\newcommand{\ric}{\operatorname{Ric}}
\newcommand{\N}{\mathbb{N}}
\newcommand{\Z}{\mathbb{Z}}
\newcommand{\R}{\mathbb{R}}
\newcommand{\inv}{^{-1}}
\newcommand{\eps}{\varepsilon}
\newcommand{\del}{\nabla}
\newcommand{\lap}{\Delta}
\newcommand{\bd}{\partial}
\newcommand{\cl}{\overline}
\newcommand{\ins}{\operatorname{int}}
\newcommand{\rest}{\scalebox{1.7}{$\llcorner$}}
\newcommand{\eval}{\bigg\vert}
\newcommand{\la}{\langle}
\newcommand{\ra}{\rangle}
\newcommand{\supp}{\operatorname{supp}}
\newcommand{\dist}{\operatorname{dist}}
\renewcommand{\div}{\operatorname{div}}
\newcommand{\RP}{{\R \text{\normalfont P}}}
\newcommand{\grad}{\del}
\newcommand{\vol}{\operatorname{Vol}}
\newcommand{\f}{\colon}
\newcommand{\area}{\operatorname{Area}}
\newcommand{\an}{\text{An}}
\newcommand{\h}{\mathcal H} 
\newcommand{\hv}{\mathfrak h}
\newcommand{\vc}{\operatorname{VC}}
\newcommand{\z}{\mathcal Z}
\newcommand{\B}{\mathcal B}
\newcommand{\C}{\mathcal C}
\newcommand{\vz}{\operatorname{VZ}}
\newcommand{\V}{\mathcal V}
\newcommand{\M}{\mathbf M}
\newcommand\numberthis{\addtocounter{equation}{1}\tag{\theequation}}
\theoremstyle{plain}
\newtheorem{theorem}{Theorem}[section]
\newtheorem{corollary}[theorem]{Corollary}
\newtheorem{prop}[theorem]{Proposition}
\newtheorem{lem}[theorem]{Lemma}
\newtheorem{conj}[theorem]{Conjecture}
\theoremstyle{definition}
\newtheorem{defn}[theorem]{Definition}
\theoremstyle{remark}
\newtheorem{rem}[theorem]{Remark}
\begin{document}

\title[Infinitely many half-volume CMCs]{Infinitely many half-volume constant mean curvature hypersurfaces via min-max theory}
\author{Liam Mazurowski}
\author{Xin Zhou}
\address{Cornell University, Department of Mathematics, Ithaca, New York 14850}
\email{lmm334@cornell.edu}
\address{Cornell University, Department of Mathematics, Ithaca, New York 14850}
\email{xinzhou@cornell.edu}

\begin{abstract}
Let $(M^{n+1},g)$ be a closed Riemannian manifold of dimension $3\le n+1\le 5$.  We show that, if the metric $g$ is generic or if the metric $g$ has positive Ricci curvature, then $M$ contains infinitely many geometrically distinct constant mean curvature hypersurfaces, each enclosing half the volume of $M$. As an essential part of the proof, we develop an Almgren-Pitts type min-max theory for certain non-local functionals of the general form  \[\Omega \mapsto \area(\bd \Omega) - \int_\Omega h + f(\vol(\Omega)).\]
\end{abstract}

\maketitle


\section{Introduction}
\label{section:introduction}

The volume spectrum of a closed, Riemannian manifold $M$ is a sequence of real numbers $\{\omega_p(M)\}_{p\in \N}$ associated to the geometry of $M$. The volume spectrum was introduced by Gromov \cite{gromov2006dimension} as a non-linear analog to the spectrum of the Laplacian. To define the volume spectrum, one first considers all $p$-sweepouts of $M$. Heuristically, a $p$-sweepout is a continuous family of hypersurfaces with the property that for any choice of points $x_1,\hdots,x_p\in M$, there is a hypersurface $\Sigma$ in the family with $x_1,\hdots,x_p\in \Sigma$. Then the $p$-width $\omega_p(M)$ is defined as the min-max value 
\[
\omega_p(M) = \inf_{p\text{-sweepouts } \Phi}\left[\sup_{\text{hypersurfaces } \Sigma \text{ in } \Phi} \area(\Sigma)\right]. 
\]
Liokumovich, Marques, and Neves \cite{liokumovich2018weyl} proved that the volume spectrum satisfies a Weyl law characterizing the asymptotic growth of $\omega_p(M)$ as a function of $p$.

In \cite{mazurowski2023half}, the authors introduced the half-volume spectrum $\{\tilde \omega_p(M)\}_{p\in \N}$. This is defined analogously to the usual volume spectrum, except that one restricts to $p$-sweepouts by hypersurfaces that are each required to enclose half the volume of $M$. The half-volume spectrum also satisfies a Weyl law \cite{mazurowski2023half}. In this paper, we develop an Almgren-Pitts type min-max theory for finding hypersurfaces associated to the half-volume spectrum. Our main result is the following theorem. 

\begin{theorem}
\label{main}
Assume $M^{n+1}$ is a closed manifold of dimension $3\le n+1\le 5$. Let $g$ be a generic Riemannian metric on $M$. Then for each $p\in \N$ there exists an open set $\Omega_p\subset M$ with $\vol(\Omega_p) = \frac{1}{2}\vol(M)$ such that $\bd \Omega_p$ is smooth and almost embedded, has non-zero constant mean curvature, and satisfies $\area(\bd \Omega_p) = \tilde \omega_p(M)$.
\end{theorem}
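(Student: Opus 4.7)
The plan is to establish Theorem~\ref{main} via an Almgren--Pitts min-max theory for the non-local functional $\mathcal{A}_f(\Omega) = \area(\bd\Omega) + f(\vol(\Omega))$ highlighted in the abstract, followed by a limiting procedure that concentrates the penalty $f$ at $v_0 := \frac{1}{2}\vol(M)$.

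First, I would set up the min-max problem for a family of steep convex penalties $f_c$ with unique minimum at $v_0$, for instance $f_c(v) = c(v - v_0)^2$ with $c \to \infty$. For each $c$, run the Almgren--Pitts construction with $p$-sweepouts in the space of Caccioppoli sets: define
\[\omega_p^c = \inf_{\Phi} \sup_{x} \mathcal{A}_{f_c}(\Phi(x)),\]
and adapt the discretization, pull-tight, and regularity arguments developed for prescribed mean curvature min-max (notably Zhou--Zhu) together with Zhou's multiplicity-one theorem. This should produce a critical open set $\Omega^c$ whose boundary is smooth, almost embedded, has constant mean curvature $H^c = -f_c'(\vol(\Omega^c))$, and achieves $\mathcal{A}_{f_c}(\Omega^c) = \omega_p^c$. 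The dimension restriction $3 \le n+1 \le 5$ enters here to rule out singularities and enforce multiplicity one.

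Next, compare with the half-volume widths. Testing the min-max with $p$-sweepouts by half-volume Caccioppoli sets (which exist by \cite{mazurowski2023half}) yields $\omega_p^c \le \tilde\omega_p(M)$ uniformly in $c$, so both $\area(\bd\Omega^c)$ and $f_c(\vol(\Omega^c))$ are bounded, which forces $\vol(\Omega^c) \to v_0$. The \emph{main obstacle} I anticipate is the uniform bound on $|H^c| = |f_c'(\vol(\Omega^c))|$ as $c \to \infty$: without it, the mean curvature could blow up. I would derive such a bound from the index control for min-max critical points via a second-variation/Jacobi field argument, or by showing that unbounded $H^c$ would be incompatible with the area bound. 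Once $|H^c|$ is controlled, the compactness theory for almost embedded CMCs in low dimensions extracts a subsequential limit $\Omega_p$ with $\vol(\Omega_p) = v_0$, $\area(\bd\Omega_p) = \tilde\omega_p(M)$, and constant mean curvature $H_p = \lim_c H^c$. Matching the limit with $\tilde\omega_p(M)$ exactly uses a reverse approximation of general $p$-sweepouts by half-volume ones.

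Finally, for non-vanishing mean curvature and geometric distinctness of the family $\{\Omega_p\}$, invoke genericity of $g$. By White's bumpy metric theorem combined with a transversality argument, for generic $g$ every closed two-sided minimal hypersurface is non-degenerate and, as a codimension-one condition on the metric, no such minimal hypersurface encloses exactly volume $v_0$; this rules out $H_p = 0$. Geometric distinctness of $\{\Omega_p\}_{p \in \N}$ then follows from the Weyl law $\tilde\omega_p(M) \to \infty$ from \cite{mazurowski2023half}: if only finitely many distinct hypersurfaces appeared in the family, their areas would be bounded, contradicting the divergence of $\tilde\omega_p$.
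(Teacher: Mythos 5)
Your overall strategy coincides with the paper's: penalize with $f_c(v)=c(v-\hv)^2$, run min-max for the non-local functional for each $c$, and pass to the limit $c\to\infty$, using genericity to exclude minimal limits via the no-half-volume-minimal-hypersurface statement (Proposition \ref{half-volume-minimal}) and the retraction onto half-volume cycles to pin the min-max values to $\tilde\omega_p$. However, there are two genuine gaps at exactly the points where the paper has to do new work. First, you treat the fixed-$c$ step as a routine adaptation of Zhou--Zhu plus Zhou's multiplicity-one theorem. The non-locality is not cosmetic: the Almgren--Pitts combinatorial argument requires locality of the functional, and the paper must introduce a new almost-minimizing notion restricted to deformations that change area faster than volume (the quasi-locality Proposition \ref{F-ql}/\ref{E-ql}), rebuild replacements via a constrained minimization problem, and match mean curvatures of overlapping replacements. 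Even then, min-max for $E_c$ alone (Theorem \ref{E-min-max}) only yields a multiplicity-one CMC \emph{together with} minimal pieces carrying multiplicities, or a purely minimal configuration; getting a clean multiplicity-one hypersurface requires the PMC regularization $F_{k,\eps}$ and a Marques--Neves-type lifting construction through relative homotopy classes, which in turn needs the generic finiteness/isolatedness and embeddedness of half-volume CMCs (Appendix \ref{generic metrics}). Zhou's theorem cannot be cited as a black box here because it is proved for the unconstrained area functional.

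Second, and more seriously, your proposed derivations of the uniform bound on $|H^c|$ would not work. The Almgren--Pitts scheme used here produces no index bounds, so a second-variation/Jacobi-field argument has nothing to start from; and unbounded $H^c$ is \emph{not} by itself incompatible with the area bound (think of many small spheres of total bounded area). The paper's actual mechanism is: the almost-minimizing property in annuli forces the rescaled surfaces (normalized to mean curvature $1$) to be volume-preserving stable on large regions; the Elbert--Nelli--Rosenberg diameter bound for stable CMCs (Theorem \ref{theorem:diameter}), valid precisely for $3\le n+1\le 5$, then forces every component of $\Sigma_c$ to have tiny extrinsic diameter when $H^c\to\infty$; and an isoperimetric argument (Proposition \ref{small-components}) shows that a boundary of uniformly bounded area enclosing half the volume must contain a component of definite diameter --- a contradiction. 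Note also that you misplace the role of the dimension restriction: regularity and multiplicity one work for $3\le n+1\le 7$; the restriction to $n+1\le 5$ enters only through this diameter bound. Without the curvature bound, neither the compactness as $c\to\infty$ nor the energy identity $c(\vol(\Omega^c)-\hv)^2=|H^c|^2/(4c)\to 0$ (which is what upgrades $\area(\bd\Omega_p)\le\tilde\omega_p$ to equality) goes through, so this step needs the full argument rather than the alternatives you sketch.
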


Combined with the Weyl law \cite[Theorem 1]{mazurowski2023half} for the half-volume spectrum, this has the following immediate corollary. 

\begin{corollary}
\label{main2}
Assume $M^{n+1}$ is a closed manifold of dimension $3\le n+1\le 5$. Then, for a generic Riemannian metric $g$ on $M$, there exist infinitely many geometrically distinct closed constant mean curvature hypersurfaces in $M$, each enclosing half the volume of $M$.  
\end{corollary}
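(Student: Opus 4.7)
The plan is to deduce Corollary \ref{main2} as an essentially immediate consequence of Theorem \ref{main} together with the Weyl law \cite[Theorem 1]{mazurowski2023half} for the half-volume spectrum. The Weyl law guarantees in particular the quantitative asymptotic $\tilde\omega_p(M) \sim C_n \vol(M)^{n/(n+1)} p^{1/(n+1)}$, and so $\tilde\omega_p(M) \to \infty$ as $p \to \infty$. Given Theorem \ref{main} and this divergence, the corollary follows by a standard Lusternik--Schnirelmann style counting argument.

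First, I would apply Theorem \ref{main} for each $p \in \N$ to produce an open set $\Omega_p \subset M$ with $\vol(\Omega_p) = \tfrac{1}{2}\vol(M)$, whose boundary $\Sigma_p := \bd \Omega_p$ is a smooth, almost embedded, non-zero CMC hypersurface satisfying $\area(\Sigma_p) = \tilde\omega_p(M)$. This directly supplies an infinite sequence of candidate half-volume CMC hypersurfaces; it remains only to show that infinitely many of them are geometrically distinct.

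Second, I would argue by contradiction: suppose the collection $\{\Sigma_p\}_{p \in \N}$ contained only finitely many geometrically distinct elements, say $T_1,\dots,T_K$. Then the set of areas realized would be the finite set $\{\area(T_1),\dots,\area(T_K)\}$, so in particular $\{\tilde\omega_p(M) : p \in \N\}$ would be bounded. This directly contradicts the Weyl law, which forces $\tilde\omega_p(M) \to \infty$. Hence infinitely many geometrically distinct hypersurfaces must appear in the list $\{\Sigma_p\}$, yielding the conclusion.

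The only minor subtlety is ensuring that a finite collection of geometrically distinct hypersurfaces produces only finitely many distinct area values; this is automatic, since each hypersurface (as a subset of $M$) determines a single, well-defined area. Consequently no additional technical ingredient beyond Theorem \ref{main} and the Weyl law is needed, and the hard work of the paper is concentrated entirely in Theorem \ref{main}; the corollary is, as advertised in the excerpt, an immediate consequence.
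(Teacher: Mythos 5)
Your proposal is correct and matches the paper's intended argument: the paper states Corollary \ref{main2} as an immediate consequence of Theorem \ref{main} combined with the Weyl law of \cite{mazurowski2023half}, which is precisely the counting argument you give (finitely many geometrically distinct hypersurfaces could realize only finitely many area values, contradicting $\tilde\omega_p \to \infty$). Nothing further is needed.
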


\begin{rem}
    In Theorem \ref{main} and Corollary \ref{main2}, the word generic is to be interpreted in the sense of Baire category. In other words, there is a subset $\Gamma$ of the space of all smooth metrics on $M$ such that $\Gamma$ is of second Baire category and the conclusions of Theorem \ref{main} and Corollary \ref{main2} hold for all $g\in \Gamma$. 
\end{rem}

By approximating a metric $g$ with positive Ricci curvature by a sequence of generic metrics, we can show that the above conclusions also hold for metrics with positive Ricci curvature. 

\begin{theorem}
    \label{positive-Ricci}
Assume $M^{n+1}$ is a closed manifold of dimension $3\le n+1\le 5$. Assume $g$ is a Riemannian metric on $M$ with positive Ricci curvature. Then for each $p\in \N$ there exists an open set $\Omega_p\subset M$ with $\vol(\Omega_p) = \frac{1}{2}\vol(M)$ such that $\bd \Omega_p$ is smooth and almost embedded, has constant mean curvature (possibly equal to 0), and satisfies $\area(\bd \Omega_p) = \tilde \omega_p(M)$.\end{theorem}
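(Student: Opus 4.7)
The plan is to approximate $g$ by a sequence of generic Riemannian metrics $g_i \to g$ in $C^\infty$, apply Theorem~\ref{main} to each $g_i$, and pass to a subsequential limit. Fix $p \in \N$. For each $i$, Theorem~\ref{main} yields an open set $\Omega_p^i \subset M$ with $\vol_{g_i}(\Omega_p^i) = \frac{1}{2} \vol_{g_i}(M)$ whose boundary is a smooth, almost-embedded CMC hypersurface with some mean curvature $h_i$ and area equal to $\tilde\omega_p(M, g_i)$.

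The first task is to establish two uniform bounds. For the area, I would verify that the half-volume widths depend continuously on the metric: sweepouts transfer between nearby metrics after a small volume adjustment, so $\tilde\omega_p(M, g_i) \to \tilde\omega_p(M, g)$, giving a uniform bound on $\area_{g_i}(\bd \Omega_p^i)$. For the mean curvatures, since $\bd \Omega_p^i$ is a critical point of the area functional constrained to the half-volume slice, testing the first variation formula against a fixed smooth isotopy whose rate of volume change is uniformly positive bounds $|h_i|$ in terms of the (already uniformly bounded) area, so $|h_i| \le C_p$ independent of $i$.

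With these bounds in hand, I would invoke the compactness theory for almost-embedded CMC hypersurfaces of bounded area and bounded mean curvature (available in dimensions $n+1 \le 5$). After passing to a subsequence, $h_i \to h_\infty$ (possibly zero), $\bd \Omega_p^i$ converges as varifolds, and smoothly away from finitely many points, to an almost-embedded CMC hypersurface $V$, while $\mathbbm 1_{\Omega_p^i} \to \mathbbm 1_{\Omega_p}$ in $L^1$ for some open set $\Omega_p$. The $L^1$ convergence together with $g_i \to g$ yields $\vol_g(\Omega_p) = \frac{1}{2}\vol_g(M)$, and the varifold limit is supported on $\bd \Omega_p$, making $\bd \Omega_p$ an almost-embedded CMC hypersurface with constant mean curvature $h_\infty$.

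The main obstacle is to rule out loss of area in the limit, so that $\area_g(\bd \Omega_p) = \tilde\omega_p(M, g)$ rather than merely the a priori inequality $\le \tilde\omega_p(M, g)$ from lower semicontinuity. The bad scenario is that two sheets of $\bd \Omega_p^i$ with opposite outward conormals coalesce in the limit, contributing positive varifold mass but canceling in the reduced boundary. This is where positive Ricci curvature enters essentially: via a Frankel-type maximum principle, two almost-embedded CMC hypersurfaces in a manifold of positive Ricci curvature cannot meet tangentially with mean curvature vectors pointing into each other, which excludes the hidden-multiplicity scenario. Combined with the min-max characterization of $\tilde\omega_p(M, g)$ (using $\Omega_p$ together with a small deformation to build a competitor sweepout to obtain the reverse inequality), this gives the required equality $\area_g(\bd \Omega_p) = \tilde\omega_p(M, g)$ and completes the proof.
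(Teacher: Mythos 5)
Your overall skeleton is the same as the paper's (approximate $g$ by generic metrics, show $\tilde\omega_p(M,g_i)\to\tilde\omega_p(M,g)$ by transferring sweepouts and repairing the volume constraint, apply Theorem \ref{main} to $(M,g_i)$, bound the mean curvatures, pass to a limit, and use positive Ricci to handle the degenerate case), but two of your key steps do not hold as stated. First, the mean curvature bound: for a \emph{fixed} isotopy generated by a vector field $X$, the rate of volume change at $\Omega_p^i$ is $\int_{\Omega_p^i}\div X$, and there is no fixed $X$ for which this is uniformly positive over an unknown sequence of half-volume regions --- any ambient flow changes the volumes of $\Omega$ and $M\setminus\Omega$ by opposite amounts, so ``uniformly positive rate of volume change'' cannot be arranged in advance, and without a lower bound on $\bigl|\int_{\Omega_p^i}\div X\bigr|$ the first variation identity gives no bound on $|h_i|$. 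This is exactly the delicate point of the paper: the bound is proved (Propositions \ref{h-bound} and \ref{h-bound2}) by a blow-up argument --- rescale to mean curvature one, use the almost-minimizing structure from the min-max construction to get volume-preserving stability in large annuli, invoke the Elbert--Nelli--Rosenberg diameter bound (this is precisely where $3\le n+1\le 5$ is used), and contradict Proposition \ref{small-components}. Relatedly, the compactness you invoke is not a general fact about almost-embedded CMCs with bounded area and mean curvature; the curvature estimates come from the volume preserving stability in annuli (property (R')) that the surfaces of Theorem \ref{main} carry by construction, i.e.\ Proposition \ref{E-compactness}, not from the area and $H$ bounds alone.

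Second, the limit analysis is misdiagnosed. Under varifold convergence there is no loss of mass: $\|V\|(M)=\lim\area_{g_i}(\bd\Omega_p^i)=\lim\tilde\omega_p(M,g_i)=\tilde\omega_p(M,g)$ automatically; the danger when $h_i\to 0$ is that $V$ is a minimal varifold with multiplicities, so that $\area_g(\bd\Omega_p)<\|V\|(M)$. A Frankel-type tangency maximum principle between two CMCs does not address this: the sheets that collapse in the limit are disjoint pieces of the single multiplicity-one hypersurface $\bd\Omega_p^i$, so no two hypersurfaces ever touch, and the Frankel property under $\ric>0$ (two closed minimal hypersurfaces must intersect) is used in the paper only to conclude that the minimal limit is \emph{connected}, writing $V=m\vert\Sigma\vert$ with $\bd\Omega_p=\Sigma$. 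Multiplicity is then excluded differently: $m\ge 3$ is ruled out by extracting a positive Jacobi field from sheets whose mean curvature vectors point the same way, impossible when $\ric>0$; and $m=2$ is ruled out because then $\Omega_p$ would be $\emptyset$ or $M$, violating the half-volume constraint. With $m=1$ the identity $\area_g(\bd\Omega_p)=\tilde\omega_p(M,g)$ is immediate, so no ``competitor sweepout'' built from $\Omega_p$ is needed (nor could a single hypersurface furnish a $p$-sweepout); also note the theorem allows $H=0$, so the minimal case need not be excluded, only its multiplicity.
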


\subsection{Background} Min-max theory is a powerful tool for finding critical points. In differential geometry, min-max theory has been used to great success to understand the existence of minimal surfaces, constant mean curvature surfaces, and more general prescribed mean curvature surfaces. In the early 1980s, Almgren \cite{almgren1965theory}, Pitts \cite{pitts2014existence}, and Schoen-Simon \cite{schoen1981regularity} developed a min-max theory for finding critical points of the area functional. Their combined work implies that every closed Riemannian manifold $M^{n+1}$ with dimension $3\le n+1\le 7$ contains a smooth, closed, embedded minimal hypersurface. Around the same time, Yau \cite{yau1982seminar} conjectured that every closed Riemannian 3-manifold should contain infinitely many closed minimal surfaces. Motivated by Yau's conjecture, Marques and Neves developed a program to understand the Morse theory of the area functional by further refining the Almgren-Pitts min-max theory. In \cite{marques2016morse}, Marques and Neves proved the following theorem.

\begin{theorem}[Marques-Neves \cite{marques2016morse}]
\label{theorem:MN1}
Let $M^{n+1}$ be a closed Riemannian manifold of dimension $3\le n+1\le 7$ and fix an integer $p\in \N$. Then there exists a collection of disjoint closed minimal hypersurfaces $\Sigma_1,\hdots,\Sigma_k$ in $M$ together with multiplicities $m_1,\hdots,m_k\in \N$ such that $\omega_p(M) = m_1\area(\Sigma_1) + \hdots + m_k\area(\Sigma_k)$. 
\end{theorem}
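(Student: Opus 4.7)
The plan is to apply the Almgren-Pitts min-max machinery to the $p$-sweepout class, in the refined form developed by Marques and Neves. First, choose a minimizing sequence of $p$-sweepouts $\{\Phi_i\}$, i.e., a sequence of continuous maps into $\z_n(M;\Z_2)$ whose cohomology class $\bar\lambda^p$ is nontrivial and for which $\sup_x \M(\Phi_i(x)) \to \omega_p(M)$. Using Almgren's isomorphism $\z_n(M;\Z_2)\simeq \RP^\infty$ together with the standard interpolation theorems between continuous and discrete maps (with no mass concentration assumption, controlled via the fineness), pass from $\{\Phi_i\}$ to a discrete min-max sequence $\{\phi_i\}$ of maps from fine grids of $I^p$ into $\z_n(M;\Z_2)$ realizing the same width $\omega_p(M)$.

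Next, perform Pitts' pull-tight deformation: on a neighborhood of the critical set of varifold limits, deform $\phi_i$ using a continuous family of ambient isotopies that strictly decrease the mass whenever the varifold is not stationary, while keeping the mass of every other slice uniformly bounded. After pull-tight, every varifold limit $V$ arising from a min-max sequence $\phi_i(x_i)$ with $\M(\phi_i(x_i))\to \omega_p(M)$ is stationary. The heart of the argument is then to produce such a $V$ that is furthermore \emph{almost-minimizing in small annuli} at every point. The construction is by contradiction: if no such $V$ existed, then at every point and scale one could locally replace the slices $\phi_i(x)$ with mass-decreasing competitors in disjoint annuli; a combinatorial argument of Pitts (refined by Marques-Neves to respect the $p$-sweepout condition) then allows one to glue these replacements across the parameter space to obtain a new $p$-sweepout of strictly smaller width, contradicting the choice of $\{\Phi_i\}$.

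Finally, apply the Schoen-Simon regularity theory, together with Pitts' theorem that a stationary integral varifold which is almost-minimizing in annuli is smooth at every point (and the dimension restriction $n+1\le 7$ rules out interior singularities), to conclude that $V = \sum_{i=1}^k m_i[\Sigma_i]$ for smooth, closed, embedded minimal hypersurfaces $\Sigma_i$ with positive integer multiplicities $m_i$; by a small perturbation one may arrange the $\Sigma_i$ to be pairwise disjoint. The mass identity then gives $\omega_p(M) = \sum_i m_i\area(\Sigma_i)$. The principal obstacle is the construction of the almost-minimizing varifold: the combinatorial replacement argument must be carried out so that each intermediate map remains a genuine $p$-sweepout, which is exactly where the non-triviality of the cup power $\bar\lambda^p$ is used, and where Marques-Neves's refinement goes beyond the original Almgren-Pitts setting (which handled only the $p=1$ case via $\Z_2$-sweepouts).
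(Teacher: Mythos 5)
This statement is quoted background: the paper attributes it to Marques--Neves \cite{marques2016morse} and contains no proof of it, so there is no internal argument to compare against. Your outline is, in substance, the standard proof from the cited reference: pass from a minimizing sequence of $p$-sweepouts to discrete maps via the Almgren isomorphism and the discretization/interpolation theorems, pull tight so that all min-max limits are stationary, run Pitts' combinatorial argument to produce a limit varifold that is almost minimizing in annuli (the deformed maps stay in the same homotopy class, so the nontriviality of the cup power $\bar\lambda^p$ is preserved automatically and the competitor is still a $p$-sweepout, giving the contradiction with the width), and then invoke Pitts' replacement theory together with Schoen--Simon regularity, which in dimensions $3\le n+1\le 7$ yields a smooth closed embedded minimal limit. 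One step as you wrote it is incorrect, though easily repaired: you say the $\Sigma_i$ can be arranged to be pairwise disjoint ``by a small perturbation.'' A perturbation would destroy minimality and is in any case unnecessary: the regularity theory shows that $\supp \|V\|$ is itself a smooth, closed, embedded minimal hypersurface, and the $\Sigma_i$ are just its connected components, hence automatically disjoint; the decomposition $V=\sum_i m_i\vert \Sigma_i\vert$ with $m_i\in\N$ then follows from integrality of $V$ and the constancy theorem, and the mass identity gives $\omega_p(M)=\sum_i m_i\area(\Sigma_i)$.
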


Theorem \ref{theorem:MN1} alone does not suffice to prove Yau's conjecture because of the possible appearance of multiplicities. Indeed, different $p$-widths could be achieved by the same underlying collection of minimal surfaces, but with different multiplicities.  Marques and Neves \cite{marques2021morse} conjectured that, for a generic choice of metric, all of the multiplicities in Theorem \ref{theorem:MN1} can be taken to be 1. In other words, for a generic choice of metric, every $\omega_p$ is equal to the area of some (possibly disconnected) minimal hypersurface $\Sigma_p$. This {\it Multiplicity One Conjecture} was proven by the second named author \cite{zhou2020multiplicity}, using the prescribed mean curvature regularization of the area functional developed by the second author and J. Zhu \cite{zhou2020existence}. The combined work of Marques and Neves together with the resolution of the Multiplicity One Conjecture gives the following strong answer to Yau's conjecture for generic metrics.

\begin{theorem}[\cite{marques2021morse},\cite{zhou2020multiplicity}]
\label{mult1}
Let $M^{n+1}$ be a closed manifold with dimension $3\le n+1\le 7$. For a generic choice of metric $g$ on $M$, there is a sequence of (possibly disconnected) closed minimal hypersurfaces $\Sigma_p$ such that $\omega_p(M) = \area(\Sigma_p)$.
\end{theorem}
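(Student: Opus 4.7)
The plan is to combine two major ingredients from min-max theory. First, I would invoke Theorem \ref{theorem:MN1} of Marques--Neves, which already guarantees that for any metric each $p$-width admits a representation $\omega_p(M) = \sum_{i=1}^k m_i \area(\Sigma_i)$ for a finite disjoint collection of smooth closed embedded minimal hypersurfaces $\Sigma_i$ with positive integer multiplicities $m_i$. The entire task is therefore to upgrade this, for a generic metric, to $k=1$ and $m_1 = 1$ (after grouping disconnected components into a single $\Sigma_p$).

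Second, I would regularize the area functional by prescribed mean curvature, as in \cite{zhou2020existence}. For a small constant $c>0$ consider the functional $\mathcal{A}^c(\Omega) = \area(\bd \Omega) - c\vol(\Omega)$, whose critical points are almost embedded hypersurfaces of constant mean curvature $c$. One develops an Almgren--Pitts min-max theory adapted to $\mathcal{A}^c$ using the same sweepouts that define $\omega_p$, producing a CMC-$c$ hypersurface $\Sigma_p^c$ realizing the corresponding $c$-perturbed width $\omega_p^c$. The advantage of working with nonzero $c$ is that the strong maximum principle forbids two-sided tangential contact of sheets, which together with a blow-up analysis shows that every two-sided component of $\Sigma_p^c$ occurs with multiplicity one, while one-sided components have odd multiplicity.

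Third, I would let $c \downarrow 0$. By the compactness theory for CMC/PMC min-max limits, after passing to a subsequence $\Sigma_p^c$ converges as varifolds to a smooth embedded minimal hypersurface $\Sigma_p$ with $\area(\Sigma_p) = \omega_p(M)$, where $\Sigma_p$ is counted with the (integer) limit multiplicities. Now one invokes the genericity of $g$: for a Baire-generic (in particular, a bumpy in the sense of White) metric every closed embedded minimal hypersurface is nondegenerate and, by a separation argument using the first eigenfunction of the Jacobi operator, any one-sided component or any higher multiplicity would produce a nontrivial positive Jacobi field on the limit, contradicting nondegeneracy. This forces the limit $\Sigma_p$ to be two-sided and multiplicity one, yielding the desired sequence.

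The principal obstacle is the multiplicity-one estimate at the PMC level, i.e.\ showing that two-sided multiplicities in the min-max limit are really forced to be one before one ever takes $c\to 0$. This is the core of \cite{zhou2020multiplicity} and relies on constructing admissible deformations that decrease $\mathcal{A}^c$ whenever a two-sided sheet of multiplicity $\ge 2$ appears, using the sign of $c$ and an obstacle-problem-style cut-and-paste. A secondary technical obstacle is to preserve the Morse index upper bounds of Marques--Neves throughout the $c \to 0$ limit so that $\Sigma_p$ inherits Morse index at most $p$, which combined with bumpiness ensures $\Sigma_p$ is an honest smooth (possibly disconnected) embedded minimal hypersurface with $\omega_p(M) = \area(\Sigma_p)$.
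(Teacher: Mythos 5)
Theorem \ref{mult1} is not proved in this paper at all: it is quoted as background from \cite{marques2021morse} and \cite{zhou2020multiplicity}, so there is no internal proof to compare your argument against. Your outline does capture the architecture of the cited proof (Marques--Neves realization of $\omega_p$ by minimal hypersurfaces with multiplicities, a prescribed-mean-curvature regularization producing multiplicity-one almost-embedded hypersurfaces, a limit as the prescribing term tends to zero, and the extraction of a positive Jacobi field to rule out higher multiplicity or one-sided components in a bumpy metric). Two corrections of substance: first, the regularization in \cite{zhou2020multiplicity} is not the constant-$c$ functional $\mathcal A^c$ of \cite{zhou2019min} but the functional $\Omega \mapsto \area(\bd\Omega)-\eps\int_\Omega h$ with $h$ a fixed non-constant function from a generic class (condition ($\dagger$), the analogue of property (T) in Definition \ref{property (T)} here); the non-constant $h$ is what controls the touching set of the almost-embedded PMC min-max surfaces and makes the regularity and compactness theory work as $\eps\to 0$ --- this is exactly why the present paper regularizes $E_k$ by $F_{k,\eps}$ with a Morse function satisfying (T) rather than by a constant. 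Second, you gloss over the passage from sweepouts by flat cycles (which define $\omega_p$) to sweepouts by Caccioppoli sets (on which the PMC functional is defined); this requires the double cover associated to the class $\lambda$ and relative $(X,Z)$-homotopy min-max with the condition $L>\sup_Z$, and is a genuine step of \cite{zhou2020multiplicity} that the present paper reproduces in its Section \ref{S:constructiong half-volume CMCs} (the lifting construction). Also, at the PMC stage the min-max hypersurface is a boundary, hence two-sided and multiplicity one; one-sidedness only enters for the minimal limit, and Morse index bounds, while true, are not needed for the bare identity $\omega_p(M)=\area(\Sigma_p)$.
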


We note that prior to Theorem \ref{mult1}, Irie, Marques, and Neves \cite{irie2018density} had already established the existence of infinitely many closed minimal hypersurfaces for generic metrics. Moreover, Marques, Neves, and Song \cite{marques2019equidistribution} proved that for a generic choice of metric on $M$, some sequence of minimal hypersurfaces becomes equidistributed in $M$. Finally, Song \cite{song2023existence} proved that {\it every} metric $g$ on $M$ admits infinitely many minimal hypersurfaces, thus completely resolving Yau's conjecture. For a more detailed overview of this subject, we refer to the survey articles \cite{Marques-Neves21} and \cite{Zhou-ICM22}. 

There is a parallel min-max procedure for constructing minimal hypersurfaces in Riemannian manifolds based on the theory of phase transitions and the Allen-Cahn equation. In this context, Gaspar and Guaraco \cite{gaspar2018allen} defined an Allen-Cahn analog to the volume spectrum $\{c_p(M)\}_{p\in N}$ and proved a corresponding version of Theorem \ref{theorem:MN1}. They also showed that $c_p(M)$ satisfies a Weyl law \cite{gaspar2019weyl}.  In the Allen-Cahn setting, Chodosh and Mantoulidis \cite{chodosh2020minimal} proved that the Multiplicity One Conjecture holds in ambient dimension $n+1=3$. In \cite{mazurowski2023half}, the authors also defined an Allen-Cahn version of the half-volume spectrum $\{\tilde c_p(M)\}_{p\in \N}$. 

\vspace{1em}
In the context of constant mean curvature hypersurfaces, the $A^c$ min-max theory of the second author and J. Zhu \cite{zhou2019min} can be used to find closed CMC hypersurfaces with prescribed mean curvature $c$. This min-max theory gives no control over the enclosed volume. It is natural to wonder if there is a corresponding min-max theory for producing {\it unstable} CMC hypersurfaces with prescribed volume (but with no control over the value of the mean curvature). Of course, by the solution of the isoperimetric problem, $M$ admits a {\it stable} CMC enclosing volume $v$ for every prescribed volume $v$. We are asking if there is a meaningful higher parameter version of this. 

Intuitively, one expects that the half-volume spectrum should detect critical points of the area functional with a half-volume constraint. 
In \cite{mazurowski2023half}, the authors used the Allen-Cahn min-max theory and the work of Bellettini-Wickramasekera \cite{bellettini2020inhomogeneous} to show that each Allen-Cahn half-volume spectrum $\tilde c_p$ is achieved by a constant mean curvature hypersurface enclosing half the volume of $M$, together with a collection of minimal hypersurfaces with even multiplicities.  This is an analog to Theorem \ref{theorem:MN1} in the half-volume Allen-Cahn setting.

In this paper, we prove a corresponding theorem for the Almgren-Pitts half-volume spectrum $\tilde \omega_p$. In fact, we are able to use a prescribed mean curvature regularization to obtain a multiplicity one type result. Thus we obtain Theorem \ref{main}, which is an analog to Theorem \ref{mult1} in the half-volume Almgren-Pitts setting. As a corollary, we prove the existence of infinitely many half-volume CMCs for generic metrics. As part of the proof, we develop an Almgren-Pitts type min-max theory for certain non-local functionals. These arguments may be of interest in their own right, and we hope that they will prove useful in other contexts. 

\begin{rem}
Most of our arguments work in the larger dimension range $3\le n+1 \le 7$, but the extra restriction $3\le n+1\le 5$ is needed at exactly one point in the proof. This is related to the fact that stable CMCs are known to have diameter bounds in these dimensions \cite{elbert2007stable}. 
\end{rem}

\subsection{Motivation and Conjectures} 
A well-known conjecture of Arnold \cite{arnold2004arnold} states that every Riemannian 2-sphere $(S^2,g)$ admits at least two distinct closed curves with constant geodesic curvature $\kappa$ for every $\kappa > 0$. The following ``{\it Twin Bubble Conjecture}'' \cite{Zhou-ICM22} can be seen as a higher dimensional analog of Arnold's conjecture; see also \cite{Dey23,Mazurowski22} for some partial results. 

\begin{conj}
\label{two-cmcs}
    Every closed Riemannian manifold $(M^{n+1},g)$ with $3\leq n+1\leq 7$ admits at least two distinct closed hypersurfaces with constant mean curvature $c$ for every constant $c > 0$. 
\end{conj}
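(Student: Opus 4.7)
The natural plan is to bring the non-local min-max framework developed in this paper to bear on the $A^c$ functional
\[
A^c(\Omega) = \area(\bd \Omega) - c\,\vol(\Omega),
\]
which fits the template $\area(\bd\Omega) - \int_\Omega h + f(\vol(\Omega))$ with $h \equiv c$ and $f \equiv 0$, and whose critical Caccioppoli sets have reduced boundary of constant mean curvature exactly $c$. Define $c$-widths $\{\omega_p^c(M)\}_{p\in \N}$ by $A^c$-min-maxing over $p$-sweepouts of $M$ by Caccioppoli sets, in direct analogy with $\omega_p(M)$. Combining Zhou--Zhu's $1$-parameter $A^c$ theory \cite{zhou2019min} with the multi-parameter machinery of the present paper, together with a multiplicity-one result in the spirit of \cite{zhou2020multiplicity, zhou2020existence}, one would hope to realize each $\omega_p^c$ as $A^c(\Omega_p)$ for a Caccioppoli set $\Omega_p$ whose boundary is a smooth, almost embedded CMC of mean curvature $c$ with multiplicity one.

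Granting such a theory, I would produce the first CMC from $\omega_1^c$, equivalently by a mountain-pass between $\emptyset$ and $M$ using $A^c(\emptyset) = 0$ and $A^c(M) = -c\,\vol(M) < 0$. The second CMC should come from $\omega_2^c$. In the favorable case $\omega_1^c(M) < \omega_2^c(M)$ the two critical hypersurfaces are automatically distinct, since they sit at different values of $A^c$. In the equal case $\omega_1^c = \omega_2^c$, a Lusternik--Schnirelmann style argument, in the spirit of Marques--Neves for the area functional \cite{marques2016morse}, should produce infinitely many critical points at that common level from the cohomological nontriviality of the $2$-sweepout — and in particular at least two distinct CMCs.

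The main obstacle is developing the multi-parameter $A^c$ min-max with a multiplicity-one conclusion strong enough to exclude the failure mode where $\omega_2^c$ is realized by the $\omega_1^c$-hypersurface counted with multiplicity two. The $-c\,\vol(\Omega)$ term is sign-sensitive, which should help rule out even multiplicities but also breaks the PMC homogeneity used in \cite{zhou2020multiplicity}, so the PMC regularization would have to be re-engineered at higher parameters in parallel with the adaptation carried out here for half-volume functionals; this is the principal technical hurdle. A secondary difficulty is ensuring the Lusternik--Schnirelmann deformation step goes through for $A^c$, since the analysis of sweepouts of Caccioppoli sets in this paper controls the volume rather than the mean-curvature parameter $c$; I would expect the non-local min-max technology developed here to provide, but not immediately deliver, what is needed, and I view these two ingredients as the genuine barriers to a complete proof of Conjecture \ref{two-cmcs}.
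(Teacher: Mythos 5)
You are trying to prove a statement that the paper itself does not prove: Conjecture \ref{two-cmcs} is stated purely as motivation (the ``Twin Bubble Conjecture'' of \cite{Zhou-ICM22}) and remains open; the paper only explains that min-max for the symmetrized functional $\Omega \mapsto \max\{\area(\bd\Omega) - c\vol(\Omega),\ \area(\bd\Omega) - c\vol(M\setminus\Omega)\}$ is \emph{expected} to detect either a second $c$-CMC or a half-volume CMC with uncontrolled mean curvature, and then develops the machinery to handle only the latter alternative. So there is no paper proof to compare against, and your proposal cannot be accepted as a proof either --- as you yourself concede at the end, it is a program with the hard steps missing.

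Two of the gaps deserve to be named more sharply than you do. First, your definition of the $c$-widths $\omega_p^c$ ``in direct analogy with $\omega_p(M)$'' does not make sense as stated: the usual $p$-sweepout notion rests on the cohomology of $\mathcal Z_n(M;\Z_2)\simeq \RP^\infty$ (equivalently $\mathcal B(M,\Z_2)$), but $A^c$ is sign-sensitive in $\Omega$ versus $M\setminus\Omega$ and therefore does \emph{not} descend to $\mathcal B(M,\Z_2)$; it lives on $\mathcal C(M)$, which is contractible, so there is no class $\lambda^p$ to pull back and no obvious multi-parameter width beyond the relative (mountain-pass) class used in \cite{zhou2019min}. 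This is precisely why the paper's own motivating discussion passes to the symmetrized max-functional, and why its min-max output can be a half-volume CMC rather than a $c$-CMC --- the failure mode your scheme would need to exclude. Second, even granting a multi-parameter $A^c$ theory with multiplicity one (itself unproven, and not supplied by the present paper, whose regularization is built around volume-penalized functionals satisfying the evenness condition (\ref{f is even}) rather than around a prescribed constant $c$), the case $\omega_1^c=\omega_2^c$ requires a Lusternik--Schnirelmann argument on a space where the relevant topology is the $\Z_2$-cover structure, and no such deformation theorem for $A^c$ exists in the literature or in this paper. Until those two ingredients are actually constructed, the argument is a heuristic outline, not a proof.
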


Our initial motivation for studying half-volume CMCs came from Conjecture \ref{two-cmcs}.  Naively, one expects that the two CMCs in Conjecture \ref{two-cmcs} come from ``sweeping out $M$ in opposite directions.'' Of course, topologically all sweepouts of $M$ are homotopic, and so it does not make sense to distinguish the direction of different sweepouts. Nevertheless, by considering the functional 
\[
\Omega \mapsto \max\{\area(\bd \Omega) - c\vol(\Omega), \area(\bd \Omega) - c\vol(M\setminus \Omega)\}, 
\]
one can artificially attempt to see both a sweepout and ``the sweepout in the opposite direction.'' Performing min-max for this functional over all sweepouts of $M$, we expect to detect either a hypersurface with constant mean curvature $c$, or a half-volume CMC (with no control over the value of the mean curvature). 
The first case should correspond to the second $c$-CMC hypersurface in addition to \cite{zhou2019min}. Our current work provides a thorough understanding of the latter case via restriction of this functional to all half-volume hypersurfaces.

We also record the following counterpart to Conjecture \ref{two-cmcs}, where prescribed mean curvature is replaced by a volume constraint. 

\begin{conj}
    Every closed Riemannian manifold $(M^{n+1},g)$ with $3\leq n+1\leq 7$ admits at least two distinct closed CMC hypersurfaces enclosing volume $v$ for every $v\in (0,\vol(M))$. 
\end{conj}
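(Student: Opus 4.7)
The strategy is to generalize the main theorems of this paper from target volume $\vol(M)/2$ to an arbitrary target volume $v\in(0,\vol(M))$. For each $v$, one first defines the $v$-volume spectrum $\{\tilde\omega_p^v(M)\}_{p\in\N}$ by restricting $p$-sweepouts to families of hypersurfaces each enclosing volume exactly $v$, and establishes the corresponding Weyl law by adapting the arguments of \cite{mazurowski2023half}. The non-local functional framework $\Omega\mapsto\area(\bd\Omega)-\int_\Omega h+f(\vol(\Omega))$ advertised in the abstract already accommodates any prescribed target volume by choosing $f$ convex with minimum at $v$, so the Almgren-Pitts type min-max theory developed in this paper should extend with only minor modifications. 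This should yield, for generic metrics, a sequence of almost embedded CMC hypersurfaces $\bd\Omega_p^v$ with $\vol(\Omega_p^v)=v$ and $\area(\bd\Omega_p^v)=\tilde\omega_p^v(M)$, in direct analogy with Theorem \ref{main}.

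Combined with the Weyl law for $\tilde\omega_p^v$, this produces infinitely many geometrically distinct CMC hypersurfaces enclosing volume $v$ for every generic metric, which is strictly stronger than the two CMCs demanded by the conjecture. To handle arbitrary metrics, one approximates by generic metrics $g_k\to g$ and passes to the limit, as in the proof of Theorem \ref{positive-Ricci}. The key step is showing that the limit surfaces remain geometrically distinct. One possible mechanism is to prove a strict inequality $I(v)<\tilde\omega_1^v(M)$, where $I$ denotes the isoperimetric profile, thereby separating the first min-max CMC from the stable isoperimetric minimizer, which always exists and automatically encloses volume $v$. Alternatively, a Morse index argument distinguishing an index one min-max CMC from the index zero isoperimetric surface would suffice.

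The main obstacle is precisely this distinctness step for non-generic metrics. In the half-volume positive Ricci case treated in this paper, the diameter estimates for stable CMCs from \cite{elbert2007stable} are used crucially in the passage to the limit, which is why that theorem restricts to $3\le n+1\le 5$. For the full conjecture at arbitrary $v$ and with no curvature assumption, one likely needs either a robust Morse-theoretic framework for the constrained min-max controlling indices uniformly under metric perturbation, or a new geometric input forcing strict separation between the first min-max width and the isoperimetric profile. Neither is available with current techniques, which is consistent with the conjecture remaining open.
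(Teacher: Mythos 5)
The statement you are addressing is stated in the paper as a \emph{conjecture} (the volume-constrained counterpart of the Twin Bubble Conjecture); the paper offers no proof of it, so there is nothing to match your argument against, and your own closing paragraph concedes that the key distinctness step is out of reach. What you have written is a research program, not a proof, and several of its steps fail concretely rather than just being ``technical.'' First, the claim that the min-max machinery of this paper extends to a prescribed volume $v\neq\frac{1}{2}\vol(M)$ ``with only minor modifications'' overlooks that the $E$ functional is only well defined on $\B(M,\Z_2)$ because of the evenness hypothesis (\ref{f is even}): a cycle $T$ determines $\Omega$ only up to replacing it by $M\setminus\Omega$, and a penalization $k(\vol(\Omega)-v)^2$ with $v\neq\hv$ depends on that choice. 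Working instead on $\C(M)$ destroys the topology that defines the spectrum ($\C(M)$ is contractible, whereas the whole construction of $\tilde\omega_p$ rests on $\h(M,\Z_2)\simeq\B(M,\Z_2)\simeq\RP^\infty$), and the deformation retraction $\theta$ with the mass estimate of Lemma \ref{lem:deformation retraction map}, which is what converts approximate volume constraints into exact ones, is built specifically for half volume. A $v$-volume spectrum, its Weyl law, and the analogue of the retraction would all have to be constructed, and none of this is done in the paper or in \cite{mazurowski2023half}.

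Second, even granting a generic-metric analogue of Theorem \ref{main} for volume $v$, the passage to arbitrary metrics is exactly where the paper itself cannot go beyond positive Ricci curvature (Theorem \ref{positive-Ricci}), and there the Frankel property and the dimension restriction $3\le n+1\le 5$ (via the stable CMC diameter bounds used in Propositions \ref{small-components} and \ref{h-bound}) are essential; the conjecture asks for all metrics and all $3\le n+1\le 7$. In the limit $g_k\to g$ the min-max surfaces can degenerate to minimal hypersurfaces with multiplicity bounding a region of volume $v$, which need not be CMC hypersurfaces ``enclosing volume $v$'' in the required sense, and nothing in your argument prevents the putative second surface from collapsing onto the isoperimetric one: neither the strict inequality $I(v)<\tilde\omega_1^v$ nor a uniform Morse index separation is established or known. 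So the proposal identifies a reasonable strategy but does not constitute a proof, and the statement should be regarded as open, exactly as the paper presents it.
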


\subsection{Outline of Proof}

In the remainder of the introduction, we will give a sketch of the proof of Theorem \ref{main}. This section is more technical than the rest of the introduction. We will assume that the reader is familiar with some of the notions from geometric measure theory introduced in Section \ref{section:preliminaries}, as well as the basic ideas in the Almgren-Pitts min-max theory.

Let $M^{n+1}$ be a closed Riemannian manifold. The Almgren-Pitts min-max theory works with homotopy classes of maps into the space $\mathcal Z_n(M,\Z_2)$ of flat cycles mod $2$. Because we work with a volume constraint, we will restrict to the space $\mathcal B(M,\Z_2)$ consisting of those $T \in \mathcal Z_n(M,\Z_2)$ such that $T = \bd \Omega$ for some Caccioppoli set $\Omega$.  Within $\mathcal B(M,\Z_2)$, we can further consider the space of half-volume cycles 
\[
\mathcal H(M,\Z_2) = \{T \in \mathcal B(M,\Z_2): T= \bd \Omega \text{ and } \vol(\Omega) = \frac{1}{2}\vol(M)\}. 
\]
At first, one might hope to develop a min-max theory for the area functional directly on the space $\mathcal H(M,\Z_2)$. 

However, a serious obstacle arises when attempting to design a pull-tight operation for the area functional restricted to $\mathcal H(M,\Z_2)$.  In the unconstrained Almgren-Pitts min-max scheme, one first obtains a weak solution $V$ in the space of varifolds $\mathcal V(M)$ as a limit of a  min-max sequence.  The pull-tight operation is certain pseudo-gradient flow that ensures that this weak solution $V$ is stationary. The fact that $V$ is stationary (or really just that $V$ has bounded first variation) is then used in an essential way to prove the regularity of $V$. 

In our case, the volume constraint does not make sense on the space of varifolds. Nevertheless, we can replace the space of varifolds by Almgren's $\vz$ space, as in \cite{wang2023existence}. The space $\vz(M,\Z_2)$ consists of pairs $(V,T)\in \mathcal V(M)\times \mathcal B(M,\Z_2)$ such that there is a sequence $T_i \in \mathcal B(M,\Z_2)$ with $T_i \to T$ and $\vert T_i\vert \to V$. In particular, $\vz(M,\Z_2)$ is the natural space in which to consider the limit of a min-max sequence. The volume constraint makes sense in $\vz(M,\Z_2)$ and so we can look for a weak solution $(V,T) \in \vz(M,\Z_2)$ with $T = \bd \Omega$ and $\vol(\Omega) = \frac{1}{2}\vol(M)$. 

It is at this point the difficult arises. One would like to design a pull-tight operation which ensures that the weak solution $(V,T)\in \vz(M,\Z_2)$ with $T = \bd \Omega$ is stationary for the area functional with half-volume constraint, i.e., 
\begin{equation}
\label{equation:eq1} 
\delta V(X) = 0, \quad \text{for all } X\in \mathfrak X(M) \text{ with } \int_\Omega \div(X) = 0.
\end{equation}
Here $\mathfrak X(M)$ denotes the space of $C^1$ vector fields on $M$. It does not seem straightforward to achieve this. Moreover, even if such a pull-tight operation does exist, the condition (\ref{equation:eq1}) does not seem sufficient to prove regularity. Indeed, condition (\ref{equation:eq1}) gives no a priori bound on the first variation of $V$.

Thus we adopt a different approach. Rather than performing min-max theory for the area functional directly on $\mathcal H(M,\Z_2)$, we first perform min-max on all of $\mathcal B(M,\Z_2)$ with a functional $E$ given by the area plus a term that penalizes the distance to the space of half-volume cycles. Let $\hv = \frac{1}{2}\vol(M)$. Then, given $k\in \N$, define 
\[
E_k(T) = \M(T) + k\left(\vol(\Omega) - \hv\right)^2
\]
where $\Omega\in \C(M)$ satisfies $\bd \Omega = T$.  It is easy to see that $E_k(T)$ does not depend on the choice of $\Omega$.  Note that if $\Sigma = \bd \Omega$ is a smooth critical point of $E_k$ then $\Sigma$ has constant mean curvature 
\begin{equation}
\label{equation:eq2}
\vert H_\Sigma\vert  = 2 k \left\vert\vol(\Omega) - \hv\right\vert \le k \vol(M).  
\end{equation}
Our idea to prove Theorem \ref{main} is to first use min-max methods to construct a critical point of $E_k$ for each $k\in \N$, and then to take a limit of these critical points as $k \to \infty$.

\subsubsection{Min-Max Theory} 
Now assume $3\le n+1 \le 7$. Fix some $k\in \N$. We would like to use min-max theory to find a critical point of $E_k$. We employ Almgren's $\vz$ space $\vz(M,\Z_2)$ as discussed above. Note that the $E_k$ functional extends naturally to $\vz(M,\Z_2)$ and that
we can define the first variation $\delta E_k|_{(V,T)}$ at a point $(V,T)\in \vz(M,\Z_2)$. As a first step, we obtain a weak solution $(V,T)\in \vz(M,\Z_2)$ as a limit of a min-max sequence. We then design a suitable pull-tight argument which ensures that 
$
\delta E_k|_{(V,T)} = 0.
$
As suggested by (\ref{equation:eq2}), one can then verify that $V$ has $k\vol(M)$-bounded first variation. Thus working with $E_k$ has the major advantage that $V$ has a priori bounded first variation. 

The next step in the usual Almgren-Pitts construction is to use a combinatorial argument to show that some weak solution $V$ is almost minimizing in annuli for the area functional. This almost minimizing property is defined in terms of $(\eps,\delta)$-deformations which, roughly speaking, are deformations which decrease the area of a cycle by at least $\eps$ without ever increasing it by more than $\delta$.  Because the combinatorial argument involves simultaneously applying many different localized $(\eps,\delta)$-deformations to a cycle $T$, it is essential at this point that area functional is {\it local}. In other words, given an open set $U\subset M$ and two cycles $S,T \in \B(M,\Z_2)$ which agree outside of $U$, one has 
\[
\M(T) - \M(S) = \M(T\rest U) - \M(S\rest U). 
\]
This property is used to ensure that if a cycle $T$ is modified in several disjoint open sets, the total change in the mass of $T$ is equal to the sum of the local changes. 

The $E_k$ functional, on the other hand, is {\it non-local}. If a cycle $T = \bd \Omega$ is modified in several disjoint open sets, the individual modifications may interact in a complicated way through the volume term. To circumvent this, we observe that the $E_k$ functional still satisfies a certain quasi-locality property. Namely, if $T$ is modified in several disjoint open sets, and each modification individually changes area by much more than it changes volume, then the net change in $E_k$ resulting from performing all the modifications is approximately additive. Based on this, we define a define a new notion of almost-minimizing for the $E_k$ functional. Heuristically, in our  definition, we consider only those $(\eps,\delta)$-deformations which additionally change the area by much more than the volume at all times. 

This new almost minimizing property still suffices to construct replacements. The first step is to solve a suitable constrained minimization problem. Because $(\eps,\delta)$-deformations which do not change the volume are always admissible, solutions to the constrained minimization problem are smooth and volume preserving stable for the area functional. By a result of Bellettini-Chodosh-Wickramasekera \cite{bellettini2019curvature}, the volume preserving stability implies a curvature estimate. This is then enough to construct a replacement as a limit of solutions to the constrained minimization problem. The smooth part of the replacement is either a multiplicity one CMC with non-zero mean curvature, or a collection of minimal hypersurfaces with multiplicities. 
Finally, one uses replacements in overlapping annuli to prove the regularity of $(V,T)$. Here the crucial point is to show that replacements in overlapping annuli will either both be minimal, or both have the same non-zero constant mean curvature. Using this, we are able to show that the support of $T$ is a (possibly minimal) CMC bounding a region $\Omega$, and that the remainder of $\supp \|V\|$ consists of minimal hypersurfaces. 

In fact, our min-max idea can handle more general functions of volume with very little extra work. See Section \ref{section:preliminaries} for precise definitions. 

\begin{theorem}
\label{E-min-max}
Let $(M^{n+1},g)$ be a closed Riemannian manifold of dimension $3\le n+1\le 7$.  Let $f\colon [0,\vol(M)] \to \R$ be a smooth function satisfying 
\[
f\left(\hv-t\right) = f\left(\hv + t\right)
\]
for all $t\in [0,\hv]$ where $\hv = \frac{1}{2}\vol(M)$. Define $E\colon \B(M,\Z_2)\to \R$ by 
\[
E(T) = \M(T) + f(\vol(\Omega))
\]
where $\Omega$ satisfies $\bd \Omega = T$. Let $\Pi$ be the $X$-homotopy class of a map $\Phi_0\colon X\to (\B(M,\Z_2),\mathbf F)$ and assume that $L^E(\Pi) > 0$. Then there exists $(V,T)\in \vz(M,\Z_2)$ with $E(V,T) = L^E(\Pi)$. 
Choose a set $\Omega$ with $\bd \Omega = T$ and let $H = -f'(\vol(\Omega))$. 
\begin{itemize}
    \item[(i)] If $H \neq 0$, then there exists a smooth, almost-embedded, (not necessarily connected) closed CMC hypersurface $\Lambda = \bd \Omega$, which has mean curvature $H$ with respect to the normal pointing into $\Omega$. Moreover, there exist a (possibly empty) collection of closed embedded minimal hypersurfaces $\Sigma_1,\hdots,\Sigma_k$ and a collection of multiplicities $m_1,\hdots,m_k\in \N$ such that 
\[
V =  \vert \Lambda\vert + \sum_{i=1}^k m_i \vert \Sigma_i\vert.
\]
The hypersurfaces $\Lambda,\Sigma_1,\hdots,\Sigma_k$ are all disjoint.
\item[(ii)] If $H=0$, then there exists a collection of closed, embedded minimal hypersurfaces $\Lambda_1,\hdots,\Lambda_q$ such that $\bd \Omega = \Lambda_1\cup \hdots \cup \Lambda_q$. Moreover, there exist a (possibly empty) collection of closed, embedded minimal hypersurfaces $\Sigma_1,\hdots,\Sigma_k$ and a collection of multiplicities $\ell_1,\hdots, \ell_q$, $m_1,\hdots,m_k\in \N$ such that 
\[
V = \sum_{i=1}^q \ell_i\vert \Lambda_i\vert + \sum_{i=1}^k m_i \vert \Sigma_i\vert.
\]
The hypersurfaces $\Lambda_1,\hdots,\Lambda_q,\Sigma_1,\hdots,\Sigma_k$ are all disjoint.
\end{itemize}
\end{theorem}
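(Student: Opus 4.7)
The plan is to adapt the Almgren-Pitts min-max scheme, carried out on the space $\B(M,\Z_2)$ of boundaries equipped with the $\F$-topology, to the non-local energy $E$. Starting from a critical sequence $\{\Phi_i\}$ of discretized $X$-maps in $\Pi$ with $L^E(\{\Phi_i\}) = L^E(\Pi)$, I would first construct a pull-tight deformation of $\Pi$ relative to sublevel sets of $E$. At any non-critical point $(V,T)\in \vz(M,\Z_2)$ one has a vector field $X\in \mathfrak X(M)$ and $\eta>0$ such that the infinitesimal deformation generated by $X$ decreases $E$ by at least $\eta$; glueing finitely many such deformations via a partition of unity produces a continuous map of $\vz$ pulling the critical sequence toward the set of $E$-stationary elements. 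Since the first variation of $E$ at a smooth boundary $T = \bd\Omega$ is
\[
\delta E|_{(V,T)}(X) = \delta V(X) + f'(\vol(\Omega))\int_\Omega \div(X),
\]
vanishing of $\delta E|_{(V,T)}$ forces $V$ to have first variation bounded by a multiple of $|f'(\vol(\Omega))|$, giving $V$ a well-defined generalized mean curvature a priori.

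The main obstacle is the non-locality of $E$: modifying $T = \bd\Omega$ on several disjoint open sets $U_1,\dots,U_m$ changes $\vol(\Omega)$ by the sum of local volume changes, so the $f$-term is additive in volume but not close to additive in $E$, and the classical Pitts combinatorial argument for almost-minimizing in annuli fails verbatim. To circumvent this I would introduce a quasi-local notion of $(\eps,\delta)$-deformation: a deformation $\{T(t)\}_{t\in [0,1]}$ is admissible if in addition to the usual mass conditions $\M(T(1)) \leq \M(T)-\eps$ and $\M(T(t)) \leq \M(T)+\delta$, the volume variation $|\vol(\Omega(t)) - \vol(\Omega)|$ is bounded by a small multiple of the simultaneous mass variation, with constant depending on $\sup|f'|$ near $\vol(\Omega)$. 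Under this constraint the contribution of the $f$-term is controlled by the mass change, so disjointly supported quasi-local admissible modifications remain additive in $E$ up to negligible error. The Pitts-Marques-Neves tableau argument then yields a min-max sequence that is almost-minimizing in small annuli in this quasi-local sense.

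The replacement construction follows. Fix an annulus $A$ around a point $p$ and let $T_i$ be a minimizing sequence for $E$ among quasi-local $(\eps_i,\delta_i)$-admissible deformations of $T$ supported in $A$, with $\eps_i,\delta_i\to 0$. Because any deformation that preserves $\vol(\Omega)$ is automatically quasi-local admissible, each $T_i$ is smooth, almost-embedded, and volume-preserving stable for area in the interior of $A$; away from self-touching points its support has constant mean curvature $-f'(\vol(\Omega_i))$, bounded a priori by $\sup|f'|$. The Bellettini-Chodosh-Wickramasekera curvature estimate \cite{bellettini2019curvature} then yields uniform $C^2$ bounds in the interior of $A$, and passing to a subsequence produces a replacement $(V^*,T^*)$ whose smooth part is either a multiplicity-one CMC of non-zero mean curvature or a union of embedded minimal hypersurfaces with integer multiplicities, disjoint from any CMC component.

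The final step is a regularity argument via iterated replacements on overlapping annuli, which yields the decomposition stated in the theorem. The crucial rigidity is that the value $H = -f'(\vol(\Omega))$ is determined globally by the enclosed volume, and the symmetry hypothesis $f(\hv-t) = f(\hv+t)$ makes $E$ invariant under $\Omega \mapsto M\setminus\Omega$ so that this value is unambiguous up to sign; therefore two replacements over overlapping annuli must be either both minimal or both CMC with the same constant mean curvature, and they match smoothly across the overlap. This rules out higher multiplicity on the CMC component of $\supp\|V\|$ and shows that $\supp\|V\|$ decomposes as a single smooth almost-embedded CMC hypersurface $\bd\Omega$ of mean curvature $H$ together with a disjoint union of embedded minimal hypersurfaces with integer multiplicities, giving exactly conclusions (i) when $H\neq 0$ and (ii) when $H = 0$. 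The hardest step is the quasi-local almost-minimizing argument of the second paragraph: the thresholds defining admissibility must be chosen uniformly along the whole min-max sequence, and Pitts's discrete interpolation and diagonalization lemmas must all be verified to respect the quasi-locality constraint without degrading the uniform bounds.
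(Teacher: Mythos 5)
Your overall scheme coincides with the paper's: pull-tight in the $\vz$ space, an almost-minimizing notion restricted to deformations whose volume change is dominated by $1/(2a)$ times the mass decrease (so that $E$ becomes quasi-additive under disjointly supported modifications), replacements obtained from a constrained minimization problem whose solutions are volume-preserving stable, regularity of replacements via the Bellettini--Chodosh--Wickramasekera estimates, and a final regularity argument on overlapping annuli. Up to that point the proposal is essentially the paper's proof.

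The genuine gap is in your ``crucial rigidity'' step. You claim that successive replacements on overlapping annuli must be either both minimal or both CMC with the same constant, and you justify this by saying that $H=-f'(\vol(\Omega))$ is determined globally by the enclosed volume and that the symmetry of $f$ makes the value unambiguous. This argument does not apply to replacements: a replacement is a limit of solutions of a \emph{local} constrained minimization problem inside an annulus, and its constant mean curvature $h_0$ arises as a Lagrange-multiplier-type constant of that local problem; it is not stationary for $E$ on all of $M$, so nothing forces $h_0$ to equal $-f'(\vol(\Omega))$, and nothing in your argument prevents two overlapping annuli from producing different constants. The paper has to prove the matching by hand (Proposition \ref{prop:mean curvature match2}): one picks regular points of the two hypersurfaces, builds a vector field whose flow is volume preserving but strictly decreases area because the two constants differ, and derives a contradiction with the $(E,\mathcal F)$ almost-minimizing property of the second replacement. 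Moreover, even with that argument the dichotomy you assert is stronger than what is actually established: the matching statement only goes one way (a nonzero-mean-curvature replacement forces all subsequent replacements to have the same nonzero constant), and the paper explicitly notes it does not seem straightforward to show that a minimal replacement with multiplicity forces later replacements to be minimal. This is why the regularity proof (Proposition \ref{E-regularity}) must split into cases and, when a first minimal replacement is followed by a CMC one, take a \emph{third} replacement before gluing. As written, your final step therefore rests on an unjustified (and in part false) rigidity claim, and the case analysis needed to complete the regularity is missing.
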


\subsubsection{Regularization} At this point, we employ a prescribed mean curvature regularization to obtain improved regularity as in \cite{zhou2020multiplicity}. Again consider a fixed $k\in \N$. Given a smooth function $h\colon M\to \R$ and $\eps > 0$, define $F_{k,\eps}\f \C(M)\to \R$ by
\[
F_{k,\eps}(\Omega) = \M(\bd \Omega) - \eps \int_\Omega h + k\left(\vol(\Omega)-\hv\right)^2. 
\]
For a good choice of $h$, we use similar arguments to find critical points of $F_{k,\eps}$. Analogous to the PMC min-max theory \cite{zhou2020existence}, these critical points are smooth, almost-embedded multiplicity one hypersurfaces $\Sigma = \bd \Omega$ with mean curvature $H = \eps h + h_0$, where $h_0 = -2k(\vol(\Omega) - \hv)$ is a constant. 

As above, we can actually handle more general functionals with very little extra effort. See Section \ref{section:preliminaries} for precise definitions. In particular, see Definition \ref{property (T)} for the definition of property (T), which is a condition on the mean curvature of the level sets of $h$ that ensures unique continuation. It is similar to, but stronger than, condition ($\dagger$) in \cite{zhou2020existence}.
\begin{theorem}
\label{F-min-max} 
Let $(M^{n+1},g)$ be a closed Riemannian manifold of dimension $3\le n+1\le 7$. Let $f\colon [0,\vol(M)]\to\R$ be an arbitrary smooth function, and let $h\colon M\to \R$ be a smooth Morse function satisfying property $\operatorname{(T)}$; see Definition \ref{property (T)}. 
Define $F\colon \C(M)\to \R$ by 
\[
F(\Omega) = \M(\bd \Omega) - \int_\Omega h + f(\vol(\Omega)).
\]
Let $\Pi$ be the $(X,Z)$-homotopy class of a map $\Phi_0\colon X\to (\mathcal C(M),\mathbf F)$. Assume that 
\[
L^F(\Pi) > \sup_{z\in Z} F(\Phi_0(z)).
\]
Then there exists a smooth, almost-embedded hypersurface $\Sigma = \bd \Omega$ satisfying 
\[
L^F(\Pi) = F(\Omega).
\]
The mean curvature of $\Sigma$ is given by $H = h - f'(\vol(\Omega))$, and the touching set of $\Sigma$ is contained in a countable union of $(n-1)$-dimensional manifolds. 
\end{theorem}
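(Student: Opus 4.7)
The plan is to adapt the prescribed mean curvature (PMC) min-max framework of Zhou-Zhu \cite{zhou2020existence} to the non-local functional $F$, combining it with the quasi-locality idea used for Theorem \ref{E-min-max}. First I extend $F$ to $\vz(M,\Z_2)$ by setting $F(V,T) = \|V\|(M) - \int_\Omega h + f(\vol(\Omega))$, where $\bd\Omega = T$. Starting from a minimizing sequence $\{\Phi_i\} \subset \Pi$, the strict inequality $L^F(\Pi) > \sup_{z \in Z} F(\Phi_0(z))$ confines the min-max action to the interior of $X$, and a pull-tight based on a pseudo-gradient for
\[
\delta F|_{(V,T)}(X) = \delta V(X) - \int_{\bd\Omega} h\, \la X, \nu\ra + f'(\vol(\Omega)) \int_{\bd\Omega} \la X, \nu\ra
\]
produces a critical weak solution $(V, T) \in \vz(M, \Z_2)$ with $F(V, T) = L^F(\Pi)$ and $\delta F|_{(V,T)} = 0$. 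The critical point condition yields a uniform bound on the first variation $\|\delta V\|$ in terms of $\|h\|_{C^0} + \|f'\|_{C^0}$, which is what drives the regularity arguments to follow.

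Next I would prove that some weak solution is almost-minimizing in annuli. The obstacle is that $F$ is non-local through $f(\vol(\Omega))$, so disjoint local modifications of a cycle interact via the total volume. To circumvent this, I would work with a restricted class of $(\eps,\delta)$-deformations along which the change in the local part $\M(\bd\Omega) - \int_\Omega h$ quantitatively dominates the change in $\vol(\Omega)$ at every time; along such deformations $f(\vol(\Omega))$ contributes only a higher-order correction, and hence $F$ is approximately additive under disjoint modifications. This quasi-locality suffices to carry out Pitts's combinatorial argument and yields a weak solution $(V,T)$ that is almost-minimizing in every sufficiently small annulus.

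I would then construct replacements via the associated constrained minimization problem. Because admissible variations include volume-preserving ones, any minimizer is locally stable for $\Omega \mapsto \M(\bd\Omega) - \int_\Omega h$ under volume-preserving variations inside the annulus, and hence satisfies a prescribed mean curvature equation $H = h + h_0$, where $h_0 = -f'(v^*)$ is a constant determined by the minimizer's volume $v^*$. The PMC regularity theory of \cite{zhou2020existence} then applies to the shifted prescribing function $h - f'(v^*)$, which is still Morse and still satisfies property $\operatorname{(T)}$, and produces smooth, almost-embedded, multiplicity-one replacements with mean curvature $h - f'(v^*)$ and a touching set of dimension at most $n-1$.

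The final step is to iterate replacements in overlapping annuli, in the style of Pitts and Marques-Neves, to upgrade almost-minimizing to smoothness of $(V,T)$ everywhere. The main obstacle, and the reason for imposing property $\operatorname{(T)}$, is to ensure that replacements in overlapping annuli come from the same enclosed volume $v^*$ and hence satisfy the same PMC equation, so that they glue smoothly. Property $\operatorname{(T)}$ provides the unique continuation for the PMC equation that propagates both the mean curvature $H = h - f'(\vol(\Omega))$ and the enclosed volume globally. Once this is in place, standard gluing yields that $\bd\Omega$ is smooth and almost embedded with the asserted mean curvature and touching set bound, and that $F(\Omega) = L^F(\Pi)$.
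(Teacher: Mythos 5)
Your overall skeleton (pull-tight in an Almgren-type pair space, restricting to deformations along which $A^h=\M(\bd\Omega)-\int_\Omega h$ dominates the change in volume so that $F$ becomes quasi-local, Pitts's combinatorics, constrained minimization, replacements, gluing) is the same as the paper's, but the step where you identify the mean curvature of replacements is wrong, and this is precisely the crux of the argument. A solution of the constrained minimization problem in an annulus is only a \emph{volume-preserving} local minimizer of $A^h$ there, so its mean curvature is $h+h_0$ with $h_0$ an undetermined Lagrange multiplier; nothing at this stage identifies $h_0$ with $-f'(v^*)$, and in particular two replacements in overlapping annuli could a priori carry different constants even if they enclose the same volume. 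Moreover, property $\operatorname{(T)}$ cannot rescue this: it is a condition on the mean curvature of the level sets of $h$ whose only role is to ensure that the touching set of an almost-embedded $(h+h_0)$-PMC lies in a countable union of $(n-1)$-manifolds (Proposition \ref{prop:touching set}); it provides no ``unique continuation'' that propagates the enclosed volume or the constant across annuli. The paper closes this gap by a separate first-variation argument (Proposition \ref{prop:mean curvature match1}): if the constants of two successive replacements differed, one could build a vector field supported near one regular point of each hypersurface whose flow preserves volume to first order but strictly decreases $A^h$, and discretizing this flow contradicts the $(F,\mathcal F)$-almost-minimizing property of the second replacement. Only at the very end, after local regularity is established with some constant $h_0$, does stationarity of the limit pair for the full non-local $F$ identify the mean curvature as $h-f'(\vol(\Omega))$; the same stationarity is what rules out extra components of $V$ not appearing in $\bd\Omega$ (the constancy theorem gives $\vert\bd\Omega\vert=\sum a_i\vert\Gamma_i\vert$ with $a_i\in\{0,1\}$, and $a_i=0$ is excluded by a first-order deformation), which is needed for the energy identity $F(\Omega)=L^F(\Pi)$. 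Your proposal omits this last multiplicity-one/identification step entirely.

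A secondary but genuine issue is the choice of ambient space: you extend $F$ to $\vz(M,\Z_2)$, but $F$ is not well defined on cycles here, since $f$ is an arbitrary smooth function (no evenness assumed) and $\int_\Omega h$ also changes under $\Omega\mapsto M\setminus\Omega$; given $T=\bd\Omega$ the two admissible fillings generally give different values of $F$. Since the relative homotopy classes consist of maps into $\C(M)$, the correct limit space is Almgren's $\vc(M)$ of pairs $(V,\Omega)$, which is what the paper uses throughout Section \ref{S:relative min-max for F}; the $\vz$ space is reserved for the $E$ functional, where the evenness hypothesis \eqref{f is even} makes $E$ well defined on $\B(M,\Z_2)$.
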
 

\subsubsection{Half-Volume CMCs} 
Fix some $p\in \N$. Consider a sequence of half-volume $p$-sweepouts $\Phi_k:X_k \to \mathcal H(M,\Z_2)$ with 
\[
\lim_{k\to \infty}\left[ \sup_{x\in X_k} \M(\Phi_k(x))\right] \to \tilde \omega_p.
\]
Let $\Pi_k$ be the homotopy class of $\Phi_k$. Then 
\[
L^{E_k}(\Pi_k)\to \tilde \omega_p. 
\]
We can apply the min-max theory for $E_{k}$ in the homotopy class of $\Pi_k$. We then follow the ideas in \cite{zhou2020multiplicity}. After regularizing to $F_{k,\eps}$, we obtain smooth, multiplicity one hypersurfaces $\Sigma_{k,\eps} = \bd \Omega_{k,\eps}$ with 
\[
F_{k,\eps}(\Omega_{k,\eps}) \to L^{E_k}(\Pi_k), \quad\text{as } \eps \to 0. 
\]
Moreover, the hypersurfaces $\Sigma_{k,\eps}$ have mean curvature $H(\Sigma_{k,\eps}) = \eps h + h_{k,\eps}$ where $h_{k,\eps}$ is a constant.  

Sending $\eps \to 0$, we obtain weak convergence $\Sigma_{k,\eps}\to V_k$ as varifolds and $\Omega_{k,\eps}\to \Omega_k$ as Caccioppoli sets. In fact, for fixed $k$, the surfaces $\Sigma_{k,\eps}$ all satisfy a certain almost minimizing property, and this is enough to obtain the regularity of $V_k$. This fact has already been observed in \cite{li2023existence} and \cite{wang2023existence}. 
In our setting, there is a subtle extra challenge in this step which we address in Section \ref{S:compactness}. 
It follows that $V_k$ is induced by either 
\begin{itemize}
\item[(i)] an almost-embedded, multiplicity one CMC hypersurface $\Sigma_k = \bd \Omega_k$ with non-zero mean curvature, or
\item[(ii)] a collection of smooth, embedded minimal hypersurfaces with multiplicities. 
\end{itemize}
Moreover, in case (ii), some subcollection of the minimal hypersurfaces bounds $\Omega_k$. 
Note that we do not a priori rule out case (ii). The primary benefit of the regularization is that it rules out the possibility of obtaining a mixture of CMC components with non-zero mean curvature and minimal components.

Next, we can send $k\to \infty$, and obtain convergence $V_k\to V$ as varifolds and $\Omega_k\to \Omega$ as Caccioppoli sets. It is straightforward to check that $\|V\|(M) \le \tilde \omega_p$ and $\vol(\Omega) = \hv$. We would like to show that $V$ is induced by a multiplicity one CMC. Again, each $V_k$ satisfies a certain almost-minimizing property. However, unlike for $\Sigma_{k,\eps}$, the varifolds $V_k$ do not have a uniform a priori bound on their first variation. If we can show that the first variation stays bounded, then we can use the almost minimizing property as before to prove regularity of $V$. In this case, $V$ is induced by either 
\begin{itemize}
\item[(i)] an almost-embedded, multiplicity one CMC hypersurface $\Sigma = \bd \Omega$ with non-zero mean curvature, or
\item[(ii)] a collection of smooth, embedded minimal hypersurfaces with multiplicities. 
\end{itemize}
In the second case, some subcollection of the minimal hypersurfaces bounds $\Omega$. We are able to show that, for a generic metric $g$, no collection of minimal hypersurfaces bounds a region with volume equal to $\hv$. Thus, for generic metrics, the second possiblity cannot occur, and we obtain a multiplicity one CMC hypersurface $\Sigma = \bd \Omega$ enclosing half the volume of $M$ and satisfying $\area(\Sigma) \le \tilde \omega_p$.
 
To show that the mean curvature $H_k$ of $\Sigma_k = \supp \|V_k\|$ is uniformly bounded, we can argue as follows.   
Supposing instead that $H_k\to \infty$, we can rescale the surfaces $\Sigma_k$ to obtain new surfaces $\Sigma_k'$ with constant mean curvature 1. Moreover, we can use the almost minimizing property to ensure that the surfaces $\Sigma_k'$ are volume preserving stable on a large set. It is at that this point that we need to further restrict to dimensions $3\le n+1\le 5$.  In these dimensions, we can take advantage of the fact that stable 1-CMC surfaces are known to have diameter upper bounds by the work of Elbert-Nelli-Rosenberg \cite{elbert2007stable}. 
This implies that $\Sigma_k'$ resembles the union of many unit balls when $k$ is large. At the original scale, this means that $\Sigma_k$ is the union of many tiny components. However, this structure is incompatible with the half-volume constraint and the uniform area bound. 

To complete the proof, it remains to verify the energy identity $\area(\Sigma) = \tilde \omega_p$. Since we know $E_k(V_k,\bd \Omega_k)\to \tilde \omega_p$, it is equivalent to show that 
\[
k(\vol(\Omega_k)-\hv)^2 \to 0, \quad \text{as } k\to \infty. 
\]
In fact, this also follows from the fact that the mean curvature $H_k$ is uniformly bounded. Indeed, since $(V_k,\bd \Omega_k)$ is stationary for $E_k$, we have 
$
\vert H_k\vert = 2 k \vert \vol(\Omega_k)-\hv\vert. 
$
Hence
\[
k(\vol(\Omega_k)-\hv)^2 = \frac{\vert H_k\vert^2}{4 k},
\]
and this goes to 0 as $k\to \infty$, as needed.

\subsection{Outline of the paper}
We collect preliminary materials in Section \ref{section:preliminaries}. Section \ref{S:relative min-max for F} and Section \ref{S:absolute min-max for E} are respectively devoted to the min-max theory for the $F$ and $E$ functionals, and the proof of Theorem \ref{F-min-max} and Theorem \ref{E-min-max}. In Section \ref{S:compactness}, we prove several compactness results for min-max solutions associated with the $E$ and $F$ functionals. In Section \ref{S:constructiong half-volume CMCs}, we apply the min-max theory for $E$ and $F$ and associated compactness results to prove our main Theorem \ref{main} for half volume spectrum as well as Theorem \ref{positive-Ricci}. In Appendix \ref{h-generic}, we prove that condition (T) is $C^\infty$ generic among Morse functions, and in Appendix \ref{generic metrics}, we prove a collection of results about closed CMC hypersurfaces in a generic metric.

\subsection*{Acknowledgement} 
L.M. acknowledges the support of an AMS Simons Travel Grant. X.Z. acknowledges the support by NSF grant DMS-1945178, an Alfred P. Sloan Research Fellowship, and a grant from the Simons Foundation (1026523, XZ). X.Z. also thanks the support of the math department at Princeton University and the Institute of Advanced Study, where part of the work was carried out.

\section{Preliminaries} 
\label{section:preliminaries} 

In this section, we begin by reviewing some concepts from geometric measure theory needed in the paper. Then we introduce the functionals we will consider and related min-max notions. 

We will need the following concepts from geometric measure theory; see \cite{simon1983lectures}.  Let $(M^{n+1},g)$ be a closed Riemannian manifold. 
\begin{itemize}
\item Let $\C(M)$ denote the space of all Caccioppoli sets in $M$. 
\item Let $\mathcal V(M)$ denote the space of all $n$-dimensional varifolds on $M$. 
\item Let $\z(M,\Z_2)$ denote the space of $n$-dimensional flat cycles in $M$ mod 2. 
\item Given $\Omega \in \C(M)$, the notation $\bd \Omega$ denotes the element of $\z(M,\Z_2)$ induced by the boundary of $\Omega$. 
\item Given $T \in \z(M,\Z_2)$, the notation $\vert T\vert$ stands for the varifold induced by $T$. 
\item Let $\B(M,\Z_2)$ denote the space of all $T\in \z(M,\Z_2)$ such that $T = \bd \Omega$ for some $\Omega \in \C(M)$. 
\item Let $\h(M,\Z_2)$ denote the space of all $T\in \z(M,\Z_2)$ such that $T = \bd \Omega$ for some $\Omega \in \C(M)$ with $\vol(\Omega) = \frac 1 2 \vol(M)$. 
\item We use $\mathcal F$ to denote the flat topology, $\mathbf F$ to denote the $\mathbf F$-topology, and $\mathbf M$ to denote the mass topology.  By convention, the $\mathbf F$ topology on $\C(M)$ is 
\[
\mathbf F(\Omega_1,\Omega_2) = \mathcal F(\Omega_1,\Omega_2) + \mathbf F(\vert \bd \Omega_1\vert,\vert \bd \Omega_2\vert).
\]
\item Let $\vz(M,\Z_2)$ denote Almgren's $\vz$ space (see \cite{almgren1965theory}).
\item Let $\vc(M)$ denote Almgren's $\vc$ space (see \cite{almgren1965theory} and \cite{wang2023existence}).  
\end{itemize} 

Because the reader is likely less familiar with Almgren's $\vz$ \cite{almgren1965theory} and $\vc$ spaces \cite[Definition 1.3]{wang2023existence}, we include the relevant background below.  The set $\vz(M,\Z_2)$ consists of all pairs $(V,T) \in \mathcal V(M) \times \mathcal \B(M,\Z_2)$ such that there is a sequence $T_k \in \mathcal \B(M,\Z_2)$ with $\vert T_k\vert \to V \in  \V(M)$ and $T_k\to T \in \mathcal B(M,\Z_2)$.  Note that it may or may not be true that $V = \vert T \vert$, but it is always true that $\|\, \vert T\vert \, \|\leq \|V\|$ as measures.  We can endow $\vz(M, \Z_2)$ with the product metric, so that for any $(V, T), (V', T')\in \vz(M, \Z_2)$, the $\mathscr{F}$-distance between them is
\[ \mathscr{F}\big( (V, T), (V', T') \big) = \mathbf{F}(V, V') + \mathcal{F}(T, T'). \]
We will use $\vz(M, \mathscr{F}, \Z_2)$ if we wish to emphasize the metric $\mathscr{F}$.

Given $(V,T)\in \vz(M,\Z_2)$ and a $C^1$ diffeomorphism $\phi \f M\to M$, define the pushforward $\phi_\sharp(V,T) = (\phi_\sharp V, \phi_\sharp T)$. It is easy to check that $\phi_\sharp (V,T) \in \vz(M,\Z_2)$ and that the mapping $\phi_\sharp\f \vz(M,\Z_2)\to \vz(M,\Z_2)$ is continuous.  

\begin{prop}
Given any constant $L > 0$, the space
\begin{gather}\label{eq:Y bounded mass pairs}
    Y_{z, L} = \{(V,T)\in \vz(M,\Z_2): \|V\|(M) \le L\}
\end{gather}
is a compact metric space. 
\end{prop}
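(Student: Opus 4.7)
The plan is to show compactness of $Y_{z,L}$ by producing, for any sequence $(V_i, T_i) \in Y_{z,L}$, a subsequence that converges in the product metric $\mathscr{F} = \F + \fla$. The strategy is to extract limits in the varifold factor and the Caccioppoli/cycle factor separately using standard compactness theorems, and then to glue them together via a diagonal argument so that the limit remains in $\vz(M,\Z_2)$.

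For the varifold factor, the bound $\|V_i\|(M) \le L$ and compactness of mass-bounded subsets of $\V(M)$ in the $\F$-metric yield, after passing to a subsequence, some $V\in \V(M)$ with $V_i \to V$ in $\F$ and (by lower semicontinuity of mass) $\|V\|(M)\le L$. For the cycle factor, write $T_i = \bd \Omega_i$ with $\Omega_i \in \C(M)$. The inequality $\|\,\vert T_i\vert\,\| \le \|V_i\|$ recorded in the preliminaries gives $P(\Omega_i) = \M(T_i) \le L$, and together with $\vol(\Omega_i) \le \vol(M)$ this bounds $\mathbf{1}_{\Omega_i}$ uniformly in $BV$. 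BV-compactness then produces, after a further subsequence, some $\Omega \in \C(M)$ with $\mathbf{1}_{\Omega_i}\to \mathbf{1}_\Omega$ in $L^1$. Setting $T := \bd \Omega \in \B(M,\Z_2)$ one has $\fla(T_i, T) \le \lvert \Omega_i\triangle \Omega\rvert \to 0$.

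To see that $(V, T)$ actually lies in $\vz(M,\Z_2)$ I would use a diagonal argument. For each $i$ the definition of $\vz(M,\Z_2)$ supplies a sequence $T^{(i)}_k \in \B(M,\Z_2)$ with $\vert T^{(i)}_k\vert \to V_i$ in $\F$ and $T^{(i)}_k \to T_i$ in $\fla$ as $k\to \infty$. Pick $k_i$ so that $\F(\vert T^{(i)}_{k_i}\vert, V_i) < 1/i$ and $\fla(T^{(i)}_{k_i}, T_i) < 1/i$. Then $S_i := T^{(i)}_{k_i} \in \B(M,\Z_2)$ satisfies $\vert S_i\vert \to V$ in $\F$ and $S_i \to T$ in $\fla$, which exhibits $(V,T)\in \vz(M,\Z_2)$. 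Together with the mass bound from the first step, this gives $(V,T)\in Y_{z,L}$ and $\mathscr{F}((V_i,T_i),(V,T)) \to 0$ by construction. That $Y_{z,L}$ is a metric space is immediate from the fact that it inherits the product metric $\mathscr{F}$ from $\vz(M,\Z_2)$.

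The main (albeit mild) obstacle is ensuring that the flat limit of the boundaries $\bd \Omega_i$ is itself the boundary of a Caccioppoli set, so that the cycle limit lies in $\B(M,\Z_2)$. Rather than appealing to a separate closure statement for boundaries under flat convergence, I bypass this by going through BV-compactness of the sets $\Omega_i$ directly to produce $\Omega$, and then identifying $T = \bd \Omega$; the $\fla$-convergence $T_i\to T$ is then automatic from the $L^1$-convergence of the indicators.
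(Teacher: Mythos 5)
Your proposal is correct and follows essentially the same route as the paper: extract a convergent subsequence in each factor using the mass bound, then run a diagonal argument on the approximating sequences $T^{(i)}_k$ to verify that the limit pair stays in $\vz(M,\Z_2)$. The only difference is that you justify the cycle-factor limit lying in $\B(M,\Z_2)$ by passing through BV-compactness of the enclosed sets $\Omega_i$, which is merely a careful substantiation of the step the paper states directly (``$T_i \to T$ for some $T \in \B(M,\Z_2)$'').
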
 

\begin{proof}
The flat topology on $\mathcal B(M,\Z_2)$ is metrizable.  Moreover, the weak topology on the space $X = \{V\in \V(M): \|V\|(M)\le L\}$ is also metrizable.  Therefore $Y$ is a metric space. To see that $Y$ is compact, assume that $(V_i,T_i)$ is a sequence in $Y$.  Since $X$ is compact, by passing to a subsequence if necessary, we can assume that $V_i \to V$ for some $V \in X$.  Since $\M(T_i) \le L$ for all $i\in \N$, by passing to a further subsequence, we can assume that $T_i \to T$ for some $T \in \B(M,\Z_2)$. 
It remains to show that $(V,T)\in \vz(M,\Z_2)$.  For each $i\in \N$, there is a sequence $T_{i,k} \in \B(M,\Z_2)$ such that $T_{i,k} \to T_i$ and $\vert T_{i,k}\vert \to V_i$ as $k\to \infty$.  Since all spaces in question are metrizable, the diagonal sequence $T_{i,i}$ therefore satisfies $T_{i,i} \to T$ and $\vert \bd T_{i,i}\vert \to V$ as $i\to \infty$.  Thus $(V,T) \in \vz(M,\Z_2)$, and it follows that $Y$ is compact. 
\end{proof}

The VC space is entirely analogous but with $\mathcal B(M,\Z_2)$ replaced by $\C(M)$. The set $\vc(M)$ consists of all pairs $(V,\Omega) \in \mathcal V(M) \times \mathcal C(M)$ such that there is a sequence $\Omega_k \in \mathcal C(M)$ with $\vert \bd \Omega_k\vert \to V \in  \V(M)$ and $\Omega_k\to \Omega \in \mathcal C(M)$.    We similarly endow $\vc(M)$ with the product metric, so that for any $(V, \Omega), (V', \Omega')\in \vc(M)$, the $\mathscr{F}$-distance between them is
\[ \mathscr{F}\big( (V, \Omega), (V', \Omega') \big) = \mathbf{F}(V, V') + \mathcal{F}(\Omega, \Omega'). \]
Again, we will write $\vc(M, \mathscr{F})$ if we wish to emphasize the metric $\mathscr{F}$. Given $(V,\Omega)\in \vc(M)$ and a $C^1$ diffeomorphism $\phi \f M\to M$, define the pushforward $\phi_\sharp(V,\Omega) = (\phi_\sharp V, \phi_\sharp \Omega)$. It is easy to check that $\phi_\sharp (V,\Omega) \in \vc(M)$ and the mapping $\phi_\sharp\f \vc(M)\to \vc(M)$ is continuous.  

\begin{prop}
Given any constant $L > 0$, the space
\begin{gather}\label{eq:Y bounded mass pairs2}
    Y_{c, L} = \{(V,\Omega)\in \vc(M): \|V\|(M) \le L\}
\end{gather}
is a compact metric space. 
\end{prop}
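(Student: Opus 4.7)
The plan is to mimic the proof of the previous proposition for $\vz(M,\Z_2)$ almost verbatim, replacing the cycle space $\mathcal{B}(M,\Z_2)$ with the space $\mathcal{C}(M)$ of Caccioppoli sets. First I would observe that $Y_{c,L}$ is a metric space: the flat topology on $\mathcal{C}(M)$ is metrizable (e.g., via the $L^1$ distance on characteristic functions), the weak topology on the set $X=\{V\in\V(M):\|V\|(M)\le L\}$ is metrizable and compact, and the product metric $\mathscr{F}$ is already built into the definition.

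For compactness, I would take a sequence $(V_i,\Omega_i)\in Y_{c,L}$ and extract limits in each factor. From compactness of $X$ we obtain $V_i\to V$ for some $V\in X$ along a subsequence. To extract a limit of $\Omega_i$, I would first note that, by definition of $\vc(M)$, there exist $\Omega_{i,k}\to\Omega_i$ in $\mathcal{C}(M)$ with $|\bd\Omega_{i,k}|\to V_i$ in $\V(M)$; lower semicontinuity of mass then gives $\M(\bd\Omega_i)\le\|V_i\|(M)\le L$. Combined with the trivial bound $\vol(\Omega_i)\le\vol(M)$, the BV compactness theorem for Caccioppoli sets yields a further subsequence with $\Omega_i\to\Omega$ in the flat topology, for some $\Omega\in\mathcal{C}(M)$.

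It remains to show $(V,\Omega)\in\vc(M)$, and here I would use exactly the diagonal argument of the previous proof. Since every space in question is metrizable, one can choose, for each $i$, an index $k(i)$ large enough that both $\mathcal{F}(\Omega_{i,k(i)},\Omega_i)<1/i$ and $\mathbf{F}(|\bd\Omega_{i,k(i)}|,V_i)<1/i$. Combining with the already-established convergences $\Omega_i\to\Omega$ and $V_i\to V$, the diagonal sequence $\Omega_{i,k(i)}$ satisfies $\Omega_{i,k(i)}\to\Omega$ in $\mathcal{C}(M)$ and $|\bd\Omega_{i,k(i)}|\to V$ in $\V(M)$, giving $(V,\Omega)\in\vc(M)$ and completing the proof.

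I do not anticipate a substantive obstacle; the only point that requires a moment's thought is ensuring the perimeter bound $\M(\bd\Omega_i)\le L$ needed to invoke BV compactness for $\mathcal{C}(M)$, which follows from the inequality $\|\,|\bd\Omega|\,\|\le\|V\|$ that the paper has already noted for the $\vz$ setting and which transfers verbatim to $\vc$.
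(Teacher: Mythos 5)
Your proof is correct and follows essentially the same route as the paper, which proves the analogous statement for $\vz(M,\Z_2)$ in detail and leaves the $\vc(M)$ case to the same argument: compactness of the bounded-mass varifold factor, a perimeter bound from $\|\,\vert\bd\Omega_i\vert\,\|\le\|V_i\|$ giving flat (BV) compactness of the Caccioppoli sets, and the diagonal sequence to verify the limit pair lies in $\vc(M)$. Your explicit use of lower semicontinuity of mass to get $\M(\bd\Omega_i)\le L$ is exactly the point the paper's remark on $\vz$ is invoking, so there is no gap.
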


\subsection{The E Functional} 

Fix a closed, Riemannian manifold $M$. 

\begin{defn}
Let $\hv = \frac{1}{2}\vol(M)$. 
\end{defn}

Let $f\colon [0,\vol(M)]\to\R$ be a non-constant smooth function and assume that $f$ is ``even'' in the sense that 
\begin{equation}
\label{f is even}
f(\hv - t) = f(\hv + t)
\end{equation}
for all $t\in [0,\hv]$. Given $T\in \B(M,\Z_2)$, we write $f(\vol(T))$ to mean $f(\vol(\Omega))$ where $\Omega\in \C(M)$ satisfies $\bd \Omega = T$. This is well-defined since $\vert \vol(\Omega) - \hv\vert = \vert \vol(M\setminus \Omega)-\hv\vert$ and $f$ satisfies (\ref{f is even}). 

\begin{defn}
Define the function $E^f\colon \B(M,\Z_2)\to \R$ by 
$
E^f(T) = \M(T) + f(\vol(T)).
$
In the following, we will sometimes just write $E$ instead of $E^f$ if the choice of $f$ is clear. 
\end{defn}

Assume $\Sigma = \bd \Omega$ is a smooth hypersurface in $M$.  Let $\nu$ denote the inward pointing normal vector to $\Sigma$, and let $H$ be the mean curvature of $\Omega$ with respect to $\nu$.  Let $X$ be a $C^1$ vector field on $M$ and let $\phi_t$ be the associated flow.  

\begin{prop}
The first variation of $E$ is given by
\[
\delta E|_\Sigma(X) = \frac{d}{dt}\eval_{t=0} E(\phi_t(\Sigma)) = -\int_\Sigma H \la X,  \nu \ra - f'(\vol(\Omega)) \int_{\Sigma} \la X,\nu\ra.
\]
Thus $\Sigma$ is critical for $E$ if and only if $\Sigma$ has constant mean curvature $H = -f'(\vol(\Omega))$. 
\end{prop}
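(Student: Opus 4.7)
The plan is to split $E$ into its two summands and differentiate each along the flow $\phi_t$ at $t=0$, invoking the standard first variation of area together with the chain rule for the volume term.

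For the area piece, I would use the first variation formula for the mass functional on a smooth hypersurface: $\frac{d}{dt}|_{t=0} \M(\phi_t(\Sigma)) = \int_\Sigma \operatorname{div}_\Sigma(X)\, dA$. Writing $X = X^\top + \la X,\nu\ra \nu$ on $\Sigma$ and using $\operatorname{div}_\Sigma(X^\top) = \operatorname{div}_\Sigma(X^\top)$ (which integrates to zero on the closed hypersurface $\Sigma$ by the divergence theorem) together with the identity $\operatorname{div}_\Sigma(\nu) = -H$ (with the sign convention that $\vec H = H\nu$ for the inward normal $\nu$), one arrives at
\[
\frac{d}{dt}\eval_{t=0} \M(\phi_t(\Sigma)) = -\int_\Sigma H\, \la X,\nu\ra\, dA.
\]

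For the volume piece, I would apply the chain rule: $\frac{d}{dt}|_{t=0} f(\vol(\phi_t(\Omega))) = f'(\vol(\Omega)) \cdot \frac{d}{dt}|_{t=0} \vol(\phi_t(\Omega))$. The derivative of the enclosed volume along the flow is standard; since $\nu$ is the \emph{inward} normal, the outward normal is $-\nu$, so
\[
\frac{d}{dt}\eval_{t=0} \vol(\phi_t(\Omega)) = \int_\Sigma \la X, -\nu\ra\, dA = -\int_\Sigma \la X,\nu\ra\, dA.
\]
Adding the two contributions yields exactly the stated formula. For the criticality characterization, setting $\delta E|_\Sigma(X) = 0$ for all $X \in \mathfrak X(M)$ forces $H + f'(\vol(\Omega)) = 0$ pointwise on $\Sigma$ (by testing against vector fields of the form $X = \varphi \nu$ for arbitrary $\varphi \in C^1(\Sigma)$). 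Since $f'(\vol(\Omega))$ is a constant depending only on $\Omega$, this is equivalent to $H \equiv -f'(\vol(\Omega))$, i.e.\ $\Sigma$ has constant mean curvature with the prescribed value.

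There is no genuine obstacle here; the entire statement is a routine first variation computation combined with the chain rule. The only mild care needed is bookkeeping the sign convention for $\nu$ and $H$ consistently between the area and volume terms, which accounts for the two minus signs in the final expression.
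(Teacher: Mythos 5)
Your computation is correct and is exactly the standard first variation argument (area term via $\int_\Sigma \div_\Sigma X$ with the tangential part integrating away, volume term via the divergence theorem and the chain rule, with the inward-normal sign convention handled consistently); the paper states this proposition without proof, taking precisely this routine calculation for granted. The only blemish is the tautological line ``$\operatorname{div}_\Sigma(X^\top) = \operatorname{div}_\Sigma(X^\top)$,'' which is clearly just a typo and does not affect the argument.
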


Note that the $E$ functional extends to a functional on $\vz(M,\Z_2)$ by 
\[
E(V,T) = \|V\|(M) + f(\vol(T)).
\]
Moreover, if $T_i\in \z(M,\Z_2)$ is a sequence with $T_i\to T$ and $\vert T_i\vert \to V$ then $E(T_i) \to E(V,T)$.  It is also possible to define the first variation of $E$ on $\vz(M,\Z_2)$.

\begin{prop} 
The first variation of $E$ on $\vz(M,\Z_2)$ is given by 
\begin{align*}
\delta E|_{(V,T)}(X) &= \frac{d}{dt}\eval_{t=0} E((\phi_t)_\sharp (V,T)) \\ &= \delta V(X) + f'(\vol(\Omega)) \int_{\Omega}\div(X) \numberthis \label{eq:1st variation for E}
\end{align*}
where $\Omega \in \C(M)$ satisfies $\bd \Omega = T$. 
\end{prop}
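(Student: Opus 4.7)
The plan is to differentiate the two pieces of $E((\phi_t)_\sharp(V,T)) = \|(\phi_t)_\sharp V\|(M) + f(\vol((\phi_t)_\sharp T))$ separately and add them. The varifold piece is standard: by definition, $\frac{d}{dt}|_{t=0}\|(\phi_t)_\sharp V\|(M) = \delta V(X)$. So the content is entirely in the volume term.

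For the volume term, I would first argue that on $\B(M,\Z_2)$ the pushforward commutes with taking a Caccioppoli primitive: if $\bd \Omega = T$, then $(\phi_t)_\sharp T = \bd(\phi_t(\Omega))$, so $f(\vol((\phi_t)_\sharp T)) = f(\vol(\phi_t(\Omega)))$ by the definition of $f$ on $\B(M,\Z_2)$. Then by a change of variables and the classical identity $\frac{d}{dt}|_{t=0}\det(D\phi_t) = \div(X)$, one has
\[
\frac{d}{dt}\eval_{t=0}\vol(\phi_t(\Omega)) = \frac{d}{dt}\eval_{t=0}\int_\Omega \det(D\phi_t)\, dV_g = \int_\Omega \div(X).
\]
Applying the chain rule with $f \in C^\infty$ then gives $\frac{d}{dt}|_{t=0} f(\vol(\phi_t(\Omega))) = f'(\vol(\Omega))\int_\Omega \div(X)$, and summing with $\delta V(X)$ yields the claimed formula.

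Finally, one needs to verify that the right hand side is independent of the choice of primitive $\Omega$, since only $T = \bd \Omega$ is determined. Replacing $\Omega$ by $\Omega' = M\setminus \Omega$, the evenness assumption $f(\hv - t) = f(\hv + t)$ implies $f'$ satisfies the antisymmetry $f'(\vol(M) - s) = -f'(s)$, while the divergence theorem on the closed manifold $M$ gives $\int_M \div(X) = 0$, hence $\int_{\Omega'}\div(X) = -\int_\Omega\div(X)$. The two sign changes cancel, so the expression $f'(\vol(\Omega))\int_\Omega \div(X)$ is well-defined.

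There is no real obstacle here; the proposition is essentially a chain-rule computation. The only mild subtlety is the well-definedness check just above, and the observation that the passage from $(\phi_t)_\sharp T$ to $\phi_t(\Omega)$ uses that $\phi_t$ is a $C^1$ diffeomorphism so that $\phi_t(\Omega)$ remains Caccioppoli with $\bd \phi_t(\Omega) = (\phi_t)_\sharp T$.
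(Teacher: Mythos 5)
Your computation is correct, and it is exactly the routine first-variation argument the paper has in mind: the paper states this proposition without proof, treating it as standard, and your decomposition into the varifold mass term (giving $\delta V(X)$ by definition) plus the chain-rule/change-of-variables computation for $f(\vol(\phi_t(\Omega)))$ is the expected justification. The well-definedness check under $\Omega \mapsto M\setminus\Omega$, using $f'(\vol(M)-s)=-f'(s)$ from the evenness of $f$ together with $\int_M \div(X)=0$, is a worthwhile addition that the paper leaves implicit.
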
 
A pair $(V,T)\in \vz(M,\Z_2)$ is called stationary for $E$ if $\delta E|_{(V,T)}(X) = 0$ for all $C^1$ vector fields $X$ on $M$. 

\begin{prop}
\label{prop:continuity of first variation of E}
Fix a $C^1$ vector field $X$ on $M$. Assume that $(V_i,T_i) \to (V,T) \in \vz(M,\Z_2)$. Then 
$
\delta E_{(V_i,T_i)}(X) \to \delta E_{(V,T)}(X). 
$
\end{prop}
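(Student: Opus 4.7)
The plan is to split the first variation formula \eqref{eq:1st variation for E} into its varifold and volume contributions and treat each separately. The varifold part is standard: the $\mathbf{F}$-convergence $V_i \to V$ implies weak varifold convergence, and since
\[
\delta V(X) = \int_{G_n(M)} \div_S X(x)\, dV(x, S)
\]
with $\div_S X$ a bounded continuous function on the Grassmann bundle over $M$ (using $X \in C^1$ and $M$ closed), one immediately obtains $\delta V_i(X) \to \delta V(X)$.

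The subtlety lies in the volume term, where the Caccioppoli set $\Omega_i$ with $\bd \Omega_i = T_i$ is determined only up to complement. The first observation is that the product $f'(\vol(\Omega_i)) \int_{\Omega_i} \div(X)$ is invariant under swapping $\Omega_i \leftrightarrow M \setminus \Omega_i$: differentiating the evenness identity $f(\hv - t) = f(\hv + t)$ yields $f'(\vol(M \setminus \Omega_i)) = -f'(\vol(\Omega_i))$, while the divergence theorem on the closed manifold $M$ gives $\int_{M \setminus \Omega_i} \div(X) = -\int_{\Omega_i} \div(X)$, so the two sign flips cancel. Hence I am free to choose $\Omega_i$ for each $i$.

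Next, I would choose $\Omega_i$ so that $\mathbbm{1}_{\Omega_i} \to \mathbbm{1}_\Omega$ in $L^1(M)$, where $\bd \Omega = T$. Convergence in $\vz(M, \Z_2)$ gives a uniform bound on $\M(T_i) \leq \|V_i\|(M)$, so by BV compactness a subsequence of a choice of $\Omega_i$ converges in $L^1$ to some $\Omega' \in \C(M)$; lower semicontinuity of perimeter together with the estimate $\mathcal{F}(\bd\Omega_i, \bd\Omega') \leq \vol(\Omega_i \sd \Omega')$ shows $\bd\Omega' = T$. On the closed manifold $M$, this forces $\Omega' \in \{\Omega, M \setminus \Omega\}$, and after possibly swapping complements we obtain $\Omega_i \to \Omega$ in $L^1$ along the subsequence. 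Consequently $\vol(\Omega_i) \to \vol(\Omega)$ and hence $f'(\vol(\Omega_i)) \to f'(\vol(\Omega))$ by continuity of $f'$, while the boundedness of $\div(X)$ gives $\int_{\Omega_i} \div(X) \to \int_\Omega \div(X)$. This establishes the convergence along the subsequence, and the standard argument (every subsequence admits a further subsequence with the same limit, which is uniquely determined) upgrades it to the full sequence. The only genuinely non-routine point is the complement ambiguity of $\Omega_i$, which the evenness of $f$ eliminates.
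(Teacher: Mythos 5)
Your proof is correct and follows essentially the same route as the paper: split the first variation into the varifold term and the volume term, choose representatives $\Omega_i \to \Omega$ as Caccioppoli sets, and pass to the limit in each term. The only difference is that you spell out what the paper simply asserts — namely that such a choice of $\Omega_i$ is possible (via BV compactness and the complement ambiguity, resolved by the evenness of $f$) — which is a faithful justification of the same step rather than a different argument.
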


\begin{proof}
Let $(V_i,T_i)$ and $(V,T)$ be as in the statement of the theorem.  It is possible to choose Caccioppoli sets $\Omega_i$ and $\Omega$ so that $\bd \Omega_i = T_i$ and $\bd \Omega = T$ and $\Omega_i \to \Omega$ as Caccioppoli sets. Since $X$ is fixed, we have 
\[
\int_{G_n(M)} \div_P X(x)\, dV_i(x,P)  \to \int_{G_n(M)} \div_P X(x)\, dV(x,P) 
\]
as $i \to \infty$.  
Again since $X$ is fixed we have $\vol(\Omega_i) \to \vol(\Omega)$ and 
\[
\int_{\Omega_i} \div(X) \to \int_{\Omega} \div(X). 
\]
The result now follows from the first variation formula. 
\end{proof}

\begin{corollary}
The set $\{(V,T)\in \vz(M,\Z_2):\, (V,T) \text{ is stationary for } E\}$ is closed as a subset of $\vz(M,\Z_2)$. 
\end{corollary}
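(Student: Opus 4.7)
The plan is to prove sequential closedness directly from Proposition \ref{prop:continuity of first variation of E}, exploiting the fact that the ambient space $\vz(M,\Z_2)$ is metrizable (as noted in the discussion preceding the statement) so sequential closedness is equivalent to topological closedness.

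First I would fix a sequence $(V_i,T_i)\in \vz(M,\Z_2)$ with each $(V_i,T_i)$ stationary for $E$, and assume $(V_i,T_i)\to (V,T)$ in the $\mathscr F$-metric. The goal is to show $\delta E|_{(V,T)}(X)=0$ for every $C^1$ vector field $X$ on $M$. For any such fixed $X$, stationarity of $(V_i,T_i)$ gives $\delta E|_{(V_i,T_i)}(X)=0$ for all $i$. Proposition \ref{prop:continuity of first variation of E} yields $\delta E|_{(V_i,T_i)}(X)\to \delta E|_{(V,T)}(X)$, so the limit must equal $0$. Since $X\in \mathfrak X(M)$ was arbitrary, $(V,T)$ is stationary, proving that the stationary set is closed.

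There is no serious obstacle here — the entire corollary is a routine application of the previous proposition. The only mild subtlety worth flagging is that the first variation depends on a choice of Caccioppoli set $\Omega$ with $\bd\Omega=T$, but by the symmetry condition \eqref{f is even} on $f$ this choice does not affect the value of $f'(\vol(\Omega))\int_\Omega \div(X)$ (the two possible choices differ by a sign in both $f'(\vol(\Omega))$ and $\int_\Omega\div(X)$, which cancel), so $\delta E|_{(V,T)}(X)$ is well-defined and the argument goes through without further care. Thus the proof proposal is essentially a one-line deduction: pass $\delta E|_{(V_i,T_i)}(X)=0$ to the limit using continuity of the first variation in $(V,T)$.
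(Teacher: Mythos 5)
Your proof is correct and is exactly the intended deduction: the paper leaves the corollary unproved because it follows immediately from Proposition \ref{prop:continuity of first variation of E} by passing $\delta E|_{(V_i,T_i)}(X)=0$ to the limit for each fixed $X$, which is precisely what you do. Your side remark on the well-definedness of the first variation (the sign cancellation between $f'(\vol(\Omega))$ and $\int_\Omega \div(X)$ when replacing $\Omega$ by $M\setminus\Omega$, using $\int_M \div(X)=0$) is also consistent with how the paper handles the choice of $\Omega$.
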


\subsection{The F Functional} 
\label{the F functional}

Fix a smooth, non-constant function $f\colon [0,\vol(M)]\to \R$. 
Also fix a smooth Morse function $h\colon M\to \R$.  

\begin{defn} Given a regular point $x\in M$ for $h$, let $\Gamma(x)$ be the level set of $h$ passing through $x$. Then define $v(h,x)$ to be the vanishing order at $x$ of the mean curvature $H_{\Gamma(x)}$, regarded as a function on $\Gamma(x)$.   
\end{defn} 

\begin{defn}
\label{property (T)} Let $h\colon M\to \R$ be a smooth Morse function. We say that $h$ satisfies property (T) provided 
\begin{itemize}
\item[(T)] For every regular point $x$ of $h$, we have $v(h,x) < \infty$. 
\end{itemize}
\end{defn}
In the following, fix a smooth Morse function $h\colon M\to \R$ satisfying property (T). In Appendix \ref{h-generic}, we show that the set of smooth Morse functions satisfying property (T) is dense in $C^\infty(M)$.

\begin{defn} Define the functional $A^h\colon \C(M) \to \R$ by 
\[
A^h(\Omega) = \M(\bd \Omega) - \int_\Omega h. 
\]
Then define the functional $F^{h,f}\f \C(M) \to \R$ by 
\[
F^{h,f}(\Omega) = A^h(\Omega) + f(\vol(\Omega)). 
\]
In the following, we will sometimes just write $F$ instead of $F^{h,f}$ if the choice of $h$ and $f$ is clear. 

\end{defn}

Assume $\Sigma = \bd \Omega$ is a smooth hypersurface in $M$.  Let $\nu$ denote the inward pointing normal vector to $\Sigma$, and let $H$ be the mean curvature of $\Sigma$ with respect to $\nu$.  Let $X$ be a $C^1$ vector field on $M$ and let $\phi_t$ be the associated flow.  

\begin{prop}
The first variation of $F$ is given by
\[
\delta F|_\Omega(X) = \frac{d}{dt}\eval_{t=0} F(\phi_t(\Omega)) = \int_\Sigma \big(h-H - f'(\vol(\Omega)\big) \la X, \nu \ra. 
\]
Thus $\Omega$ is a critical point for $F$ if and only if $\Sigma$ has constant mean curvature given by $H = h - f'(\vol(\Omega))$. 
\end{prop}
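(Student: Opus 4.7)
The plan is to linearize each of the three terms in $F(\Omega) = \M(\bd \Omega) - \int_\Omega h + f(\vol(\Omega))$ separately and then sum. This is a routine first-variation calculation; the only thing that actually requires care is the sign convention, since $\nu$ is declared to be the \emph{inward}-pointing unit normal (so for any $X$, the boundary flux $\int_\Sigma \la X,\nu\ra$ comes with a minus sign relative to the usual divergence-theorem formulas).

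First, I would apply the classical first variation of the perimeter (see, e.g., Simon \cite{simon1983lectures}): since $\Sigma = \bd \Omega$ is smooth, and $\vec H = H \nu$ for $\nu$ inward,
\[
\frac{d}{dt}\eval_{t=0} \M(\bd \phi_t(\Omega)) \;=\; \int_\Sigma \div_\Sigma X \;=\; -\int_\Sigma H\,\la X, \nu \ra.
\]
Second, for the potential term I would use the fact that $\frac{d}{dt}\eval_{t=0}\vol(\phi_t(\Omega)) = -\int_\Sigma \la X,\nu\ra$ (the sign again coming from $\nu$ being inward), and obtain by a direct differentiation or change of variables
\[
\frac{d}{dt}\eval_{t=0} \int_{\phi_t(\Omega)} h \;=\; -\int_\Sigma h\,\la X,\nu\ra,
\]
so that the contribution of $-\int_\Omega h$ to $\delta F|_\Omega(X)$ is $+\int_\Sigma h\,\la X,\nu\ra$. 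Third, the chain rule yields
\[
\frac{d}{dt}\eval_{t=0} f(\vol(\phi_t(\Omega))) \;=\; f'(\vol(\Omega))\cdot\Bigl(-\int_\Sigma \la X,\nu\ra\Bigr).
\]
Adding the three contributions gives exactly the claimed formula for $\delta F|_\Omega(X)$.

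For the critical point characterization, if $H = h - f'(\vol(\Omega))$ pointwise on $\Sigma$ then the integrand vanishes for every $X$, so $\Omega$ is critical. Conversely, suppose $\delta F|_\Omega(X) = 0$ for all $C^1$ vector fields $X$. Since $\Sigma$ is smooth and the coefficient $h - H - f'(\vol(\Omega))$ is continuous on $\Sigma$, applying the fundamental lemma of the calculus of variations to vector fields $X$ whose normal component $\la X,\nu\ra$ is a bump localized near an arbitrary point $p\in\Sigma$ forces $H(p) = h(p) - f'(\vol(\Omega))$. Since $f'(\vol(\Omega))$ is a constant depending only on $\Omega$, this says precisely that $H - h$ is constant on $\Sigma$, establishing the stated CMC-type Euler--Lagrange equation.

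I do not anticipate any substantive obstacle in this step; the entire proof is a bookkeeping exercise on top of standard first-variation identities for area and for the enclosed volume, with the sole caveat of the inward-normal sign convention.
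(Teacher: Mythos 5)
Your computation is correct, and the sign bookkeeping with the inward normal $\nu$ is handled consistently with the paper's conventions (it agrees with the analogous stated formulas $\delta E|_\Sigma(X) = -\int_\Sigma H\la X,\nu\ra - f'(\vol(\Omega))\int_\Sigma \la X,\nu\ra$ and $\delta F|_{(V,\Omega)}(X)$ in Section \ref{section:preliminaries}). The paper states this proposition without proof as a standard first-variation identity, and your argument is exactly the routine term-by-term linearization plus the fundamental lemma that such a proof would consist of.
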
 

Property (T) is used to control the touching set of almost-embedded hypersurfaces with mean curvature $h + h_0$ for some constant $h_0$. 

\begin{prop}\label{prop:touching set}
    Assume that $h$ satisfies property $\operatorname{(T)}$. Let $\Sigma$ be an almost-embedded hypersurface with mean curvature $H = h + h_0$ for some constant $h_0$. Then the touching set $S(\Sigma)$ is contained in a countable union of $(n-1)$-dimensional manifolds. 
\end{prop}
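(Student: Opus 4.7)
The plan is to argue locally at each touching point $p \in S(\Sigma)$, derive a linear elliptic equation for the difference of the two tangent sheets, and use property~(T) to bound the vanishing order of this difference so that standard nodal-set structure theorems apply.

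Near $p$, the almost-embedded hypersurface $\Sigma$ consists of two smooth sheets sharing the common tangent plane $T_p \Sigma$. I would work in normal coordinates so that $T_p \Sigma$ is horizontal and write the two sheets as graphs $u_1 \geq u_2$ over a small disc, with $u_i(0) = 0$ and $\nabla u_i(0) = 0$. Since $\Sigma = \bd \Omega$ and the two sheets lie on opposite sides of their common tangent plane, their outward unit normals at $p$ must point in opposite directions: otherwise both graphs would satisfy the same quasilinear mean curvature equation with the same orientation, and the strong maximum principle would force $u_1 \equiv u_2$. Writing both mean curvature equations with a common upward orientation therefore produces opposite sign right-hand sides $\pm(h + h_0)$. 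Subtracting and setting $w = u_1 - u_2 \geq 0$ yields
\[
L w \;=\; 2\bigl(h(x,0) + h_0\bigr) + O\bigl(|w| + |\nabla w|\bigr),
\]
where $L$ is a second-order linear uniformly elliptic operator with smooth coefficients coming from the linearization of the mean curvature operator. Note that $w \not\equiv 0$ in any neighborhood of $0$, for otherwise the two sheets would coincide and $p$ would not be a genuine touching point.

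The heart of the argument is to show that $w$ vanishes to finite order at every point of its zero set $N = \{w = 0\}$. Suppose for contradiction that $w$ vanishes to infinite order at some $x_0 \in N$. Matching Taylor coefficients on both sides of the PDE forces the inhomogeneous term $h + h_0$, viewed along the graph of $u_2$, to vanish to infinite order at the corresponding point $p_0 \in \Sigma \subset M$. Since $h$ is Morse, $p_0$ is either a regular or a non-degenerate critical point of $h$. In the regular case, the level set $\Gamma(p_0) = \{h = h(p_0)\}$ is a smooth hypersurface through $p_0$, and infinite-order vanishing of $h + h_0$ along $\Sigma$ forces $\Sigma$ to osculate $\Gamma(p_0)$ to infinite order at $p_0$; a direct computation using that the mean curvature of $\Sigma$ equals $h + h_0$ then shows that the mean curvature $H_{\Gamma(p_0)}$ of the level set must itself vanish to infinite order at $p_0$, contradicting property~(T). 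The critical-point case is easier: critical points of the Morse function $h$ form a discrete set, so they contribute at most finitely many isolated touching points, which are already $(n-1)$-negligible.

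Having established finite-order vanishing of $w$ everywhere on $N$, I would invoke the standard structure theorem of Caffarelli--Friedman (or Han--Lin) for the nodal sets of solutions to linear elliptic equations: near any point where $w$ vanishes to finite order $k$, the zero set is contained in the zero set of a non-trivial homogeneous polynomial of degree $\leq k$, hence in a finite union of $C^1$ submanifolds of dimension $n-1$. Covering $S(\Sigma)$ by countably many such coordinate neighborhoods then produces the required countable union of $(n-1)$-dimensional manifolds. The main technical obstacle will be translating infinite-order vanishing of $w$ into a violation of property~(T), which requires a careful comparison between the Taylor expansion of $\Sigma$ and the level set of $h$ at $p_0$ and a verification that the induced order of vanishing of $H_{\Gamma(p_0)}$ at $p_0$ is genuinely infinite.
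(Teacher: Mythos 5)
Your proposal is correct in substance, and it is essentially the argument that underlies the paper's proof: the paper handles the local statement near a regular point of $h$ by citing the proof of \cite[Theorem 3.11]{zhou2020existence} (condition (T) was designed precisely so that argument goes through with $H=h+h_0$), and then finishes with a covering argument over $M$ minus the finite critical set of the Morse function $h$. Your write-up simply re-derives that cited local result: two graphical sheets $u_1\ge u_2$ with opposite orientations, the difference $w=u_1-u_2$ solving a linear elliptic equation whose inhomogeneity is the sum of the prescribed-curvature terms along the two graphs, infinite-order vanishing of $w$ forcing infinite-order vanishing of $(h+h_0)|_\Sigma$, hence infinite-order vanishing of $H_{\Gamma(p_0)}$ at $p_0$, contradicting (T); critical points of $h$ are finitely many and harmless. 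That is the same route, just unpacked rather than quoted.

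Two steps should be stated more carefully. First, the equation is cleaner kept as $\mathcal M(u_1)-\mathcal M(u_2)=(h+h_0)(x,u_1)+(h+h_0)(x,u_2)$, i.e.\ $Lw$ with smooth coefficients equals the sum of the curvature terms evaluated \emph{along the graphs}; if you freeze the right side at height $0$ the error is $O(|u_1|+|u_2|)$, which is not controlled by $|w|+|\nabla w|$ (your later reasoning ``along the graph of $u_2$'' is the correct version, so this is only a slip in the displayed formula). Second, the appeal to Caffarelli--Friedman/Han--Lin is mis-stated: the zero set of $w$ near a finite-order zero is \emph{not} contained in the zero set of its leading homogeneous polynomial. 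What you actually need is weaker and elementary: since $w$ is smooth, $w\ge 0$, and every touching point is a zero of $w$ and of $\nabla w$ with finite vanishing order, stratify the touching set by the largest $j$ with all derivatives of order $\le j$ vanishing; at a point of the $j$-th stratum there is $|\alpha|=j$ with $\partial^\alpha w=0$ and $\nabla\partial^\alpha w\neq 0$, so near that point the stratum lies in the smooth $(n-1)$-manifold $\{\partial^\alpha w=0\}$, and a countable cover over the countably many pairs $(j,\alpha)$ gives the claim. With that fix, together with your final covering of $\mathcal S(\Sigma)$ by countably many such charts (plus the finitely many touching points at critical points of $h$), the proof is complete and matches the paper's.
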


\begin{proof}
    For every regular point $x\in M$ for $h$, the proof of \cite[Theorem 3.11]{zhou2020existence} shows that there is some $r > 0$ for which $\mathcal S(\Sigma) \cap B_r(x)$ is contained in a countable union of $(n-1)$-dimensional manifolds. Let $C$ be the finite set of all critical points of $h$. Let $G_n$ be a sequence of closed subsets of $M\setminus C$ with $\cup_{n=1}^\infty G_n = M\setminus C$. Then each $G_n$ is covered by finitely many balls $B$ with the property that $\mathcal S(\Sigma)\cap B$ is contained in a countable union of $(n-1)$-dimensional submanifolds. Taking the union of these balls over all $n$, it follows that $\mathcal S(\Sigma)\cap (M\setminus C)$ is contained in a countable union of $(n-1)$-dimensional manifolds. Of course, the finite set $C$ is also contained in a finite union of $(n-1)$-dimensional manifolds, and the result follows. 
\end{proof}

Note that the $F$ functional extends to a functional on $\vc(M)$ by 
\[
F(V,\Omega) = \|V\|(M) - \int_\Omega h + f(\vol(\Omega)).
\]
Moreover, if $\Omega_i\in \C(M)$ is a sequence with $\Omega_i\to \Omega$ and $\vert \bd\Omega_i\vert \to V$ then $F(\Omega_i) \to F(V,\Omega)$.  It is also possible to define the first variation of $F$ on $\vc(M)$.

\begin{prop}
The first variation of $F$ on $\vc(M)$ is given by 
\begin{align*}
\delta F|_{(V,\Omega)}(X) &= \frac{d}{dt}\eval_{t=0} F((\phi_t)_\sharp (V,\Omega)) \\ &= \delta V(X) - \int_\Omega \div(hX) + f'(\vol(\Omega)) \int_{\Omega}\div(X). \numberthis \label{eq:1st variation for F}
\end{align*}
\end{prop}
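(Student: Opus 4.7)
The plan is to differentiate each of the three summands in the expression
\[
F((\phi_t)_\sharp(V,\Omega)) = \|(\phi_t)_\sharp V\|(M) - \int_{\phi_t(\Omega)} h + f(\vol(\phi_t(\Omega)))
\]
separately at $t=0$, and then add the contributions. The additivity of the derivative reduces the problem to three familiar first variation computations, one for each term in the definition of $F$.

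For the varifold term, $\frac{d}{dt}\bigr|_{t=0}\|(\phi_t)_\sharp V\|(M) = \delta V(X)$ is the standard definition of the first variation of a varifold along the flow of $X$, so nothing needs to be done here beyond invoking it. For the $h$-integral term, I would use the change of variables formula to pull the integral back to $\Omega$, writing $\int_{\phi_t(\Omega)} h = \int_\Omega (h\circ \phi_t)\, J\phi_t$, where $J\phi_t$ is the Jacobian determinant of $\phi_t$. Differentiating at $t=0$ and using the well-known identities $\frac{d}{dt}\bigr|_{t=0}(h\circ\phi_t) = \langle \nabla h, X\rangle$ and $\frac{d}{dt}\bigr|_{t=0} J\phi_t = \div(X)$ gives
\[
\frac{d}{dt}\biggr|_{t=0}\int_{\phi_t(\Omega)} h = \int_\Omega \bigl(\langle \nabla h, X\rangle + h\,\div(X)\bigr) = \int_\Omega \div(hX).
\]

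For the volume term, the same change of variables formula applied to the constant function $1$ yields $\frac{d}{dt}\bigr|_{t=0}\vol(\phi_t(\Omega)) = \int_\Omega \div(X)$, and then the chain rule gives $\frac{d}{dt}\bigr|_{t=0} f(\vol(\phi_t(\Omega))) = f'(\vol(\Omega))\int_\Omega \div(X)$. Combining the three contributions with the appropriate signs from the definition of $F$ produces the claimed formula.

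There is really no serious obstacle: the identity is an immediate consequence of three elementary first variation formulas applied termwise. The only point that requires a little care is ensuring that the derivatives commute with the $\vc$-sum structure, but this is automatic since the evaluation $F(V,\Omega) = \|V\|(M) - \int_\Omega h + f(\vol(\Omega))$ is linear in its dependence on $V$ and depends on $\Omega$ only through integrals that behave well under the smooth flow $\phi_t$, and the pushforward $(\phi_t)_\sharp$ acts independently on the two components of the pair $(V,\Omega)$.
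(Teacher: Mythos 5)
Your termwise computation is correct, and it is exactly the routine first variation argument that the paper implicitly relies on here: the paper states this proposition without proof, treating the varifold first variation, the change-of-variables identity $\frac{d}{dt}\big\vert_{t=0}\int_{\phi_t(\Omega)}h=\int_\Omega \div(hX)$, and the chain rule for the volume term as standard. Nothing is missing.
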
 
A pair $(V,\Omega)\in \vc(M)$ is called stationary for $F$ if $\delta F_{(V,\Omega)}(X) = 0$ for all $C^1$ vector fields $X$ on $M$. 

\begin{prop}
\label{prop:continuity of first variation of F}
Fix a $C^1$ vector field $X$ on $M$. Assume that $(V_i,\Omega_i) \to (V,\Omega) \in \vc(M)$. Then 
$
\delta F_{(V_i,\Omega_i)}(X) \to \delta F_{(V,\Omega)}(X). 
$
\end{prop}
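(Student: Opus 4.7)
The plan is to mimic the argument given for Proposition \ref{prop:continuity of first variation of E}, handling the three terms in the first variation formula \eqref{eq:1st variation for F} separately.

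First, by definition of convergence in $\vc(M)$, we have $V_i \to V$ weakly as varifolds and $\Omega_i \to \Omega$ in the flat topology on $\C(M)$. The first term $\delta V_i(X) = \int_{G_n(M)} \div_P X(x)\, dV_i(x,P)$ converges to $\delta V(X)$ because $X$ is fixed and the integrand $(x,P)\mapsto \div_P X(x)$ is a fixed continuous, bounded function on the Grassmann bundle $G_n(M)$; this is exactly the same argument as in the proof of Proposition \ref{prop:continuity of first variation of E}.

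Next, the flat convergence $\Omega_i \to \Omega$ is equivalent to $L^1$-convergence of the indicator functions $\mathbf 1_{\Omega_i}\to \mathbf 1_\Omega$. Since $X\in C^1$ and $h\in C^\infty$, the function $\div(hX) = \la \grad h, X\ra + h\div(X)$ is a fixed bounded continuous function on $M$, so
\[
\int_{\Omega_i} \div(hX)\, d\vol = \int_M \mathbf 1_{\Omega_i}\,\div(hX)\, d\vol \longrightarrow \int_\Omega \div(hX)\, d\vol.
\]
The same argument gives $\int_{\Omega_i}\div(X)\to \int_\Omega \div(X)$ and $\vol(\Omega_i)\to \vol(\Omega)$, and then since $f$ is smooth, $f'(\vol(\Omega_i))\to f'(\vol(\Omega))$. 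Combining, the third term converges:
\[
f'(\vol(\Omega_i))\int_{\Omega_i}\div(X) \longrightarrow f'(\vol(\Omega))\int_\Omega \div(X).
\]

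Summing the three convergences yields $\delta F|_{(V_i,\Omega_i)}(X)\to \delta F|_{(V,\Omega)}(X)$, as required. There is no essential obstacle here; the argument is just a bookkeeping check that each term in \eqref{eq:1st variation for F} is continuous with respect to the $\vc$-topology, exactly as in the proof of Proposition \ref{prop:continuity of first variation of E}, with the only new ingredient being that the term $\int_\Omega \div(hX)$ is continuous in $\Omega$ because $h$ is a fixed smooth function.
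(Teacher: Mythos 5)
Your proof is correct and follows exactly the route the paper intends: the paper omits a written proof for this proposition precisely because it is the same term-by-term argument as in Proposition \ref{prop:continuity of first variation of E}, namely weak varifold convergence handles $\delta V_i(X)$ and flat ($L^1$) convergence of the $\Omega_i$ handles the integrals of the fixed bounded functions $\div(hX)$ and $\div(X)$ together with $\vol(\Omega_i)\to\vol(\Omega)$. The only simplification, which you correctly exploit, is that here the Caccioppoli sets are part of the data $(V_i,\Omega_i)\in\vc(M)$, so no choice of representatives is needed.
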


\begin{corollary}
The set $\{(V,\Omega)\in \vc(M):\, (V,\Omega) \text{ is stationary for } F\}$ is closed in $\vc(M)$. 
\end{corollary}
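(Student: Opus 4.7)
The plan is to deduce this corollary directly from the preceding Proposition \ref{prop:continuity of first variation of F} on the continuity of the first variation of $F$ on $\vc(M)$. Closedness is a sequential statement, and the machinery for converting pointwise (in $X$) convergence of first variations into stability of the stationary condition under limits has essentially been set up already.

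Concretely, I would take a sequence $(V_i, \Omega_i) \in \vc(M)$ of pairs that are stationary for $F$ and which converge to some $(V, \Omega) \in \vc(M)$ with respect to $\mathscr{F}$. The goal is to show $(V, \Omega)$ is stationary for $F$, i.e., that $\delta F|_{(V, \Omega)}(X) = 0$ for every $C^1$ vector field $X$ on $M$. Fix such an $X$. By the assumption of stationarity, $\delta F|_{(V_i, \Omega_i)}(X) = 0$ for every $i$. By Proposition \ref{prop:continuity of first variation of F} applied with this fixed $X$, we have $\delta F|_{(V_i, \Omega_i)}(X) \to \delta F|_{(V, \Omega)}(X)$ as $i \to \infty$. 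The limit of the zero sequence is zero, so $\delta F|_{(V, \Omega)}(X) = 0$. Since $X$ was arbitrary, $(V, \Omega)$ is stationary for $F$, and the set of stationary pairs is therefore closed.

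There is no real obstacle in this argument, as the nontrivial content, namely the joint continuity in $(V, T)$ of the first-variation functional on $\vc(M)$, has already been established in Proposition \ref{prop:continuity of first variation of F}. (That proposition itself used only that $X$ and $h$ are fixed, together with weak convergence $V_i \to V$ testing the continuous integrand $\operatorname{div}_P X(x)$, and Caccioppoli convergence $\Omega_i \to \Omega$ testing the fixed functions $\operatorname{div}(X)$ and $\operatorname{div}(hX)$, along with continuity of $f'$ evaluated at $\vol(\Omega_i) \to \vol(\Omega)$.) The corollary is simply the standard observation that the zero set of a continuous function is closed, specialized to the continuous linear functional $(V, \Omega) \mapsto \delta F|_{(V,\Omega)}(X)$ for each $X$, intersected over all $C^1$ vector fields $X$.
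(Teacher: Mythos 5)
Your proposal is correct and is exactly the argument the paper intends: the corollary is stated as an immediate consequence of Proposition \ref{prop:continuity of first variation of F}, namely that for each fixed $C^1$ vector field $X$ the map $(V,\Omega)\mapsto \delta F|_{(V,\Omega)}(X)$ is continuous on the metric space $\vc(M,\mathscr F)$, so its zero set is closed and the stationary set is the intersection of these closed sets. Since $\vc(M,\mathscr F)$ is a metric space, your sequential formulation is equivalent, and there is no gap.
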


\subsection{Min-Max Notions} 

Let $X$ be a cubical subcomplex of the cell complex $I(m, k_0)$ for some $m, k_0 \in \mathbb N$. Here $I(m, k)=I(1, k)\otimes \cdots I(1, k)$ ($m$-times), where $I(1, k)$ denotes the complex on $I=[0,1]$ whose $1$-cells and $0$-cells are, respectively,
\[ [1, 3^{-k}], [3^{-k}, 2\cdot 3^{-k}], \cdots, [1-3^{-k}, 1] \text{ and } [0], [3^{-k}], \cdots, [1-3^{-k}], [1].\]
We refer to \cite[Appendix A]{zhou2020multiplicity} for a summary of notions; (see also \cite[Section 2.1]{marques2017existence}).  We will define both absolute and relative homotopy classes. 

\subsubsection{Absolute Homotopy Classes} 

Given a continuous map $\Phi_0: X\to (\mathcal B(M,\Z_2),\mathbf F)$, we let $\Pi$ be the collection of all sequences of continuous maps $\{\Phi_i: X\to (\B(M,\Z_2),\mathbf F)\}$ such that, for each $i$, there exists a flat continuous homotopy map 
\begin{gather*}
H_i\colon X\times [0,1] \to (\B(M,\Z_2),\mathcal F),\\
H_i(x,0) = \Phi_0(x), \\
H_i(x,1) = \Phi_i(x).
\end{gather*}

\begin{defn}
Such a sequence $\{\Phi_i\}_{i\in\N}$ is called an $X$-homotopy sequence of mappings into $\B(M,\Z_2)$, and $\Pi$ is called the $X$-homotopy class of $\Phi_0$.
\end{defn}

\begin{defn}
    Fix a functional $E\f \B(M,Z_2)\to \R$. Define the min-max value of $\Pi$ with respect to $E$ by
    \[
    L^{E}(\Pi) = \inf_{\{\Phi_i\}\in \Pi} \limsup_{i\to\infty}\left[\sup_{x\in X} E(\Phi_i(x))\right]. 
    \]
    A sequence $\{\Phi_i\}\in \Pi$ is called a critical sequence for $E$ if 
    \[
    L^{E}(\{\Phi_i\}) := \limsup_{i\to \infty} \left(\sup_{x\in X} E(\Phi_i(x))\right) = L^{E}(\Pi). 
    \]
  The critical set  $\mathcal K(\{\Phi_i\})$ associated to a critical sequence $\{\Phi_i\}$ is the set of all $(V,T)\in \vz(M, \Z_2)$ such that there exist $x_{i_j}\in X$ with  $\vert \Phi_{i_j}(x_{i_j})\vert \to V$, and $\Phi_{i_j}(x_{i_j})\to \Omega$, and $E(\Phi_{i_j}(x_{i_j})) \to L^{E}(\Pi)$.  In the following, we will sometimes abbreviate $L^{E}$ to $L$ when the choice of $E$ is clear.  
\end{defn}

\begin{prop}
    The critical set $\mathcal K(\{\Phi_i\})$ is a compact subset of $\vz(M, \mathscr{F}, \Z_2)$. 
\end{prop}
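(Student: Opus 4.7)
The plan is to prove sequential compactness directly, by combining the compactness of the ambient space $Y_{z,L}$ from \eqref{eq:Y bounded mass pairs} with a standard diagonalization. Since $\vz(M,\mathscr{F},\Z_2)$ is metrizable, it suffices to show that every sequence in $\mathcal K(\{\Phi_i\})$ has a subsequence which converges to a point of $\mathcal K(\{\Phi_i\})$.

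First, I would establish a uniform mass bound. Since $\{\Phi_i\}$ is a critical sequence, the numbers $\sup_{x\in X} E(\Phi_i(x))$ converge to $L^E(\Pi)<\infty$ and are hence bounded by some constant $C_0$ for $i$ large. Because $f$ is continuous on the compact interval $[0,\vol(M)]$, it is bounded, say $|f|\leq C_f$. The definition $E(T)=\mathbf M(T)+f(\vol(T))$ then gives $\mathbf M(\Phi_i(x))\leq C_0+C_f$ for every $x\in X$ and $i$ large. Passing to the varifold limit, every $(V,T)\in\mathcal K(\{\Phi_i\})$ satisfies $\|V\|(M)\leq L:=C_0+C_f$, so $\mathcal K(\{\Phi_i\})\subset Y_{z,L}$.

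Now let $(V_k,T_k)$ be any sequence in $\mathcal K(\{\Phi_i\})$. By the compactness of $Y_{z,L}$, after extracting a subsequence we may assume $(V_k,T_k)\to(V,T)$ in $\mathscr{F}$ for some $(V,T)\in Y_{z,L}$. To conclude, I need to show $(V,T)\in\mathcal K(\{\Phi_i\})$. For each fixed $k$, the definition of $\mathcal K(\{\Phi_i\})$ supplies indices $i$ and points $x\in X$ with $(|\Phi_i(x)|,\Phi_i(x))$ arbitrarily close in $\mathscr{F}$ to $(V_k,T_k)$ and $E(\Phi_i(x))$ arbitrarily close to $L^E(\Pi)$. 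Hence I may select a strictly increasing sequence $i_k$ and points $x_k\in X$ such that
\[
\mathscr{F}\!\bigl((|\Phi_{i_k}(x_k)|,\Phi_{i_k}(x_k)),(V_k,T_k)\bigr)<\tfrac1k,\qquad \bigl|E(\Phi_{i_k}(x_k))-L^E(\Pi)\bigr|<\tfrac1k.
\]
By the triangle inequality, $|\Phi_{i_k}(x_k)|\to V$ and $\Phi_{i_k}(x_k)\to T$, while $E(\Phi_{i_k}(x_k))\to L^E(\Pi)$. Therefore $(V,T)\in\mathcal K(\{\Phi_i\})$, which completes the argument.

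There is no real obstacle: the uniform mass bound is automatic from boundedness of $f$ on $[0,\vol(M)]$, and the remainder is a routine diagonal extraction in the metric space $Y_{z,L}$.
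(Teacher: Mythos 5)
Your proof is correct, and it is exactly the argument the paper intends (the proposition is stated without proof, resting on the earlier compactness of $Y_{z,L}$): the bound $\M(\Phi_i(x))\le E(\Phi_i(x))+\sup|f|$ places $\mathcal K(\{\Phi_i\})$ inside the compact set $Y_{z,L}$, and the diagonal selection of $i_k\to\infty$, $x_k\in X$ shows any $\mathscr F$-limit of points of $\mathcal K(\{\Phi_i\})$ again lies in $\mathcal K(\{\Phi_i\})$. The only slight imprecision is saying $\sup_{x\in X}E(\Phi_i(x))$ ``converges'' to $L^E(\Pi)$ — a critical sequence only guarantees the limsup equals $L^E(\Pi)$ — but this still gives the uniform bound for large $i$ that you actually use.
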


\subsubsection{Relative Homotopy Classes} 
Let $Z \subset X$ be a cubical subcomplex. 
Given a continuous map $\Phi_0: X\to (\C(M),\mathbf F)$, we let $\Pi$ be the collection of all sequences of continuous maps $\{\Phi_i: X\to (\C(M),\mathbf F)\}$ such that, for each $i$, there exists a flat continuous homotopy map 
\begin{gather*}
H_i\colon X\times [0,1] \to (\C(M),\mathcal F),\\
H_i(x,0) = \Phi_0(x), \\
H_i(x,1) = \Phi_i(x),
\end{gather*}
and, moreover,
\[
\limsup_{i\to \infty} \left[\sup_{(z,t)\in Z\times [0,1]} \mathbf F(H_i(z,t), \Phi_0(z))\right] \to 0
\]
as $i\to \infty$. 

\begin{defn}
Such a sequence $\{\Phi_i\}_{i\in\N}$ is called an $(X,Z)$-homotopy sequence of mappings into $\C(M)$, and $\Pi$ is called the $(X,Z)$-homotopy class of $\Phi_0$.
\end{defn}

\begin{defn}
    Fix a functional $F\f \C(M)\to \R$. Define the min-max value of $\Pi$ with respect to $F$ by
    \[
    L^{F}(\Pi) = \inf_{\{\Phi_i\}\in \Pi} \limsup_{i\to\infty}\left[\sup_{x\in X} F(\Phi_i(x))\right]. 
    \]
    A sequence $\{\Phi_i\}\in \Pi$ is called a critical sequence for $F$ if 
    \[
    L^{F}(\{\Phi_i\}) := \limsup_{i\to \infty} \left(\sup_{x\in X} F(\Phi_i(x))\right) = L^{F}(\Pi). 
    \]
    The critical set  $\mathcal K(\{\Phi_i\})$ associated to a critical sequence $\{\Phi_i\}$ is the set of all $(V,\Omega)\in \vc(M)$ such that there exist $x_{i_j}\in X$ with  $\vert \Phi_{i_j}(x_{i_j})\vert \to V$, $\Phi_{i_j}(x_{i_j})\to \Omega$, and $F(\Phi_{i_j}(x_{i_j})) \to L^{F}(\Pi)$.  In the following, we will sometimes abbreviate $L^{F}$ to $L$ when the choice of $F$ is clear.  
\end{defn}

\begin{prop}
    The critical set $\mathcal K(\{\Phi_i\})$ is a compact subset of $\vc(M, \mathscr{F})$. 
\end{prop}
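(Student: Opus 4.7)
The strategy is to prove sequential compactness, since $\vc(M,\mathscr{F})$ is metrizable. The first step is the observation that, along a critical sequence, the mass of the boundary is uniformly controlled. Indeed, by definition $\sup_{x\in X} F(\Phi_i(x))\to L^F(\Pi)$, so there is a constant $C$ with $F(\Phi_i(x))\le C$ for all $i$ and $x$. Since $h$ is bounded and $f$ is continuous on the compact interval $[0,\vol(M)]$, the defining formula $F(\Omega)=\M(\bd\Omega)-\int_\Omega h+f(\vol(\Omega))$ yields a uniform bound $\M(\bd\Phi_i(x))\le L$ for some $L>0$. Passing to limits in the definition of $\mathcal K(\{\Phi_i\})$, every $(V,\Omega)\in\mathcal K(\{\Phi_i\})$ then satisfies $\|V\|(M)\le L$, so $\mathcal K(\{\Phi_i\})\subset Y_{c,L}$.

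Given a sequence $(V_n,\Omega_n)\in\mathcal K(\{\Phi_i\})$, the compactness of $Y_{c,L}$ established earlier in the section furnishes a subsequence (still written $(V_n,\Omega_n)$) converging in $\mathscr{F}$ to some $(V_\infty,\Omega_\infty)\in Y_{c,L}$. To conclude, I need to verify the membership $(V_\infty,\Omega_\infty)\in\mathcal K(\{\Phi_i\})$, i.e.\ to produce a strictly increasing index sequence $\{m_k\}\subset\N$ and points $y_k\in X$ such that $|\Phi_{m_k}(y_k)|\to V_\infty$, $\Phi_{m_k}(y_k)\to\Omega_\infty$, and $F(\Phi_{m_k}(y_k))\to L^F(\Pi)$. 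This is a routine diagonal extraction: for each $n$, choose by the definition of $\mathcal K$ applied to $(V_n,\Omega_n)$ witnesses $\{i^{(n)}_j\}$ and $\{x^{(n)}_j\}$ with the appropriate convergences as $j\to\infty$, then select $j_n$ large enough that $i^{(n)}_{j_n}>i^{(n-1)}_{j_{n-1}}$, that the resulting point lies within $1/n$ (in $\mathscr{F}$) of $(V_n,\Omega_n)$, and that its $F$-value lies within $1/n$ of $L^F(\Pi)$. The triangle inequality, combined with $(V_n,\Omega_n)\to(V_\infty,\Omega_\infty)$, then delivers the three desired convergences for $m_n=i^{(n)}_{j_n}$ and $y_n=x^{(n)}_{j_n}$.

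I do not foresee a substantive obstacle: the argument is purely soft, and the only point requiring care is the bookkeeping in the diagonal step, to ensure that the index sequence $\{m_n\}$ is genuinely a subsequence of $\N$ and hence a legitimate subsequence of $\{\Phi_i\}$. The same argument applies verbatim, with $\vc(M)$ replaced by $\vz(M,\Z_2)$ and $F$ replaced by $E$, to prove the analogous compactness statement for the critical set associated with an absolute $X$-homotopy class.
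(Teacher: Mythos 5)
Your argument is correct and is exactly the intended one: the paper states this proposition without proof, but the tools it sets up (the compactness of $Y_{c,L}$, plus the $\mathbf F$-continuity of mass and $\mathcal F$-continuity of the volume and $\int_\Omega h$ terms giving the uniform mass bound) are precisely what your proof uses, and the diagonal extraction establishing closedness of $\mathcal K(\{\Phi_i\})$ is the standard step. The only cosmetic imprecision is that a critical sequence satisfies $\limsup_i \sup_x F(\Phi_i(x)) = L^F(\Pi)$ rather than convergence of the sups, but since each $\sup_x F(\Phi_i(x))$ is finite ($F$ is $\mathbf F$-continuous and $X$ is compact), the uniform bound you need still holds.
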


\subsection{Regularity and Curvature Estimates}

When developing a min-max theory for the $F$ functional, we will need to analyze volume constrained local minimizers of $A^h$.

\begin{prop}
    \label{regularity-for-minimizers}
    Fix $3\le n+1\le 7$. Assume that $\Omega$ is a local minimizer for $A^h$ in an open set $U$ subject to a volume constraint. In other words, for any $\Theta$ with $\supp(\Omega-\Theta)\subset U$ and $\vol(\Theta)=\vol(\Omega)$, we have $A^h(\Theta)\ge A^h(\Omega)$. Then $\bd \Omega$ is smooth and embedded in $U$. 
\end{prop}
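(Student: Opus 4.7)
The plan is to convert the volume-constrained minimization property of $\Omega$ into a standard $\Lambda$-minimizer property for the perimeter in a neighborhood of each point of $U$, and then invoke the classical regularity theory for such minimizers. If $\bd^*\Omega \cap U = \emptyset$ there is nothing to prove, so fix an arbitrary point $p \in U$ at which we wish to establish smoothness of $\bd\Omega$. Using that the reduced boundary has positive $\mathcal H^n$-density at each of its points, we will select an auxiliary open ball $B_0$ with $\cl B_0 \subset U$, $p \notin \cl B_0$, and $\mathcal H^n(\bd^*\Omega \cap B_0) > 0$. This separation of $p$ from $B_0$ is the essential technical point: it decouples the volume correction mechanism from the region where regularity is to be deduced.

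Fix a smooth compactly supported vector field $X$ in $B_0$ whose flow $\phi_s$ satisfies $c := \int_\Omega \div(X) \neq 0$; such an $X$ exists because $\bd^*\Omega \cap B_0$ has positive $\mathcal H^n$-measure and one may choose $X$ not orthogonal in average to the inner normal along $\bd^*\Omega \cap B_0$. For any competitor $\Theta \in \C(M)$ with $\supp(\Omega - \Theta)$ contained in a compact subset of $U \setminus \cl B_0$, the map $s \mapsto \vol(\phi_s(\Theta))$ depends only on $\Theta \cap B_0 = \Omega \cap B_0$ and has derivative $c \neq 0$ at $s = 0$. Hence for $|\vol(\Theta) - \vol(\Omega)|$ below a threshold depending only on $X$, one may pick $s = s(\Theta)$ with $|s| \leq C_0 |\vol(\Theta) - \vol(\Omega)|$ such that $\vol(\phi_s(\Theta)) = \vol(\Omega)$ and $|A^h(\phi_s(\Theta)) - A^h(\Theta)| \leq C_1 |s|$, where the constants $C_0, C_1$ depend only on $X$ and $h$. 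The volume-constrained minimization hypothesis then gives $A^h(\Omega) \leq A^h(\phi_s(\Theta))$, which combined with the estimates $|\int_\Omega h - \int_\Theta h| \leq \|h\|_\infty |\Omega \triangle \Theta|$ and $|\vol(\Theta) - \vol(\Omega)| \leq |\Omega \triangle \Theta|$ yields
\[
\M(\bd\Omega) \leq \M(\bd\Theta) + \Lambda |\Omega \triangle \Theta|
\]
for some constant $\Lambda > 0$. Choosing $r_0 > 0$ small enough that the ball $V := B_{r_0}(p)$ satisfies $\cl V \cap \cl B_0 = \emptyset$, $\cl V \subset U$, and so that the volume correction is always admissible for $\supp(\Omega - \Theta) \subset V$, this exhibits $\Omega$ as a $(\Lambda, r_0)$-minimizer of the perimeter in $V$.

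It remains to invoke the classical regularity theory for $\Lambda$-minimizers of the perimeter (De Giorgi, Federer, Bombieri--De Giorgi--Giusti, Tamanini): in ambient dimension $3 \leq n+1 \leq 7$ the potential singular set has Hausdorff dimension at most $n - 7$ and is therefore empty, so $\bd\Omega \cap V$ is a $C^{1,\alpha}$ embedded hypersurface. Applying the standard Lagrange multiplier calculation to smooth test vector fields supported in $V$ shows that the mean curvature of $\bd\Omega \cap V$ is of the form $H = h + \lambda$ for some constant $\lambda \in \R$; Schauder estimates for the resulting quasilinear elliptic graph equation then upgrade the regularity to $C^\infty$. Since $p \in U$ was arbitrary, this proves that $\bd\Omega$ is smooth and embedded throughout $U$. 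The main obstacle in the argument is the volume-decoupling step, namely choosing $B_0$ away from the point of interest while retaining enough reduced boundary in $B_0$ to adjust volume; once the $\Lambda$-minimizer property is in hand, the remaining steps are entirely standard.
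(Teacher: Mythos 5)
Your argument is correct and is essentially the paper's own proof: both convert the volume-constrained minimality into a $\Lambda$-minimizer (almost-minimizer) property by restoring volume at linear area cost via a deformation supported away from the point of interest, and then invoke standard regularity for such minimizers in dimensions $3\le n+1\le 7$ together with the Lagrange-multiplier equation $H=h+\lambda$. The only cosmetic differences are that the paper fixes two auxiliary points $p_1,p_2$ (following the deformations of \cite{morgan2003regularity}, whose Proposition 3.1 it verifies) rather than choosing an auxiliary ball $B_0$ per point, and cites Morgan's proposition rather than the Tamanini-type $\Lambda$-minimizer theory directly.
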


\begin{proof}
It suffices to verify condition (2) of \cite[Proposition 3.1]{morgan2003regularity}. As in \cite{morgan2003regularity}, we can find two points $p_1,p_2\in U$ and deformations $\Omega_{i,t}$ such that 
\begin{itemize}
\item[(i)] $\Omega_{i,0}=\Omega$, $\Omega_{i,t}\supset \Omega$ for $t > 0$, and $\Omega_{i,t}\subset \Omega$ for $t<0$
\item[(ii)] $\supp(\Omega_{i,t}-\Omega)$ is contained in a small ball centered at $p_i$,
\item[(iii)] $\vert \mathcal H^{n}(\bd \Omega_{i,t}) - \mathcal H^n(\bd \Omega)\vert \le C_1 \vert \vol(\Omega_{i,t})-\vol(\Omega)\vert$
\end{itemize}
for $i=1,2$. Condition (i) implies that $\vert \vol(\Omega \operatorname{\Delta} \Omega_{i,t}) \vert = \vert \vol(\Omega) - \vol(\Omega_{i,t})\vert$. Therefore condition (iii) implies that 
\begin{align*}
    \vert A^h(\Omega_{i,t})-A^h(\Omega)\vert &\le \vert \mathcal H^n(\bd \Omega_{i,t})-\mathcal H^n(\bd \Omega)\vert + \left\vert \int_\Omega h - \int_{\Omega_{i,t}} h\right\vert \\
    &\le C_1 \vert \vol(\Omega_{i,t})-\vol(\Omega)\vert + (\sup \vert h\vert) \vol(\Omega \operatorname{\Delta} \Omega_{i,t}) \\
    &\le C_2 \vert \vol(\Omega) - \vol(\Omega_{i,t})\vert. 
\end{align*}
Now we can argue as in \cite{zhou2020existence}.

Let $p\in U$ and choose a small number $r > 0$ so that $B_r(p)\subset U$. We can suppose $r$ is small enough that $B_r(p)$ is far away from a neighborhood of $p_i$ for some choice of $i\in \{1,2\}$. Consider any $\Theta$ with $\supp(\Omega-\Theta)\subset B_r(p)$. Then it must be the case that 
\begin{equation}
\label{delta-a}
A^h(\Theta) - A^h(\Omega) \ge -C_2 \vert \vol(\Theta) - \vol(\Omega)\vert. 
\end{equation}
Otherwise, we could restore volume near $p_i$ using the deformation above, and this would create a new competitor with the same volume as $\Omega$ but lower $A^h$. Let $A = \mathcal H^n(\bd \Omega)$ and $A' = \mathcal H^n(\bd \Theta)$. Then (\ref{delta-a}) implies that 
\[
A' - A \ge -C_2 \vert \vol(\Theta)-\vol(\Omega)\vert - \left\vert \int_\Theta h - \int_\Omega h\right\vert \ge -C_3 \vol(\Theta \operatorname{\Delta} \Omega). 
\]
As in \cite{zhou2020existence}, this inequality implies condition (2) of \cite[Proposition 3.1]{morgan2003regularity} provided $r$ is sufficiently small. The regularity then follows. 
\end{proof}

Bellettini, Chodosh, and Wickramasekera proved curvature estimates for volume preserving stable CMCs. In particular, they proved the following Bernstein type theorem.

\begin{theorem}[\cite{bellettini2019curvature} Proposition 3]
\label{Bernstein} 
Fix $3\le n+1\le 7$. Assume that $\Sigma\subset \R^{n+1}$ is a connected, embedded, volume preserving stable minimal hypersurface with no singularities. Further suppose that $\mathcal H^n(\Sigma \cap B_r)\le \Lambda r^n$ for some constant $\Lambda \ge 1$ and all $r\ge 0$. Then $\Sigma$ is a hyperplane. 
\end{theorem}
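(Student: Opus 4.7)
The plan is to reduce the claim to the classical stable Bernstein theorem of Schoen-Simon by upgrading volume preserving stability to the usual stability inequality
\[
\int_\Sigma |A|^2 \varphi^2 \le \int_\Sigma |\nabla \varphi|^2
\]
for every compactly supported $\varphi$ on $\Sigma$. The Schoen-Simon theorem asserts that a connected, complete, embedded, stable minimal hypersurface in $\R^{n+1}$ with Euclidean volume growth must be a hyperplane when $3 \le n+1 \le 7$, so this reduction finishes the proof. (Without loss of generality we assume $\Sigma$ is noncompact, since a compact smooth minimal hypersurface in $\R^{n+1}$ cannot exist by the maximum principle.)

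To pass from volume preserving stability to full stability, fix a compactly supported $\varphi$ with $\supp \varphi \subset B_{R_0}$. For each $R \gg R_0$ I would construct a smooth bump $\chi_R$ on $\Sigma$ supported in $\Sigma \cap (B_{2R}\setminus B_R)$ and normalized so that $\int_\Sigma \chi_R = 1$, with $\|\chi_R\|_\infty \le CR^{-n}$ and $\|\nabla \chi_R\|_\infty \le CR^{-n-1}$. The Euclidean upper bound $\mathcal H^n(\Sigma \cap B_r)\le \Lambda r^n$ together with the density lower bound from the monotonicity formula allows such $\chi_R$ to be built (possibly after restricting to a dyadic subsequence of radii). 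Letting $\bar\varphi = \int_\Sigma \varphi$, the function $\psi_R = \varphi - \bar\varphi\,\chi_R$ satisfies $\int_\Sigma \psi_R = 0$, so applying volume preserving stability to $\psi_R$ and using that $\varphi$ and $\chi_R$ have disjoint supports gives
\[
\int_\Sigma |A|^2 \varphi^2 + \bar\varphi^2 \int_\Sigma |A|^2 \chi_R^2 \le \int_\Sigma |\nabla \varphi|^2 + \bar\varphi^2 \int_\Sigma |\nabla \chi_R|^2.
\]
Dropping the nonnegative term on the left and estimating
\[
\int_\Sigma |\nabla \chi_R|^2 \le \|\nabla\chi_R\|_\infty^2 \cdot \mathcal H^n(\Sigma\cap(B_{2R}\setminus B_R)) \le CR^{-n-2},
\]
which tends to $0$ as $R\to\infty$ since $n \ge 2$, produces the desired standard stability inequality for $\varphi$ in the limit.

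The main obstacle is the construction of $\chi_R$: one needs a lower bound on $\mathcal H^n(\Sigma \cap (B_{2R}\setminus B_R))$ of order $R^n$ to make the $L^\infty$ normalization compatible with $\int \chi_R = 1$. While the monotonicity formula guarantees $\mathcal H^n(\Sigma\cap B_r)\ge \omega_n r^n$ at every scale, the annular area could a priori be dominated by the ball interior. To handle this I would argue along a dyadic subsequence $R_i = 2^i$: combining the monotonicity lower bound for the large ball $B_{R_{i+1}}$ with the Euclidean upper bound forces, by a pigeonhole argument, infinitely many annuli $B_{R_{i+1}}\setminus B_{R_i}$ to carry area of order $R_i^n$, and the argument above then runs along this subsequence. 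Once full stability is established for every compactly supported $\varphi$, the Schoen-Simon Bernstein theorem immediately concludes that $\Sigma$ is a hyperplane.
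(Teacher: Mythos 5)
The paper offers no proof of this statement at all: it is quoted verbatim from Bellettini--Chodosh--Wickramasekera, so there is no internal argument to compare yours against. Taken on its own merits, your reduction is sound and is a legitimate self-contained route: you upgrade volume-preserving stability to unconstrained stability by correcting the mean-zero constraint with a far-away test function of negligible Dirichlet energy, and then invoke the Schoen--Simon Bernstein theorem for complete, two-sided, embedded stable minimal hypersurfaces with Euclidean volume growth, which holds precisely in the range $3\le n+1\le 7$ (two-sidedness is implicit in the volume-preserving stability hypothesis). The quadratic-form computation with disjoint supports is correct, and so is the dyadic pigeonhole: if every annulus $B_{2^{i+1}}\setminus B_{2^i}$ beyond some scale carried area less than $\eps\, 2^{in}$, summing the geometric series would contradict the monotonicity lower bound $\mathcal H^n(\Sigma\cap B_r(p))\ge \omega_n r^n$ at a point $p\in \Sigma$ (translate so the balls are centered on $\Sigma$; this only changes $\Lambda$).

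One detail as literally written does not work: you ask for $\chi_R$ supported exactly in $B_{2R}\setminus B_R$ with $\|\chi_R\|_\infty\le CR^{-n}$, $\|\nabla\chi_R\|_\infty\le CR^{-n-1}$ and $\int_\Sigma\chi_R=1$, but the pigeonhole only bounds the area of the \emph{full} annulus from below, and that area could concentrate arbitrarily close to $\partial B_R$ or $\partial B_{2R}$, where $\chi_R$ must vanish; in that situation no function with your sup and gradient bounds can have unit integral. The fix is immediate: set $\chi_R=\eta_R/\int_\Sigma\eta_R$ with $\eta_R(x)=\zeta(|x|/R)$, where $\zeta\equiv 1$ on $[1,2]$, $\supp\zeta\subset[1/2,4]$, $|\zeta'|\le C$. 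Then $\int_\Sigma\eta_R\ge \mathcal H^n\big(\Sigma\cap(B_{2R}\setminus B_R)\big)\ge \eps R^n$ along the good dyadic radii, while $\int_\Sigma|\nabla\chi_R|^2\le C R^{-2}\,\Lambda (4R)^n\,(\eps R^n)^{-2}\le C' R^{-n-2}\to 0$, the support is still disjoint from $\supp\varphi$ for $R$ large, and the $L^\infty$ bound on $\chi_R$ is never actually needed since the only error term is $\bar\varphi^2\int_\Sigma|\nabla\chi_R|^2$. With this adjustment the rest of your argument goes through verbatim.
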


For the min-max theory, we need to extend the Bellettini-Chodosh-Wickramasekera curvature estimates to local minimizers of $A^h$ with a volume constraint.

\begin{prop}
\label{curvature-estimates}
Fix an open set $U\subset M$. Assume that $\Sigma = \bd \Omega$ is smooth and properly embedded in $U$ with mean curvature $H_\Sigma = h + h_0$ for some constant $h_0$. Assume that $\Omega$ is volume preserving stable for $A^{h+h_0}$ in $U$.  Then there is a constant $C$ depending only on $U$, $\|h\|_{C^3}$, an upper bound for $\area(\Sigma)$, and an upper bound for $\|H_\Sigma\|_\infty$ such that 
\[
\|A_\Sigma\|^2(x) \le \frac{C}{\dist(x,\bd U)^2}
\]
for all $x\in \Sigma\cap U$.
\end{prop}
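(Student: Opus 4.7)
\textbf{Proof proposal for Proposition \ref{curvature-estimates}.}

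The plan is to argue by contradiction using a standard blow-up analysis in the spirit of Schoen's curvature estimates, adapted to the volume-preserving-stable CMC setting as in Bellettini--Chodosh--Wickramasekera. Suppose the estimate fails. Then there exists a sequence $\Sigma_i = \bd \Omega_i$ of properly embedded hypersurfaces in $U$ with mean curvature $H_{\Sigma_i} = h + h_{0,i}$, where the $h_{0,i}$, areas $\area(\Sigma_i)$, and $\|H_{\Sigma_i}\|_\infty$ are all uniformly bounded, each $\Omega_i$ is volume preserving stable for $A^{h+h_{0,i}}$, yet there are points $x_i \in \Sigma_i \cap U$ with $\|A_{\Sigma_i}\|(x_i) \cdot \dist(x_i,\bd U) \to \infty$. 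After a standard point-picking argument, I may assume $\lambda_i := \|A_{\Sigma_i}\|(x_i) \to \infty$, that $\|A_{\Sigma_i}\| \le 2\lambda_i$ on $\Sigma_i \cap B_{\rho_i}(x_i)$, and that $\lambda_i \rho_i \to \infty$.

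Next, I rescale by $\lambda_i$ in normal coordinates centered at $x_i$, obtaining hypersurfaces $\tilde\Sigma_i$ in the rescaled ball $B_{\lambda_i\rho_i}(0)$ that satisfy $\|A_{\tilde\Sigma_i}\|(0)=1$, $\|A_{\tilde\Sigma_i}\| \le 2$, and ambient metric converging smoothly to the flat metric. The rescaled mean curvature equals $(h+h_{0,i})/\lambda_i \to 0$ in $C^k$ on bounded sets, since $h$ is smooth and $h_{0,i}, \|h\|_\infty$ are bounded. Combining the uniform $\|A\|$ bound with smooth graphical decompositions and the a priori area bound on $\Sigma_i$ (which rules out accumulation of sheets via monotonicity for bounded-$H$ surfaces), I extract a subsequential smooth limit $\Sigma_\infty \subset \R^{n+1}$ that is properly embedded, minimal, and has Euclidean area growth $\mathcal H^n(\Sigma_\infty \cap B_R) \le \Lambda R^n$ for some $\Lambda \ge 1$ and all $R$. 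Passing to the connected component through the origin, I still have $\|A_{\Sigma_\infty}\|(0)=1$.

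The key remaining point is that $\Sigma_\infty$ is volume preserving stable as a minimal hypersurface in Euclidean space. For each $i$, the second variation of $A^{h+h_{0,i}}$ in a volume preserving direction with normal component $f$ satisfies
\[
\int_{\Sigma_i} |\nabla f|^2 \, dA \;\ge\; \int_{\Sigma_i} \big(\|A_{\Sigma_i}\|^2 + \ric_M(\nu,\nu) + \nu(h)\big) f^2 \, dA
\]
for all compactly supported $f$ with $\int_{\Sigma_i} f \, dA = 0$. Under the dilation by $\lambda_i$, the gradient and second-fundamental-form terms scale in the same way, so they pass to the limit unchanged. The ambient Ricci term and the $\nu(h)$ term are both controlled by quantities that remain uniformly bounded in the original metric, and so in the rescaled picture they carry factors of $\lambda_i^{-2}$ and $\lambda_i^{-1}$ respectively, and vanish as $i \to \infty$. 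Approximating any compactly supported mean-zero test function on $\Sigma_\infty$ by mean-zero test functions on $\tilde\Sigma_i$ (using that the mean of any cutoff times the limit function over $\tilde\Sigma_i$ tends to that over $\Sigma_\infty$, and subtracting a small correction supported where $\tilde\Sigma_i$ has positive area) yields
\[
\int_{\Sigma_\infty} |\nabla f|^2 \, dA \;\ge\; \int_{\Sigma_\infty} \|A_{\Sigma_\infty}\|^2 f^2 \, dA
\]
for every compactly supported smooth $f$ on $\Sigma_\infty$ with $\int_{\Sigma_\infty} f = 0$. Thus $\Sigma_\infty$ is a connected, properly embedded, non-singular, volume preserving stable minimal hypersurface in $\R^{n+1}$ with Euclidean area growth. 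By the Bellettini--Chodosh--Wickramasekera Bernstein theorem (Theorem \ref{Bernstein}), $\Sigma_\infty$ must be a hyperplane, contradicting $\|A_{\Sigma_\infty}\|(0)=1$.

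The main obstacle I expect is the last step before invoking Bernstein: verifying that the volume-preserving-stability inequality passes to the limit cleanly. One needs to approximate compactly supported mean-zero functions on $\Sigma_\infty$ by mean-zero functions on $\tilde\Sigma_i$ without spoiling the stability estimate, which in particular requires the limit to be properly embedded with controlled multiplicity and to have Euclidean area growth uniform in $i$. The latter is where the a priori upper bound on $\area(\Sigma)$ and the bound on $\|H_\Sigma\|_\infty$ enter, through the monotonicity formula for bounded mean curvature applied at the original scale. A related subtlety, easily handled by passing to the component through the origin, is ensuring that the limit is connected, as required by Theorem \ref{Bernstein}.
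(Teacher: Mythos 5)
Your overall strategy---contradiction, point-picking, rescaling, passing the volume-preserving stability inequality to the limit via mean-zero corrections of test functions, and invoking Theorem \ref{Bernstein}---is precisely the ``standard argument'' the paper alludes to, and most of the sketch is fine: the scaling away of the $\ric(\nu,\nu)$ and $\nu(h)$ terms is correct, and the mean-zero correction trick (transplanting $f$ to a single sheet, so the needed correction constant tends to $0$) is the standard way to inherit weak stability on the limit, after which stability of the component through the origin follows since mean-zero functions supported on one component are admissible for the whole surface.

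The genuine gap is the step where you assert Euclidean area growth of the blow-up limit ``via monotonicity for bounded-$H$ surfaces,'' which you also use to rule out accumulation of sheets. Since $\Sigma_i$ is only properly embedded in $U$, the monotonicity formula centered at $x_i$ is valid only for radii $s\le \dist(x_i,\bd U)$ (for larger $s$ the ball leaves $U$ and $\Sigma_i$ effectively has boundary there), so all it yields is
\[
\frac{\mathcal H^n\big(\Sigma_i\cap B_s(x_i)\big)}{s^n}\;\le\; C\,\frac{\area(\Sigma_i)}{\dist(x_i,\bd U)^n},\qquad s\le \dist(x_i,\bd U),
\]
which is uniform in $i$ only if $\dist(x_i,\bd U)$ stays bounded away from zero. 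Your point-picking only guarantees $\|A_{\Sigma_i}\|(x_i)\,\dist(x_i,\bd U)\to\infty$ and does nothing to prevent $\dist(x_i,\bd U)\to 0$; in that regime the global area bound is compatible with more and more sheets accumulating at scale $\dist(x_i,\bd U)$, the local mass ratio may blow up like $\area(\Sigma_i)/\dist(x_i,\bd U)^n$, and then you can neither extract a limit with a finite area-growth constant nor control the multiplicity of convergence, so Theorem \ref{Bernstein} (whose hypothesis is exactly Euclidean area growth) cannot be invoked. To close this you need an extra argument for blow-up points approaching $\bd U$ (for instance, a preliminary rescaling to the scale $\dist(x_i,\bd U)$ and a separate analysis of that configuration), or else you should prove the weaker, genuinely standard statement in which the constant is also allowed to depend on the local mass ratio $\area(\Sigma)/\dist(x,\bd U)^n$; the latter is what the naive blow-up gives and is sufficient for the paper's subsequent applications, which only use the estimate on compact subsets of the interior of the relevant set.
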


\begin{proof}
This follows from Theorem \ref{Bernstein} by a standard argument. 
\end{proof}

Finally we record the following Simons type theorem for volume preserving stable cones. 

\begin{prop}
\label{simons}
Assume that $\Sigma$ is a smooth, embedded, volume preserving stable minimal cone in $\R^{n+1}\setminus \{0\}$ with $3\le n+1 \le 7$. Then $\Sigma$ is a hyperplane. 
\end{prop}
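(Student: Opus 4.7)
My plan is to reduce to Simons' classical theorem that in dimensions $n\le 6$, every smooth embedded stable minimal cone in $\R^{n+1}$ is a hyperplane. Since $\Sigma$ is already assumed to be a minimal cone, it will suffice to upgrade the volume-preserving stability hypothesis to full stability. I will write $Q(v,v)=\int_\Sigma(|\nabla v|^2-|A|^2v^2)$; the hypothesis gives $Q(v,v)\ge 0$ whenever $v\in C^\infty_c(\Sigma)$ has $\int_\Sigma v=0$, and the goal will be to show $Q(v,v)\ge 0$ for \emph{every} $v\in C^\infty_c(\Sigma)$.

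The key step will be to exploit the non-compactness of the cone to construct balancing functions $w_k\in C^\infty_c(\Sigma)$ with the properties (i) $\int_\Sigma w_k=1$, (ii) $\supp(w_k)$ escapes every compact set, and (iii) $Q(w_k,w_k)\to 0$. Let $\Gamma=\Sigma\cap S^n$ be the (smooth, closed) cross-section, so the cone structure gives $d\mathrm{vol}_\Sigma=r^{n-1}\,dr\,d\mathrm{vol}_\Gamma$ and $|A_\Sigma|^2=|A_\Gamma|^2/r^2$. I will take $w_k(x)=c_k\phi(|x|/R_k)$ for a fixed bump $\phi\in C^\infty_c([1,2])$ and $R_k\to\infty$, constant along the cross-section. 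A change of variables $s=r/R_k$ shows $c_k^{-1}\sim R_k^n$, while $\int_\Sigma|\nabla(c_k^{-1}w_k)|^2$ and $\int_\Sigma|A|^2(c_k^{-1}w_k)^2$ are both of order $R_k^{n-2}$ (or $O(1)$ when $n=2$). Hence $Q(w_k,w_k)\lesssim R_k^{-n-2}\to 0$.

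The contradiction will then run as follows: if there were $v\in C^\infty_c(\Sigma)$ with $Q(v,v)<0$, I could choose $k$ large enough that $\supp(v)\cap\supp(w_k)=\emptyset$, forcing $Q(v,w_k)=0$ because both the gradient pairing and the potential pairing vanish pointwise when the two factors have disjoint supports. Then $u_k:=v-(\int_\Sigma v)w_k$ would have zero mean, so volume-preserving stability would give
\[ 0\le Q(u_k,u_k)=Q(v,v)+\Bigl(\int_\Sigma v\Bigr)^2 Q(w_k,w_k). \]
Sending $k\to\infty$ would produce $Q(v,v)\ge 0$, a contradiction. Hence $\Sigma$ is fully stable, and Simons' theorem forces it to be a hyperplane.

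The main obstacle will be the construction of the balancing functions. Because $|A_\Sigma|^2$ has the $1/r^2$ scaling characteristic of the cone, simple rescaling of a fixed bump cannot make $Q$ decay, since both the Dirichlet and potential terms in $Q$ scale identically under dilations. What saves the argument is the favorable mismatch between the normalization and the stability form: $\int_\Sigma w_k$ is an $n$-dimensional integral over an annulus, growing like $R_k^n$, so the squared normalization $c_k^2\sim R_k^{-2n}$ suppresses the $R_k^{n-2}$ size of the unnormalized stability form by a net factor of $R_k^{-n-2}$.
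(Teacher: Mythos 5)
Your proof is correct, but it reduces volume-preserving stability to strong stability by a different mechanism than the paper. The paper argues softly: a volume-preserving stable hypersurface cannot admit two disjointly supported test functions on which the strong stability form is negative, so $\Sigma$ is strongly stable outside some ball $B(0,R)$; then the dilation invariance $r\Sigma=\Sigma$ propagates strong stability to the complement of $B(0,rR)$ for every $r>0$, hence to all of $\R^{n+1}\setminus\{0\}$, and the usual Simons theorem finishes. You instead prove full strong stability directly by building balancing functions $w_k$ supported in annuli $\{R_k\le |x|\le 2R_k\}$ with $\int_\Sigma w_k=1$ and $|Q(w_k,w_k)|\lesssim R_k^{-n-2}\to 0$, then absorbing the mean of an arbitrary compactly supported $v$ into $w_k$; your scaling computation (normalization $c_k\sim R_k^{-n}$ against Dirichlet and potential terms of size $R_k^{n-2}$, using that the link $\Gamma=\Sigma\cap S^n$ is compact so $|A_\Sigma|^2=a(\omega)/r^2$ with $a$ bounded) is correct, and the cross terms indeed vanish by disjointness of supports, so the limit $Q(v,v)\ge 0$ follows. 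The paper's route is shorter and uses the cone structure only through scale invariance of the stability inequality, while yours is more quantitative and self-contained (it does not even invoke the "index at most one" style fact), at the cost of an explicit computation; note also that your argument only needs $Q(w_k,w_k)\to 0$, not any sign, which your estimate delivers. Both proofs then appeal to the same classical Simons theorem for strongly stable minimal cones in dimensions $3\le n+1\le 7$.
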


\begin{proof}
Note that $\Sigma$ must be strongly stable outside some ball $B(0,R)$. Since $r \Sigma = \Sigma$ for every $r > 0$, it follows that $\Sigma$ is strongly stable outside $B(0,rR)$ for every $r > 0$. Thus $\Sigma$ is strongly stable in $\R^{n+1}\setminus \{0\}$ and the result follows from the usual Simons theorem. 
\end{proof}

\section{Relative Min-Max for F}\label{S:relative min-max for F}

The goal of this section is to develop a relative min-max theory for the $F$ functional. Throughout this section, $M$ is a fixed Riemannian manifold, $f\colon [0,\vol(M)]\to \R$ is an non-constant smooth function, $h\colon M\to \R$ is a smooth Morse function satisfying property (T), and we abbreviate $F = F^{h,f}$.

\subsection{Pull Tight}  
 We will employ a pull-tight operation to ensure that every point in the critical set is stationary for $F$.  The construction is relatively standard, and so we shall be brief.  The goal is to prove the following proposition.

\begin{prop}
\label{F-pt}
Let $\Pi$ be the $(X,Z)$-homotopy class of a continuous map $\Phi_0:X \to \mathcal (\C(M),\mathbf F)$. Assume that 
\[
L^F(\Pi) > \sup_{z\in Z} F(\Phi_0(z)).
\]
Let $\{\Phi_i\}$ be a critical sequence for $\Pi$.  There is another critical sequence $\{\Xi_i\}$ such that $\mathcal K(\{\Xi_i\}) \subset \mathcal K(\{\Phi_i\})$ and every $(V,\Omega)\in \mathcal K(\{\Xi_i\})$ is stationary for $F$. 
\end{prop}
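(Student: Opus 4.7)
The plan is to adapt the standard pull-tight construction from Almgren--Pitts--Marques--Neves min-max theory to the $F$ functional on $\vc(M)$, making use of the continuity of the first variation (Proposition \ref{prop:continuity of first variation of F}) and exploiting the assumption $L^F(\Pi) > \sup_{z\in Z} F(\Phi_0(z))$ to keep the deformation trivial near $Z$. Fix $L > L^F(\Pi)$ so that, for $i$ large, the image of each $\Phi_i$ lies in the compact metric space $Y_{c,L}$ of \eqref{eq:Y bounded mass pairs2}. Let $A_\infty \subset Y_{c,L}$ denote the (closed) subset of pairs stationary for $F$. The goal is to produce a continuous one-parameter family of maps $H_t \colon Y_{c,L}\to Y_{c,L}$ which strictly decreases $F$ away from $A_\infty$, equals the identity on a $\mathscr{F}$-neighborhood of the image $\Phi_0(Z)$, and satisfies $F\circ H_t \le F$ everywhere.

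To build $H_t$, I would first assign to each $(V,\Omega)\in Y_{c,L}\setminus A_\infty$ a $C^1$ vector field $X_{(V,\Omega)}\in\mathfrak{X}(M)$ with $\|X_{(V,\Omega)}\|_{C^1}\le 1$ and $\delta F|_{(V,\Omega)}(X_{(V,\Omega)}) < 0$; such a field exists precisely because $(V,\Omega)\notin A_\infty$. By Proposition \ref{prop:continuity of first variation of F}, the same field continues to give a strictly negative first variation on an open $\mathscr{F}$-neighborhood. Covering $Y_{c,L}\setminus A_\infty$ by such neighborhoods and extracting a locally finite refinement, a partition-of-unity argument produces a continuous section $(V,\Omega)\mapsto X_{(V,\Omega)}$ over $Y_{c,L}\setminus A_\infty$ such that $\delta F|_{(V,\Omega)}(X_{(V,\Omega)})$ is strictly negative and locally bounded away from zero. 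I would then choose a continuous speed function $\rho\colon Y_{c,L}\to[0,1]$ which vanishes on $A_\infty$ and on the sublevel set $\{F\le \sup_{z\in Z} F(\Phi_0(z)) + \eps_0\}$, where $\eps_0 > 0$ is chosen small enough that $L^F(\Pi) > \sup_{z\in Z} F(\Phi_0(z)) + 2\eps_0$. Letting $\phi_t^{(V,\Omega)}$ be the flow of $\rho(V,\Omega) X_{(V,\Omega)}$, define $H_t(V,\Omega) = (\phi_t^{(V,\Omega)})_\sharp(V,\Omega)$; continuity of pushforward under $C^1$-continuous dependence of the vector field, together with the first variation formula \eqref{eq:1st variation for F}, gives continuity of $H_t$ in $\mathscr{F}$ and the monotonicity $F\circ H_t \le F$.

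Finally, I would set $\Xi_i(x) = H_{T_i}(\Phi_i(x))$ for an appropriate increasing sequence $T_i\to\infty$, with the obvious flat homotopy from $\Phi_i$ to $\Xi_i$. By the choice of $\rho$, the deformation is the identity near $\Phi_0(Z)$, so $\{\Xi_i\}$ lies in the same $(X,Z)$-homotopy class as $\{\Phi_i\}$. Since $F\circ H_t\le F$ we have $L^F(\{\Xi_i\}) \le L^F(\{\Phi_i\}) = L^F(\Pi)$, so $\{\Xi_i\}$ is again critical. For any $(V,\Omega)\in\mathcal K(\{\Xi_i\})$ with $\Xi_i(x_i) \to (V,\Omega)$ and $F(\Xi_i(x_i))\to L^F(\Pi)$, monotonicity forces $F(\Phi_i(x_i))\to L^F(\Pi)$ as well; extracting a subsequence, $\Phi_i(x_i)\to (V',\Omega')\in\mathcal K(\{\Phi_i\})$. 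If $(V',\Omega')\notin A_\infty$, then the uniform strict decrease of $F$ under $H_t$ on a neighborhood of $(V',\Omega')$ would, by choosing $T_i$ large enough, push $F(\Xi_i(x_i))$ below $L^F(\Pi)$ by a definite amount, a contradiction; hence $(V',\Omega')\in A_\infty$ and $(V,\Omega) = (V',\Omega')\in\mathcal K(\{\Phi_i\})\cap A_\infty$, giving both containments in the statement.

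The main obstacle is the continuity of the section $(V,\Omega)\mapsto X_{(V,\Omega)}$ and hence of the global deformation $H_t$ on $Y_{c,L}$ in the $\mathscr{F}$-topology, since the partition-of-unity gluing takes place on the infinite-dimensional space $Y_{c,L}$; here it is crucial that $Y_{c,L}$ is metrizable and compact so that a locally finite refinement exists, and that Proposition \ref{prop:continuity of first variation of F} provides joint continuity of $\delta F$ in the point $(V,\Omega)$ and the vector field $X$. Once this is in place, the selection of the time parameters $T_i$ and the verification of the critical set inclusions are essentially formal.
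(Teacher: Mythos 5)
Your overall framework (compact space $Y_{c,L}$, continuous assignment of decreasing vector fields via Proposition \ref{prop:continuity of first variation of F} and a partition of unity, a cutoff that freezes the deformation on the sublevel set $\{F\le \sup_{z\in Z}F(\Phi_0(z))+\eps_0\}$ so that the relative homotopy condition on $Z$ is preserved) is in the spirit of the paper's proof, and the sublevel-set cutoff is a legitimate substitute for the paper's device of adjoining $\Phi_0(Z)$ to the set $Y_0$ on which the flow is trivial. However, there is a genuine gap at the heart of the deformation: you flow each pair along the \emph{frozen} vector field $\rho(V,\Omega)X_{(V,\Omega)}$ for times $T_i\to\infty$ and assert $F\circ H_t\le F$ for all $t$. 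The inequality $\delta F|_{(V,\Omega)}(X_{(V,\Omega)})<0$ only controls $\tfrac{d}{dt}F(H_t(V,\Omega))$ as long as the flowed pair $H_t(V,\Omega)$ remains in the (unquantified) neighborhood of $(V,\Omega)$ where the first variation along that particular field stays negative; for long times the trajectory can leave this neighborhood and $F$ can increase. The same problem undermines your contradiction step: the ``uniform strict decrease of $F$ under $H_t$ on a neighborhood of $(V',\Omega')$'' for large $T_i$ is exactly what has not been established. Finally, the identification $(V,\Omega)=(V',\Omega')$ needed for $\mathcal K(\{\Xi_i\})\subset\mathcal K(\{\Phi_i\})$ requires a bound on the $\mathscr F$-displacement of $\Xi_i(x_i)$ from $\Phi_i(x_i)$ that tends to zero as the starting point approaches the stationary set; with $T_i\to\infty$ and only the \emph{speed} (not the displacement) small near $A_\infty$, this does not follow.

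The missing ingredient is the quantitative core of the pull-tight construction, which is precisely what the two lemmas in the paper's proof (following Step 2--3 of the cited construction in \cite{wang2023existence}) provide: one first upgrades the pointwise negativity to the statement that $\delta F_{V',\Omega'}(\mathcal X(V,\Omega))\le -g(\gamma)$ whenever $\mathscr F\big((V',\Omega'),(V,\Omega)\big)\le \rho(\gamma)$, with $\gamma=\mathscr F((V,\Omega),Y_0)$, and then flows only for a controlled time $\mathcal T(\gamma)$ chosen (using $\|\mathcal X\|_{C^1}\le 1$ and the mass bound) so small that the trajectory stays within $\rho(\gamma)$ of the starting pair. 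This simultaneously yields monotone decrease along the flow, a definite decrease $F(V_{\mathcal T(\gamma)},\Omega_{\mathcal T(\gamma)})\le F(V,\Omega)-\mathcal L(\gamma)$ with $\mathcal L(\gamma)>0$ away from $Y_0$, and displacement tending to zero as $\gamma\to0$; these three facts are what make both conclusions of the proposition (criticality with stationary limits, and $\mathcal K(\{\Xi_i\})\subset\mathcal K(\{\Phi_i\})$) go through. Your argument should be repaired by replacing the unbounded flow times $T_i$ with such a point-dependent bounded time function; as written, the monotonicity, the definite-decrease contradiction, and the critical-set inclusion are all unsupported.
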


\begin{proof}
Let $L = L^F(\{\Phi_i\}) + 1$. Recall that $Y = Y_{c, L}$ defined in \eqref{eq:Y bounded mass pairs2} is the set of pairs in $\vc$ with mass bounded by $L$. Let 
\[ Y_0 = \{(V, \Omega)\in Y: \text{$(V, T)$ is stationary for $F$}\} \cup \Phi_0(Z). \]
Consider the concentric annuli of $Y_0$ under the $\mathscr{F}$-metric:
\[
\begin{aligned}
    & Y_1 = \{(V, \Omega)\in Y: \mathscr{F}\big((V, T), Y_0\big)\geq \frac{1}{2}\},\\
    & Y_j = \{(V, \Omega)\in Y: \frac{1}{2^j}\leq \mathscr{F}\big((V, \Omega), Y_0\big) \leq \frac{1}{2^{j-1}}\},\, j\in\N,\, j\geq 2.
\end{aligned}
\]
By compactness of $Y_j$ under the $\mathscr{F}$-metric and the continuity of first variation of $(V,\Omega)\to \delta F_{V, \Omega}(\mathcal X)$ for a fixed vector field $\mathcal X$ (Proposition \ref{prop:continuity of first variation of F}), for each $j\in \N$, we can find $c_j>0$, such that for all $(V, \Omega)\in Y_j$, there exists a $C^1$ vector field $\mathcal X_{V, \Omega}$ on $M$ with 
\[ \|\mathcal X_{V, \Omega}\|_{C^1}\leq 1,\quad \delta F_{V, \Omega}(\mathcal X_{V, T})\leq - c_j <0. \]
Next, we can follow the same procedure as in \cite[Step 2 in Section 2.2]{wang2023existence} to construct a mapping
\[ \mathcal X: Y \to \mathfrak X(M),\]
where $\mathfrak X(M)$ denotes the space of $C^1$ vector fields on $M$, such that the following lemma holds.
\begin{lem}
    The map $\mathcal X$ is continuous under the $C^1$ topology on $\mathfrak X(M)$. Moreover, there exist continuous functions $g\colon (0,\infty)\to(0,\infty)$ and $\rho\colon(0,\infty)\to(0,\infty)$ with $\lim_{t\to 0} g(t) = 0$ and $\lim_{t\to 0} \rho(t) = 0$ such that 
    \[
    \delta F_{V',\Omega'}(\mathcal X(V,\Omega)) \le - g(\mathscr F((V,\Omega),Y_0))
    \]
    for all $(V',T'),(V,T)\in Y$ satisfying $\mathscr F\big((V',\Omega'),(V,\Omega)\big) \le \rho\big(\mathscr F((V,\Omega),Y_0)\big)$.
\end{lem}

The next step is to construct a map into the space of isotopies. Again this can be done as in \cite[Step 3 in Section 2.2]{wang2023existence}. Given $(V,\Omega)\in Y$ and $t > 0$, define $(V_t,\Omega_t) = \phi_{V,\Omega}(t)_\sharp (V,\Omega)$ where $\phi_{V,\Omega}(t)$ denotes the time $t$ flow of the vector field $\mathcal X(V,\Omega)$.  

\begin{lem}
There exist continuous functions $\mathcal T\colon(0,\infty)\to (0,\infty)$ and $\mathcal L\colon (0,\infty)\to(0,\infty)$ with $\lim_{t\to 0} \mathcal T(t) = 0$ and $\lim_{t\to 0} \mathcal L(t) = 0$ such that for all $(V,\Omega)\in Y$ it holds that
\[
F(V_{\mathcal T(\gamma)},\Omega_{\mathcal T(\gamma)}) \le F(V,\Omega) - \mathcal L(\gamma),
\]
where we have abbreviated $\gamma = \mathscr F((V,\Omega),Y_0)$. 
\end{lem}

Define a map $\Psi\colon Y \times [0,1]\to Y$ by 
\[
\Psi((V,\Omega),t) = (V_{\mathcal T(\gamma) t}, \Omega_{\mathcal T(\gamma) t}),
\]
where again we have abbreviated $\gamma = \mathscr F((V,\Omega),Y_0)$. Then, for each $i$, define $\Xi_i\colon X \to \C(M)$ by $\Xi_i(x) = \pi\circ \Psi((\vert \Phi_i(x)\vert, \Phi_i(x)),1)$ where $\pi(V,\Omega)=\Omega$. It is straightforward to verify from the properties of $\Psi$ that the sequence $\{\Xi_i\}$ is as required. 
\end{proof}

\subsection{Replacements and the Almost Minimizing Property} 

In this section, we define a suitable almost-minimizing property and then use it to construct replacements.  Note that previous Almgren-Pitts min-max schemes have always used for a local functional.  
For example, in the case of the $A^h$ functional, if a competitor $\Omega$ is modified in a small open set $U$, it is possible to determine the change in $A^h$ without knowledge of $\Omega \rest (M- U)$.  However, this is not the case for $E$.  If a competitor $\Omega$ is modified in a small open set $U$, it is impossible to determine the change in $f(\vol(\Omega))$ without knowledge of $\Omega\rest (M- U)$.  
The key observation is that the functional $F$ still satisfies a quasi-locality type property if we restrict to modifications that change $A^h$ faster than volume. 

\begin{defn}
Define $a = \max_{v\in [0,\vol(M)]} \vert f'(v)\vert$.  Note that $a > 0$ since we assume $f$ is not constant. 
\end{defn}

\begin{defn}
Define $b = \sup_M \vert h\vert$. 
\end{defn}

\begin{prop}
\label{F-ql}
Assume $U_1,U_2,\hdots,U_N$ are disjoint open sets in $M$. Assume that $\Omega\in \mathcal C(M)$. Fix a small $\delta > 0$.  Suppose there are sets $\Theta_k$ with $\supp(\Omega-\Theta_k)\subset U_k$ and 
\[
2a \vert \vol(\Omega)-\vol(\Theta_i)\vert \le A^h(\Omega) - A^h(\Theta_i) + \delta
\]
for $k=1,2,\hdots,N$.  Let 
\[
\Omega^* = \Omega \rest (M\setminus (U_1\cup \hdots \cup U_k)) + \sum_{k=1}^N \Theta_k \rest U_k. 
\]
Then there is an estimate 
\[
F(\Omega^*) \le F(\Omega)  +  \frac{1}{2}\sum_{i=1}^N \big(A^h(\Theta_k) - A^h(\Omega)\big) + \frac{N\delta}{2}. 
\]
\end{prop}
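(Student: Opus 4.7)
The plan is to exploit the fact that $A^h$ is strictly local (it is a sum of mass and $-\int h$, both of which split over disjoint pieces), while the non-local term $f(\vol(\cdot))$ is only Lipschitz. The hypothesis is precisely the quantitative statement that lets us absorb the non-local error into the local $A^h$ improvements.

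First I would decompose $F(\Omega^*) - F(\Omega)$ into its local and non-local parts. Since the $U_k$ are pairwise disjoint and $\Omega^*$ agrees with $\Theta_k$ on $U_k$ and with $\Omega$ on $M\setminus\bigcup U_k$, locality of $A^h$ gives
\[
A^h(\Omega^*) - A^h(\Omega) \;=\; \sum_{k=1}^N \bigl(A^h(\Theta_k) - A^h(\Omega)\bigr).
\]
For the volume, disjointness of the $U_k$ likewise yields $\vol(\Omega^*) - \vol(\Omega) = \sum_{k=1}^N\bigl(\vol(\Theta_k) - \vol(\Omega)\bigr)$, so by the triangle inequality and the mean value theorem applied to $f$ (with $a = \max|f'|$),
\[
f(\vol(\Omega^*)) - f(\vol(\Omega)) \;\le\; a\sum_{k=1}^N \bigl|\vol(\Theta_k) - \vol(\Omega)\bigr|.
\]

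Next I would invoke the hypothesis, which rearranges to
\[
a\,|\vol(\Omega) - \vol(\Theta_k)| \;\le\; \tfrac{1}{2}\bigl(A^h(\Omega) - A^h(\Theta_k)\bigr) + \tfrac{\delta}{2},
\]
for every $k$. Summing over $k$ and substituting into the previous inequality gives
\[
f(\vol(\Omega^*)) - f(\vol(\Omega)) \;\le\; \tfrac{1}{2}\sum_{k=1}^N \bigl(A^h(\Omega) - A^h(\Theta_k)\bigr) + \tfrac{N\delta}{2}.
\]

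Finally I would add the $A^h$ identity and the last display. The sum $\sum (A^h(\Theta_k) - A^h(\Omega))$ and its negative $\sum (A^h(\Omega) - A^h(\Theta_k))$ combine to leave exactly half of $\sum (A^h(\Theta_k) - A^h(\Omega))$, yielding the claimed bound
\[
F(\Omega^*) \;\le\; F(\Omega) + \tfrac{1}{2}\sum_{k=1}^N \bigl(A^h(\Theta_k) - A^h(\Omega)\bigr) + \tfrac{N\delta}{2}.
\]

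There is no real obstacle here; the entire content is bookkeeping around the identity $\vol(\Omega^*) - \vol(\Omega) = \sum_k (\vol(\Theta_k) - \vol(\Omega))$ (which uses disjointness of the $U_k$) together with the Lipschitz bound on $f$. The only subtlety worth flagging is that the factor $2a$ in the hypothesis is sharp: one needs the factor $1/2$ to appear after applying the hypothesis so that when combined with the full $A^h$ difference there is still a net contribution of $\tfrac12\sum(A^h(\Theta_k) - A^h(\Omega))$ left to drive the later almost-minimizing and replacement arguments.
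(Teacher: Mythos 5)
Your argument is correct and is essentially the paper's proof: both decompose $A^h(\Omega^*)-A^h(\Omega)=\sum_k\bigl(A^h(\Theta_k)-A^h(\Omega)\bigr)$ by locality, bound $\vert\vol(\Omega^*)-\vol(\Omega)\vert$ by $\sum_k\vert\vol(\Theta_k)-\vol(\Omega)\vert$ using disjointness of the $U_k$, apply the Lipschitz bound $a=\max\vert f'\vert$, and absorb the volume error via the hypothesis with the factor $2a$. The only cosmetic difference is that the paper runs the volume estimate through the sets $\Theta_k\cap U_k$ rather than writing the exact telescoping identity first, which changes nothing.
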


\begin{proof}
Note that 
\begin{align*}
&\vol(\Omega^*)  = \vol(\Omega\setminus (U_1\cup \hdots \cup U_k)) + \sum_{k=1}^N \vol(\Theta_k\cap U_k).
\end{align*}
Observe that 
\[
\vol(\Theta_k\cap U_k) - \vol(\Omega\cap U_k) = \vol(\Theta_k)-\vol(\Omega) \le \frac{ A^h(\Omega)-A^h(\Theta_k)}{2a} + \frac{\delta}{2a}. 
\]
for $k=1,\hdots,N$.
It follows that 
\[
\vert \vol(\Omega^*) - \vol(\Omega)\vert \le \frac{N\delta}{2a}+ \sum_{k=1}^N \frac{ A^h(\Omega)-A^h(\Theta_k)}{2a}.
\]
Next observe that 
\begin{align*}
f(\vol(\Omega^*)) &\le f(\vol(\Omega)) + a \vert \vol(\Omega^*) - \vol(\Omega)\vert \\
&\le f(\vol(\Omega)) + \frac{N\delta}{2} +  \sum_{k=1}^N \frac{A^h(\Omega)-A^h(\Theta_k)}{2}. 
\end{align*}
Thus we have 
\begin{align*}
F(\Omega^*) &= A^h(\Omega) + \sum_{k=1}^N \big({A^h(\Theta_k)-A^h(\Omega)}\big) + f(\vol(\Omega^*))\\
&\le A^h(\Omega) + \frac{N\delta}{2} +  \frac{1}{2}\sum_{k=1}^N \big(A^h(\Theta_k) - A^h(\Omega)\big) + f(\vol(\Omega))\\
&= F(\Omega) + \frac{1}{2}\sum_{k=1}^N\big (A^h(\Theta_k) - A^h(\Omega)\big) + \frac{N\delta}{2}. 
\end{align*}
This proves the result. 
\end{proof}

\subsubsection{The Almost-Minimizing Property} 
The above quasi-locality property is the motivation for the following definition of $F$ almost-minimizing sets. 

\begin{defn}
\label{F-am}
Let $U$ be an open subset of $M$.  Let $\nu$ denote either the $\mathcal F$, $\mathbf F$, or $\M$ norm.  Fix constants $\eps,\delta > 0$ and $\gamma \ge 0$.  Fix an element $\Omega \in \C(M)$.  Let $\{\Omega_i\}_{i=1,\hdots,k}$ be a sequence in $\C(M)$.  Assume that  
\begin{itemize}
\item[(i)] $\Omega_1 = \Omega$,
\item[(ii)] $\operatorname{supp}(\Omega-\Omega_i) \subset U$ for all $i$,
\item[(iii)] $2 a \vert \vol(\Omega)-\vol(\Omega_i)\vert  \le  A^h(\Omega) - A^h(\Omega_i) + \gamma$ for all $i$, 
\item[(iv)] $\mathcal \nu(\bd \Omega_i,\bd \Omega_{i+1}) < \delta$ for all $i$,
\item[(v)]  $A^h(\Omega_i) \le A^h(\Omega) + \delta$ for all $i$.
\end{itemize}
We say $\Omega$ is $(F,\eps,\delta,\gamma,\nu)$-almost-minimizing in $U$ if for all sequences $\{\Omega_i\}_{i=1,\hdots,k}$ as above, we have $A^h(\Omega_k) \ge A^h(\Omega) - \eps$. 
\end{defn}

\begin{rem}
Note that we allow $\gamma = 0$.  When $\nu = \mathcal F$, we will always choose $\gamma = 0$.  When $\nu = \M$, we will always choose $\gamma = \delta > 0$ to allow room for interpolation. 
\end{rem}

\begin{defn} 
\label{F-am2} 
We define  $F$-almost-minimizers. Let $(V,\Omega)\in \vc(M)$ and let $U$ be an open subset of $M$.
\begin{itemize}
\item[(a)] We say $(V,\Omega)$ is $(F,\mathcal F)$-almost-minimizing in $U$ if there is a sequence $\Omega_i \in C(M)$ such that $\Omega_i$ is $(F,\eps_i,\delta_i,0,\mathcal F)$-almost-minimizing in $U$, $\vert \bd \Omega_i \vert \to V$, $\Omega_i \to \Omega$, and $\eps_i ,\delta_i\to 0$. 
\item[(b)] We say $(V,\Omega)$ is $(F,\M)$-almost-minimizing in $U$ if there is a sequence $\Omega_i \in \C(M)$ such that $\Omega_i$ is $(F,\eps_i,\delta_i,\delta_i,\M)$ almost-minimizing in $U$, $\vert \bd \Omega_i\vert \to V$, $\Omega_i\to \Omega$, and $\eps_i,\delta_i\to 0$. 
\end{itemize}
\end{defn}

\begin{prop}
\label{F-cc}
Assume that $\Omega$ is $(F,\eps,\rho,\rho,\M)$-almost-minimizing in $U$. If $\delta \le \rho$ is small enough, then $\Omega$ is $(F,\eps,\delta, 0,\mathcal F)$-almost-minimizing in any open set $W$ compactly contained in $U$. 
\end{prop}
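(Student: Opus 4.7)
The approach will follow the classical Pitts interpolation argument that converts between $\mathcal{F}$- and $\M$-almost-minimizing properties, adapted to accommodate the extra volume condition (iii) in Definition \ref{F-am}. The key observation is that (iii) is an open condition with slack $\rho$ in the $\M$-setting, so it can absorb the small perturbations introduced by the interpolation.

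First I would fix an arbitrary sequence $\Omega = \Omega_1, \ldots, \Omega_k$ in $\C(M)$ witnessing the $(F,\eps,\delta,0,\mathcal{F})$-test in $W$. Applying the standard Almgren interpolation theorem pair-by-pair, I would produce a refined sequence $\tilde\Omega_0, \ldots, \tilde\Omega_N$ with $\tilde\Omega_0 = \Omega$ and $\tilde\Omega_N = \Omega_k$ satisfying: (a) the $\M$-distance between consecutive terms is at most $\delta'(\delta)$, with $\delta'(\delta) \to 0$ as $\delta \to 0$; (b) each $\tilde\Omega_j$ agrees with $\Omega$ outside $U$, using that $W$ being compactly contained in $U$ leaves room for a slight enlargement of the interpolation support; and (c) $\M(\bd\tilde\Omega_j) \le \max_i \M(\bd\Omega_i) + \delta'(\delta)$. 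Since $h$ is bounded and each $\tilde\Omega_j$ is $\mathcal{F}$-close to a corresponding $\Omega_{i(j)}$ in the original sequence, one also has $|A^h(\tilde\Omega_j) - A^h(\Omega_{i(j)})| \le \delta''(\delta)$ with $\delta''(\delta) \to 0$, hence $A^h(\tilde\Omega_j) \le A^h(\Omega) + \rho$ for $\delta$ sufficiently small.

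Next I would verify that the refined sequence satisfies the volume condition (iii) with slack $\gamma = \rho$. Continuity of volume under $\mathcal{F}$-convergence yields $|\vol(\tilde\Omega_j) - \vol(\Omega_{i(j)})| \le \eta(\delta)$ with $\eta(\delta) \to 0$. Combining this with the original hypothesis $2a|\vol(\Omega) - \vol(\Omega_{i(j)})| \le A^h(\Omega) - A^h(\Omega_{i(j)})$ from the $\gamma = 0$ test sequence, together with the area comparison above, gives
\[
2a|\vol(\Omega) - \vol(\tilde\Omega_j)| \le A^h(\Omega) - A^h(\tilde\Omega_j) + \rho
\]
for every $j$, provided $\delta$ was initially chosen so small that $2a\,\eta(\delta) + 2\delta''(\delta) \le \rho$. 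Thus $\tilde\Omega_0, \ldots, \tilde\Omega_N$ is a valid test sequence for the hypothesized $(F,\eps,\rho,\rho,\M)$-almost-minimizing property of $\Omega$ in $U$, and this property delivers $A^h(\Omega_k) = A^h(\tilde\Omega_N) \ge A^h(\Omega) - \eps$, as required.

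The main obstacle is arranging for all three ingredients --- localization of the interpolation inside $U$, the step-by-step $\M$-bound, and the transfer of the volume constraint --- to be delivered by a single choice of the threshold $\delta$. In particular, the interpolation has to be quantitative enough to furnish explicit moduli $\delta'(\delta), \delta''(\delta), \eta(\delta)$ that depend only on $\delta$ and on the fixed data $(\Omega, W, U, h, f)$. This is precisely where the compact containment of $W$ in $U$ (ensuring room for a slightly enlarged support) and the uniform boundedness of $h$ (ensuring continuity of $A^h$ under $\mathcal{F}$-convergence with a mass bound) get used simultaneously.
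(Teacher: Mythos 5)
Your overall route is the same as the paper's: run the pair-by-pair interpolation (the paper uses \cite[Lemma A.1]{zhou2019min}, cf.\ \cite[Lemma A.1]{zhou2020existence}) to turn an $(\mathcal F,\delta,\gamma=0)$ test sequence in $W$ into an $(\M,\rho,\gamma=\rho)$ test sequence in $U$, letting the slack $\rho$ in condition (iii) and (v) absorb the interpolation errors, and then invoke the hypothesis. However, one step as you justify it is wrong: from ``$h$ is bounded and $\tilde\Omega_j$ is $\mathcal F$-close to $\Omega_{i(j)}$'' you cannot conclude $\vert A^h(\tilde\Omega_j)-A^h(\Omega_{i(j)})\vert\le\delta''(\delta)$. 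Flat closeness controls the $\int h$ term and the volume, but the mass is only lower semicontinuous in the flat topology, so the perimeter of an interpolated set is in no way pinned to that of the flat-nearest original set; if $\Omega_{i(j)}$ is the nearest index and happens to be the endpoint of the pair with much smaller $A^h$, the interpolated set can have $A^h$ larger by an amount comparable to $A^h(\Omega_{i(j)+1})-A^h(\Omega_{i(j)})$, which need not be small. The two-sided comparison you assert is therefore false in general (and the lower-bound half of it is not needed anyway).

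The repair is already contained in your own ingredient (c): the interpolation gives the one-sided bound $A^h(\tilde\Omega_j)\le \max\{A^h(\Omega_i),A^h(\Omega_{i+1})\}+\eta$ for the pair being interpolated, and you should take the comparison index $i(j)$ to be the endpoint $\Omega_k$ realizing this maximum, using volume closeness to that same $\Omega_k$ (which holds since both endpoints are within flat distance $\delta$ of $\tilde\Omega_j$). Then the chain
\[
2a\vert \vol(\Omega)-\vol(\tilde\Omega_j)\vert \le 2a\vert \vol(\Omega)-\vol(\Omega_k)\vert + 2a\eta \le A^h(\Omega)-A^h(\Omega_k)+2a\eta \le A^h(\Omega)-A^h(\tilde\Omega_j)+(2a+1)\eta
\]
gives (iii) with $\gamma=\rho$ once $\eta\le\rho/(2a+1)$, which is exactly the computation in the paper's proof (where $\eta=\min\{\rho/(2a+1),\rho/2\}$ is fixed first and $\delta$ is then taken small enough for the interpolation to apply). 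With that correction your argument coincides with the paper's; the rest of your verifications of (ii), (iv), (v) and the localization via $W\Subset U$ are as in the paper.
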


\begin{proof}
We rely on an interpolation process from \cite{zhou2019min} and \cite{zhou2020existence}.  Assume $\delta$ is small enough that we can apply \cite[Lemma A.1]{zhou2019min} with $\eta = \min\{\rho/(2a + 1), \rho/2\}$ and $c = \max\{1,b\}$.  We can assume that $\delta \le \eta$.   Assume that $\Omega_1$ and $\Omega_2$ satisfy $\mathcal F(\bd \Omega_1,\bd \Omega_2) < \delta$ and $\supp(\Omega-\Omega_i)\subset W$ and $A^h(\Omega_i) \le A^h(\Omega) + \delta$ and 
\[
2a \vert \vol(\Omega) - \vol(\Omega_i)\vert \le  A^h(\Omega) - A^h(\Omega_i)
\]
for $i=1,2$. By \cite[Lemma A.1]{zhou2019min} (also see \cite[Lemma A.1]{zhou2020existence}), there exists a sequence $\Omega_1 = \Lambda_0,\Lambda_1,\hdots,\Lambda_m = \Omega_2\in \C(M)$ such that for each $j=1,\hdots,m-1$ we have 
\begin{itemize}
\item[(a)] $\supp(\Lambda_j - \Omega) \subset U$,
\item[(b)] $\M(\bd \Lambda_k - \bd \Lambda_{j+1}) \le \eta$,
\item[(c)] $A^h(\Lambda_j) \le \max\{A^h(\Omega_1),A^h(\Omega_2)\} + {\eta}$,
\item[(d)] $\vert \vol(\Lambda_j) -\vol(\Omega_i)\vert \le \eta$, for $i=1,2$.  
\end{itemize}
One verifies immediately that (ii), (iv), and (v) hold along the sequence $\Lambda_j$ with $\nu = \M$ and with $\delta$ replaced by $\rho$. Choose $k$ so that $\max\{A^h(\Omega_1),A^h(\Omega_2)\} = A^h(\Omega_k)$.  Observe that 
\begin{align*}
2a \vert \vol(\Omega)-\vol(\Lambda_j)\vert &= 2a \vert \vol(\Omega) -\vol(\Omega_k) + \vol(\Omega_k)-\vol(\Lambda_j)\vert \\
&\le 2a \vert \vol(\Omega)-\vol(\Omega_k)\vert + 2a \eta,
\end{align*}
and therefore 
\begin{align*}
 A^h(\Omega) - A^h(\Lambda_j)&=  A^h(\Omega) - A^h(\Omega_k) + A^h(\Omega_k) - A^h(\Lambda_j) \\
&\ge 2a \vert \vol(\Omega) - \vol(\bd \Omega_k)\vert   - \eta\\
&\ge 2a \vert \vol(\Omega)-\vol(\Lambda_j)\vert - (2a + 1) \eta  \\
&\ge 2a \vert \vol(\Omega)-\vol(\Lambda_j)\vert - \rho.
\end{align*}
Thus (iii) also holds along the sequence $\Lambda_j$ with $\gamma = \rho$.  It is easy to see that the above construction can be used to prove that if $\Omega$ is not $(F,\eps,\delta, 0, \mathcal F)$ almost-minimizing in $W$ then $\Omega$ is not $(F,\eps,\rho,\rho,\M)$ almost-minimizing in $U$. 
\end{proof} 

\begin{prop}
\label{prop:F-M a-m equivalentto F-F a-m}
If $(V,\Omega)\in \vc(M)$ is $(F,\M)$-almost-minimizing in $U$ then $(V,\Omega)$ is $(F,\mathcal F)$-almost-minimizing in any open set $W$ compactly contained in $U$. 
\end{prop}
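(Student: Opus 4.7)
The plan is to reduce this statement directly to Proposition \ref{F-cc} by a diagonal-type selection of parameters along the approximating sequence. By Definition \ref{F-am2}(b), since $(V,\Omega)$ is $(F,\M)$-almost-minimizing in $U$, there is a sequence $\Omega_i \in \C(M)$ such that each $\Omega_i$ is $(F,\eps_i,\delta_i,\delta_i,\M)$-almost-minimizing in $U$, with $|\bd \Omega_i| \to V$, $\Omega_i \to \Omega$, and $\eps_i, \delta_i \to 0$. To verify Definition \ref{F-am2}(a) for the open set $W \Subset U$, I just need to produce, for each $i$, a sufficiently small $\delta_i' > 0$ so that $\Omega_i$ is $(F,\eps_i,\delta_i',0,\mathcal F)$-almost-minimizing in $W$, with $\delta_i' \to 0$.

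First, I would fix $i$ and apply Proposition \ref{F-cc} with $\eps = \eps_i$ and $\rho = \delta_i$. That proposition guarantees the existence of some threshold $\delta_i^* = \delta_i^*(\eps_i,\delta_i, W, U, \Omega_i) > 0$ such that whenever $\delta_i' \le \delta_i^*$ the set $\Omega_i$ is $(F,\eps_i,\delta_i',0,\mathcal F)$-almost-minimizing in $W$. Then I would simply set
\[
\delta_i' = \min\{\delta_i^*, 1/i\},
\]
which is strictly positive and tends to $0$ as $i \to \infty$. The same sequence $\{\Omega_i\}$ therefore satisfies all the requirements of Definition \ref{F-am2}(a) on $W$: the convergences $|\bd \Omega_i| \to V$ and $\Omega_i \to \Omega$ are unchanged, and the new parameters $\eps_i, \delta_i' \to 0$.

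The core content of the argument is entirely in Proposition \ref{F-cc}, which uses the interpolation lemma \cite[Lemma A.1]{zhou2019min} to upgrade $\mathcal F$-close competitors to $\M$-close ones while preserving the quasi-local constraint $2a|\vol(\Omega)-\vol(\Theta)| \le A^h(\Omega) - A^h(\Theta) + \gamma$ (with a controlled loss $\gamma = \rho$). In our setting, since we only need $\mathcal F$-almost-minimizing at the limit, we can take $\gamma = 0$ on the target side and absorb the interpolation loss into $\rho = \delta_i$. Thus the main (and essentially only) obstacle — the fact that $\mathcal F$-deformations can have arbitrarily large mass and need to be replaced by a chain of $\M$-close competitors still satisfying the volume-vs-area slope condition — has already been handled in Proposition \ref{F-cc}, and the present proposition is simply its consequence along the approximating sequence.
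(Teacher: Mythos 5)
Your argument is correct and is essentially identical to the paper's proof: both apply Proposition \ref{F-cc} to each $\Omega_i$ in the approximating sequence from Definition \ref{F-am2}(b) and extract admissible parameters tending to zero (the paper takes $\delta_i \le \rho_i$ so convergence is automatic, while you force it with the extra $\min\{\cdot,1/i\}$ — an immaterial difference). Nothing is missing.
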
  

\begin{proof}
Assume that $(V,\Omega)$ is $(F,\M)$-almost-minimizing in $U$ and let $W$ be an open set compactly contained in $U$.  According to the definition, there is a sequence $\Omega_i\in \C(M)$ such that $\Omega_i$ is $(F,\eps_i,\rho_i,\rho_i,\M)$ almost-minimizing in $U$ and $\vert \bd \Omega_i\vert\to V$ and $\Omega_i\to \Omega$ and $\eps_i,\rho_i\to 0$. By Proposition \ref{F-cc}, for each $i$, there is a $\delta_i \le \rho_i$ such that $\Omega_i$ is $(F,\eps_i,\delta_i,0,\mathcal F)$-almost-minimizing in $W$. Since $\eps_i,\delta_i\to 0$, this witnesses that $(V,\Omega)$ is $(F,\mathcal F)$-almost-minimizing in $W$. 
\end{proof}

Recall that $a = \sup \vert f'\vert$ and $b = \sup \vert h\vert$. 

\begin{prop}
\label{F-ambv}
Assume that $(V,\Omega)\in \vc(M)$ is $(F,\mathcal F)$-almost-minimizing in $U$. Then $V$ has $(2a+b)$-bounded first variation in $U$. 
\end{prop}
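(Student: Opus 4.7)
The plan is to argue by contradiction using the approximating sequence from Definition \ref{F-am2}(a). Suppose there is a $C^1$ vector field $X$ with compact support in $U$ satisfying
\[
\delta V(X) + (2a+b)\int_M |X|\,d\|V\| < -\eta
\]
for some $\eta>0$; since the same reasoning applied to $-X$ yields the matching upper bound, it suffices to derive a contradiction from this one-sided inequality. By hypothesis, pick $\Omega_i\in\C(M)$ that are $(F,\eps_i,\delta_i,0,\mathcal F)$-almost-minimizing in $U$, with $|\bd\Omega_i|\to V$ as varifolds, $\Omega_i\to\Omega$, and $\eps_i,\delta_i\to 0$. Since $X$ is $C^1$ with compact support, varifold convergence gives, for all $i$ sufficiently large,
\[
\delta|\bd\Omega_i|(X) + (2a+b)\int_M|X|\,d\|\bd\Omega_i\| < -\eta/2.
\]

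Next I would construct the required competitor by flowing along $X$. Let $\phi_s$ denote the flow of $X$ and set $\Omega_{i,s}=(\phi_s)_\sharp\Omega_i$. Standard expansions give
\[
A^h(\Omega_{i,s}) - A^h(\Omega_i) = s\,\delta A^h|_{\Omega_i}(X)+O(s^2), \quad \vol(\Omega_{i,s})-\vol(\Omega_i)=s\,\delta\vol|_{\Omega_i}(X)+O(s^2),
\]
with remainder constants depending only on $\|X\|_{C^2}$ and on the uniform bounds $\sup_i \M(\bd\Omega_i)<\infty$ and $\vol(\Omega_i)\leq\vol(M)$. Combining the identity $\delta A^h|_{\Omega_i}(X)=\delta|\bd\Omega_i|(X)-\int_{\bd\Omega_i} h\langle X,\nu\rangle$ with $|\int h\langle X,\nu\rangle|\leq b\int|X|\,d\|\bd\Omega_i\|$ and $|\delta\vol|_{\Omega_i}(X)|\leq\int|X|\,d\|\bd\Omega_i\|$ gives
\[
\delta A^h|_{\Omega_i}(X) \leq -2a\int|X|\,d\|\bd\Omega_i\| - \eta/2,
\]
so that for $s\in[0,t]$ with $t$ sufficiently small and independent of $i$,
\[
\bigl(A^h(\Omega_i) - A^h(\Omega_{i,s})\bigr) - 2a\,\bigl|\vol(\Omega_{i,s})-\vol(\Omega_i)\bigr| \geq s\eta/2 - O(s^2) \geq 0.
\]
This verifies condition (iii) of Definition \ref{F-am} with $\gamma=0$ along the flow; conditions (i) and (ii) are automatic, and (v) holds since $A^h$ is monotone decreasing on $[0,t]$.

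To meet the discrete requirement of Definition \ref{F-am}, I would partition $[0,t]$ into $N_i$ equal subintervals, taking $N_i$ large enough that consecutive samples $\Omega_{i,tj/N_i}$ lie within flat distance $\delta_i$ (condition (iv)). The almost-minimizing property then forces $A^h(\Omega_{i,t})\geq A^h(\Omega_i)-\eps_i$, whereas the previous displayed inequality at $s=t$ yields $A^h(\Omega_i)-A^h(\Omega_{i,t})\geq t\eta/4$ for $t$ small. Fixing such a $t$ and taking $i$ so large that $\eps_i<t\eta/4$ produces the contradiction.

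The delicate technical point I expect to navigate is keeping the $O(s^2)$ remainders in the flow expansions uniform in $i$. This relies on the observation that $\phi_s$ is a single, $i$-independent flow whose tangential and ambient Jacobians are controlled in terms of $\|X\|_{C^2}$ alone, combined with the uniform mass bound $\sup_i\M(\bd\Omega_i)<\infty$ inherited from $|\bd\Omega_i|\to V$. Once the remainders are uniform, the length $t$ can be chosen once and for all independently of $i$, and $\eps_i\to 0$ closes the argument.
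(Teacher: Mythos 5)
Your proposal is correct and follows essentially the same route as the paper: argue by contradiction, flow the approximating sets $\Omega_i$ along a compactly supported vector field violating the bound, check that the (discretized) flow satisfies conditions (i)--(v) of Definition \ref{F-am} with $\gamma=0$ while decreasing $A^h$ by a definite amount independent of $i$, and contradict the $(F,\eps_i,\delta_i,0,\mathcal F)$-almost-minimizing property once $\eps_i$ is small. The only differences are cosmetic: the paper obtains the uniform-in-$i$ control from the $\mathbf F$-continuity of the relevant first variations on an $\mathbf F$-ball around $V$ rather than from Taylor remainders at $s=0$, and it takes $X$ smooth from the outset, which you should also do (by density, since you start from a $C^1$ field but invoke $\|X\|_{C^2}$-dependent remainder bounds).
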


\begin{proof}
We will prove the contrapositive. Suppose that $V$ does not have $(2a+b)$-bounded first variation in $U$.  Then there is a smooth vector field $X$ compactly supported in $U$ such that 
\[
\delta V(X) < -(2a + b + \eps_0)   \int_M \vert X\vert \, d\mu_V
\]
for some $\eps_0 > 0$. 
By continuity and the first variation formula, there is an $\eps_1 > 0$ such that for all $\Omega\in \C(M)$ with  $\mathbf F(\vert \bd \Omega\vert,V) < 2\eps_1$ we have
\begin{gather*}
\delta \vert \bd \Omega \vert (X) + 2a \int_M \vert X\vert \, d\mu_{\bd \Omega} \le 0,\\
\delta A^h\vert_\Omega(X) \le - \frac{\eps_0}{2} \int_M \vert X\vert\, d\mu_{\bd \Omega}. 
\end{gather*}
Fix some $\Omega$ with $\mathbf F(\vert \bd \Omega \vert,V) < \eps_1$. Deforming $\Omega$ along the flow of $X$ for a uniform small time $\tau$ yields an $\mathbf F$-continuous family $(\Omega_t)_{t\in [0,\tau]}$ such that $\mathbf F(\vert \bd \Omega_t\vert , V)<2\eps_1$ for all $t\in[0,\tau]$. Observe that $t \mapsto A^h(\Omega_t)$ is decreasing for $t\in [0,\tau]$. We also have  
\begin{align*}
A^h(\Omega_\tau) \le A^h(\Omega) - \eps_2
\end{align*}
where $\eps_2 > 0$ is a uniform constant that does not depend on the choice of $\Omega$. Finally, note that for any $t\in [0,\tau]$ we have 
\begin{align*}
2a \vert \vol(\Omega) - \vol(\Omega_t)\vert &= 2a \left\vert \int_0^t \delta \vol |_{\Omega_t}(X) \, dt\right\vert \\
&\le 2a \int_0^t  \left(\int_{\bd \Omega_t} \vert X\vert\right)  dt \le  -\int_0^t  \delta \vert \bd \Omega_t\vert(X)\, dt. 
\end{align*}
Therefore $\area(\bd \Omega)-\area(\bd \Omega_t) \ge 2a \vert \vol(\Omega)-\vol(\Omega_t)\vert$ for all $t\in [0,\tau]$. Discretizing the flow witnesses that $\Omega$ cannot be $(F,\eps_2,\delta,0,\mathcal F)$ almost-minimizing in $U$ for any $\delta > 0$. 
\end{proof} 

\begin{rem}
    \label{rem:am-stability}
    Note that we cannot use a similar argument to prove that $(V, \Omega)$ is stable in a suitable sense for $A^h$ in $U$ as in \cite{wang2023existence}, mainly due to the volume constraint in Definition \ref{F-am}(iii). This will bring in extra challenges to show compactness in Section \ref{S:compactness}.
\end{rem}

\begin{defn}
An element $(V,\Omega)\in \vc(M)$ is called $F$-almost-minimizing in annuli if for each $x\in M$ there is a number $\rho(x) > 0$ such that $(V,\Omega)$ is $(F,\mathcal F)$ almost-minimizing in every annulus $\an(x,s,r)$ with $s < r < \rho(x)$. 
\end{defn}

\begin{prop}
\label{F-ca}
Let $\Pi$ be the $(X,Z)$-homotopy class of a continuous map $\Phi_0\colon X\to (\C(M),\mathbf F)$, where $X$ is a cubical subcomplex of $I(m,k)$ for some $m,k\in \N$. Assume that 
\[
L^F(\Pi) > \sup_{z\in Z} F(\Phi_0(z)). 
\]
Choose a pulled-tight critical sequence $\{\Phi_i\}$ and let $\mathcal K =\mathcal K(\{\Phi_i\})$ be the critical set.  Then there is an element $(V,\Omega)\in \mathcal K$ which is $F$-almost minimizing in annuli. In fact, there exists some $(V,\Omega)\in \mathcal K$ satisfying the following stronger property: 
\begin{itemize}
    \item[(R)]\label{item:property R for F} There is a number $N = N(m)$ depending only on $m$ such that for any collection of $N$ concentric annuli $\an(x,s_1,r_1)$, $\hdots$, $\an(x,s_N,r_N)$ with $2r_j < s_{j+1}$, $(V,\Omega)$ is $(F,\mathcal F)$ almost minimizing in at least one of the annuli. 
\end{itemize}
\end{prop}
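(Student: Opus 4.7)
The proof will follow Pitts' combinatorial scheme adapted to the non-local setting. I argue by contradiction: suppose no $(V,\Omega) \in \mathcal K$ satisfies property (R). Then for every $(V,\Omega) \in \mathcal K$ there is a point $x = x_{V,\Omega} \in M$ and $N$ concentric annuli $\an(x, s_j, r_j)$ with $2r_j < s_{j+1}$ (where $N = N(m)$ is chosen below) such that $(V,\Omega)$ fails to be $(F,\mathcal F)$-almost-minimizing in each of them. By Proposition \ref{prop:F-M a-m equivalentto F-F a-m} this failure also witnesses failure of the $(F,\M)$-almost-minimizing property in slightly enlarged annuli, and by compactness of $\mathcal K \subset \vc(M, \mathscr F)$, together with the continuity of $F$ under $\mathscr F$, finitely many such data suffice: I obtain positive constants $\eps_0, \delta_0 > 0$, a finite collection of points $x_1, \dots, x_P \in M$, an assignment of $N$ annuli around each, and an $\mathscr F$-open cover $\{\mathcal U_\alpha\}$ of $\mathcal K$ such that for every $(V,\Omega) \in \mathcal U_\alpha$, the pair $(V,\Omega)$ fails the $(F,\eps_0,\delta_0,\delta_0, \M)$-almost-minimizing condition in each of $N$ specified annuli around the point $x_\alpha$ chosen for that chart.

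For $i$ large, the sets $\Phi_i^{-1}(\mathcal U_\alpha)$ pulled back to a sufficiently fine subdivision of $X$ cover the ``bad'' vertices where $F(\Phi_i(x))$ is close to $L^F(\Pi)$. At each such vertex $v$, the failure of the $(F,\eps_0,\delta_0,\delta_0,\M)$-almost-minimizing property yields a sequence of modifications $\{\Theta_k\}$ in the appropriate annulus which decreases $A^h$ by at least $\eps_0$; crucially, condition (iii) in Definition \ref{F-am} built into the very definition of almost-minimizing guarantees that along the whole deformation one has $2a|\vol(\Omega)-\vol(\Theta_k)| \le A^h(\Omega)-A^h(\Theta_k) + \delta_0$. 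This is precisely the hypothesis needed to invoke the quasi-locality Proposition \ref{F-ql}. The standard Pitts labeling argument, applied to the $m$-dimensional cube complex refined so that at most $3^m$ cubes meet at each vertex, allows me to choose $N = 3^m + 1$ so that the $N$ annuli around $x_\alpha$ can be distributed among the cubes in a way that annuli assigned to adjacent cubes are disjoint. Performing the corresponding local deformations in disjoint annuli and gluing them via Proposition \ref{F-ql} produces, cube by cube, a new map $\Phi_i'$ whose $F$-values at each modified vertex are strictly smaller than those of $\Phi_i$ by at least $\eps_0/2 - N\delta_0/2 > 0$.

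The interpolation on edges of the refinement is carried out in the discrete Almgren-Pitts framework exactly as in \cite{marques2017existence} and \cite{zhou2020existence}: the pulled-tight critical sequence has $(2a+b)$-bounded first variation in the limit by Proposition \ref{F-ambv}, giving uniform mass bounds, and the extension to the $(X,Z)$-relative setting uses that $L^F(\Pi) > \sup_{z\in Z} F(\Phi_0(z))$ to confine all modifications to a neighborhood of $Z^c$ where $F \circ \Phi_i$ is already uniformly bounded away from $\sup_Z F \circ \Phi_0$. The resulting sequence $\Phi_i'$ lies in $\Pi$ and satisfies
\[
\limsup_{i\to\infty}\sup_{x \in X} F(\Phi_i'(x)) \le L^F(\Pi) - \tfrac{\eps_0}{4},
\]
contradicting the definition of $L^F(\Pi)$.

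The main technical obstacle is ensuring that the combinatorial gluing step does not lose control of the volume. In the classical Almgren-Pitts argument the area functional is strictly local, so combining disjoint local deformations is automatic; here the non-local term $f(\vol(\Omega))$ could in principle accumulate across many cubes. This is precisely why property (iii) is wired into Definition \ref{F-am}: the quasi-locality estimate of Proposition \ref{F-ql} shows that, as long as each local modification changes area faster than $2a$ times the change in volume, the total change in $F$ is bounded by half the sum of the local area drops plus an arbitrarily small error $N\delta_0/2$. Choosing $\delta_0$ small at the outset, relative to $\eps_0/N$, completes the scheme.
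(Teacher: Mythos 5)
Your proposal is correct and follows essentially the same route as the paper: argue by contradiction, pass from $(F,\mathcal F)$ to $(F,\M)$ almost-minimizing failure via compact containment, run the Almgren--Pitts combinatorial construction in the discretized setting, and use the quasi-locality estimate of Proposition \ref{F-ql} (enabled by condition (iii) in Definition \ref{F-am}) to control the non-local volume term when gluing deformations in disjoint annuli, before interpolating back into the $(X,Z)$-homotopy class and using $L^F(\Pi) > \sup_{z\in Z}F(\Phi_0(z))$ to respect the relative constraint. The only slip is your appeal to Proposition \ref{F-ambv} for mass bounds (which is neither applicable under the contradiction hypothesis nor needed, since uniform mass bounds come directly from the critical sequence), but this remark is not load-bearing.
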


\begin{proof} 
Applying discretization \cite[Theorem 1.11]{zhou2020multiplicity}, we can find a homotopy sequence $\{\phi_i\}$ such that $L(\{\phi_i\})= L(\Pi)$ and $\mathcal K(\{\phi_i\}) = \mathcal K(\{\Phi_i\})$. 
Assume for contradiction that no element in $\mathcal K$ satisfies property (R). Then by Proposition \ref{prop:F-M a-m equivalentto F-F a-m}, no element in $\mathcal K$ satisfies property (R) with $(E,\mathcal F)$ almost-minimizing replaced by $(E,\M)$ almost-minimizing. Thus we can apply Parts 1-19 of the Almgren-Pitts combinatorial construction \cite[Theorem 4.10]{pitts2014existence} to $\{\phi_i\}$. This produces a homotopic sequence $\{\psi_i\}$.  

Note that in Part 20, one can use Proposition \ref{F-ql} to verify that 
\[
L(\{\psi_i\}) < L(\{\phi_i\}) - \eps
\]
for some $\eps > 0$. Let us be more precise. Consider disjoint annuli $A_1,A_2,\hdots,A_K$. Fix $\Omega\in \C(M)$. As part the the combinatorial construction, one obtains small constants $\eps,\delta>0$ and Caccioppoli sets $\Omega_k(j)$, $k\in \{1,\hdots,K\}$, $j\in \{1,\hdots,J\}$ satisfying
\begin{itemize}
\item[(i)] $\Omega_k(1) = \Omega$,
\item[(ii)] $\supp(\Omega_k(j)-\Omega) \subset A_k$ for all $j$ and $k$,
\item[(iii)] $2a\vert \vol(\Omega)-\vol(\Omega_k(j))\vert \le A^h(\Omega)-A^h(\Omega_k(j)) + \delta$ for all $j$ and $k$,
\item[(iv)] $\M(\bd \Omega_k(j)-\bd\Omega_k(j+1))< \delta$ for all $j$ and $k$,
\item[(v)] $A^h(\Omega_k(j))\le A^h(\Omega) + \delta$ for all $j$ and $k$,
\item[(vi)] $A^h(\Omega_k(J))\le A^h(\Omega) - \eps$. 
\end{itemize}
For a choice of integers $j_1,\hdots,j_N\in \{1,\hdots,J\}$, one then considers a set of the form 
\[
\Omega^* = \Omega\rest (M\setminus (A_1\cup \hdots \cup A_N)) + \sum_{k=1}^K \Omega_k(j_k)\rest A_k
\]
and needs to estimate the difference $F(\Omega) - F(\Omega^*)$. This can be accomplished by applying Proposition \ref{F-ql} with $U_k = A_k$ and $\Theta_k = \Omega_k(j_k)$.

Applying interpolation \cite[Theorem 1.12]{zhou2020multiplicity} to $\{\psi_i\}$ then gives a sequence 
\[
\{\Psi_i \colon X\to (\C(M),\mathbf F)\}
\]
in the $(X,Z)$-homotopy class $\Pi$ with $\sup_{x\in X} F(\Psi_i(x)) < L(\Pi) - \eps/2$ for all large $i$. This is a contradiction. 
\end{proof}

\subsubsection{A Constrained Minimization Problem}  Consider the following constrained minimization problem.  Assume that $\Omega$ is $(F,\eps,\delta,0,\mathcal F)$ almost minimizing in $U$.  Let $K$ be a compact subset of $U$.   Let $\mathcal A = \mathcal A(\Omega,K,\delta)$ be the set of all $\Theta\in \mathcal C(M)$ such that there is a sequence $\{\Omega_i\}_{i=1,\hdots,k}$ in $\C(M)$ satisfying 
\begin{itemize}
\item[(i')] $\Omega_1 = \Omega$,
\item[(ii')] $\operatorname{supp}(\Omega-\Omega_i) \subset K$ for all $i$,
\item[(iii')] $2 a  \vert \vol(\Omega)-\vol(\Omega_i)\vert \le  A^h(\Omega) - A^h(\Omega_i)$ for all $i$, 
\item[(iv')] $\mathcal F(\bd \Omega_i,\bd\Omega_{i+1}) < \delta$ for all $i$,
\item[(v')]  $A^h(\Omega_i) \le A^h(\Omega) + \delta$ for all $i$,
\end{itemize}
and $\Theta = \Omega_k$.
We aim to minimize $A^h$ among elements of $\mathcal A$. 

\begin{prop}
There exists $\Theta\in \mathcal A$ such that $A^h(\Theta)= \inf_{\Xi \in \mathcal A} A^h(\Xi)$. 
\end{prop}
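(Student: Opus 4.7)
The plan is to apply the direct method. First I observe that $\mathcal{A}$ is nonempty, since the trivial sequence $\Omega_1 = \Omega$ is admissible, so $\Omega \in \mathcal{A}$. Also, $A^h$ is bounded below on $\C(M)$ by $-b\vol(M)$, so the infimum $L := \inf_{\Xi \in \mathcal{A}} A^h(\Xi)$ is finite. Pick a minimizing sequence $\Theta_n \in \mathcal{A}$ with $A^h(\Theta_n) \to L$. Condition $(v')$ gives $\M(\bd \Theta_n) = A^h(\Theta_n) + \int_{\Theta_n} h \le A^h(\Omega) + \delta + b\vol(M)$, which is uniformly bounded. By the standard compactness theorem for Caccioppoli sets, a subsequence (not relabeled) converges in the flat topology: $\Theta_n \to \Theta$ for some $\Theta \in \C(M)$.

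The crucial step is to show $\Theta \in \mathcal{A}$. For each $n$, fix an admissible sequence $\Omega = \Omega_1^{(n)}, \Omega_2^{(n)}, \ldots, \Omega_{k_n}^{(n)} = \Theta_n$ witnessing $\Theta_n \in \mathcal{A}$. I will show that for $n$ large the extended sequence obtained by appending $\Theta$ as a final term is still admissible, i.e. satisfies $(i')$--$(v')$. Conditions $(i')$ and those conditions applied to the original terms hold by assumption, so it suffices to verify the new conditions for the appended $\Theta$. For $(ii')$: flat convergence preserves support in the closed set $K$, so $\supp(\Omega - \Theta) \subset K$. For $(iv')$ at the new last step: the flat norm satisfies $\mathcal{F}(\bd \Theta_n, \bd \Theta) \le \vol(\Theta_n \triangle \Theta) \to 0$, so this is less than $\delta$ for all large $n$. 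For $(v')$: flat convergence gives $\vol(\Theta_n) \to \vol(\Theta)$, hence $\int_{\Theta_n} h \to \int_\Theta h$, and by lower semicontinuity of mass, $A^h(\Theta) \le \liminf_n A^h(\Theta_n) \le A^h(\Omega) + \delta$. For $(iii')$: the same ingredients yield
\[
A^h(\Omega) - A^h(\Theta) \;\ge\; \limsup_n \big(A^h(\Omega) - A^h(\Theta_n)\big) \;\ge\; \limsup_n 2a\,\vert \vol(\Omega) - \vol(\Theta_n)\vert \;=\; 2a\,\vert \vol(\Omega) - \vol(\Theta)\vert.
\]
Thus the extended sequence witnesses $\Theta \in \mathcal{A}$.

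Once $\Theta \in \mathcal{A}$ is established, the minimization conclusion is immediate: on the one hand $A^h(\Theta) \ge L$ by definition of $L$, and on the other hand the same lower semicontinuity computation gives $A^h(\Theta) \le \liminf_n A^h(\Theta_n) = L$. Hence $A^h(\Theta) = L$, and $\Theta$ is a minimizer.

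The main obstacle I anticipate is simply the verification that $\Theta \in \mathcal{A}$, since $\mathcal{A}$ is defined combinatorially as the set of endpoints of admissible discrete paths rather than directly as a closed set. The trick is to exploit the fact that each $\Theta_n$ already has a discrete path from $\Omega$, and to append $\Theta$ as one extra step; this only works because $(iii')$ and $(v')$ are each closed under passing to flat limits, which in turn relies on the continuity of $\vol$ and the lower semicontinuity of $\M$ along flat convergence of Caccioppoli sets. No other part of the argument requires any real input beyond the standard compactness theorem.
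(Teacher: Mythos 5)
Your proof is correct and follows essentially the same route as the paper: take a minimizing sequence, extract a flat limit $\Theta$ using the mass bound from (v'), and show $\Theta\in\mathcal A$ by appending it as one extra step to the admissible chain of a nearby $\Theta_n$, with (iii') and (v') preserved by continuity of volume and lower semicontinuity of $A^h$. The only cosmetic difference is that the paper fixes a single large index $J$ with $\mathcal F(\bd\Theta_J,\bd\Theta)<\delta$ rather than phrasing it for all large $n$, which changes nothing.
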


\begin{proof}
Choose a minimizing sequence $\Theta_j \in \mathcal A$.  
Since the mass of $\bd \Theta_j$ is uniformly bounded, by passing to a subsequence if necessary, we can assume that $\Theta_j\to \Theta$ in $\C(M)$. Since $A^h$ is lower-semicontinuous in the flat topology, one has 
\[
A^h(\Theta) \le \inf_{\Xi \in \mathcal A} A^h(\Xi).
\]
To complete the proof, it remains to show that $\Theta\in \mathcal A$.  Select $J$ large enough so that $\mathcal F(\bd \Theta_J,\bd \Theta) < \delta$.  There is a sequence $\{\Omega_i\}_{i=1,\hdots,k}$ satisfying (i')-(v') and such that $\Theta_J = \Omega_k$. Consider the extended sequence $\{\Omega_j\}_{j=1,\hdots,k+1}$ where $\Omega_{k+1}=\Theta$. It is easy to see that (i'),(ii'),(iv'),(v') hold for the extended sequence.   Note that 
\[
2a \vert \vol(\Omega)-\vol(\Theta_j)\vert \le  A^h(\Omega) - A^h(\Theta_j) 
\]
for all $j$.   Passing to the limit as $j\to \infty$ and using the lower semi-continuity of $A^h$, it follows that  
\[
2a \vert \vol(\Omega)-\vol(\Theta)\vert \le A^h(\Omega) - A^h(\Theta).
\]
Thus (iii') holds as well, and it follows that $\Theta\in \mathcal A$, as needed.
\end{proof} 

\begin{prop} 
\label{F-mam} 
Assume $\Omega$ is $(F,\eps,\delta,0,\mathcal F)$ almost-minimizing in $U$ and let $K$ be a compact subset of $U$.  Let $\Theta$ be a solution to the constrained minimization problem. Then $\Theta$ is $(F,\eps,\delta,0,\mathcal F)$ almost-minimizing in $U$. 
\end{prop}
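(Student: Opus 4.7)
The plan is to prove the almost-minimizing property of $\Theta$ by concatenation: given any sequence starting from $\Theta$ that witnesses a putative failure of the $(F,\eps,\delta,0,\mathcal F)$ almost-minimizing property for $\Theta$ in $U$, I will append it to the sequence running from $\Omega$ to $\Theta$ (which exists because $\Theta \in \mathcal A$), and observe that the resulting long sequence is a valid competitor for the almost-minimizing property of $\Omega$.

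More precisely, suppose $\{\Theta_i\}_{i=1,\dots,m}$ with $\Theta_1=\Theta$ satisfies properties (i)--(v) of Definition \ref{F-am} for $\Theta$ in $U$ with parameters $(\eps,\delta,0,\mathcal F)$. Since $\Theta\in\mathcal A$, pick a sequence $\{\Omega_i\}_{i=1,\dots,k}$ with $\Omega_1=\Omega$ and $\Omega_k=\Theta$ satisfying (i')--(v'). Form the concatenated sequence
\[
\Xi_1=\Omega_1,\ \Xi_2=\Omega_2,\ \dots,\ \Xi_k=\Omega_k=\Theta=\Theta_1,\ \Xi_{k+1}=\Theta_2,\ \dots,\ \Xi_{k+m-1}=\Theta_m.
\]
I would then check that this sequence satisfies conditions (i)--(v) in Definition \ref{F-am} relative to $\Omega$ in $U$ with parameters $(\eps,\delta,0,\mathcal F)$. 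Conditions (i), (ii), (iv) are immediate from (i')--(iv') on the first block and (i)--(iv) on the second (noting the bridge $\mathcal F(\bd \Omega_k,\bd\Theta_2)=\mathcal F(\bd\Theta,\bd\Theta_2)<\delta$).

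The one step that needs an actual argument is condition (iii) on the second block, but this falls out of the triangle inequality together with the fact that the two halves of the sequence individually verify (iii) with $\gamma=0$:
\[
2a\,\vert\vol(\Omega)-\vol(\Theta_j)\vert \le 2a\,\vert\vol(\Omega)-\vol(\Theta)\vert + 2a\,\vert\vol(\Theta)-\vol(\Theta_j)\vert
\le \big(A^h(\Omega)-A^h(\Theta)\big) + \big(A^h(\Theta)-A^h(\Theta_j)\big) = A^h(\Omega)-A^h(\Theta_j).
\]
Similarly, the mass bound in (v) on the second block follows from $A^h(\Theta_j)\le A^h(\Theta)+\delta$ combined with the inequality $A^h(\Theta)\le A^h(\Omega)$, which holds because $\Omega\in\mathcal A$ (via the trivial length-one sequence) and $\Theta$ minimizes $A^h$ over $\mathcal A$.

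Having produced a valid competitor for $\Omega$'s almost-minimizing property, the defining conclusion gives $A^h(\Theta_m)\ge A^h(\Omega)-\eps$, and using $A^h(\Theta)\le A^h(\Omega)$ once more yields $A^h(\Theta_m)\ge A^h(\Theta)-\eps$, which is exactly the almost-minimizing property of $\Theta$. I do not anticipate any real obstacle here; the only conceptual point to confirm is the triangle-inequality argument for (iii), which is why the definition was set up with the tight coefficient $2a$ on both sides and with $\gamma=0$ in the $\mathcal F$-setting so that these inequalities add cleanly without slack.
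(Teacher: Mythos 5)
Your proposal is correct and is essentially the paper's own argument: concatenate the sequence witnessing $\Theta\in\mathcal A$ with any putative violating sequence for $\Theta$, verify (iii) by the same triangle-inequality computation, and conclude using $A^h(\Theta)\le A^h(\Omega)$ (which holds since $\Omega\in\mathcal A$ and $\Theta$ is the minimizer). No gaps; the justifications you give for (v) at the junction and for the final inequality match the paper's (implicit) reasoning.
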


\begin{proof}
Since $\Theta\in \mathcal A$, there is a sequence $\{\Omega_i\}_{i=1,\hdots,k}$ satisfying (i')-(v') such that $\Theta = \Omega_k$.  Now consider any sequence $\{\Theta_i\}_{i=1,\hdots,j}$ such that 
\begin{itemize}
\item[(i'')] $\Theta_1 = \Theta$,
\item[(ii'')] $\operatorname{supp}(\Theta-\Theta_i) \subset U$ for all $i$,
\item[(iii'')] $2 a \vert \vol(\Theta)-\vol(\Theta_i)\vert  \le  A^h(\Theta) - A^h(\Theta_i)$ for all $i$, 
\item[(iv'')] $\mathcal F(\bd \Theta_i,\bd \Theta_{i+1}) < \delta$ for all $i$,
\item[(v'')]  $A^h(\Theta_i) \le A^h(\Theta) + \delta$ for all $i$.
\end{itemize}
Now consider the concatenated sequence $\Omega_1,\hdots,,\Omega_{k-1},\Omega_k=\Theta_1,\Theta_2,\hdots,\Theta_k$. It is clear that (i), (ii), (iv), and (v) hold along the extended sequence. Finally note that 
\begin{align*}
2a \vert \vol(\Omega)-\vol(\Theta_i)\vert &= 2a \vert \vol(\Omega) - \vol(\Theta) + \vol(\Theta) - \vol(\Theta_i)\vert \\
&\le 2a \vert \vol(\Omega)-\vol(\Theta)\vert + 2a \vert \vol(\Theta)-\vol(\Theta_i)\vert \\
& \le A^h(\Omega)-A^h(\Theta) + A^h(\Theta) - A^h(\Theta_i) \\
& = A^h(\Omega)-A^h(\Theta_i) 
\end{align*} 
for $i=2,\hdots,j$. Thus (iii) also holds along the extended sequence. Since $\Omega$ is almost-minimizing, it follows that $A^h(\Theta_k) \ge A^h(\Omega) - \eps \ge A^h(\Theta)-\eps$. 
\end{proof}

\begin{prop}
Let $\Theta$ be a solution to the constrained minimization problem.   Then $\bd \Theta$ locally minimizes $A^h$ with respect to volume preserving modifications in the interior of $K$.  In the interior of $K$, $\bd \Theta$ is smooth with mean curvature $H = h + h_0$ for some constant $h_0$ and, moreover, $\vert H\vert \le 2a + b$.  The surface $\bd \Theta$ is a volume preserving stable critical point for $A^{h+h_0}$ in the interior of $K$. 
\end{prop}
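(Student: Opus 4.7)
My plan is as follows. First, I would establish volume-preserving local minimality of $\bd \Theta$ by extending the sequence that witnesses $\Theta \in \mathcal A$. Given any $\Theta'$ with $\supp(\Theta - \Theta') \subset W \Subset \operatorname{int}(K)$, $\vol(\Theta') = \vol(\Theta)$, and $\mathcal F(\bd \Theta', \bd \Theta) < \delta$, suppose for contradiction that $A^h(\Theta') < A^h(\Theta)$. Fix a sequence $\Omega = \Omega_1, \dots, \Omega_k = \Theta$ satisfying (i')--(v') and append $\Omega_{k+1} = \Theta'$. Conditions (ii'), (iv'), (v') hold immediately; for (iii'), use that $\vert\vol(\Omega) - \vol(\Theta')\vert = \vert V_0\vert$ (with $V_0 := \vol(\Omega)-\vol(\Theta)$) and that
\[
A^h(\Omega) - A^h(\Theta') > A^h(\Omega) - A^h(\Theta) \ge 2a|V_0|,
\]
the latter inequality coming from (iii') applied to $\Theta$ itself. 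Thus $\Theta' \in \mathcal A$, contradicting minimality, and hence $A^h(\Theta') \ge A^h(\Theta)$.

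Given this local volume-preserving minimality, Proposition \ref{regularity-for-minimizers} applies and gives smoothness and embeddedness of $\bd \Theta$ in $\operatorname{int}(K)$, while the Lagrange-multiplier form of the first variation furnishes the CMC equation $H = h + h_0$ for some constant $h_0$. The main obstacle, as I see it, is the bound $\vert h_0\vert \le 2a$; the non-locality of the $F$ functional sits in tension with the pointwise nature of the mean curvature constraint. My plan is to probe $\Theta$ by a one-parameter family $\Theta_t = \phi_{tX}(\Theta)$, where $X$ is smooth and compactly supported in a small ball in the smooth part of $\bd \Theta \cap \operatorname{int}(K)$ and chosen so that $v := \int_{\bd \Theta}\la X,\nu\ra \neq 0$. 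Using the expansions $\vol(\Theta_t) = \vol(\Theta) + vt + O(t^2)$ and $A^h(\Theta_t) = A^h(\Theta) - h_0 v t + O(t^2)$, I would extend the sequence by appending $\Theta_t$ and ask when condition (iii') holds at the new step; it reduces to the linearized inequality $2a|V_0 - vt| \le S_0 + h_0 v t + O(t^2)$ with $S_0 := A^h(\Omega) - A^h(\Theta) \ge 2a|V_0|$. A short case analysis on the sign of $V_0$ and on whether the inequality $S_0 \ge 2a|V_0|$ for $\Theta$ is strict or an equality shows that if $h_0 > 2a$, then for an appropriate sign choice of $(v,t)$ one has $\Theta_t \in \mathcal A$ while $A^h(\Theta_t) - A^h(\Theta) = -h_0 v t < 0$, contradicting the minimality of $\Theta$. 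Symmetric reasoning rules out $h_0 < -2a$, so $\vert h_0\vert \le 2a$ and $\vert H\vert \le b + 2a$.

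Finally, volume-preserving stability of $\bd \Theta$ for $A^{h+h_0}$ in the interior of $K$ is a formality: on any volume-preserving competitor $A^{h+h_0}$ differs from $A^h$ by the constant $-h_0 \vol(\Theta)$, so the local volume-preserving minimality already established for $A^h$ transfers verbatim to $A^{h+h_0}$, yielding in fact strict local minimality and in particular volume-preserving stability.
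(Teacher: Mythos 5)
Your proposal is correct, and for the first and third assertions it follows essentially the paper's own strategy: you append a single competitor to the sequence witnessing $\Theta\in\mathcal A$ and re-verify (i$'$)--(v$'$), with (iii$'$) coming from $A^h(\Omega)-A^h(\Theta')>A^h(\Omega)-A^h(\Theta)\ge 2a\vert V_0\vert$ and $\vert\vol(\Omega)-\vol(\Theta')\vert=\vert V_0\vert$; the paper phrases the local minimality for sets $W$ with $\vol(W)<\delta$ (so that (iv$'$) is automatic), whereas you impose flat-closeness directly, an immaterial difference (and, as in the paper, Proposition \ref{regularity-for-minimizers} is then applied in small balls). The genuinely different step is the mean curvature bound. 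The paper argues pointwise: if $\vert H\vert>2a+b$ at some point, it builds a one-sided competitor $\Xi$ in a tiny ball with $\M(\bd\Theta)-\M(\bd\Xi)\ge(2a+b)\vert\vol(\Theta)-\vol(\Xi)\vert$, uses one-sidedness and $\vert h\vert\le b$ to convert this into $A^h(\Theta)-A^h(\Xi)\ge 2a\vert\vol(\Theta)-\vol(\Xi)\vert$, and appends $\Xi$ to contradict minimality. You instead bound the Lagrange multiplier itself, proving $\vert h_0\vert\le 2a$ from the first-variation identity $\delta A^h=\pm h_0\,\delta\vol$ along flows supported in the smooth part: if $\vert h_0\vert>2a$, a deformation with $h_0vt>0$ decreases $A^h$ linearly while the slack $(\vert h_0\vert-2a)\vert vt\vert$ absorbs the $O(t^2)$ error in (iii$'$). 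This is sound; in fact the case analysis you advertise is unnecessary, since $2a\vert V_0-vt\vert\le 2a\vert V_0\vert+2a\vert vt\vert\le S_0+2a\vert vt\vert$ always suffices. It yields the slightly stronger conclusion $\vert h_0\vert\le 2a$, hence $\vert H\vert\le 2a+b$, which is consistent with the bound $\vert H\vert\le 2a$ the paper gets in the $E$-setting where $h\equiv 0$. One cosmetic point: with the paper's convention ($\nu$ pointing into $\Omega$ and $H=h+h_0$ with respect to $\nu$) the multiplier relation reads $\delta A^h=+h_0\,\delta\vol$, not $-h_0\,\delta\vol$; your sign corresponds to the opposite normal and does not affect the argument, which is symmetric in the sign of $h_0$.

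One small caveat on the final assertion: passing from volume-preserving local minimality to volume-preserving stability is not purely a formality, because a mean-zero test function $\varphi$ supported in $\ins(K)$ must first be realized as the initial speed of a genuinely volume-preserving family (normal graphs $\exp(t\varphi\nu)$ preserve volume only to first order). The paper invokes \cite[Lemma 2.2]{barbosa2012stability} for exactly this and appends the resulting family to the sequence; in your setup the same lemma, together with the observation that the family is flat-close to $\Theta$ for small $t$ so your minimality statement applies, closes the step. Your reduction of $A^{h+h_0}$ to $A^h$ on volume-preserving competitors via the constant shift $-h_0\vol(\Theta)$ is correct and coincides with the paper's remark.
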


\begin{proof} There is a sequence $\{\Omega_i\}_{i=1,\hdots,k}$ satisfying (i')-(v') such that $\Theta = \Omega_k$. We will show that if any of the claimed properties fail, it is possible to extend this sequence so as to contradict that $\Theta$ solved the constrained minimization problem. 

Consider an open set $W\subset K$ with $\vol(W) < \delta$.   Assume for contradiction that $\bd \Theta$ does not minimize area with respect to volume preserving modifications in $W$.  Then there exists $\Xi\in \C(M)$ such that $\supp(\Xi-\Theta) \subset W$ and $\vol(\Xi) = \vol(\Theta)$ and $A^h(\Xi) < A^h(\Theta)$.    Consider the extended sequence $\{\Omega_i\}_{i=1,\hdots,k+1}$ where $\Omega_{k+1}=\Xi$. It is easy to see that the extended sequence satisfies (i'), (ii'), and (v').  We have $\mathcal F(\bd \Omega_k,\bd \Omega_{k+1}) \le \vol(W) < \delta$ and so (iv') holds. Finally observe that 
\begin{align*}
A^h(\Omega) - A^h(\Xi)  &\ge A^h(\Omega) - A^h(\Theta)  \\
&\ge 2a \vert \vol(\Omega)-\vol(\Theta)\vert \\
&= 2a \vert \vol(\Omega) - \vol(\Xi)\vert
\end{align*}
and so (iii') holds.  It follows that $\Xi\in \mathcal A$, and this is a contradiction. 

Proposition \ref{regularity-for-minimizers} implies that $\bd \Theta$ is induced by a multiplicity one, smooth, embedded surface with mean curvature $H = h + h_0$ in the interior of $K$.  Here $h_0$ is a constant.  We claim that $\vert H\vert \le 2a+ b$. Suppose for contradiction that $\vert H\vert > 2a + b$ at some point $p\in \bd \Theta \cap \text{int}(K)$. Fix a small number $r > 0$.  {The assumption on the mean curvature gives} the existence of $\Xi\in \C(M)$ with $\supp(\Xi-\Theta)\subset B_r(p)$ and 
\[
\M(\bd \Theta)-\M(\bd \Xi) \ge (2a + b) \vert \vol(\Theta)-\vol(\Xi)\vert > 0.
\]
Moreover, we can ensure that $\bd \Xi$ lies to one side of $\bd \Theta$ so that we have 
$
\vol(\Xi \operatorname{\Delta} \Theta) = \vert \vol(\Theta) - \vol(\Xi)\vert. 
$
Note that this implies 
\begin{align*}
A^h(\Theta) - A^h(\Xi) &\ge (2a + b) \vert \vol(\Theta) - \vol(\Xi)\vert - \int_\Theta h + \int_\Xi h\\
&\ge (2a + b) \vert \vol(\Theta) - \vol(\Xi) \vert - b \vol(\Theta \operatorname{\Delta} \Xi) = 2a \vert \vol(\Theta) - \vol(\Xi)\vert. 
\end{align*}
Consider the extended sequence $\{\Omega_i\}_{i=1,\hdots,k+1}$ where $\Omega_{k+1}=\Xi$.  It is easy to see that (i'), (ii'), (iv'), and (v') hold provided $r$ is small enough.  It remains to note that 
\begin{align*}
A^h(\Omega) - A^h(\Xi)  &= A^h(\bd \Omega) - A^h(\Theta)  + A^h(\Theta) - A^h(\Xi) \\
&\ge 2a \vert \vol(\Omega) - \vol(\Theta)\vert + 2a \vert \vol(\Theta)-\vol(\Xi)\vert \\
&\ge 2a \vert \vol(\Omega) - \vol(\Xi)\vert.
\end{align*}
Thus (iii') holds so $\Xi\in \mathcal A$ and this is a contradiction. 

It remains to show that $\Theta$ is a volume preserving stable critical point for $A^{h+h_0}$ in the interior of $K$. Note that it is equivalent to show that $\Theta$ is a volume preserving stable critical point for $A^h$.  Suppose this is not the case.  Then there is a smooth function $\varphi\f \bd \Theta\to \R$ with compact support in $\ins(K)$ such that 
\[
\int_{\bd \Theta\cap K} \varphi  = 0, \text{ and } \int_{\bd \Theta \cap K}  \vert \grad \varphi\vert^2  - (\vert A\vert^2 + \ric(\nu,\nu) - \bd_\nu h)\varphi^2 < 0.
\]
According to \cite[Lemma 2.2]{barbosa2012stability}, it is possible to find a smooth family $\bd \Theta_t$, $t\in [-\eps_0,\eps_0]$ such that 
\begin{itemize}
\item[(a)] $\Theta_0 = \Theta$ 
\item[(b)] $\supp(\Theta_t-\Theta)\subset \ins(K)$ for all $t\in[-\eps_0,\eps_0]$ 
\item[(c)] $\vol(\Theta_t) = \vol(\Theta)$ for all $t\in[-\eps_0,\eps_0]$,
\item[(d)] one has
\[
\frac{d}{dt}\eval_{t=0} \bd \Theta_t = \varphi \cdot \nu
\]
where $\nu$ is the unit normal pointing into $\Theta$.
\end{itemize}
{By the second variation of $A^{h}$ and the volume preserving condition}, there is an $\eps_1 < \eps_0$ such that  $A^h(\Theta_t) < A^h(\Theta)$ for all $t\in (0,\eps_1)$.  Fix a very small $t_0 > 0$, and consider the extended sequence $\{\Omega_i\}_{i=1,\hdots,k+1}$ where $\Omega_{k+1} = \Theta_{t_0}$. It is easy to verify that the extended sequence satisfies (i')-(v') provided $t_0$ is small enough, and this contradicts that $\Theta$ solved the constrained minimization problem.
\end{proof}

\subsubsection{Replacements}

Next we turn to the construction of replacements. 

\begin{prop}
Assume that $V$ has $(2a+b)$-bounded first variation, and also that $(V,\Omega)$ is $(F,\mathcal F)$-almost-minimizing in an open set $U$. Let $K$ be a compact subset of $U$.  There exists an element $(V^*,\Omega^*)\in \vc(M)$ called a replacement for $(V,\Omega)$ such that 
\begin{itemize}
\item[(i)] $(V, \Omega) \rest (M-K) = (V^*, \Omega^*)\rest (M-K)$,
\item[(ii)] $A^h(V,\Omega) = A^h(V^*,\Omega^*)$,
\item[(iii)] $(V^*,\Omega^*)$ is $(F,\mathcal F)$ almost-minimizing in $U$,
\item[(iv)] $V^*$ has $(2a+b)$-bounded first variation,
\item[(v)] $(V^*,\Omega^*)$, when restricted to the interior of $K$, is a limit of some solutions to the constrained minimization problems. 
\end{itemize}
\end{prop}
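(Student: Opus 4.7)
The plan is the standard replacement construction adapted to the $F$ setting: solve the constrained minimization problem on a sequence of almost-minimizers witnessing $(F,\mathcal F)$-almost-minimality of $(V,\Omega)$, then pass to the limit.

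First, by Definition \ref{F-am2}(a), there exists a sequence $\Omega_i\in\C(M)$ and sequences $\eps_i,\delta_i\to 0$ such that each $\Omega_i$ is $(F,\eps_i,\delta_i,0,\mathcal F)$-almost-minimizing in $U$, with $\vert\bd\Omega_i\vert\to V$ and $\Omega_i\to\Omega$. For each $i$, solve the constrained minimization problem in $\mathcal A(\Omega_i,K,\delta_i)$ to obtain $\Theta_i$ minimizing $A^h$. By Proposition \ref{F-mam}, each $\Theta_i$ is itself $(F,\eps_i,\delta_i,0,\mathcal F)$-almost-minimizing in $U$. Moreover, condition (ii') forces $\supp(\Theta_i-\Omega_i)\subset K$, so $\Omega_i$ and $\Theta_i$ agree as Caccioppoli sets on $M\setminus K$.

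Next, since $\M(\bd\Theta_i)\le A^h(\Omega_i)+\delta_i+b\vol(M)$ is uniformly bounded, after passing to a subsequence we may extract limits $\Theta_i\to\Omega^*$ in $\C(M)$ with flat convergence, and $\vert\bd\Theta_i\vert\to V^*$ as varifolds. Thus $(V^*,\Omega^*)\in\vz(M,\Z_2)\cap(\V(M)\times\C(M))$, and by construction $(V^*,\Omega^*)\in\vc(M)$. The sequence $\{\Theta_i\}$ together with the parameters $\eps_i,\delta_i\to 0$ directly witnesses that $(V^*,\Omega^*)$ is $(F,\mathcal F)$-almost-minimizing in $U$, proving (iii). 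Property (iv) then follows from Proposition \ref{F-ambv} applied to $(V^*,\Omega^*)$. Property (v) holds by construction, since $\Theta_i\rest\ins(K)$ is a solution to the constrained minimization problem for the datum $\Omega_i$.

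For (i), note that on the open set $M\setminus K$ we have $\Theta_i\rest(M\setminus K)=\Omega_i\rest(M\setminus K)$ as Caccioppoli sets, and the mass of $\bd\Theta_i$ and $\bd\Omega_i$ on this open set coincide for the same reason; passing to the flat and weak varifold limits yields $(V^*,\Omega^*)\rest(M\setminus K)=(V,\Omega)\rest(M\setminus K)$. For (ii), observe that $\Omega_i\in\mathcal A(\Omega_i,K,\delta_i)$ tautologically, so $A^h(\Theta_i)\le A^h(\Omega_i)$; combined with the almost-minimizing bound $A^h(\Theta_i)\ge A^h(\Omega_i)-\eps_i$, we get $\lim A^h(\Theta_i)=\lim A^h(\Omega_i)=A^h(V,\Omega)$. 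Since $\Theta_i\to\Omega^*$ in flat norm implies $\int_{\Theta_i}h\to\int_{\Omega^*}h$, and $\|\vert\bd\Theta_i\vert\|(M)\to\|V^*\|(M)$, we conclude $A^h(V^*,\Omega^*)=\lim A^h(\Theta_i)=A^h(V,\Omega)$.

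The main subtlety I expect is verifying that the flat limit $\Theta_i\to\Omega^*$ preserves the $A^h$ equality rather than only giving lower semicontinuity, and that agreement of $\Theta_i$ and $\Omega_i$ off $K$ survives the passage to varifold and Caccioppoli limits on the open set $M\setminus K$. Both are resolved by noting that the upper bound $A^h(\Theta_i)\le A^h(\Omega_i)$ is pointwise along the sequence (not just in the limit), and that restriction to an open set is continuous under flat convergence of Caccioppoli sets and weak convergence of varifolds.
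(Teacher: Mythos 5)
Your proposal follows essentially the same route as the paper: solve the constrained minimization problem for the witnessing sequence $\Omega_i$, invoke Proposition \ref{F-mam} so the minimizers $\Theta_i$ remain $(F,\eps_i,\delta_i,0,\mathcal F)$-almost-minimizing (giving (iii) and (v)), use the sandwich $A^h(\Omega_i)-\eps_i\le A^h(\Theta_i)\le A^h(\Omega_i)$ for (ii), and pass to subsequential limits for (i). The only point to tighten is (iv): Proposition \ref{F-ambv} gives the $(2a+b)$-bound on the first variation of $V^*$ only in $U$, so you must combine it with the agreement $(V^*,\Omega^*)\rest(M-K)=(V,\Omega)\rest(M-K)$ from (i) and the hypothesis that $V$ has $(2a+b)$-bounded first variation to conclude the bound on all of $M$, which is exactly how the paper closes this step.
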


\begin{proof}
Let $(V,\Omega)$ and $U$ and $K$ be as in the assumptions of the theorem.  By definition, there exists a sequence $\Omega_i$ such that $\vert \bd \Omega_i\vert\to \vert V\vert$ and $\Omega_i\to \Omega$ and $\Omega_i$ is $(F,\eps_i,\delta_i,0,\mathcal F)$-almost-minimizing in $U$ and $\eps_i,\delta_i\to 0$.  Let $\Theta_i$ be the solution to a corresponding constrained minimization problem, i.e., 
\[
A^h(\Theta_i) = \inf_{\Theta \in \mathcal A(\Omega_i,K,\delta_i)} A^h(\Theta).
\]
By compactness, passing to a subsequence if necessary, we can assume that $\vert \bd \Theta_i\vert \to V^*$ and $\Theta_i \to \Omega^*$. We claim that $(V^*,\Omega^*)$ has the necessary properties. 

Properties (i) and (v) are clear.  To check property (iii), note that by Proposition \ref{F-mam}, each $\Theta_i$ is $(F,\eps_i,\delta_i,0,\mathcal F)$-almost-minimizing in $U$.  Since $\vert \bd \Theta_i\vert \to V^*$ and $\Theta_i\to \Omega^*$, it follows that $(V^*,\Omega^*)$ is $(F,\mathcal F)$-almost-minimizing in $U$.  To see that (ii) holds, observe that 
\[
A^h(\Omega_i) - \eps_i \le A^h(\Theta_i) \le A^h(\Omega_i)
\]
for all $i$. We can let $i\to \infty$ in the above equation to deduce that 
\[
A^h(V,\Omega) = A^h(V^*,\Omega^*)
\]
which is property (iii).  It remains to check (iv). Since $V$ has $(2a+b)$-bounded first variation, it follows that that $V^*$ has $(2a+b)$-bounded variation on $M-K$. Also $V^*$ has $(2a+b)$-bounded variation on $U$ by Proposition \ref{F-ambv}. This implies that $V^*$ has $(2a+b)$-bounded variation on all of $M$.
\end{proof}

\begin{rem}
Note that since the replacement $(V^*,\Omega^*)$ is still almost-minimizing in $U$, it is therefore possible to obtain a replacement $(V^{**},\Omega^{**})$ for $(V^*,\Omega^*)$ in any compact set $K' \subset U$, and so on. 
\end{rem}

To prove the regularity of replacements, we need to use the curvature estimates for volume preserving stable critical points of $A^{h+h_0}$. 

\begin{prop}
Let $(V^*,\Omega^*)$ be a replacement for $(V,\Omega)$ in $K$.  Then, in the interior of $K$, the varifold $V^*$ is induced by a smooth, almost-embedded hypersurface $\Sigma$ with multiplicity one. The surface $\Sigma$ has mean curvature $H = h + h_0$ for some constant $h_0$ and its touching set is contained in a countable union of $(n-1)$-dimensional submanifolds.  
The surface $\Sigma$ coincides with $\bd \Omega^*$ in the interior of $K$. The surface $\Sigma$ is volume preserving stable as an immersion for $A^{h+h_0}$ in the interior of $K$.  
Finally, in the interior of $K$, there is a curvature estimate 
\[
\|A_\Sigma\|^2(x) \le \frac{C}{\dist(x,\bd K)^2}
\]
where $C$ is a constant that depends only on $M$, $a$, $b$, and $\|V\|(M)$. 
\end{prop}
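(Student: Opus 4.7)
The plan is to pass to a smooth subsequential limit along the sequence $\Theta_i \in \C(M)$ of solutions to the constrained minimization problems used to construct the replacement. From the previous proposition, each $\bd \Theta_i$ is smooth and embedded in $\ins(K)$ with mean curvature $h + h_{0,i}$ for some constant $h_{0,i}$ satisfying $\vert h_{0,i}\vert \le 2a+b$, and $\bd \Theta_i$ is volume-preserving stable for $A^{h+h_{0,i}}$ in $\ins(K)$. After passing to a subsequence, $h_{0,i} \to h_0$ for some constant $h_0$ with $\vert h_0\vert \le 2a+b$, so the approximators have uniformly bounded mass (from $\vert \bd \Theta_i\vert \to V^*$) and uniformly bounded mean curvature $\|H\|_\infty \le 2a+2b$. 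Proposition \ref{curvature-estimates} then supplies the uniform bound $\|A_{\bd \Theta_i}\|^2(x) \le C/\dist(x,\bd K)^2$ on $\ins(K)$, with $C$ depending only on $M$, $a$, $b$, and $\|V^*\|(M)$.

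With these uniform curvature and mean curvature bounds in hand, standard elliptic PDE theory applied to the CMC equation (writing pieces of $\bd \Theta_i$ as graphs in small balls and invoking Schauder estimates) delivers smooth subsequential convergence $\bd \Theta_i \to \Sigma$ in $\ins(K)$ to an immersed, almost-embedded hypersurface $\Sigma$ with mean curvature $h + h_0$, possibly with some integer multiplicity on each component. Since $\Theta_i \to \Omega^*$ as Caccioppoli sets and the smooth convergence of boundaries is compatible with the Caccioppoli structure, we obtain $\Sigma = \bd \Omega^*$ in $\ins(K)$, oriented by the inward normal to $\Omega^*$. Volume-preserving stability of $\Sigma$ for $A^{h+h_0}$ passes to the limit by continuity of the stability operator under smooth convergence, and the desired curvature estimate for $\Sigma$ is the limiting version of the bound for the $\bd \Theta_i$.

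The main obstacle is ruling out higher multiplicity, for which I would run the standard argument from \cite{zhou2020multiplicity}. If $\Sigma$ has multiplicity $k \ge 2$ at some point, then locally $\bd \Theta_i$ consists of $k$ graphical sheets converging to the same limit, and because $\bd \Theta_i$ is the measure-theoretic boundary of the Caccioppoli set $\Theta_i$, adjacent sheets must carry opposite inward normals. Passing to the limit yields two sheets of $\Sigma$ meeting tangentially with opposite inward normals and both satisfying $H = h + h_0$ with respect to their own normal, which forces $h + h_0 \equiv 0$ on an open piece of $\Sigma$. Then $\Sigma$ locally lies in the level set $\{h = -h_0\}$; at any regular point $x$ of $h$ on this piece, $\Sigma$ locally coincides with $\Gamma(x)$, so the mean curvature $H_{\Gamma(x)}$ vanishes on an open subset of $\Gamma(x)$, contradicting property $\operatorname{(T)}$. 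Since $h$ is Morse on a compact manifold, its critical set is finite and cannot contain an entire component of $\Sigma$, so a regular point $x$ is always available on such a component. Hence $\Sigma$ has multiplicity one, and the touching set bound is then an immediate application of Proposition \ref{prop:touching set}.
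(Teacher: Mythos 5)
Your proposal is correct and follows essentially the same route as the paper: the replacement is realized in $\ins(K)$ as a limit of the smooth, embedded, volume-preserving stable constrained minimizers with uniformly bounded mean curvature, so regularity comes from the Bellettini--Chodosh--Wickramasekera curvature estimate (Proposition \ref{curvature-estimates}) and the touching-set claim from Proposition \ref{prop:touching set}. The only difference is that you spell out the multiplicity-one step, which the paper leaves implicit, and your argument for it (adjacent sheets of $\bd\Theta_i$ carry opposite inward normals, so higher multiplicity would force $h+h_0\equiv 0$ on an open piece of the limit, contradicting property (T) since the Morse critical set is finite) is the intended standard argument and is sound.
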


\begin{proof}
In the interior of $K$, the replacement $V^*$ is a limit of smooth, embedded, volume preserving stable critical points $\Sigma_k$ for $A^{h+h_k}$. Here $\{h_k\}$ is a sequence of constants. Moreover, the surfaces $\Sigma_k$ have uniformly bounded mean curvature. Hence the regularity follows from the curvature estimates of Proposition \ref{curvature-estimates}. By Proposition \ref{prop:touching set}, the touching set is contained in a countable union of $(n-1)$-dimensional submanifolds since $h$ satisfies property (T). 
\end{proof}

We conclude by investigating the tangent cones to almost-minimizers.  We show that if $(V,\Omega)$ is $(F,\mathcal F)$-almost-minimizing in annuli, then every tangent cone to $V$ is an integer multiple of a plane. In the following, $\eta_{p,r}$ denotes the map which rescales by a factor of $1/r$ centered at $p$. 

\begin{prop}
Assume that $(V,\Omega)$ has $(2a+b)$-bounded first variation and is $(F,\mathcal F)$-almost minimizing in a set $U$. Then for any sequence $p_i \to p \in U$ and any sequence of scales $r_i \to 0$, every varifold limit $\overline V = (\eta_{p_i,r_i})_\sharp V$ is an integer multiple of a complete, embedded minimal hypersurface. Moreover, every varifold tangent to $V$ is an integer multiple of a hyperplane. 
\end{prop}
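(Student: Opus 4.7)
The plan is to exploit the replacement construction from the preceding proposition, combined with the rescaling. Since the smooth almost-embedded CMC replacements have mean curvature bounded by $2a+b$ and satisfy the curvature estimates of Proposition \ref{curvature-estimates}, rescaling by $1/r_i$ produces smooth almost-embedded pieces with mean curvature $\le r_i(2a+b)\to 0$ and uniformly bounded second fundamental forms, so they subconverge smoothly to embedded minimal hypersurfaces (almost-embedded touching points disappear in the limit because opposite-side mean curvatures cancel). First I would note that the $(2a+b)$-bound on $\delta V$ rescales to an $r_i(2a+b)$-bound for $V_i:=(\eta_{p_i,r_i})_\sharp V$ on compact subsets of $\R^{n+1}$, so by Allard compactness $\overline V$ is rectifiable and stationary.

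Next, for any annulus $A=\an(q,s,r)\subset \R^{n+1}$ and $i$ large, the preimage annulus $A_i\subset U$ around $p_i$ at scale $r_i$ lies inside the almost-minimizing region. The preceding proposition yields a replacement $V_i^*$ of $V$ in a compact subset of $A_i$ which is smooth almost-embedded CMC with mean curvature $\le 2a+b$, satisfies the Bellettini--Chodosh--Wickramasekera curvature estimates, and agrees with $V$ outside $A_i$. Setting $W_i:=(\eta_{p_i,r_i})_\sharp V_i^*$, the rescaled mean curvature and curvature bounds tend to zero on compact subsets of $A$, so the $W_i$ subconverge smoothly to a smooth embedded minimal hypersurface $\overline W$ in $A$, with $\overline W=\overline V$ outside $A$. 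Since $A^h(V_i^*)=A^h(V)$ and the $h$-correction in the annulus is of order $\|h\|_\infty\vol(A_i)=O(r_i^{n+1})$, we obtain $\|\overline W\|(A)=\|\overline V\|(A)$. Running this over a family of annuli covering $\R^{n+1}\setminus\{0\}$ (after translating $p$ to the origin), iterating replacements in overlapping annuli in the standard Almgren--Pitts manner, and applying a removable singularity argument at the origin for stationary integer varifolds yields $\overline V=k\,|\overline \Sigma|$ for some integer $k$ and some complete embedded minimal hypersurface $\overline \Sigma$.

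For the second assertion, take $p_i=p$ so that $\overline V$ is in addition a cone. By the first part, $\overline V=k\,|\overline \Sigma|$ for a complete embedded minimal cone $\overline \Sigma$. The replacements are volume-preserving stable for $A^{h+h_0}$, and this property passes to the rescaled smooth limit (where $h$ vanishes effectively under the blow-up), so $\overline \Sigma$ is a volume-preserving stable embedded minimal cone in $\R^{n+1}$ with $3\le n+1\le 7$. Proposition \ref{simons} then forces $\overline \Sigma$ to be a hyperplane.

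The hard part will be identifying $\overline V$ with $\overline W$ \emph{inside} $A$ in the second paragraph. Outside $A$ this is immediate from $V_i^*=V$ there, but inside one must leverage the mass equality $\|\overline W\|(A)=\|\overline V\|(A)$, together with an overlapping-annuli iteration and unique continuation for smooth minimal hypersurfaces, to rule out the possibility that $\overline V$ and $\overline W$ bifurcate inside $A$. This step is by now standard in Almgren--Pitts min-max theory, but adapting it here requires careful bookkeeping of the $h$-integrand and of the volume-type constraint (iii) built into Definition \ref{F-am}, together with the fact that after rescaling all these corrections disappear to leading order in $r_i$.
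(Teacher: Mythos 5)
Your treatment of the first assertion follows the paper's own route. The paper argues exactly as in \cite[Lemma 5.10]{zhou2019min}: rescale (so the $(2a+b)$-bound on first variation becomes $r_i(2a+b)\to 0$), transplant the replacements of $(V,\Omega)$ constructed in annuli at scale $r_i$ to the blow-up, use the curvature estimate of Proposition \ref{curvature-estimates} together with the vanishing rescaled mean curvature and the $O(r_i)$ mass bookkeeping from $A^h(V,\Omega)=A^h(V^*,\Omega^*)$ to produce volume-preserving stable minimal replacements for $\overline V$ in every annulus, and then defer to the standard ``replacements imply regularity'' scheme, observing that strong stability enters only through curvature estimates and a Simons-type theorem, for which the volume-preserving versions (Proposition \ref{curvature-estimates}, Proposition \ref{simons}) suffice. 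That is what you do, including acknowledging that the identification of $\overline V$ with its replacement inside the annulus is the part carried by the standard machinery. (One small slip: the second fundamental form bound is scale-invariant, so it stays uniformly bounded rather than tending to zero; uniform boundedness is exactly what the smooth subconvergence needs.)

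For the second assertion your route deviates and contains the one under-justified step: you assert that the tangent cone $\overline\Sigma$ is globally volume-preserving stable ``because the replacements are,'' but the replacements are stable only in the annuli where they are built, and transferring that stability to $\overline V$ itself inside an annulus presupposes the identification $\overline V=\overline W$ there --- precisely the step you flagged as the hard part. The claim is repairable (a compactly supported mean-zero variation of the cone in $\R^{n+1}\setminus\{0\}$ is supported in a single annulus, where the regularity argument does identify the cone with a volume-preserving stable replacement), but it is also unnecessary: once the first assertion is known, a tangent varifold is a dilation-invariant integer multiple of a complete smooth embedded minimal hypersurface; since the vertex lies in the support (by monotonicity, unless $\overline V=0$), smoothness at the vertex together with the cone structure already forces the support to be a hyperplane. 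This is how the argument of \cite[Proposition 5.11]{zhou2019min}, which the paper invokes, concludes; Proposition \ref{simons} is needed inside the regularity machinery of the first part, not at this final step.
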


\begin{proof} 
Choose a sequence of points $p_i\to p$ and a sequence of scales $r_i\to 0$.  Consider a varifold limit of the form
 \[
 \overline V  = \lim (\eta_{p_i,r_i})_\sharp V.
 \]
We can argue exactly as in \cite[Lemma 5.10]{zhou2019min} to show that $\overline V$ admits volume preserving stable minimal replacements in annuli. More precisely, for any annulus $\an \subset T_pM$, there exists a varifold $\overline V^*$ on $T_pM$ such that 
\begin{itemize}
\item[(i)] $\overline V \rest (T_pM \setminus \an) = \overline V^* \rest (T_pM \setminus \an)$,
\item[(ii)] $\|\overline V\|(B) = \|\overline V^*\|(B)$ for a large ball $B$ containing $\an$,
\item[(iii)] the varifold $\overline V^*$ is induced by a smooth, embedded, volume preserving stable minimal hypersurface with multiplicity in $\an$.
\end{itemize}
In fact, this process can be iterated any number of times to construct replacements for $\overline V^*$, replacements for these replacements, and so on. 

It is well known that the existence of replacements implies the regularity of $\overline V$ when volume preserving stability is replaced by strong stability in (iii), c.f. \cite[Appendix C]{zhou2019min}. However, strong stability is used only to get curvature estimates and to apply Simons theorem. Since volume preserving stable minimal hypersurfaces also have curvature estimates and satisfy  Simons theorem (Proposition \ref{simons}), the existence of volume preserving stable replacements is also enough to prove the regularity of $\overline V$. 

Finally, once the regularity of all such $\overline V$ is known, we can argue exactly as in \cite[Proposition 5.11]{zhou2019min} to show that every varifold tangent to $V$ is an integer multiple of a hyperplane.
\end{proof}

\subsection{Regularity} 

\label{section:regularity}

The goal of this subsection is to prove the regularity of the min-max pair $(V,\Omega)$. Given a smooth almost-embedded hypersurface $\Sigma$ with mean curvature $h+h_0$, let $\mathcal R(\Sigma)$ be the set of embedded points of $\Sigma$ and let $\mathcal S(\Sigma)$ denote the touching set of $\Sigma$.

Next we construct replacements for $(V,\Omega)$ on overlapping annuli.  To prove regularity, it is essential that the consecutive replacements can be selected to have matching mean curvature. The next proposition shows that this is always possible. 

\begin{prop}\label{prop:mean curvature match1}
Assume $(V,\Omega)$ is $F$-almost-minimizing in annuli and fix a point $p\in \supp \|V\|$.  
Fix sufficiently small numbers $s_1 < r_1$ and let $(V^*,\Omega^*)$ be a replacement for $(V,\Omega)$ in $\an(p,s_1,r_1)$. Let $\Sigma^*$ be the smooth, almost-embedded hypersurface inducing $V^*$ in $\an(p,s_1,r_1)$ and suppose $\Sigma^*$ has mean curvature $h+h_0$. Choose $s_1 < r_2 < r_1$ so that $\bd B_{r_2}(p)$ intersects $\Sigma^*$ and the countable union of manifolds containing $\mathcal S(\Sigma^*)$ transversally.  Fix any $s < s_1$ and let $(V^{**}_s,\Omega^{**}_s)$ be a replacement for $(V^*,\Omega^*)$ in $\an(p,s,r_2)$. Let $\Sigma^{**}_s$ be the smooth, almost-embedded hypersurface inducing $V^{**}$ in $\an(p,s,r_2)$. Then $\Sigma^{**}_s$ has mean curvature $H = h + h_0$ for the same constant $h_0$. 
\end{prop}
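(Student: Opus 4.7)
The strategy is to exploit the fact that the second replacement is constructed on some compact $K \subset \an(p,s,r_2)$, so $(V^{**}_s,\Omega^{**}_s)$ coincides with $(V^*,\Omega^*)$ outside $K$. By shrinking $K$ slightly, I can force $V^{**}_s$ to agree with $V^*$ on a collar just inside $\bd B_{r_2}(p)$, and then read off the constant mean curvature from this collar where $\Sigma^*$ is known to have mean curvature $h+h_0$.

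The first step is to select $K$ in the construction of the second replacement to be of the form $K = \overline{\an(p,s+\eta,r_2-\eta)}$ for some small $\eta > 0$. Outside $K$ (within $\an(p,s,r_2)$) the replacement is unchanged, so on the outer collar $U_\eta := \an(p,r_2-\eta,r_2)$ one has $(V^{**}_s,\Omega^{**}_s) = (V^*,\Omega^*)$, and hence $\Sigma^{**}_s$ coincides with $\Sigma^*$ on $U_\eta$. The transversality hypothesis at $r_2$ ensures that $\Sigma^* \cap \bd B_{r_2}(p)$ contains a regular embedded (non-touching) point $q$; by choosing $\eta$ sufficiently small, $U_\eta$ contains an open neighborhood $W$ of $q$ on which $\Sigma^*$ (and hence $\Sigma^{**}_s$) is smoothly embedded with mean curvature $h + h_0$.

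The second step is to appeal to the single Lagrange multiplier structure of the replacement: $\Sigma^{**}_s$, as the smooth almost-embedded hypersurface induced by $V^{**}_s$ in $\an(p,s,r_2)$, has mean curvature of the form $h + h_0'$ for a single constant $h_0'$. This is inherited from the constrained minimizers in the replacement construction, each of which is CMC with a single Lagrange multiplier that passes to the limit. Evaluating this on the neighborhood $W$ where $\Sigma^{**}_s = \Sigma^*$ yields $h + h_0' = h + h_0$ on $W$, hence $h_0' = h_0$.

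The main obstacle is carefully establishing that the Lagrange multiplier of $\Sigma^{**}_s$ is globally well-defined across the full smooth locus of $V^{**}_s \rest \an(p,s,r_2)$, rather than potentially differing between the portion in $\ins(K)$ (where the new constrained minimizers live) and the portion outside $K$ (inherited from $\Sigma^*$). This requires that the single Lagrange multiplier at each stage of the constrained minimization passes consistently to the limit and describes all connected components of the limit hypersurface, ruling out a mismatch across $\bd K \cap \Sigma^{**}_s$ by unique continuation for the CMC equation at the regular transverse intersection points.
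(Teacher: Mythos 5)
There is a genuine gap, and it sits exactly where you flag your ``main obstacle'': the claim that $\Sigma^{**}_s$ carries a \emph{single} Lagrange multiplier $h_0'$ across both the replaced region and the untouched collar is the entire content of the proposition, and nothing in your argument establishes it. The replacement regularity only gives a smooth almost-embedded hypersurface with one constant $h+h_0'$ in the \emph{interior of the compact set $K$} where the constrained minimizations were performed; outside $K$ the pair is simply $(V^*,\Omega^*)$ (and, near the inner radius $s<s_1$, it is the original $(V,\Omega)$, for which no regularity is known at this stage). So for your shrunken $K=\overline{\an(p,s+\eta,r_2-\eta)}$ there is no theorem giving a globally defined CMC structure with one constant on all of $\an(p,s,r_2)$. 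Your proposed repair via unique continuation does not close this: unique continuation needs the new surface in $\ins(K)$ and the old surface $\Sigma^*$ on the collar to coincide on an open piece of hypersurface, or at least to match in $C^1$ across $\bd K$. Neither is available a priori — the constrained minimizer inside $K$ may be an entirely different surface whose closure meets $\bd K$ arbitrarily — and in the paper's logical order the smooth gluing across the interface is proved \emph{after}, and \emph{using}, the mean-curvature matching, so invoking it here is circular. A second, smaller issue: the statement concerns a replacement in $\an(p,s,r_2)$ (so that $\Sigma^{**}_s$ reaches $\bd B_{r_2}(p)$, where the transversality hypothesis lives and where the later gluing happens); replacing only in $\overline{\an(p,s+\eta,r_2-\eta)}$ proves a statement about a different object and merely relocates the matching problem to the sphere of radius $r_2-\eta$, where no transversality was arranged.

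The paper avoids all of this with a first-variation argument by contradiction, which needs no overlap and no shrinking of $K$: since the second replacement leaves $V^*$ untouched in $\an(p,r_2,r_1)$, one picks a regular point of $\Sigma^*$ there (mean curvature $h+h_0$) and a regular point of $\Sigma^{**}_s$ in $\an(p,s,r_2)$ (mean curvature $h+h_1$), and chooses a vector field $X$ supported near the two points with $\int_{\Sigma^*}\la X,\nu\ra=-\int_{\Sigma^{**}_s}\la X,\nu\ra\neq 0$, so the enclosed volume is preserved to first order while, if $h_0\neq h_1$, $\delta A^h|_{(V^{**}_s,\Omega^{**}_s)}(X)<0$ after possibly flipping the sign of $X$. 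Flowing and discretizing produces admissible comparison sequences (condition (iii) of Definition \ref{F-am} holds because the area drops faster than $2a$ times the volume change) that violate the $(F,\mathcal F)$-almost-minimizing property of $(V^{**}_s,\Omega^{**}_s)$, forcing $h_1=h_0$. If you want to salvage your approach you would essentially have to reprove this variational step; as written, the proposal assumes the conclusion.
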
 

\begin{proof} 
Choose $s < s_1 < r_2 < r_1$ as in the statement of the proposition. Observe that the maximum principle ensures that $\Sigma^*$ and $\Sigma^{**}_s$ are both non-empty provided $r_1$ is sufficiently small.  Note that $\Sigma^{**}_s$ has mean curvature $H = h + h_1$ for some constant $h_1$, and we need to show that $h_0 = h_1$. 

Suppose to the contrary that $h_0 \neq h_1$. Pick a point $p \in \mathcal R(\Sigma^*)\cap \an(p,r_2,r_1)$ and pick a point $q\in \mathcal R(\Sigma^{**}_s)\cap \an(p,s,r_2)$. Let $\nu$ be the unit normal pointing into $\Omega^{**}_s$. Let $X$ be a vector field supported in a small neighborhood of $\{p,q\}$ such that 
\[
\int_{\Sigma^*} \la X,\nu\ra = -\int_{\Sigma^{**}_s} \la X,\nu\ra \neq 0. 
\]
Note that this implies $\delta\vol|_{\Omega_s^{**}}(X)=0$. Since $h_0\neq h_1$, after replacing $X$ by $-X$ if necessary, we obtain that 
\[
\delta A^h|_{V^{**}_s,\Omega^{**}_s}(X) < 0. 
\]
Hence, by continuity, there are $\eps_1,\eps_2 > 0$ such that 
\[
\delta A^h|_\Theta(X) < -\eps_2, \quad \vert \delta \vol|_\Theta (X) \vert \le \frac{\eps_2}{4a}
\]
for all $\Theta\in \mathcal C(M)$ with $\mathscr F((\vert \bd \Theta\vert,\Theta), (V^{**}_s,\Omega^{**}_s)) < 2\eps_1$.  It follows that for any $\Theta\in \mathcal C(M)$ with $\mathscr F((\vert \bd \Theta\vert,\Theta), (V^{**}_s,\Omega^{**}_s)) < \eps_1$, we can flow $\Theta$ along the flow of $X$ for a uniform short time $\tau$ to obtain a family $\Theta_t$ such that 
\begin{itemize}
\item[(i)] $A^h(\Theta_t) \le A^h(\Theta)$ for all $t\in [0,\tau]$,
\item[(ii)] $2a \vert \vol(\Theta) - \vol(\Theta_t)\vert \le A^h(\Theta) - A^h(\Theta_t)$ for all $t\in [0,\tau]$,
\item[(iii)] $A^h(\Theta_\tau) \le A^h(\Theta) - \eps_3$.
\end{itemize}
Here $\eps_3>0$ is a uniform constant that does not depend on $\Theta$. Discretizing this family shows that $\Theta$ is not $(F,\eps_3,\delta,0,\mathcal F)$-almost-minimizing for any $\delta > 0$. Since this is true for all $\Theta$ which are $\mathscr F$-close to $(V^{**}_s,\Omega^{**}_s)$, this contradicts that $(V^{**}_s,\Omega^{**}_s)$ is $(F, \mathcal F)$-almost-minimizing. 
\end{proof}

We can now prove the regularity. 

\begin{prop}
\label{F-regularity}
Assume that $(V,\Omega)$ is stationary for $F$. Further suppose that $(V,\Omega)$ is $F$-almost-minimizing in annuli. Then $V$ is induced by a smooth, closed, almost-embedded hypersurface $\Sigma$ with multiplicity one. The touching set of $\Sigma$ is contained in a countable union of $(n-1)$-dimensional submanifolds. Moreover, $\Sigma$ coincides with the boundary of $\Omega$. Finally, the mean curvature of $\Sigma$ satisfies $H = h -f'(\vol(\Omega))$. 
\end{prop}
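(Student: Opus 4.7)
The plan is to apply the standard Almgren-Pitts regularity argument, iterating replacements in overlapping annuli, with the new input being the matching mean curvature property of Proposition \ref{prop:mean curvature match1} and unique continuation enabled by property (T).

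For local regularity at $p\in\supp\|V\|$, fix $r>0$ small enough that $(V,\Omega)$ is $(F,\mathcal F)$-almost-minimizing in every annulus $\an(p,s,r)$ with $0<s<r$. Choose $r_0<r/2$ and let $(V^*,\Omega^*)$ be a replacement for $(V,\Omega)$ in $\an(p,r_0,r)$; by the replacement regularity already proved, $V^*$ is induced in the annulus by a smooth, multiplicity-one, almost-embedded hypersurface with mean curvature $h+h_0$, and $V^* = V$ on $\overline{B}_{r_0}(p)$. I would then iterate: build successive replacements $V^{(k)}$ of $V^{(k-1)}$ in shrinking annuli $\an(p,r_0/2^k,r/2^k)$, chosen with the transversality condition of Proposition \ref{prop:mean curvature match1}. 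Each $V^{(k)}$ is smooth CMC with the \emph{same} constant $h_0$ by Proposition \ref{prop:mean curvature match1}, and on the overlap with $V^{(k-1)}$ the two smooth CMCs agree by CMC unique continuation across $\bd B_{r/2^{k-1}}(p)$, where they already coincide in the exterior. Chaining these replacements and passing to the limit $k\to\infty$ yields that $V$ itself is a smooth, multiplicity-one, almost-embedded CMC hypersurface on $B_{r_0}(p)\setminus\{p\}$, since $V^{(k)}=V$ on $\overline{B}_{r_0/2^k}(p)$.

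To extend across $p$, I would invoke the tangent cone proposition proved immediately above the statement: every varifold tangent of $V$ at $p$ is an integer multiple of a hyperplane. Combined with the uniform mean curvature bound and the multiplicity-one conclusion on the punctured ball, Allard's regularity theorem implies $V$ is smooth at $p$. Repeating this at every $p\in\supp\|V\|$ shows $V$ is induced by a closed, smooth, multiplicity-one, almost-embedded hypersurface $\Sigma$; the touching set statement follows from Proposition \ref{prop:touching set}. The identification $\Sigma=\bd\Omega$ is obtained by passing to the limit from $\bd\Omega^*=\Sigma^*$ on the replacement annulus together with $\Omega^*=\Omega$ outside.

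Finally, to pin down the constant, I would use the global stationarity $\delta F|_{(V,\Omega)}=0$. For a $C^1$ vector field $X$ supported near an embedded point of $\Sigma$, the first variation formula \eqref{eq:1st variation for F} reduces to $\int_\Sigma(h-f'(\vol(\Omega))-H)\la X,\nu\ra=0$, forcing $H=h-f'(\vol(\Omega))$ there. Unique continuation (again enabled by property (T) via Proposition \ref{prop:touching set}) propagates this identity to all of $\Sigma$. The main obstacle throughout is exactly this CMC unique continuation needed to glue matched replacements across their overlaps in the presence of a touching set; property (T) and Proposition \ref{prop:touching set} provide precisely the thinness of the touching set that makes this possible.
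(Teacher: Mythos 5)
The local-regularity half of your argument is essentially the paper's: once Proposition \ref{prop:mean curvature match1} guarantees that successive replacements carry the same constant $h_0$, the iterated-replacement, gluing, unique-continuation and tangent-cone/removable-singularity scheme is exactly what the paper invokes (it simply cites \cite{zhou2020existence} for this step), and your sketch of it is acceptable at that level of detail. The genuine gap is in the conclusion ``$\Sigma$ coincides with $\bd\Omega$.'' Your one-line justification --- ``passing to the limit from $\bd\Omega^*=\Sigma^*$ on the replacement annulus together with $\Omega^*=\Omega$ outside'' --- does not work: a replacement agrees with $(V,\Omega)$ only \emph{outside} the annulus in which it was constructed, and inside that annulus the sets $\Omega^{**}_s$ were produced by constrained minimization, so nothing identifies their limit with $\Omega$ itself; the regularity scheme identifies $V$ locally, not $\bd\Omega$. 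A priori one only knows $\|\,\vert\bd\Omega\vert\,\|\le\|V\|$, so after local regularity the constancy theorem gives $\vert\bd\Omega\vert=\sum_i a_i\vert\Gamma_i\vert$ with $a_i\in\{0,1\}$, and you must rule out components with $a_i=0$, i.e.\ sheets of $V$ carrying no boundary mass. This is not a vacuous worry: for the $E$ functional the analogous statement is false (Proposition \ref{E-regularity} allows minimal components of $V$ that are not part of $\bd\Omega$), so some argument specific to $F$ is required.

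The paper closes this gap using the hypothesis that $(V,\Omega)$ is stationary for $F$: if $a_i=0$, then near $\Gamma_i$ the set $\Omega$ is unchanged by local deformations, and since $\Gamma_i$ has mean curvature $h+h_i$ which cannot vanish identically (this is where the Morse/property (T) assumption on $h$ enters), one can choose a vector field supported near $\Gamma_i$ that decreases area, hence $F$, to first order --- contradicting $\delta F|_{(V,\Omega)}=0$. Your proposal never makes this argument, and your final step is partly circular because of it: reducing the first variation formula \eqref{eq:1st variation for F} to $\int_\Sigma\big(h-f'(\vol(\Omega))-H\big)\la X,\nu\ra=0$ near an embedded point already presupposes that $V$ agrees with the multiplicity-one boundary $\vert\bd\Omega\vert$ there, which is exactly the unproven identification. (Once $a_i=1$ is known, the determination of the constant is immediate component by component --- local regularity already gives $H=h+h_i$ with $h_i$ constant on each connected component, so no unique continuation is needed for that step, only the stationarity argument fixing $h_i=-f'(\vol(\Omega))$.)
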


\begin{proof}
Once we know that successive replacements on overlapping annuli must have the same mean curvature, then we can proceed exactly as in \cite{zhou2020existence} to deduce local regularity: for each $p\in \supp \|V\|$ there is an $r > 0$ such that $V$ is induced by a smooth, almost-embedded hypersurface $\Sigma(p,r)$ with multiplicity one in $B_r(p)$. Moreover, the touching set of $\Sigma(p,r)$ is contained in a countable union of $(n-1)$-dimensional manifolds.  Finally, $\Sigma(p,r)$ has mean curvature $h + h_0(p,r)$ for some constant $h_0(p,r)$ that a priori depends on $p$ and $r$.  

The local regularity implies that $V$ is induced by finitely many smooth, almost-embedded, multiplicity one  components $(\Gamma_i)_{i=1}^k$ with mean curvature $H_i = h + h_i$, where $h_i$ is a constant that depends a priori on the component. Since $\| \, \vert \bd \Omega \vert\, \| \le \|V\|$, the constancy theorem implies that 
\[
\vert \bd \Omega\vert = \sum_{i=1}^k a_i \vert \Gamma_i\vert,
\]
where each $a_i$ is either 0 or 1. 
We claim that in fact $a_i = 1$ for all $i$. Indeed, this follows from the fact that $(V,\Omega)$ is stationary for $F$. If some $a_i$ was equal to 0, then since the mean curvature of $\Gamma_i$ is not identically 0, we could construct a local deformation near $\Gamma_i$ decreasing the area to first order, and leaving the region $\Omega$ unchanged. But such a deformation would also decrease $F$ to first order. Thus $a_i$ must equal 1, as claimed.  

Finally, the fact that $(V,\Omega)$ is stationary for $F$ implies that the mean curvature of $\Gamma_i$ with respect to the normal vector pointing into $\Omega$ must equal $h -f'(\vol(\Omega))$ for every component $\Gamma_i$. Otherwise it would again be possible to construct a deformation decreasing the $F$ functional to first order. This completes the proof.
\end{proof}

Theorem \ref{F-min-max} now follows by combining Proposition \ref{F-pt}, Proposition \ref{F-ca}, and Proposition \ref{F-regularity}.

\section{Absolute Min-Max for E}\label{S:absolute min-max for E}

The goal of this section is to develop a min-max theory for the $E$ functional. Throughout this section, we fix a closed manifold $M$, and a smooth function $f\colon[0,\vol(M)]\to \R$ satisfying (\ref{f is even}), and we abbreviate $E = E^f$. 

\subsection{Pull Tight}  
 We will employ a pull-tight operation to ensure that every point in the critical set is stationary for $E$.

\begin{prop}
\label{E-pt}
Let $\Pi$ be the homotopy class of an $\mathbf F$ continuous map $\Phi_0:X \to \mathcal B(M,\Z_2)$. Let $\{\Phi_i\}$ be a critical sequence for $\Pi$.  There is another critical sequence $\{\Xi_i\}$ such that $\mathcal K(\{\Xi_i\}) \subset \mathcal K(\{\Phi_i\})$ and every $(V,T)\in \mathcal K(\{\Xi_i\})$ is stationary for $E$. 
\end{prop}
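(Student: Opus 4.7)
The plan is to transcribe the proof of Proposition \ref{F-pt} into the absolute $\vz$ setting, with $E$ in place of $F$ and using the first-variation continuity from Proposition \ref{prop:continuity of first variation of E}. Set $L = L^E(\{\Phi_i\}) + 1$ and work inside the compact space $Y = Y_{z, L} \subset \vz(M, \Z_2)$. Let
\[ Y_0 = \{(V, T) \in Y : (V, T) \text{ is stationary for } E\}, \]
and decompose its complement into concentric $\mathscr{F}$-annuli $Y_j$, $j \ge 1$, exactly as in the proof of Proposition \ref{F-pt}. By compactness of each $Y_j$ together with Proposition \ref{prop:continuity of first variation of E}, one obtains a constant $c_j > 0$ so that at every $(V, T) \in Y_j$ there is a $C^1$ vector field $\mathcal X_{V, T}$ with $\|\mathcal X_{V, T}\|_{C^1} \le 1$ and $\delta E|_{(V, T)}(\mathcal X_{V, T}) \le -c_j$.

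Next I would glue these pointwise choices into a globally continuous selector $\mathcal X : Y \to \mathfrak X(M)$ via the partition-of-unity argument from \cite[Section 2.2, Step 2]{wang2023existence} used already in the $F$ case. The continuity statement in Proposition \ref{prop:continuity of first variation of E} then yields continuous positive functions $g, \rho, \mathcal T, \mathcal L$ on $(0, \infty)$ tending to $0$ at $0$ with
\[ \delta E|_{(V', T')}(\mathcal X(V, T)) \le -g\bigl(\mathscr{F}((V, T), Y_0)\bigr) \]
whenever $\mathscr{F}((V', T'), (V, T)) \le \rho(\mathscr{F}((V, T), Y_0))$, and such that if $\phi_{V, T}$ denotes the flow of $\mathcal X(V, T)$, then the time-$\mathcal T(\gamma)$ pushforward decreases $E$ by at least $\mathcal L(\gamma)$, where $\gamma = \mathscr{F}((V, T), Y_0)$. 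The key structural inputs here are that $C^1$ diffeomorphism pushforward is continuous on $\vz(M, \Z_2)$ and preserves $\B(M, \Z_2)$, and that $E$ is $\mathscr{F}$-continuous on $\vz$.

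Defining $\Psi : Y \times [0, 1] \to Y$ by $\Psi((V, T), s) = \phi_{V, T}(\mathcal T(\gamma) s)_\sharp (V, T)$ and $\Xi_i(x) = \pi \circ \Psi((|\Phi_i(x)|, \Phi_i(x)), 1)$ with $\pi(V, T) = T$, the linear-in-$s$ interpolation yields a flat-continuous homotopy from $\Phi_i$ to $\Xi_i$ inside $\B(M, \Z_2)$; concatenating with the original homotopy from $\Phi_0$ to $\Phi_i$ places $\{\Xi_i\}$ in $\Pi$. The uniform $E$-decrease along $\Psi$ makes $\{\Xi_i\}$ another critical sequence, and any point of $\mathcal K(\{\Xi_i\})$ must both lie in $Y_0$ (otherwise $\mathcal L(\gamma) > 0$ would force a strict drop below $L^E(\Pi)$) and in $\mathcal K(\{\Phi_i\})$ (since $\mathcal T(\gamma) \to 0$ as $\gamma \to 0$ forces $\Xi_i(x_i)$ to be $\mathscr{F}$-close to $\Phi_i(x_i)$ at any sequence realizing the critical value). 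The step I expect to require the most care is verifying that the glued selector $\mathcal X$ is genuinely $C^1$-continuous despite the non-local $f(\vol(\Omega))$ contribution to $\delta E$; this is handled by the fact that $T \mapsto \vol(\Omega_T)$ is flat-continuous, so the coefficient $f'(\vol(\Omega))$ in \eqref{eq:1st variation for E} varies continuously in $(V, T) \in \vz$, and the gluing then proceeds exactly as in the $F$ case.
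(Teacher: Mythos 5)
Your proposal is correct and follows essentially the same route as the paper: the paper's own proof of Proposition \ref{E-pt} simply says it is almost identical to Proposition \ref{F-pt} with the $\vc$ space replaced by the $\vz$ space and $Z=\emptyset$, which is exactly the transcription you carry out (including taking $Y_0$ to be just the stationary set, invoking Proposition \ref{prop:continuity of first variation of E}, and reusing the construction from \cite{wang2023existence}).
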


\begin{proof} The proof is almost identical to Proposition \ref{F-pt}, except that one replaces the $\vc$ space by the $\vz$ space and lets $Z = \emptyset$. 
\end{proof}

\subsection{Replacements and the Almost Minimizing Property} 

In this section, we define a suitable almost-minimizing property and then use it to construct replacements.  Most of the constructions are very similar to those for the $F$ functional, although at some points we need to exercise care to ensure that nothing depends on whether we consider $T = \bd \Omega$ or $T = \bd(M\setminus \Omega)$. We will omit proofs that are essentially identical to those for $F$-almost minimizers. 

As in the previous section, the key point is that the functional $E$ still satisfies a quasi-locality type property if we restrict to modifications that change area faster than volume. As before, let $a = \sup \vert f'\vert$.

\begin{prop}
\label{E-ql}
Assume $U_1,U_2,\hdots,U_N$ are disjoint open sets in $M$. Assume that $T\in \mathcal B(M,\Z_2)$ and choose $\Omega\in \mathcal C(M)$ with $\bd \Omega = T$.  Let $\delta > 0$ be given.  Suppose there are $S_k = \bd \Theta_k\in \mathcal B(M,\Z_2)$ with $\supp(\Omega-\Theta_k)\subset U_k$ and 
\[
2a \vert \vol(\Omega)-\vol(\Theta_i)\vert \le \M(\bd \Omega) - \M(\bd \Theta_i) + \delta
\]
for $k=1,2,\hdots,N$.  Let 
\[
T^* = T\rest (M\setminus (U_1\cup \hdots \cup U_k)) + \sum_{k=1}^N S_k \rest U_k. 
\]
Then there is an estimate 
\[
E(T^*) \le E(T)  + \frac{1}{2} \sum_{i=1}^N \big(\M(S_k) - \M(T)\big) + \frac{N\delta}{2}. 
\]
\end{prop}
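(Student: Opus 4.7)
The plan is to mimic the proof of Proposition \ref{F-ql} almost verbatim, since the only structural difference is that the local functional $A^h$ has been replaced by $\M$, and the hypothesis is phrased at the level of cycles rather than Caccioppoli sets. The first step is to pick a Caccioppoli set $\Omega$ with $\bd\Omega = T$ and, for each $k$, a Caccioppoli set $\Theta_k$ with $\bd \Theta_k = S_k$ and $\supp(\Omega - \Theta_k)\subset U_k$ (this uniquely determines $\Theta_k$ in terms of $\Omega$ outside $U_k$). Setting
\[
\Omega^* = \Omega \rest (M\setminus (U_1\cup\cdots\cup U_N)) + \sum_{k=1}^N \Theta_k \rest U_k,
\]
I would then check that $\bd \Omega^* = T^*$, so that $E(T^*) = \M(T^*) + f(\vol(\Omega^*))$.

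Next, since the $U_k$ are pairwise disjoint, the mass of $T^*$ telescopes: because $S_k$ agrees with $T$ outside $U_k$, one has $\M(S_k) - \M(T) = \M(S_k\rest U_k) - \M(T\rest U_k)$, and summing over $k$ gives $\M(T^*) - \M(T) = \sum_{k=1}^N (\M(S_k) - \M(T))$. For the volume term, disjointness again yields $\vol(\Omega^*) - \vol(\Omega) = \sum_{k=1}^N (\vol(\Theta_k) - \vol(\Omega))$, and the hypothesis then gives the crucial quantitative bound
\[
\vert \vol(\Omega) - \vol(\Theta_k)\vert \le \frac{\M(T) - \M(S_k) + \delta}{2a}
\]
for each $k$, which, together with the triangle inequality, produces
\[
\vert \vol(\Omega^*) - \vol(\Omega)\vert \le \frac{1}{2a}\sum_{k=1}^N (\M(T) - \M(S_k)) + \frac{N\delta}{2a}.
\]

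Finally, since $f$ is $a$-Lipschitz (by the definition of $a = \sup \vert f'\vert$), I would estimate $f(\vol(\Omega^*)) \le f(\vol(\Omega)) + a \vert \vol(\Omega^*) - \vol(\Omega)\vert$, which absorbs a factor of $a$ against the $1/(2a)$ and yields
\[
f(\vol(\Omega^*)) \le f(\vol(\Omega)) + \frac{1}{2}\sum_{k=1}^N (\M(T) - \M(S_k)) + \frac{N\delta}{2}.
\]
Adding this to the mass identity for $\M(T^*)$ gives exactly the claimed estimate on $E(T^*)$: the full area increment $\sum_k(\M(S_k) - \M(T))$ from the mass part is partially cancelled by the volume term, leaving only half of it plus the error $N\delta/2$.

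There is no real obstacle here; the argument is essentially algebraic once one has the right setup. The only point requiring mild care is checking that the volume contribution is well-defined and genuinely even, which is where the hypothesis (\ref{f is even}) is used implicitly: had we instead chosen $M\setminus \Omega$ as the Caccioppoli set representing $T$, both $\vert \vol(\Omega) - \hv\vert$ and hence $f(\vol(\Omega))$ would be unchanged, and the same is true of each $\Theta_k$, so the estimate is independent of these choices.
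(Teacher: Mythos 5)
Your proposal is correct and is exactly the argument the paper intends: Proposition \ref{E-ql} is stated without proof as the analogue of Proposition \ref{F-ql}, and your computation (decompose $\Omega^*$ over the disjoint $U_k$, bound $\vert\vol(\Omega^*)-\vol(\Omega)\vert$ by the hypothesis, then use that $f$ is $a$-Lipschitz to absorb half the area change) reproduces that proof with $A^h$ replaced by $\M$. The only nitpick is that the mass identity $\M(T^*)-\M(T)=\sum_k(\M(S_k)-\M(T))$ should strictly be an inequality $\le$ (possible cancellation along $\bd U_k$), but since $\supp(\Omega-\Theta_k)\subset U_k$ forces agreement near $\bd U_k$ this is harmless, and the inequality direction favors the conclusion in any case.
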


\subsubsection{The Almost-Minimizing Property} 
The above quasi-locality property is the motivation for the following definition of $E$ almost-minimizing cycles. 

\begin{defn}
\label{E-am}
Let $U$ be an open subset of $M$.  Let $\nu$ denote either the $\mathcal F$, $\mathbf F$, or $\M$ norm.  Fix constants $\eps,\delta > 0$ and $\gamma \ge 0$.  Fix an element $T \in \mathcal \B(M,\Z_2)$.  Choose $\Omega\in \C(M)$ with $\bd \Omega = T$. Let $(\Omega_i)_{i=1,\hdots,k}$ be a sequence in $\C(M)$.  Assume that  
\begin{itemize}
\item[(i)] $\Omega_1 = \Omega$,
\item[(ii)] $\operatorname{supp}(\Omega-\Omega_i) \subset U$ for all $i$,
\item[(iii)] $2 a  \vert \vol(\Omega)-\vol(\Omega_i)\vert  \le  \M(\bd \Omega) - \M(\bd \Omega_i) + \gamma$ for all $i$, 
\item[(iv)] $\mathcal \nu(\bd \Omega_i,\bd \Omega_{i+1}) < \delta$ for all $i$,
\item[(v)]  $\M(\bd \Omega_i) \le \M(\bd \Omega) + \delta$ for all $i$.
\end{itemize}
We say $T$ is $(E,\eps,\delta,\gamma,\nu)$-almost-minimizing in $U$ if for all sequences $(\Omega_i)_{i=1,\hdots,k}$ as above, we have $\M(\bd \Omega_k) \ge \M(T) - \eps$. 
\end{defn}

\begin{rem}
The above definition does not depend on the choice of $\Omega$.  Indeed if $(\Omega_i)$ is a sequence satisfying (i)-(v) with respect to $\Omega$ then $M-\Omega_i$ is a sequence satisfying (i)-(v) with respect to $M- \Omega$. 
\end{rem}

\begin{defn} We define $E$ almost-minimizers. Let $(V,T)\in \vz(M,\Z_2)$ and let $U$ be an open subset of $M$.
\begin{itemize}
\item[(a)] We say $(V,T)$ is $(E,\mathcal F)$ almost-minimizing in $U$ if there is a sequence $T_i \in \B(M,\Z_2)$ such that $T_i$ is $(E,\eps_i,\delta_i,0,\mathcal F)$-almost-minimizing in $U$ and $\vert T_i \vert \to V$ and $T_i \to T$ and $\eps_i ,\delta_i\to 0$.  
\item[(b)] We say $(V,T)$ is $(E,\M)$ almost-minimizing in $U$ if there is a sequence $T_i \in \B(M,\Z_2)$ such that $T_i$ is $(E,\eps_i,\delta_i,\delta_i,\M)$ almost-minimizing in $U$ and $\vert T_i\vert \to V$ and $T_i\to T$ and $\eps_i,\delta_i\to 0$. 
\end{itemize}
\end{defn}

\begin{prop}
\label{E-cc}
Assume that $T$ is $(E,\eps,\rho,\rho,\M)$ almost-minimizing in $U$. If $\delta \le \rho$ is small enough, then $T$ is $(E,\eps,\delta, 0,\mathcal F)$ almost-minimizing in any open set $W$ compactly contained in $U$. 
\end{prop}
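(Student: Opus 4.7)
The strategy mirrors the proof of Proposition \ref{F-cc} verbatim, simply replacing the functional $A^h$ by the mass functional $\M$ (this corresponds to the special case $h\equiv 0$, in which case $b = \sup|h| = 0$). The only essential input is the $\mathcal F$-to-$\M$ interpolation lemma \cite[Lemma A.1]{zhou2019min}, which produces, from any pair of flat-close Caccioppoli sets, an $\M$-close chain of intermediate Caccioppoli sets with controlled volume and mass.

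Concretely, assume $T = \bd\Omega$ is $(E,\eps,\rho,\rho,\M)$-almost-minimizing in $U$, fix an open set $W$ compactly contained in $U$, and choose $\delta\le\rho$ small enough that \cite[Lemma A.1]{zhou2019min} applies with parameters $\eta = \min\{\rho/(2a+1),\rho/2\}$ and $c = 1$. We argue by contrapositive: suppose $(\Omega_1,\ldots,\Omega_k)$ violates the $(E,\eps,\delta,0,\mathcal F)$-almost-minimizing condition in $W$, so conditions (i)-(v) of Definition \ref{E-am} hold with $\nu = \mathcal F$ and $\gamma=0$, yet $\M(\bd\Omega_k) < \M(T)-\eps$. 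We replace this chain by an $\M$-close refinement: for each consecutive pair $\Omega_i,\Omega_{i+1}$, the interpolation lemma produces sets $\Omega_i = \Lambda_i^0,\Lambda_i^1,\ldots,\Lambda_i^{m_i} = \Omega_{i+1}$ with $\supp(\Lambda_i^j - \Omega)\subset U$, consecutive mass differences bounded by $\eta$, masses at most $\max\{\M(\bd\Omega_i),\M(\bd\Omega_{i+1})\}+\eta$, and volumes within $\eta$ of $\vol(\Omega_i)$ and $\vol(\Omega_{i+1})$.

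It remains to verify along the concatenated sequence $\{\Lambda_i^j\}$ that conditions (ii), (iv), (v) of Definition \ref{E-am} hold with $\nu = \M$ and $\delta$ replaced by $\rho$, and that (iii) holds with $\gamma = \rho$. Conditions (ii), (iv), (v) are immediate from the properties of the interpolation, since $\eta \le \rho/2 \le \rho$. For (iii), I fix $j$, let $\Omega_k$ be whichever endpoint of the pair realizes the larger mass, and estimate
\begin{align*}
\M(\bd\Omega) - \M(\bd\Lambda_i^j) &\ge \M(\bd\Omega) - \M(\bd\Omega_k) - \eta \\
&\ge 2a|\vol(\Omega) - \vol(\Omega_k)| - \eta \\
&\ge 2a|\vol(\Omega) - \vol(\Lambda_i^j)| - (2a+1)\eta \\
&\ge 2a|\vol(\Omega) - \vol(\Lambda_i^j)| - \rho,
\end{align*}
using in turn the mass bound (c), the hypothesis (iii) on $\Omega_k$, the triangle inequality together with the volume bound (d), and the choice of $\eta$. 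Thus the interpolated sequence witnesses the failure of the $(E,\eps,\rho,\rho,\M)$-almost-minimizing property of $T$ in $U$, a contradiction.

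The main point to be careful about is the symmetry check: Definition \ref{E-am} requires the notion to be independent of whether we work with $\Omega$ or $M\setminus\Omega$, and the interpolation argument must respect this. This is automatic because replacing $\Omega$ by $M\setminus\Omega$ replaces each $\Lambda_i^j$ by its complement without changing mass, $|\vol(\Omega)-\vol(\Lambda_i^j)|$, or support of the symmetric difference. The only other potential obstacle is the absence of the $h$-term: its role in Proposition \ref{F-cc} was to allow $|\int h|$ to be absorbed into the mass estimate, which is trivial when $h = 0$, so the proof actually becomes cleaner.
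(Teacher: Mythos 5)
Your proposal is correct and matches the paper's intended argument: the paper omits the proof of Proposition \ref{E-cc} precisely because it is the verbatim adaptation of Proposition \ref{F-cc} with $A^h$ replaced by $\M$ (i.e.\ $h\equiv 0$, $b=0$), which is exactly what you carry out, including the interpolation via \cite[Lemma A.1]{zhou2019min} with $\eta=\min\{\rho/(2a+1),\rho/2\}$ and the same chain of inequalities establishing condition (iii) with $\gamma=\rho$. The only cosmetic caveat is to state explicitly that $\delta\le\eta$ (so that the mass bound (v) with $\rho$ follows from $\delta+\eta\le\rho$), as the paper does; your symmetry remark about $\Omega$ versus $M\setminus\Omega$ is consistent with the paper's remark following Definition \ref{E-am}.
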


\begin{prop}
If $(V,T)\in \vz(M,\Z_2)$ is $(E,\M)$ almost-minimizing in $U$ then $(V,T)$ is $(E,\mathcal F)$ almost-minimizing in any open set $W$ compactly contained in $U$. 
\end{prop}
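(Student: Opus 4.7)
The plan is to mirror the proof of Proposition \ref{prop:F-M a-m equivalentto F-F a-m}, which established the analogous statement for the $F$ functional. The two functionals behave identically in this respect: both the quasi-locality property and the almost-minimizing definition are structurally the same, with the space $\C(M)$ replaced by $\B(M,\Z_2)$ via the assignment $\Omega \mapsto \bd \Omega$. Proposition \ref{E-cc} has already done the heavy lifting by providing the mass-to-flat conversion for a single cycle, so what remains is simply to stitch together a diagonal sequence.

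More concretely, I would proceed as follows. Assume $(V,T)$ is $(E,\M)$-almost-minimizing in $U$ and let $W$ be compactly contained in $U$. By definition, there is a sequence $T_i \in \B(M,\Z_2)$ such that each $T_i$ is $(E,\eps_i,\rho_i,\rho_i,\M)$-almost-minimizing in $U$ with $|T_i| \to V$, $T_i \to T$, and $\eps_i, \rho_i \to 0$. Applying Proposition \ref{E-cc} to $T_i$ with $\rho = \rho_i$, for each $i$ there exists $\delta_i \leq \rho_i$ such that $T_i$ is $(E,\eps_i,\delta_i,0,\mathcal F)$-almost-minimizing in $W$. Since $\eps_i,\delta_i \to 0$ and the convergences $|T_i| \to V$, $T_i \to T$ still hold, the resulting sequence witnesses the $(E,\mathcal F)$-almost-minimizing property of $(V,T)$ in $W$.

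There is essentially no serious obstacle; the only minor point worth checking is that Proposition \ref{E-cc} produces $\delta_i$ depending on $\rho_i$ in a way compatible with the required convergence $\delta_i \to 0$, but this is automatic because $\delta_i \leq \rho_i$ and $\rho_i \to 0$. The genuine technical content — the interpolation argument from \cite{zhou2019min, zhou2020existence} which is responsible for upgrading $\M$-almost-minimizing with slack parameter $\gamma = \rho$ to $\mathcal F$-almost-minimizing with no slack, while preserving the volume-area comparison (iii) — is encapsulated in Proposition \ref{E-cc} itself and does not need to be revisited here.
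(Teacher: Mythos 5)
Your proof is correct and is exactly the argument the paper intends: the paper omits the proof as "essentially identical" to Proposition \ref{prop:F-M a-m equivalentto F-F a-m}, whose proof is precisely the diagonal argument you give, with Proposition \ref{E-cc} playing the role of Proposition \ref{F-cc}. Nothing further is needed.
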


\begin{prop}
\label{E-ambv}
Assume that $(V,T)\in \vz(M)$ is $(E,\mathcal F)$ almost-minimizing in $U$. Then $V$ has $2a$-bounded first variation in $U$. 
\end{prop}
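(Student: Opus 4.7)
The plan is to prove the contrapositive, following the structure of Proposition \ref{F-ambv} but with the simplification that $E$ is built on $\M(T)$ directly rather than on $A^h(\Omega) = \M(\bd\Omega) - \int_\Omega h$. This removes the $\sup|h|$ contribution and gives the sharper constant $2a$.

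Suppose $V$ does \emph{not} have $2a$-bounded first variation in $U$. Then there is a $C^1$ vector field $X$ compactly supported in $U$ and an $\eps_0 > 0$ such that
\[
\delta V(X) < -(2a + \eps_0)\int_M |X|\, d\mu_V.
\]
By Proposition \ref{prop:continuity of first variation of E} and continuity of the weight measure in $\mathbf F$, there exists $\eps_1 > 0$ such that every $\Omega\in \C(M)$ with $\mathbf F(|\bd\Omega|, V) < 2\eps_1$ satisfies
\[
\delta |\bd\Omega|(X) \le -\Bigl(2a + \tfrac{\eps_0}{2}\Bigr)\int_M |X|\, d\mu_{\bd\Omega}.
\]

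Now fix any $\Omega$ with $\mathbf F(|\bd\Omega|, V) < \eps_1$ and let $\phi_t$ denote the flow of $X$. Choose a uniform small time $\tau > 0$ so that $\Omega_t := (\phi_t)_\sharp \Omega$ remains within the $2\eps_1$-neighborhood of $V$ in the $\mathbf F$ metric for all $t\in[0,\tau]$. Then $t \mapsto \M(\bd\Omega_t)$ is decreasing, and in fact $\M(\bd\Omega_\tau) \le \M(\bd\Omega) - \eps_2$ for some uniform $\eps_2 > 0$ independent of $\Omega$. Moreover, the first variation formula applied pointwise in $t$ gives
\[
2a\,|\vol(\Omega)-\vol(\Omega_t)| \;=\; 2a\Bigl|\int_0^t \delta\vol|_{\Omega_s}(X)\, ds\Bigr| \;\le\; 2a\int_0^t\!\int_{\bd\Omega_s}\!|X|\,ds \;\le\; -\int_0^t \delta |\bd\Omega_s|(X)\,ds \;=\; \M(\bd\Omega)-\M(\bd\Omega_t).
\]

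Finally, discretize the family $\{\Omega_t\}_{t\in[0,\tau]}$ with arbitrarily fine spacing to obtain a sequence $\{\Omega_i\}_{i=1,\dots,k}$ verifying conditions (i)--(v) of Definition \ref{E-am} with $\gamma = 0$, arbitrary $\delta > 0$, but with $\M(\bd\Omega_k) \le \M(\bd\Omega)-\eps_2$. This shows $\bd\Omega$ is not $(E,\eps_2,\delta,0,\mathcal F)$-almost-minimizing for any $\delta > 0$. Since $\Omega$ was arbitrary within $\mathbf F$-distance $\eps_1$ of $V$, this rules out the existence of any approximating sequence $T_i = \bd\Omega_i$ realizing the $(E,\mathcal F)$-almost-minimizing property of $(V,T)$ in $U$, a contradiction.

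The proof is essentially routine given the analogous argument for $F$; the only step requiring a little attention is the chain inequality in the displayed estimate, where the $(2a + \eps_0/2)$ bound on $\delta|\bd\Omega_s|(X)$ is used both to force monotone decrease of mass and simultaneously to dominate $2a$ times the volume change, yielding clause (iii) of Definition \ref{E-am} along the discretization with $\gamma = 0$. There is no genuine obstacle beyond bookkeeping, since the absence of an $\int h$ term means no $b$-correction enters the constant.
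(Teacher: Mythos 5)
Your proof is correct and is essentially the argument the paper intends here: it omits the $E$-case proof precisely because it is the same contrapositive flow-and-discretize argument as Proposition \ref{F-ambv}, with the constant dropping from $2a+b$ to $2a$ because clause (iii) of Definition \ref{E-am} compares volume change against $\M$ rather than $A^h$. Your chain inequality and the verification of (i)--(v) along the discretized flow match that template, so there is nothing further to add.
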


\begin{defn}
An element $(V,T)\in \vz(M,\Z_2)$ is called $E$-almost-minimizing in annuli if for each $x\in M$ there is a number $\rho(x) > 0$ such that $(V,T)$ is $(E,\mathcal F)$ almost-minimizing in every annulus $\an(x,s,r)$ with $s < r < \rho(x)$. 
\end{defn}

\begin{prop}
\label{E-ca}
Let $\Pi$ be the $X$-homotopy class of a map $\Phi_0\colon X\to \mathcal (B(M,\Z_2),\mathbf F)$, where $X$ is a cubical subcomplex of $I(m,k)$ for some $m,k\in \N$.  Choose a pulled-tight critical sequence $\{\Phi_i\}$ and let $\mathcal K =\mathcal K(\{\Phi_i\})$ be the critical set.  Then there is an element $(V,T)\in \mathcal K$ which is $(E,\mathcal F)$ almost minimizing in annuli. In fact, there exists some $(V,T)\in \vz(M,\Z_2)$ satisfying the following stronger property: 
\begin{itemize}
    \item[(R)]\label{item:property R for E} There is a number $N = N(m)$ depending only on $m$ such that for any collection of $N$ concentric annuli $\an(x,s_1,r_1)$, $\hdots$, $\an(x,s_N,r_N)$ with $2r_j < s_{j+1}$, $(V,T)$ is $(E,\mathcal F)$ almost minimizing in at least one of the annuli. 
\end{itemize}
\end{prop}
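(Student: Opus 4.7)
The plan is to mirror the argument of Proposition \ref{F-ca}, replacing $\vc(M)$ with $\vz(M,\Z_2)$, the functional $A^h$ with the mass $\mathbf M$, and dropping the relative boundary set $Z$. The two nonstandard ingredients in the Almgren-Pitts combinatorial machine are (a) the quasi-locality Proposition \ref{E-ql}, which replaces the additivity of mass, and (b) the equivalence between $(E,\mathcal F)$ and $(E,\mathbf M)$ almost-minimizing on relatively compact open sets (Proposition \ref{E-cc} together with its corollary), which plays the role that the corresponding fact plays for the area functional.

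First I would apply the discretization theorem \cite[Theorem 1.11]{zhou2020multiplicity} to the $\mathbf F$-continuous critical sequence $\{\Phi_i\}$ to obtain a discrete homotopy sequence $\{\phi_i\}$ with $L(\{\phi_i\}) = L(\Pi)$ and $\mathcal K(\{\phi_i\}) = \mathcal K(\{\Phi_i\})$. Assume for contradiction that no element of $\mathcal K$ satisfies property (R). By the version of Proposition \ref{E-cc} and the associated equivalence statement for $\vz$-pairs, no element of $\mathcal K$ satisfies property (R) with $(E,\mathcal F)$-almost-minimizing replaced by $(E,\mathbf M)$-almost-minimizing. This is the input needed to run Parts 1--19 of the Almgren-Pitts combinatorial construction \cite[Theorem 4.10]{pitts2014existence}, which produces a new homotopic discrete sequence $\{\psi_i\}$ together with, at each vertex of each cube, a finite collection of disjoint annuli $A_1,\dots,A_K$ and Caccioppoli sets $\Omega_k(j)$ (with $T_k(j) = \bd\Omega_k(j)$) satisfying
\begin{itemize}
\item[(i)] $\Omega_k(1) = \Omega$ where $\bd \Omega = T$,
\item[(ii)] $\supp(\Omega_k(j) - \Omega) \subset A_k$,
\item[(iii)] $2a\,|\vol(\Omega)-\vol(\Omega_k(j))| \le \M(T) - \M(T_k(j)) + \delta$,
\item[(iv)] $\M(T_k(j) - T_k(j+1)) < \delta$,
\item[(v)] $\M(T_k(j)) \le \M(T) + \delta$,
\item[(vi)] $\M(T_k(J)) \le \M(T) - \eps$.
\end{itemize}

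In Part 20 of the construction, one combines the local modifications into a single cycle $T^* = T\rest(M\setminus \bigcup A_k) + \sum_k T_k(j_k)\rest A_k$ and needs to verify that $E(T^*) < E(T) - \eps/2$ uniformly. This is where the $E$ functional differs from the mass, and this is exactly what Proposition \ref{E-ql} is designed to handle: conditions (iii) and (v) above are precisely the hypotheses for quasi-locality, so applying Proposition \ref{E-ql} with $U_k = A_k$ and $\Theta_k = \Omega_k(j_k)$ yields
\[
E(T^*) \le E(T) + \frac{1}{2}\sum_{k=1}^K \big(\M(T_k(j_k)) - \M(T)\big) + \frac{K\delta}{2},
\]
and, choosing $j_k = J$ on the index where (vi) holds and keeping $\delta$ small relative to $\eps$, this gives the needed uniform decrease of $E$ by at least some $\eps' > 0$. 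Then interpolation \cite[Theorem 1.12]{zhou2020multiplicity} promotes $\{\psi_i\}$ back to an $\mathbf F$-continuous homotopy sequence $\{\Psi_i\}$ in the class $\Pi$ with $\sup_{x\in X} E(\Psi_i(x)) < L(\Pi) - \eps'/2$ for all large $i$, contradicting the definition of $L(\Pi)$.

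I expect the main subtlety to be verifying condition (iii) along the entire combinatorial sequence, not just at the endpoints. In the mass-only Almgren-Pitts setting one only needs control on the local mass fluctuation; here we must additionally propagate a quantitative statement relating the change in volume to the change in mass throughout the construction. The almost-minimizing definition (Definition \ref{E-am}) is designed with exactly this in mind, so one should be able to check the condition inductively at each insertion step of the combinatorial construction. All the other steps are essentially formal, since the argument is parallel to Proposition \ref{F-ca} with $Z = \emptyset$.
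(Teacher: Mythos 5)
Your overall architecture matches the paper's proof of Proposition \ref{E-ca}: discretize via \cite[Theorem 1.11]{zhou2020multiplicity}, argue by contradiction that no element of $\mathcal K$ is $(E,\M)$-almost-minimizing in the sense of property (R) (via Proposition \ref{E-cc}), run Parts 1--19 of \cite[Theorem 4.10]{pitts2014existence}, use Proposition \ref{E-ql} in Part 20, and conclude with interpolation \cite[Theorem 1.12]{zhou2020multiplicity}. However, there is a genuine gap at exactly the point where the $E$ case differs from the $F$ case, and it is not the one you flag. You write the combinatorial data as Caccioppoli sets with ``$\Omega_k(1)=\Omega$ where $\bd\Omega=T$,'' but the Pitts construction here is applied to cycles in $\B(M,\Z_2)$, not to sets: at each vertex and each annulus the deformation sequences produced by the failure of almost-minimizing come with bounding sets that are only determined up to complementation, and a priori the local choices at different annuli need not all be anchored to one global $\Omega$. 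If at some annulus the selected set is the complement-type lift, then $\supp(\Omega-\Theta_k)\subset U_k$ fails, and Proposition \ref{E-ql} cannot be applied as you state it.

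The paper closes this in two steps that your proposal omits. First, in Part 9, after the small mass perturbation from $T(j,1)$ to $T(j,2)$ (with $\M(T(j,1)-T(j,2))<\delta_i$), the isoperimetric theorem is used to pick the \emph{unique} $\Omega_2$ with $\bd\Omega_2=T(j,2)$, $\vert\vol(\Omega_1)-\vol(\Omega_2)\vert<\delta_i/(2a)$, and $\supp(\Omega_2-\Omega_1)\subset A_k$; the deformation sequence is then constructed relative to $\Omega_2$ and transferred back to $\Omega_1$ with relaxed constants ($3\delta_i$ in (iii)--(v), $\eps/2$ in (vi)). Second, at the gluing step one observes that all conditions in Definition \ref{E-am} are invariant under replacing $\Omega_q$ by $M\setminus\Omega_q$, so in Part 20 one sets $\Theta_k(q)=\Omega_k(q)$ when $\Omega_k(1)=\Omega$ and $\Theta_k(q)=M\setminus\Omega_k(q)$ when $\Omega_k(1)=M\setminus\Omega$; only after this flip are the hypotheses of Proposition \ref{E-ql} satisfied relative to a single $\Omega$ with $\bd\Omega=T$. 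Your stated worry about propagating condition (iii) along the sequence is essentially handled by the volume-closeness furnished by the isoperimetric selection, so the real missing idea is the consistent (complement-aware) lifting of mod-2 cycles to Caccioppoli sets; with that added, your argument becomes the paper's.
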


\begin{proof} Applying discretization \cite[Theorem 1.11]{zhou2020multiplicity}, we can find a homotopy sequence $\{\phi_i\}$ such that $L(\{\phi_i\})= L(\Pi)$ and $\mathcal K(\{\phi_i\}) = \mathcal K(\{\Phi_i\})$. 
Assume for contradiction that no element in $\mathcal K$ satisfies property (R). Then by Proposition \ref{E-cc}, no element in $\mathcal K$ satisfies property (R) with $(E,\mathcal F)$ almost-minimizing replaced by $(E,\M)$ almost-minimizing. Thus we can apply Parts 1-19 of the Almgren-Pitts combinatorial construction \cite[Theorem 4.10]{pitts2014existence} to $\{\phi_i\}$ . 
This produces a homotopic sequence $\{\psi_i\}$.  

Let us give more details. In Part 9, given $\sigma$ and a vertex of $\sigma$, one defines $T(j,1)$ and $T(j,2)$ in the same way as Pitts. Note that 
\[
\M(T(j,1)-T(j,2)) < \delta_i. 
\]
Choose $\Omega_1$ so that $\bd \Omega_1 = T(j,1)$.  Then by the isoperimetric theorem, there is a {\it unique} choice of $\Omega_2$ so that $\bd \Omega_2 = T(j,2)$ and $\vert \vol(\Omega_1)-\vol(\Omega_2)\vert < \frac{\delta_i}{2a}$. {Moreover, $\supp(\Omega_2-\Omega_1)\subset A_k$.} 
Then, using the fact that $T(j,2)$ is not $(E,\eps,\delta_i,\delta_i,\mathcal M)$ almost minimizing in an annulus $a_k\subset A_k$, we obtain a sequence $(\Omega_q)_{q=2}^{3^{N_1}}$ such that 
\begin{itemize}
\item[(i)] $\operatorname{supp}(\Omega_2-\Omega_q) \subset a_k$ for all $q$,
\item[(ii)] $2 a  \vert \vol(\Omega_2)-\vol(\Omega_q)\vert  \le  \M(\bd \Omega_2) - \M(\bd \Omega_q) + \delta_i$ for all $i$, 
\item[(iii)] $\mathcal \M(\bd \Omega_2,\bd \Omega_{q}) < \delta_i$ for all $q$,
\item[(iv)]  $\M(\bd \Omega_q) \le \M(\bd \Omega_2) + \delta_i$ for all $q$,
\item[(v)] $\M(\bd \Omega_{3^{N_1}}) \le \M(\bd\Omega_2)-\eps$. 
\end{itemize}
Note then that 
\begin{itemize}
\item[(i)] $\operatorname{supp}(\Omega_1-\Omega_q) \subset A_k$ for all $q$,
\item[(ii)] $2 a  \vert \vol(\Omega_1)-\vol(\Omega_q)\vert  \le  \M(\bd\Omega_1) - \M(\bd\Omega_q) + 3\delta_i$ for all $i$, 
\item[(iii)] $\mathcal \M(\bd \Omega_1,\bd \Omega_{q}) < 3\delta_i$ for all $q$,
\item[(iv)]  $\M(\bd \Omega_q) \le \M(\bd \Omega_1) + 3\delta_i$ for all $q$,
\item[(v)] $\M(\bd \Omega_{3^{N_1}}) \le \M(\bd \Omega_1)-\eps/2$. 
\end{itemize}
We set $T(j,q) = \bd \Omega_q$. 
Again, we emphasize that all properties in the above list continue to hold if we replace $\Omega_q$ by $M\setminus \Omega_q$ for $q=1,\hdots,3^{N_1}$. 
One then continues through Part 19 as in \cite{pitts2014existence}.

Now in Part 20, one can use Proposition \ref{E-ql} to verify that 
\[
L(\{\psi_i\}) < L(\{\phi_i\}) - \eps_1
\]
for some $\eps_1 > 0$. Again let us be more precise. Consider disjoint annuli $A_1,A_2,\hdots,A_I$. Fix $T\in \B(M,\Z_2)$.  
For each annulus, consider a sequence $T_k(q) = T_k(j,q)$ as above. 
For a choice of integers $q_1,\hdots,q_N\in \{1,\hdots,3^{N_1}\}$, one then considers a cycle of the form 
\[
T^* = T\rest (M\setminus (A_1\cup \hdots \cup A_I)) + \sum_{k=1}^I T_k(q_k)\rest A_k
\]
and needs to estimate the difference $E(T) - E(T^*)$. Choose $\Omega$ so that $T = \bd \Omega$. Then, for each $k$ and $q$, let $\Omega_k(q)$ be the sets that were selected above which satisfy $\bd \Omega_k(q) = T_k(q)$ and were used to define $T_k(q)$. If $\Omega_k(1) = \Omega$, then set $\Theta_k(q) = \Omega_k(q)$ for all $q$. Otherwise, if $\Omega_k(1) = M\setminus \Omega$, then set $\Theta_k(q) = M\setminus \Omega_k(q)$ for all $q$. Then we can apply Proposition \ref{E-ql} with $S_k = T_k(q_k)$ and $\Theta_k = \Omega_k(q_k)$ to estimate $E(T) - E(T^*)$, as needed. 

Applying interpolation \cite[Theorem 1.12]{zhou2020multiplicity} to $\{\psi_i\}$ then gives a sequence 
\[
\{\Psi_i \colon X\to (\C(M),\mathbf F)\}
\]
in the $(X,Z)$-homotopy class $\Pi$ with $\sup_{x\in X} F(\Psi_i(x)) < L(\Pi) - \eps_2$ for all large $i$. This is a contradiction. 
\end{proof}

\subsubsection{A Constrained Minimization Problem}  Consider the following constrained minimization problem.  Assume that $T$ is $(E,\eps,\delta,0,\mathcal F)$ almost minimizing in $U$.  Let $K$ be a compact subset of $U$. Choose a set $\Omega\in \mathcal C(M)$ such that $\bd \Omega = T$.  Let $\mathcal A = \mathcal A(T,K,\delta)$ be the set of all $S\in \B(M,\Z_2)$ such that there is a sequence $(\Omega_i)_{i=1,\hdots,k}$ in $\C(M)$ satisfying 
\begin{itemize}
\item[(i')] $\Omega_1 = \Omega$,
\item[(ii')] $\operatorname{supp}(\Omega-\Omega_i) \subset K$ for all $i$,
\item[(iii')] $2 a \vert \vol(\Omega)-\vol(\Omega_i)\vert \le  \M(\bd \Omega) - \M(\bd \Omega_i)$ for all $i$, 
\item[(iv')] $\mathcal F(\bd \Omega_i,\bd\Omega_{i+1}) < \delta$ for all $i$,
\item[(v')]  $\M(\bd \Omega_i) \le \M(\bd \Omega) + \delta$ for all $i$,
\end{itemize}
and $S = \bd \Omega_k$.
We aim to minimize area among elements of $\mathcal A$.  

\begin{prop}
There exists $S\in \mathcal A$ such that $\M(S)= \inf_{R\in \mathcal A} \M(R)$. 
\end{prop}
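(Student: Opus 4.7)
The plan is to follow exactly the $F$-analog proved as the preceding proposition: take a minimizing sequence in $\mathcal A$, extract a Caccioppoli-convergent subsequence using the uniform perimeter bound, and then extend a defining sequence by one step to certify that the limit lies in $\mathcal A$.

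First I would pick a minimizing sequence $S_j \in \mathcal A$ and, for each $j$, a defining sequence $(\Omega_i^{(j)})_{i=1,\ldots,k_j}$ satisfying (i')--(v') with $S_j = \bd \Omega_{k_j}^{(j)}$. Set $\Xi_j = \Omega_{k_j}^{(j)}$. By (ii'), $\supp(\Omega - \Xi_j) \subset K$, and by (v') the perimeters $\M(\bd \Xi_j)$ are uniformly bounded. Standard BV compactness then lets me pass to a subsequence with $\Xi_j \to \Xi$ in $\C(M)$, for some $\Xi$ satisfying $\supp(\Omega - \Xi) \subset K$. Setting $S = \bd \Xi$, one has $S_j \to S$ in the flat topology, and lower semicontinuity of mass yields
\[
\M(S) \le \liminf_{j\to\infty} \M(S_j) = \inf_{R \in \mathcal A} \M(R).
\]

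Next, I would verify $S \in \mathcal A$ by appending $\Xi$ to a defining sequence. Choose $J$ large enough that $\mathcal F(\bd \Xi_J, \bd \Xi) < \delta$, and consider the extended sequence $(\Omega_1^{(J)}, \ldots, \Omega_{k_J}^{(J)}, \Xi)$. Conditions (i'), (ii'), and (iv') are immediate at the appended step. Condition (v') follows from (v') for $\Xi_J$ combined with lower semicontinuity of mass: $\M(\bd \Xi) \le \liminf_j \M(\bd \Xi_j) \le \M(\bd \Omega) + \delta$. To obtain (iii') at the new step, I pass to the limit in the estimate $2a\,|\vol(\Omega) - \vol(\Xi_j)| \le \M(\bd \Omega) - \M(\bd \Xi_j)$ supplied by (iii') for each $\Xi_j$; the LHS converges to $2a|\vol(\Omega)-\vol(\Xi)|$ by continuity of volume under $\C(M)$-convergence, while lower semicontinuity of $\M$ yields $\liminf_j(\M(\bd\Omega) - \M(\bd \Xi_j)) = \M(\bd\Omega) - \limsup_j \M(\bd \Xi_j) \le \M(\bd \Omega) - \M(\bd \Xi)$, so chaining the two bounds gives (iii') in the limit.

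The one subtlety worth noting is the need to lift the flat convergence of boundaries to genuine Caccioppoli-set convergence so that volumes converge and condition (iii') survives the limit; this is handled by working directly with the defining sets $\Xi_j$ (which are fixed once $\Omega$ is chosen and chosen with $\Omega\sd \Xi_j$ supported in $K$), whose perimeters are uniformly controlled by (v'). Otherwise the argument is formally identical to its $F$-counterpart.
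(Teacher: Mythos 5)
Your proof is correct and is essentially the paper's own argument: the paper omits the proof of this $E$-version precisely because it is identical to the $F$-functional case, and your proposal reproduces that argument (minimizing sequence, BV compactness of the defining sets, lower semicontinuity of mass, and appending the limit set to a defining sequence with the one-step flat-distance and volume/mass limit checks for (iii')--(v')). The subtlety you flag — working with the Caccioppoli sets themselves so that volumes converge and (iii') passes to the limit — is exactly how the paper handles it.
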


\begin{prop} 
\label{E-mam} 
Assume $T=\bd \Omega$ is $(E,\eps,\delta,0,\mathcal F)$ almost-minimizing in $U$ and let $K$ be a compact subset of $U$.  Let $S$ be an area minimizer in $\mathcal A$. Then $S$ is $(E,\eps,\delta,0,\mathcal F)$ almost-minimizing in $U$. 
\end{prop}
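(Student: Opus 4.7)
The plan is to mimic the proof of Proposition \ref{F-mam} with the mass functional $\M$ in place of $A^h$, exploiting the symmetry remark following Definition \ref{E-am} to dispose of the ambiguity in choosing a bounding set for a given cycle. Concretely, since $S \in \mathcal A$, fix a witnessing sequence $\{\Omega_i\}_{i=1,\hdots,k}$ with $\Omega_1 = \Omega$ satisfying (i')--(v') and $\bd \Omega_k = S$. To test the almost-minimizing property of $S$, let $\{\Theta_i\}_{i=1,\hdots,j}$ be any sequence in $\C(M)$ satisfying the analog of (i)--(v) with respect to $S$; by the symmetry remark after Definition \ref{E-am}, we may take $\Theta_1 = \Omega_k$ without loss of generality. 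Form the concatenation $\Omega_1,\hdots,\Omega_{k-1},\Omega_k=\Theta_1,\Theta_2,\hdots,\Theta_j$ and verify that it witnesses the admissibility conditions of Definition \ref{E-am} with respect to $\Omega$.

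Checking (i), (ii), (iv), (v) is routine. (i) is immediate; for (ii) on the $\Theta$-part one writes $\supp(\Omega-\Theta_i) \subset \supp(\Omega-\Omega_k) \cup \supp(\Omega_k-\Theta_i) \subset K \cup U \subset U$. For (iv) the bridge step $\Omega_k = \Theta_1$ contributes zero distance. For (v), note that $\bd \Omega \in \mathcal A$ trivially, so $S$ being a mass minimizer in $\mathcal A$ gives $\M(S) \le \M(\bd \Omega)$; combined with (v'') on the $\Theta$-subsequence this yields $\M(\bd \Theta_i) \le \M(S) + \delta \le \M(\bd \Omega) + \delta$. The one step that requires care is (iii). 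For indices in the initial $\Omega$-block it is just (iii'). For indices in the $\Theta$-block, I would add the inequalities
\begin{align*}
2a\,\bigl|\vol(\Omega)-\vol(\Omega_k)\bigr| &\le \M(\bd \Omega) - \M(S), \\
2a\,\bigl|\vol(\Omega_k)-\vol(\Theta_i)\bigr| &\le \M(S) - \M(\bd \Theta_i),
\end{align*}
whereupon the middle mass terms cancel and the triangle inequality for volume yields $2a\,|\vol(\Omega)-\vol(\Theta_i)| \le \M(\bd \Omega) - \M(\bd \Theta_i)$ with slack $\gamma = 0$, as required.

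Having verified (i)--(v), the $(E,\eps,\delta,0,\mathcal F)$-almost-minimizing hypothesis on $T = \bd \Omega$ forces $\M(\bd \Theta_j) \ge \M(\bd \Omega) - \eps \ge \M(S) - \eps$, which is the desired inequality for $S$. The only real obstacle is ensuring that the slack parameter $\gamma$ remains $0$ rather than being forced to grow along the concatenation; this works precisely because the minimizing property of $S$ in $\mathcal A$ gives the exact bound $\M(S) \le \M(\bd \Omega)$, so that the two volume-versus-mass inequalities chain telescopically with no remainder. Finally, the $\Omega$-versus-$(M\setminus \Omega)$ ambiguity is a nonissue because every clause of Definition \ref{E-am} is invariant under simultaneously replacing each member of a test sequence by its complement.
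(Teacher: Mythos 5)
Your proposal is correct and takes essentially the same route as the paper: the paper omits the proof of this proposition precisely because it is identical to that of Proposition \ref{F-mam}, which concatenates the witnessing sequence for $S$ with the test sequence and telescopes the zero-slack inequalities (iii') and (iii) exactly as you do, and your complementation remark supplies the extra care about $\Omega$ versus $M\setminus\Omega$ that the $E$-setting requires. One small prose correction: the telescoping works because both chains have slack $\gamma=0$, not because of the minimality of $S$ in $\mathcal A$; minimality is what you need (and correctly use) for verifying (v) and for the final inequality $\M(\bd\Omega)\ge\M(S)$.
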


\begin{prop}
Let $S$ be an area minimizer in $\mathcal A$.   Then $S$ locally minimizes area with respect to volume preserving modifications in the interior of $K$. Moreover, $S$ is smooth with constant mean curvature $\vert H\vert \le 2a$ in $K$.  The surface $S$ is volume preserving stable for the area functional in the interior of $K$. 
\end{prop}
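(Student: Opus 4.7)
The proof follows the template of the analogous proposition for the $F$ functional. Since $S \in \mathcal{A}$, there is a sequence $(\Omega_i)_{i=1,\hdots,k}$ witnessing this, with $\bd \Omega_k = S$. The strategy for all three claims is the same: assume the property fails, produce a one-term extension $\Omega_{k+1}$ of this sequence which still lies in $\mathcal{A}$ but has strictly smaller boundary area, contradicting the minimality of $S$. The key technical point in each case is verifying that condition (iii') survives the extension.

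\textbf{Local volume-constrained minimization.} Fix $W$ compactly contained in $\ins(K)$ with $\vol(W) < \delta$. Suppose there exists $\Xi \in \C(M)$ with $\supp(\Xi - \Omega_k) \subset W$, $\vol(\Xi) = \vol(\Omega_k)$, and $\M(\bd \Xi) < \M(\bd \Omega_k)$. Set $\Omega_{k+1} = \Xi$. Conditions (i'), (ii'), (v') are immediate; (iv') follows from $\mathcal F(\bd \Omega_k, \bd \Xi) \le \vol(W) < \delta$. For (iii'), since $\vol(\Xi) = \vol(\Omega_k)$:
\[
2a|\vol(\Omega) - \vol(\Xi)| = 2a|\vol(\Omega) - \vol(\Omega_k)| \le \M(\bd \Omega) - \M(\bd \Omega_k) < \M(\bd \Omega) - \M(\bd \Xi).
\]
This contradicts that $S$ minimizes area in $\mathcal{A}$. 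Given this local minimization property, Proposition \ref{regularity-for-minimizers} (applied with $h \equiv 0$) yields that $S$ is smooth and embedded in $\ins(K)$ with constant mean curvature $H = h_0$.

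\textbf{Mean curvature bound.} Suppose $|H| > 2a$ at some point $p \in S \cap \ins(K)$. For small $r$, the mean curvature hypothesis produces $\Xi$ with $\supp(\Xi - \Omega_k) \subset B_r(p)$, with $\bd \Xi$ lying to one side of $\bd \Omega_k$ so that $\vol(\Omega_k \sd \Xi) = |\vol(\Omega_k) - \vol(\Xi)|$, and satisfying $\M(\bd \Omega_k) - \M(\bd \Xi) \ge 2a |\vol(\Omega_k) - \vol(\Xi)| > 0$. Extending by $\Omega_{k+1} = \Xi$, one verifies (iii') via the triangle inequality:
\[
2a|\vol(\Omega) - \vol(\Xi)| \le 2a|\vol(\Omega) - \vol(\Omega_k)| + 2a|\vol(\Omega_k) - \vol(\Xi)| \le \M(\bd \Omega) - \M(\bd \Xi),
\]
with (i'), (ii'), (iv'), (v') holding for $r$ small. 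This contradicts minimality of $S$.

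\textbf{Volume preserving stability.} If $S$ fails to be volume preserving stable in $\ins(K)$, there exists $\varphi \in C_c^\infty(S \cap \ins(K))$ with $\int_S \varphi = 0$ and negative second variation. By \cite[Lemma 2.2]{barbosa2012stability}, there is a smooth family $(\Omega_t)_{t\in[-\eps_0,\eps_0]}$ with $\Omega_0 = \Omega_k$, $\supp(\Omega_t - \Omega_k) \subset \ins(K)$, $\vol(\Omega_t) = \vol(\Omega_k)$, and initial normal variation $\varphi \nu$. The volume preserving second variation gives $\M(\bd \Omega_t) < \M(\bd \Omega_k)$ for small $t > 0$. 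Set $\Omega_{k+1} = \Omega_{t_0}$ for $t_0$ small; (iv') and (v') hold by continuity, and (iii') is verified exactly as in the first paragraph since the deformation is volume preserving. This again contradicts minimality of $S$.

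The main obstacle, as with the $F$ case, is checking (iii') for each extension; the trick throughout is that volume preserving competitors trivially inherit (iii') from $\Omega_k$, while volume changing competitors must change area at a rate of at least $2a$ per unit volume to allow the triangle inequality to absorb the prior deficit from the original sequence.
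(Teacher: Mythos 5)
Your proposal is correct and is essentially the argument the paper intends: the paper omits this proof as "essentially identical" to the $F$-functional case, and your three extension arguments (volume-preserving competitor, mean-curvature competitor via the $|H|>2a$ deformation, and the Barbosa–do Carmo volume-preserving variation), together with the verification of (iii') by the triangle inequality, reproduce that proof with $h\equiv 0$ and the bound $2a$ in place of $2a+b$.
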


\subsubsection{Replacements}

Next we turn to the construction of replacements. 

\begin{prop}
Assume that $V$ has $2a$-bounded first variation, and also that $(V,T)$ is $(E,\mathcal F)$ almost-minimizing in an open set $U$. Let $K$ be a compact subset of $U$.  There exists an element $(V^*,T^*)\in \vz(M)$ called a replacement for $(V,T)$ such that 
\begin{itemize}
\item[(i)] $V \rest (M-K) = V^*\rest (M-K)$,
\item[(ii)] $\|V\|(M) = \|V^*\|(M)$,
\item[(iii)] $(V^*,T^*)$ is $(E,\mathcal F)$ almost-minimizing in $U$,
\item[(iv)] $V^*$ has $2a$-bounded first variation,
\item[(v)] $(V^*,T^*)$, when restricted to the interior of $K$, is a limit of some solutions to the constrained minimization problems. 
\end{itemize}
\end{prop}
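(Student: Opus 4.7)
The plan is to mirror the replacement construction already carried out for the $F$ functional in the previous section, substituting $\vz(M,\Z_2)$ for $\vc(M)$ and using the minimization problem, Proposition \ref{E-mam}, and Proposition \ref{E-ambv} in place of their $F$-counterparts. First, I would invoke the definition of $(E,\mathcal F)$-almost-minimizing in $U$ to obtain a sequence $T_i = \bd \Omega_i \in \B(M,\Z_2)$ such that $T_i$ is $(E,\eps_i,\delta_i,0,\mathcal F)$-almost-minimizing in $U$, with $\vert T_i\vert \to V$, $T_i \to T$, and $\eps_i,\delta_i \to 0$. For each $i$, I would then choose a solution $\Omega_i^*$ to the constrained minimization problem $\mathcal A(T_i,K,\delta_i)$, whose existence is guaranteed by the analog of the existence result proved above. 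Setting $T_i^* = \bd \Omega_i^*$, the uniform mass bound $\M(T_i^*) \le \M(T_i) + \delta_i$ allows, after passing to a subsequence, the extraction of limits $\vert T_i^*\vert \to V^*$ and $T_i^* \to T^*$ in the appropriate topologies; this produces the candidate pair $(V^*,T^*) \in \vz(M,\Z_2)$.

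Next, I would verify the five required properties. Property (i) is immediate from condition (ii') in the definition of $\mathcal A$, which forces $\supp(\Omega_i^* - \Omega_i) \subset K$ and hence $V^* \rest (M - K) = V\rest (M - K)$. Property (v) is the definition of $(V^*,T^*)$. For property (ii), I would use the two-sided bound
\[
\M(T_i) - \eps_i \le \M(T_i^*) \le \M(T_i)
\]
(the right inequality from $\mathcal A$ containing the constant sequence $\{\Omega_i\}$, the left from the almost-minimizing property of $T_i$), and let $i \to \infty$. For property (iii), I would apply Proposition \ref{E-mam} to conclude that each $T_i^*$ is $(E,\eps_i,\delta_i,0,\mathcal F)$-almost-minimizing in $U$; since $\eps_i,\delta_i \to 0$ and $\vert T_i^*\vert \to V^*$, $T_i^* \to T^*$, this directly witnesses that $(V^*,T^*)$ is $(E,\mathcal F)$-almost-minimizing in $U$.

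Finally, for property (iv), I would split $M$ into $M - K$ and $U$. On $M - K$, the equality $V^* \rest (M-K) = V \rest (M - K)$ from (i), together with the assumed $2a$-bounded first variation of $V$, gives the desired bound there. On $U$, property (iii) combined with Proposition \ref{E-ambv} shows that $V^*$ has $2a$-bounded first variation on $U$. Since $M = U \cup (M - K)$, patching these yields $2a$-bounded first variation on all of $M$.

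I do not expect any serious obstacle in this argument: the entire proof is a direct transcription of its $F$-analog, and the only subtlety worth noting is that in the $E$ setting one must be careful that the various properties in the definition of $\mathcal A$ and in Definition \ref{E-am} are invariant under the swap $\Omega \mapsto M \setminus \Omega$, so that the construction is well-defined on $\B(M,\Z_2)$ rather than $\C(M)$; this invariance has already been observed in the remark following Definition \ref{E-am}.
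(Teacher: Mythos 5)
Your proposal is correct and follows essentially the same route as the paper: the paper omits this proof precisely because it is the verbatim transcription of the replacement construction for the $F$ functional (solve the constrained minimization problems $\mathcal A(T_i,K,\delta_i)$ along the approximating sequence, pass to a subsequential limit, and verify (i)--(v) via Proposition \ref{E-mam}, the two-sided mass bound $\M(T_i)-\eps_i\le \M(T_i^*)\le \M(T_i)$, and Proposition \ref{E-ambv} patched with the hypothesis on $M-K$). Your closing remark about invariance under $\Omega\mapsto M\setminus\Omega$ is the only $E$-specific point, and it is indeed the one the paper flags.
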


\begin{prop}
Let $(V^*,T^*)$ be a replacement for $(V,T)$ in $K$.  Then in the interior of $K$, either
\begin{itemize}
\item[(i)] the varifold $V^*$ is induced by a smooth, almost-embedded CMC hypersurface $\Sigma$ with {$|H|\leq 2a$} and multiplicity one; or
\item[(ii)] the support of $V^*$ is a smooth, embedded (not necessarily connected) minimal hypersurface $\Sigma$ and $V^*$ is induced by the connected components of $\Sigma$ equipped with some integer multiplicities. 
\end{itemize}
Moreover, if $\Omega^*\in \mathcal C(M)$ satisfies $\bd \Omega^* = T^*$, then $\bd \Omega^*$ coincides with the odd multiplicity components of $V^*$ in the interior of $K$. 
{Finally, in the interior of $K$, there is a curvature estimate 
\[
\|A_\Sigma\|^2(x) \le \frac{C}{\dist(x,\bd K)^2}
\]
where $C$ is a constant that depends only on $M$, $a$, and an upper bound for $\|V\|(M)$. }
\end{prop}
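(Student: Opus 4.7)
The plan is to realize $(V^*, T^*)$ as a smooth limit of solutions to the constrained minimization problem (property (v) in the construction of the replacement) and then to perform the standard CMC sheeting analysis. By construction, in the interior of $K$ we may write $(V^*, T^*) = \lim_i (|\bd \Theta_i|, \bd \Theta_i)$, where each $\Theta_i$ solves the constrained minimization problem associated to some $(E, \eps_i, \delta_i, 0, \mathcal F)$-almost-minimizing cycle with $\eps_i, \delta_i \to 0$. By the preceding analysis of such minimizers, each $\bd \Theta_i$ is smooth, embedded in $\ins(K)$, has constant mean curvature $H_i$ with $|H_i| \le 2a$, and $\Theta_i$ is volume-preserving stable for the area functional. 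Passing to a subsequence, we may assume $H_i \to H_\infty$ with $|H_\infty| \le 2a$.

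Next, I would invoke Proposition \ref{curvature-estimates} (applied with $h \equiv 0$) on the sequence $\bd \Theta_i$ to obtain a uniform second-fundamental-form bound $\|A_{\bd \Theta_i}\|^2(x) \le C/\dist(x, \bd K)^2$ on compact subsets of $\ins(K)$, where $C$ depends only on $M$, $a$, and the mass bound inherited from $\|V\|(M)$. Standard sheeting compactness for CMC hypersurfaces with uniformly bounded curvature and mean curvature then yields subsequential smooth convergence $\bd \Theta_i \to \Sigma$ in $\ins(K)$ with integer sheeting multiplicities, where $\Sigma$ is smooth with constant mean curvature $H_\infty$. I would then split on the value of $H_\infty$. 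If $H_\infty \ne 0$, the multiplicity-one conclusion follows from the sheeting argument for CMC minimizers developed in \cite{zhou2020existence}: the presence of two nearby parallel sheets of $\bd \Theta_i$ would admit an area-decreasing modification compatible with the constrained class $\mathcal A(T_i, K, \delta_i)$, contradicting the minimality of $\Theta_i$. Hence $V^* = |\Sigma|$ and $\Sigma$ is almost-embedded with constant mean curvature $H_\infty$, the touching set arising from sheets approaching from the consistent orientation side. If instead $H_\infty = 0$, the limit $\Sigma$ is minimal and higher sheeting multiplicities are permitted, but any tangential touching of two minimal sheets forces coincidence by the strong maximum principle; consequently $\supp V^*$ is a smooth embedded minimal hypersurface $\Sigma = \Sigma_1 \cup \cdots \cup \Sigma_j$ with $V^* = \sum_i m_i |\Sigma_i|$ for integer multiplicities $m_i$.

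Finally, to identify $\bd \Omega^*$ with the odd-multiplicity components, I would use that $\Omega_i \to \Omega^*$ as Caccioppoli sets and $\supp \bd \Omega^* \subset \supp V^* = \Sigma$ in $\ins(K)$, so the constancy theorem (applied component-by-component with $\Z_2$ coefficients) forces $\bd \Omega^* \rest \Sigma_i$ to equal either $0$ or $|\Sigma_i|$ within $\ins(K)$. Since crossing each sheet of $\bd \Theta_i$ flips the $\Omega_i$-assignment, $\Sigma_i$ carries $\bd \Omega^*$ precisely when the multiplicity $m_i$ is odd. The interior curvature estimate for $\Sigma$ descends directly from the uniform bound on the approximating sequence. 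I expect the main obstacle to be the multiplicity-one conclusion in case (i): the $E$-functional carries no external prescribing function, so the sheeting/no-folding argument must be carried out entirely within the constrained minimization structure, carefully tracking the orientation of nearby sheets and the volume-area inequality governing admissible modifications in $\mathcal A(T_i, K, \delta_i)$.
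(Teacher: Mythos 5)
Your overall strategy is the paper's: in $\ins(K)$ the replacement is a limit of constrained minimizers $\bd\Theta_i$, which are smooth, embedded, volume-preserving stable, with a single constant mean curvature $H_i$, $|H_i|\le 2a$; the curvature estimate of Proposition \ref{curvature-estimates} (with $h\equiv 0$) gives uniform bounds, and smooth subsequential convergence plus the dichotomy $H_\infty\neq 0$ versus $H_\infty=0$ yields (i)/(ii). The odd-multiplicity identification via the constancy theorem and the fact that crossing a sheet of $\bd\Theta_i$ flips the indicator of $\Theta_i$ is also the right argument, and the interior curvature estimate does descend from the approximating sequence.

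The gap is in your justification of multiplicity one in case (i). You argue that two nearby parallel sheets of $\bd\Theta_i$ would admit an area-decreasing modification ``compatible with the constrained class $\mathcal A(T_i,K,\delta_i)$,'' contradicting minimality. This mechanism is not available here, and if it were it would prove too much: the same neck-cutting construction makes no reference to the value of $H_\infty$, so it would equally exclude the minimal limits with multiplicity $\ge 2$ that case (ii) explicitly allows (and which genuinely occur in this theory). Concretely, a competitor obtained by drilling/filling a neck between two sheets at separation $\eps_i$ must be appended to a chain in which \emph{every} step has flat distance less than $\delta_i$, keeps $\M(\bd\,\cdot\,)\le \M(\bd\Omega_i)+\delta_i$, and satisfies $2a\vert\vol(\Omega_i)-\vol(\cdot)\vert\le \M(\bd\Omega_i)-\M(\bd\,\cdot\,)$ at each stage; since $\delta_i\to 0$ with no control relative to the sheet separation, and since the last inequality may hold with near equality at $\Theta_i$ (so intermediate configurations with even slightly larger area, or slightly shifted volume, are inadmissible), the modification cannot in general be realized inside $\mathcal A(T_i,K,\delta_i)$. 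The correct (and standard) reason for multiplicity one when $H_\infty\neq 0$ is an orientation argument: the collapsing sheets are portions of the embedded boundary $\bd\Theta_i$, so consecutive sheets bound $\Theta_i$ from opposite sides, and their mean curvature vectors (of magnitude $|H_i|\to|H_\infty|$, computed with respect to the inward normal of $\Theta_i$) point in opposite directions; in the limit the same hypersurface would have mean curvature $H_\infty$ and $-H_\infty$ with respect to a fixed normal, forcing $H_\infty=0$. Hence $H_\infty\neq 0$ rules out multiplicity $\ge 2$, and touching in the limit can only occur between sheets whose mean curvature vectors agree, which is exactly almost-embeddedness. With this replacement of your no-folding step, the rest of your argument goes through.
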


\begin{proof}
In the interior of $K$, the replacement $V^*$ is a limit of smooth, embedded, volume preserving stable CMC hypersurfaces with uniformly bounded mean curvature. Hence the regularity follows from the curvature estimates for volume preserving stable CMCs.
\end{proof}

We conclude by investigating the tangent cones to $E$ almost-minimizers.  We show that if $(V,T)$ is $(E,\mathcal F)$ almost-minimizing in annuli, then every tangent cone to $V$ is an integer multiple of a plane.

\begin{prop}
Assume that $(V,T)$ has $2a$-bounded first variation and is $(E,\mathcal F)$-almost minimizing in a set $U$. Then for any sequence $p_i \to p \in U$ and any sequence of scales $r_i \to 0$, every varifold limit $\overline V = (\eta_{p_i,r_i})_\sharp V$ is an integer multiple of a complete, embedded minimal hypersurface. Moreover, every varifold tangent to $V$ is an integer multiple of a hyperplane. 
\end{prop}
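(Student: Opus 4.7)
The plan is to imitate, nearly verbatim, the corresponding tangent cone argument for the $F$ functional proved immediately before in the text (whose proof in turn follows \cite[Lemma 5.10, Proposition 5.11, Appendix C]{zhou2019min}). The only structural difference is that $E$ carries no $h$ term, so the replacements will have constant (rather than prescribed) mean curvature, and after rescaling this mean curvature goes to zero. Let me describe the adaptation.

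First, I would fix $p\in U$, sequences $p_i\to p$ and $r_i\to 0$, and look at a varifold limit $\overline V = \lim(\eta_{p_i,r_i})_\sharp V$ on $T_pM$. The first step is to show that $\overline V$ admits volume-preserving stable minimal replacements in any annulus $\an\subset T_pM$. Concretely, for each $i$ one applies the already-established replacement construction for $E$ to the pair $(V,T)$ inside $\eta_{p_i,r_i}^{-1}(\an)$, producing $(V_i^*,T_i^*)$ which in the interior is induced by a smooth almost embedded CMC hypersurface with $|H|\le 2a$, with a uniform curvature estimate depending only on $\|V\|(M)$ (which is scale invariant only up to a factor blown up by $r_i^{-n}$, but the bounded first variation and the curvature estimate rescale so that after pushing forward by $\eta_{p_i,r_i}$ the mean curvature becomes $r_i\cdot O(1)\to 0$). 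Passing to a subsequence gives a limiting $\overline V^*$ on $T_pM$ that agrees with $\overline V$ off $\an$, has the same mass on any ball containing $\an$, and inside $\an$ is induced by a smooth embedded volume-preserving stable \emph{minimal} hypersurface with integer multiplicity (the mean curvature term vanishes in the blow-up limit). This is the exact analogue of \cite[Lemma 5.10]{zhou2019min}.

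Next, the replacement procedure can be iterated: $\overline V^*$ is still $(E,\mathcal F)$ almost-minimizing outside a slightly smaller annulus, so one can construct a second replacement $\overline V^{**}$ on an overlapping annulus, and so on. At this point one invokes the general regularity machinery (\cite[Appendix C]{zhou2019min}): the existence of replacements that are locally induced by smooth embedded hypersurfaces satisfying a curvature estimate, together with a Bernstein/Simons-type statement for the relevant class of cones, implies the regularity of $\overline V$ as an integer multiple of a complete embedded minimal hypersurface. The standard Pitts/Zhou argument uses strong stability for these two inputs, but here both inputs are available in the volume-preserving stable setting: Proposition \ref{curvature-estimates} (with $h\equiv 0$) gives the curvature estimate, and Proposition \ref{simons} rules out non-flat volume-preserving stable minimal cones in dimensions $3\le n+1\le 7$. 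With these substitutions the appendix argument carries over verbatim.

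Finally, once every blow-up limit of the above form is known to be a smooth embedded minimal hypersurface with integer multiplicity, the conclusion that every varifold tangent $C\in\operatorname{VarTan}(V,p)$ is an integer multiple of a hyperplane follows exactly as in \cite[Proposition 5.11]{zhou2019min}: $C$ is a stationary integral cone that itself admits smooth minimal replacements in annuli; its link is therefore regular, and the cone structure together with regularity at the vertex forces $C$ to split off a line in every direction, hence to be flat, with the multiplicity being integer since $V$ is already known to have integer density almost everywhere by the replacement description. I expect the one point requiring genuine care (rather than citation) is the rescaling of the mean curvature bound in the first step: one must verify that the uniform $|H|\le 2a$ bound on the pre-blow-up replacements becomes $|H|\le 2a\,r_i\to 0$ after applying $\eta_{p_i,r_i}$, so that the limiting replacement $\overline V^*$ is genuinely minimal and not merely CMC, which is what permits the volume-preserving stable version of Simons' theorem to apply in the ambient Euclidean space $T_pM$.
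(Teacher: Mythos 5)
Your proposal is correct and follows essentially the same route as the paper: the paper omits the proof for $E$ precisely because it mirrors the $F$-functional argument, namely constructing volume-preserving stable minimal replacements in annuli on the blow-up limit (the rescaled mean curvature $\le 2a\,r_i$ vanishing in the limit), then invoking the curvature estimates and the volume-preserving stable Simons theorem (Proposition \ref{simons}) in place of strong stability to run the regularity argument of \cite[Appendix C]{zhou2019min}, and finally deducing flatness of tangent cones as in \cite[Proposition 5.11]{zhou2019min}. No substantive differences to report.
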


\subsection{Regularity} 

\label{section:E-regularity}

The goal of this subsection is to prove the regularity of the min-max pair $(V,T)$. As in the case of the $F$ functional, the key point is to ensure that successive replacements on overlapping annuli have the same mean curvature.

Given a smooth almost-embedded surface $\Sigma$ with constant mean curvature, recall that $\mathcal R(\Sigma)$ and $\mathcal S(\Sigma)$ denote respectively the set of embedded points and the touching set of $\Sigma$.

The next proposition shows that, once a replacement has non-zero mean curvature, all subsequent replacements must also have matching non-zero mean curvature. It does not seem straightforward to show that if the initial replacement is minimal with multiplicity, then subsequent replacements will also be minimal with multiplicity. Fortunately, the following is already enough to prove regularity. 

\begin{prop}
\label{prop:mean curvature match2}
Assume $(V,\Omega)$ is $E$ almost-minimizing in annuli and fix a point $p\in \supp \|V\|$.  Fix sufficiently small numbers $s_1 < r_1$ and let $(V^*,\Omega^*)$ be a replacement for $(V,\Omega)$ in $\an(p,s_1,r_1)$. Let $\Sigma^*$ be the smooth, almost-embedded hypersurface inducing $V^*$ in $\an(p,s_1,r_1)$ and assume that $\Sigma^*$ has non-zero mean curvature $h_0$.  Choose $s_1 < r_2 < r_1$ so that $\bd B_{r_2}(p)$ intersects $\Sigma^*$ and the countable union of manifolds containing $\mathcal S(\Sigma^*)$ transversally and so that $\bd B_{r_2}(p) \cap \mathcal R(\Sigma^*)$ is non-empty. Fix any $s < s_1$ and let $(V^{**}_s,\Omega^{**}_s)$ be a replacement for $(V^*,\Omega^*)$ in $\an(p,s,r_2)$. Let $\Sigma^{**}_s$ be the smooth hypersurface inducing $V^{**}_s$ in $\an(p,s,r_2)$. Then $\Sigma^{**}_s$ also has mean curvature $h_0$. 
\end{prop}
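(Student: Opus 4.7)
The plan is to argue in parallel with the proof of Proposition \ref{prop:mean curvature match1}. Assume for contradiction that the constant mean curvature $h_1$ of the smooth support $\Sigma^{**}_s$ in $\an(p, s, r_2)$ satisfies $h_1 \neq h_0$; note that $h_1 = 0$ in case (ii) since the support of $V^{**}_s$ is then a minimal hypersurface, so in that case the hypothesis $h_0 \neq 0$ automatically gives $h_1 \neq h_0$. The goal is to produce a volume-preserving, mass-decreasing first-order variation of $(V^{**}_s, \Omega^{**}_s)$ and then discretize it to contradict the $(E, \mathcal F)$-almost-minimizing property.

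First I locate two suitable regular points. The assumption that $\bd B_{r_2}(p) \cap \mathcal R(\Sigma^*) \neq \emptyset$, combined with continuity of the smooth hypersurface $\Sigma^*$ in the outer annulus, furnishes $p' \in \mathcal R(\Sigma^*) \cap \an(p, r_2, r_1)$. Because $(V^{**}_s, \Omega^{**}_s)$ coincides with $(V^*, \Omega^*)$ on $M \setminus \an(p, s, r_2)$ and $\bd \Omega^*$ coincides with $\Sigma^*$ near regular points on $\bd B_{r_2}(p)$, the cycle $\bd \Omega^{**}_s$ continues across $\bd B_{r_2}(p)$ into the replacement annulus. In case (i) this continuation is simply $\Sigma^{**}_s$, while in case (ii) it is the odd-multiplicity component of $\Sigma^{**}_s$ that meets $\Sigma^*$ transversally at the boundary point. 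Either way, one obtains a regular embedded point $q'' \in \mathcal R(\bd \Omega^{**}_s) \cap \an(p, s, r_2)$.

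Next, construct a smooth vector field $X$ supported in disjoint small balls around $p'$ and $q''$ such that $\int_{\Sigma^*} \la X, \nu_1 \ra = c$ and $\int_{\bd \Omega^{**}_s} \la X, \nu_2 \ra = -c$ for some $c \neq 0$, where $\nu_1$ and $\nu_2$ denote the unit normals pointing into $\Omega^{**}_s$ at $p'$ and $q''$. By construction, $\delta \vol|_{\Omega^{**}_s}(X) = 0$. For the mass variation, the piece near $p'$ contributes $-h_0 c$ since $V^{**}_s$ agrees with the multiplicity-one varifold $\vert \Sigma^*\vert$ in a neighborhood of $p'$. The piece near $q''$ contributes $h_1 c$ in case (i). In case (ii), even though $V^{**}_s$ may carry any integer multiplicity at $q''$, the smooth sheet through $q''$ is part of a minimal hypersurface, so its first variation of mass vanishes identically and the contribution is $0 = h_1 c$. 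In both cases, $\delta \M|_{V^{**}_s}(X) = (h_1 - h_0) c \neq 0$.

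After replacing $X$ by $-X$ if necessary, one has $\delta \M|_{V^{**}_s}(X) < -\eps_2$ with $\delta \vol|_{\Omega^{**}_s}(X) = 0$ for some $\eps_2 > 0$. The continuity of first variations in the $\mathscr F$-topology (Proposition \ref{prop:continuity of first variation of E} and its obvious mass analogue) extends these estimates to an $\mathscr F$-neighborhood of $(V^{**}_s, \Omega^{**}_s)$, with $\vert \delta \vol(X) \vert \le \eps_2/(8a)$ throughout. Flowing any competitor $\Theta$ in this neighborhood along $X$ for a uniform short time $\tau$ produces a family $\Theta_t$ with $\M(\bd \Theta_\tau) \le \M(\bd \Theta) - \eps_3$ while maintaining $2a\vert \vol(\Theta) - \vol(\Theta_t)\vert \le \M(\bd \Theta) - \M(\bd \Theta_t)$ throughout $[0, \tau]$. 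Discretizing this flow as in Proposition \ref{prop:mean curvature match1} contradicts the $(E, \mathcal F)$-almost-minimizing property of $(V^{**}_s, \Omega^{**}_s)$. The main obstacle is case (ii), and it is resolved precisely by the observation that the first variation of mass of any multiplicity varifold supported on a minimal hypersurface vanishes identically, so a nonzero net mass variation is still produced at $p'$ while volume preservation is ensured by the compensating contribution at $q''$.
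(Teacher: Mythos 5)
Your first-variation argument for the cases where it applies is sound and matches the paper's template: the CMC-mismatch case ($h_1\neq h_0$, $h_1\neq 0$) and the case where $\bd \Omega^{**}_s$ has a regular sheet inside $\an(p,s,r_2)$ are handled exactly as in Proposition \ref{prop:mean curvature match1}, and your observation that a minimal sheet has vanishing first variation of mass at any multiplicity lets you fold the odd-multiplicity minimal case into the same computation. The problem is the step you treat as automatic: the existence of the point $q''\in \mathcal R(\bd \Omega^{**}_s)\cap \an(p,s,r_2)$, i.e.\ the claim that ``the cycle $\bd \Omega^{**}_s$ continues across $\bd B_{r_2}(p)$ into the replacement annulus'' along an odd-multiplicity component of $\Sigma^{**}_s$. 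In the minimal case the replacement regularity only says that $\bd \Omega^{**}_s$ coincides with the \emph{odd}-multiplicity components of $V^{**}_s$ in the \emph{interior} of the annulus; if every component of $\Sigma^{**}_s$ carries even multiplicity, then $\bd\Omega^{**}_s$ has no support in the open annulus at all and your $q''$ does not exist. A purely current-theoretic ``the cycle cannot stop'' argument does not close this: since $\supp\vert T\vert \subset \supp\|V^{**}_s\|$ and the regularity of the replacement is only interior, nothing you have established prevents $V^{**}_s$, and hence $\bd\Omega^{**}_s$, from carrying mass on the sphere $\bd B_{r_2}(p)$ itself near the crossing set, so the cycle could a priori turn and run along the sphere instead of entering the inner annulus, and the volume-compensating deformation cannot be built.

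This is precisely the scenario the paper spends its effort on. Its proof splits off the higher-multiplicity minimal case and rules it out not by a deformation but by a tangent-cone and density argument at a point $y\in \bd B_{r_2}(p)\cap \mathcal R(\Sigma^*)$: every varifold tangent to $V^{**}_s$ at $y$ is an integer multiple of a plane; matching with the outer data, which is the multiplicity-one sheet $\Sigma^*$ crossing $\bd B_{r_2}(p)$ transversally, forces that tangent to be $T_y\Sigma^*$ with multiplicity one, so the density of $V^{**}_s$ at $y$ equals $1$; on the other hand, the inner annulus must contribute sheets of $\Sigma^{**}_s$ accumulating at $y$, and if their multiplicity were everywhere at least $2$, upper semicontinuity of the density would force density at least $2$ at $y$, a contradiction. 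This argument simultaneously excludes mass concentration on the sphere at $y$ and the all-even-multiplicity configuration, which is exactly what your continuation claim needs. So either import this tangent-cone/density step (after which your unified variational treatment of the remaining cases is a clean way to finish), or supply an independent proof that $\bd\Omega^{**}_s$ enters the open annulus through a regular point; as written, the proposal has a genuine gap at this point.
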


\begin{proof} There are several other possibilities to rule out. First, suppose for contradiction that $\Sigma^{**}_s$ is a multiplicity one CMC with constant mean curvature $h_1\neq h_0$.  Then we can argue as in Proposition \ref{prop:mean curvature match1} to construct a volume preserving deformation which decreases the area. This contradicts the $E$-almost-minimizing property. 

Second, suppose for contradiction that $\Sigma^{**}_s$ is minimal. If the multiplicity of $\Sigma^{**}_s$ is one, then $\Sigma^{**}_s$ coincides with the boundary of $\Omega^{**}_s$ and we can again construct a volume preserving deformation which decreases the area. As before, this violates the almost-minimizing property. Finally suppose that $\Sigma^{**}_s$ has higher multiplicity. Then the multiplicity function $\theta$ for $V^{**}_r$ satisfies $\theta(x) \ge 2$ for all $x\in \Sigma^{**}_s$. By assumption, there is a point $y \in \bd B_{r_2}\cap \mathcal R(\Sigma^*)$. We know any varifold tangent to $V^{**}_s$ at $y$ is an integer multiple of a plane. By considering $\Sigma^*$, we see that in fact the only possible varifold tangent is $T_y\Sigma^*$ with multiplicity one.  Thus, by transversality, there must be a sequence of points $x_i\in \Sigma^{**}_s$ with $x_i \to y$. But, by upper semi-continuity of the density, this implies that $\theta_{V^{**}_s}(y) \ge 2$, contradicting that the varifold tangent $T_y\Sigma^*$ has multiplicity one. Therefore this also cannot occur.
\end{proof}

We can now complete the regularity argument.

\begin{prop}
\label{E-regularity}
Assume that $(V,T)$ satisfies $\delta E(V,T) = 0$. Further suppose that $(V,T)$ is $(E,\mathcal F)$ almost-minimizing in annuli. Choose a set $\Omega$ with $\bd \Omega = T$ and let $H = -f'(\vol(\Omega))$. 
\begin{itemize}
    \item[(i)] If $H \neq 0$, then there exists a smooth, almost-embedded (not necessarily connected) CMC hypersurface $\Lambda = \bd \Omega$, which has mean curvature $H$ with respect to the normal pointing into $\Omega$. Moreover, there exist a (possibly empty) collection of minimal hypersurfaces $\Sigma_1,\hdots,\Sigma_k$ and a collection of multiplicities $m_1,\hdots,m_k\in \N$ such that 
\[
V =  \vert \Lambda\vert + \sum_{i=1}^k m_i \vert \Sigma_i\vert.
\]
The hypersurfaces $\Lambda,\Sigma_1,\hdots,\Sigma_k$ are all disjoint.
\item[(ii)] If $H=0$, then there exists a collection of smooth, embedded minimal hypersurfaces $\Lambda_1,\hdots,\Lambda_q$ such that $\bd \Omega = \Lambda_1\cup \hdots \cup \Lambda_q$. Moreover, there exist a (possibly empty) collection of minimal hypersurfaces $\Sigma_1,\hdots,\Sigma_k$ and a collection of multiplicities $\ell_1,\hdots,\ell_q$, $m_1,\hdots,m_k\in \N$ such that 
\[
V = \sum_{i=1}^q \ell_i\vert \Lambda_i\vert + \sum_{i=1}^k m_i \vert \Sigma_i\vert.
\]
The hypersurfaces $\Lambda_1,\hdots,\Lambda_q,\Sigma_1,\hdots,\Sigma_k$ are all disjoint.
\end{itemize}
\end{prop}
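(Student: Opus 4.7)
The plan is to mirror the proof of Proposition \ref{F-regularity}, with Proposition \ref{prop:mean curvature match2} playing the role of Proposition \ref{prop:mean curvature match1} to propagate mean curvature information across overlapping replacements. The new feature relative to the $F$-setting is that local replacements may now fall into the minimal-with-multiplicity alternative, and these must be sorted out globally via the stationarity equation.

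First, I would establish smoothness of $V$ at each $p \in \supp \|V\|$. Choose $\rho(p) > 0$ such that $(V,T)$ is $(E,\mathcal F)$-almost-minimizing in every annulus $\an(p,s,r)$ with $s < r < \rho(p)$. Construct successive replacements on a chain of overlapping annuli and invoke the tangent cone analysis (every varifold tangent is an integer multiple of a hyperplane) as in \cite{zhou2020existence} to deduce that on a small ball around $p$, $\supp \|V\|$ is a smooth hypersurface of one of two types: (a) a multiplicity-one almost-embedded CMC with some non-zero constant mean curvature $h_0$, or (b) an embedded minimal hypersurface with integer multiplicity. Proposition \ref{prop:mean curvature match2} guarantees that once a replacement on a given annulus is of type (a) with mean curvature $h_0$, every subsequent replacement on a smaller overlapping annulus is again of type (a) with exactly the same $h_0$, so the local picture is unambiguous.

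Next, by compactness of $M$, I would assemble these local descriptions into a finite decomposition $\supp \|V\| = N_1 \cup \hdots \cup N_L$ where each $N_\alpha$ is a closed smooth component of type (a) or (b). Pairwise disjointness follows from the strong maximum principle: two components with distinct constant mean curvatures (in particular, a type (a) component and any other) cannot be tangent to one another, so they must be separated. Since $T \in \B(M,\Z_2)$ satisfies $\|\, \vert T\vert \, \| \le \|V\|$ and is a mod-$2$ cycle supported in $\supp\|V\|$, the constancy theorem applied on each component gives $T = \sum_{\alpha \in I} \vert N_\alpha \vert$ for some subset $I \subset \{1,\hdots,L\}$.

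Finally, I would use stationarity $\delta E|_{(V,T)} = 0$ with the first variation formula (\ref{eq:1st variation for E}), tested against vector fields supported near a single component $N_\alpha$, to identify $I$ and the mean curvatures. A type (a) component $N_\alpha$ produces a contribution proportional to its mean curvature $H_\alpha$ from $\delta V(X)$ and, if $\alpha \in I$, an additional boundary term $\pm f'(\vol(\Omega))$ from the divergence integral; balancing forces $\alpha \in I$ (otherwise $H_\alpha = 0$, contradicting type (a)) and fixes $H_\alpha = -f'(\vol(\Omega)) = H$ with the sign matching the normal pointing into $\Omega$. A type (b) component contributes nothing to $\delta V(X)$, so if $\alpha \in I$ then $f'(\vol(\Omega)) = 0$, i.e., $H = 0$. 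Assembling these constraints: when $H \ne 0$, every type (a) component lies in $I$ with mean curvature $H$ while no type (b) component does; setting $\Lambda$ to be the union of the type (a) components and enumerating the type (b) components as $\Sigma_i$ with their multiplicities $m_i$ proves (i). When $H = 0$, type (a) components cannot exist at all, so every component is minimal; those in $I$ become the $\Lambda_j$'s and the rest become the $\Sigma_i$'s, proving (ii). The main difficulty is the mean curvature matching across replacements, already handled by Proposition \ref{prop:mean curvature match2}; the remaining work is the global book-keeping above, where the maximum principle and stationarity together sort out which components actually bound $\Omega$.
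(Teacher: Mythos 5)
Your proposal follows the same route as the paper: local regularity via successive replacements with Proposition \ref{prop:mean curvature match2} supplying the mean-curvature matching, followed by the constancy theorem plus stationarity for $E$ (tested on vector fields localized near single components) to decide which components form $\bd\Omega$ and to pin down the mean curvatures. Your final bookkeeping is essentially the paper's. However, two steps are weaker than what is needed. First, the local step is not as ``unambiguous'' as you assert: Proposition \ref{prop:mean curvature match2} only propagates the non-zero CMC alternative forward; it does \emph{not} show that a minimal first replacement forces later replacements to be minimal. So when the first replacement in $\an(p,s_1,r_1)$ is minimal with multiplicity, the second replacements $(V^{**}_s,\Omega^{**}_s)$ may be minimal for some $s$ and CMC for others, and the argument must branch: if they are minimal for every $s<s_1$ one argues as in the classical Almgren--Pitts scheme, while if some $V^{**}_{s_2}$ is a non-zero CMC one must take a \emph{third} replacement in $\an(p,s,r_3)$, apply Proposition \ref{prop:mean curvature match2} to the pair $(V^{**}_{s_2},\Omega^{**}_{s_2})$ and $(V^{***}_s,\Omega^{***}_s)$, glue smoothly along $\bd B_{r_3}(p)$, and let $s\to 0$. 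This three-replacement case analysis is exactly the extra difficulty of the $E$ functional compared with $F$, and your sketch passes over it.

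Second, your disjointness argument is incorrect as stated: two hypersurfaces with distinct constant mean curvatures can perfectly well be tangent (e.g.\ internally tangent spheres), so the strong maximum principle does not by itself separate a CMC component from the others. Disjointness instead follows from the local regularity dichotomy: at a touching point of a non-zero CMC sheet with a minimal sheet (or with a CMC sheet of different mean curvature), the varifold would locally be neither a multiplicity one almost-embedded CMC nor a collection of minimal hypersurfaces with multiplicities, contradicting the local structure already established. With these two repairs your outline matches the paper's proof.
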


\begin{proof}
The argument is slightly more complicated than for the $F$ functional, because Proposition \ref{prop:mean curvature match2} is weaker than Proposition \ref{prop:mean curvature match1}.  To start, choose a point $p\in \supp \|V\|$. Choose very small numbers $s_1 < r_1$ and let $(V^*,\Omega^*)$ be a replacement for $(V,\Omega)$ in $\an(p,s_1,r_1)$. There are now several cases to consider. 

First, suppose that $(V^*,\Omega^*)$ is induced by a smooth, almost-embedded CMC hypersurface $\Sigma^*$ with non-zero mean curvature in $\an(p,s_1,r_1)$. Then choose $s_1 < r_2< r_1$  so that $\bd B_{r_2}(p)$ is transverse to $\Sigma^*$ and the union of manifolds containing $\mathcal S(\Sigma^*)$ and so that $\bd B_{r_2}(p)\cap \mathcal R(\Sigma^*)$ is non-empty.  Then for $s < s_1$ let $(V^{**}_s,\Omega^{**}_s)$ be a replacement for $(V^*,\Omega^*)$ in $\an(p,s,r_2)$. According to Proposition \ref{prop:mean curvature match2}, $V^{**}_s$ must be induced by a multiplicity one hypersurface $\Sigma^{**}_s$ whose mean curvature matches that of $\Sigma^*$.  One can now proceed exactly as in \cite{zhou2019min} to deduce that $V$ is induced by a smooth, almost-embedded CMC hypersurface with multiplicity one in a small neighborhood of $p$. 

Second, suppose that $(V^*,\Omega^*)$ is induced by a smooth, embedded, minimal hypersurface $\Sigma^*$, possibly with multiplicity. Choose $s_1< r_2< r_1$ so that $\bd B_{r_2}(p)$ intersects $\Sigma^*$ transversally. Then for $s<s_1$ let $(V^{**}_s,\Omega^{**}_s)$ be a replacement for $(V^*,\Omega^*)$ in $\an(p,s,r_2)$. If for every $s < s_1$, $V^{**}_s$ is induced by a smooth minimal surface with multiplicity, then we can argue exactly as in the usual Almgren-Pitts construction to deduce that $V$ is induced by a smooth, embedded minimal hypersurface with multiplicity in a small neighborhood of $p$. 

The remaining possibility is that for some choice of $s_2< s_1$, the varifold $V^{**}_{s_2}$ is induced by a smooth, almost-embedded CMC hypersurface $\Sigma^{**}_{s_2}$ with non-zero mean curvature. In this case, we will take a {\it third} replacement. Choose $s_2 < r_3< r_2$ so that $\bd B_{r_3}(p)$ intersects $\Sigma^{**}_{s_2}$ and the countable union of manifolds containing $\mathcal S(\Sigma^{**}_{s_2})$ transversally and so that $\bd B_{r_3}(p)\cap \mathcal R(\Sigma^{**}_{s_2})$ is non-empty. Then for any $s< s_2$, let $(V^{***}_s,\Omega^{***}_s)$ be a replacement for $(V^{**}_{s_2},\Omega^{**}_{s_2})$ in $\an(p,s,r_3)$. According to Proposition \ref{prop:mean curvature match2} (applied with $(V,\Omega) = (V^{**}_{s_2},\Omega^{**}_{s_2})$), the varifold $V^{***}_{s}$ is induced by a smooth, almost-embedded hypersurface $\Sigma^{***}_s$ in $\an(p,s,r_3)$ with non-zero mean curvature matching that of $\Sigma^{**}_{s_2}$. Arguing exactly as in \cite{zhou2019min}, one can show that $\Sigma^{**}_{s_2}$ and $\Sigma^{***}_s$ glue smoothly along $\bd B_{r_3}(p)$. One can then let $s\to 0$ and continue to argue exactly as in \cite{zhou2019min} to deduce that $V$ is induced by a smooth, almost-embedded CMC hypersurface with multiplicity one in a small neighborhood of $p$. 

We have now proven the following local regularity: for every $p\in \supp \|V\|$ there is an $r > 0$ such that in $B_r(p)$ the varifold $V$ is induced by either 
    \begin{itemize}
        \item[(i)] a smooth, almost-embedded, multiplicity one CMC surface $\Lambda$ with non-zero mean curvature, or
        \item[(ii)] a collection of smooth, embedded minimal hypersurfaces with integer multiplicities. 
    \end{itemize}
The local regularity then implies that $V$ is induced by a collection of multiplicity one, almost-embedded CMCs together with a collection of smooth, embedded minimal hypersurfaces with multiplicities. 

Next note that since $\|\, \vert T\vert\, \| \le \|V\|$, the constancy theorem implies that $\vert T\vert$ is induced by some subcollection of the surfaces inducing $V$. Hence we can list the components of $\supp \|V\|$ as $\Lambda_1,\hdots,\Lambda_\ell,\Sigma_1,\hdots,\Sigma_k$ so that 
\begin{gather*}
\vert T\vert = \sum_{i=1}^\ell \vert \Lambda_i\vert,\\
V = \sum_{i=1}^\ell \ell_i \vert \Lambda_i\vert + \sum_{j=1}^k m_j \vert \Sigma_j\vert.
\end{gather*}
Here $\ell_i,m_j\in \N$ are some multiplicities. 

Now we use the fact that $(V,T)$ is stationary for $E$ to deduce the remaining claims of the proposition. First note that every surface $\Sigma_j$ must be minimal. Indeed, if some surface $\Sigma_j$ was not minimal, then we could construct a local deformation near $\Sigma_j$ decreasing the area and leaving $T$ unchanged. 
Such a deformation would decrease $E$ to first order. Therefore each $\Sigma_j$ must be minimal. 

Choose a set $\Omega$ with $\bd \Omega = T$. It remains to verify that all the surfaces $\Lambda_i$ must have mean curvature $-f'(\vol(\Omega))$, computed with respect to the normal pointing into $\Omega$. If, to the contrary, $\Lambda_i$ had a different mean curvature, then we could construct a local deformation near $\Lambda_i$ decreasing the $E$ functional to first order. 
This completes the proof of the proposition. 
\end{proof}

Theorem \ref{E-min-max} now follows by combining Proposition \ref{E-pt}, Proposition \ref{E-ca}, and Proposition \ref{E-regularity}.

\section{Compactness}\label{S:compactness}

In this section, we record some compactness properties satisfied by the surfaces produced by the min-max theorems.

\subsection{Compactness for F} Assume that $f\colon [0,\vol(M)] \to \R$ satisfies \eqref{f is even} and that $h\colon M\to \R$ is a smooth Morse function satisfying property (T) \ref{property (T)}. 
Choose a sequence $\eps_k\to 0$ and let 
\[
F_k(\Omega) = \M(\bd \Omega) - \eps_k \int_\Omega h + f(\vol(\Omega)).
\]
Also define 
\[
E(T) = \M(\bd \Omega) + f(\vol(T)). 
\]
We would like to prove certain 
compactness for a sequence $(V_k,\Omega_k)$ of critical points for $F_k$ as $k\to \infty$. 

The basic strategy is to try and show that the almost-minimizing property implies volume preserving stability. As noted in Remark \ref{rem:am-stability}, it seems we cannot adapt the argument in \cite[Lemma 3.3]{wang2023existence} because of the extra volume constraint in the definition of almost-minimizers. Thus we will instead argue that the min-max hypersurfaces coincide with their replacements, which are known to be volume preserving stable.

\begin{prop}
\label{F-gluing}
Assume that $\Sigma = \bd \Omega$ is a smooth, almost-embedded hypersurface with mean curvature $\eps_k h + h_0$ for some constant $h_0$. Assume that $(\vert\Sigma\vert, \Omega)$ is $(F_k,\mathcal F)$ almost-minimizing in an open set $U$ and that $\an(p,s,r)\subset U$ is a closed annulus. Let $(V^*,\Omega^*)$ be a replacement for $(\vert \Sigma\vert,\Omega)$ in $\an(p,s,r)$. Let $\Sigma^*$ be the smooth, almost-embedded hypersurface inducing $V$ in $\an(p,s,r)$. Then $\Sigma \cap (M\setminus \an(p,s,r))$ and $\Sigma^*$ have the same mean curvature, and glue smoothly along the boundary of $\an(p,s,r)$. 
\end{prop}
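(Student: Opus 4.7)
The proof has two steps: match the mean curvatures (i.e.\ show $h_1 = h_0$ where $\Sigma^*$ has mean curvature $\eps_k h + h_1$), then upgrade the $C^0$ matching of $\Sigma$ and $\Sigma^*$ across $\bd B_r(p) \cup \bd B_s(p)$ to smoothness via elliptic regularity for prescribed-mean-curvature hypersurfaces. For Step~1 the plan is to adapt Proposition~\ref{prop:mean curvature match1} directly. Note first that, by property (iii) of replacements, $(V^*,\Omega^*)$ is $(F_k,\mathcal F)$-almost-minimizing in $U$. Suppose for contradiction that $h_0 \neq h_1$. By Proposition~\ref{prop:touching set}, the touching sets $\mathcal S(\Sigma)$ and $\mathcal S(\Sigma^*)$ are contained in countable unions of $(n-1)$-dimensional submanifolds, so I can pick regular points $q \in \mathcal R(\Sigma) \cap (M \setminus \cl{\an(p,s,r)})$ and $q^* \in \mathcal R(\Sigma^*) \cap \an(p,s,r)$. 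Let $\nu$ denote the unit normal pointing into $\Omega^*$, and construct a $C^1$ vector field $X$ supported in disjoint small neighborhoods of $q$ and $q^*$ satisfying
\[
\int_{\Sigma}\la X,\nu\ra = -\int_{\Sigma^*}\la X,\nu\ra \neq 0.
\]
The first equality forces $\delta\vol|_{\Omega^*}(X)=0$, and the first variation formula for $A^{\eps_k h}$ combined with $h_0 \neq h_1$ yields $\delta A^{\eps_k h}|_{(V^*,\Omega^*)}(X) = (h_1 - h_0)\int_{\Sigma^*}\la X,\nu\ra \neq 0$.

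After possibly replacing $X$ by $-X$, this variation is strictly negative. Flowing $\Omega^*$ along $X$ for a uniform short time $\tau$ produces a smooth family $\Omega^*_t$ along which $A^{\eps_k h}$ decreases by a definite amount; since $\delta\vol|_{\Omega^*}(X)=0$, a second-order Taylor expansion gives $|\vol(\Omega^*_t)-\vol(\Omega^*)|=O(t^2)$, so that $2a|\vol(\Omega^*)-\vol(\Omega^*_t)| \le A^{\eps_k h}(\Omega^*)-A^{\eps_k h}(\Omega^*_t)$ for all small $t$. Discretizing this family produces an admissible competing sequence in Definition~\ref{F-am} that violates the $(F_k,\mathcal F)$-almost-minimizing property of $(V^*,\Omega^*)$, a contradiction. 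Hence $h_1 = h_0$.

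For Step~2, property (i) of replacements gives $V^* = \vert\Sigma\vert$ on $M\setminus\an(p,s,r)$, so $\Sigma \cap (M \setminus \an(p,s,r))$ and $\Sigma^*$ match as varifolds across $\bd B_r(p) \cup \bd B_s(p)$. At any regular point lying on one of these boundary spheres, both pieces are smooth multiplicity-one graphs over a common tangent plane satisfying the same quasilinear PMC equation $H = \eps_k h + h_0$ on their respective sides, with matching Dirichlet data along the interface. Standard boundary regularity for such equations, applied exactly as in the analogous gluing arguments of \cite{zhou2019min,zhou2020existence}, upgrades the $C^0$ matching to smooth gluing, and the smooth structure then extends across the $(n-1)$-dimensional touching set. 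The subtlest point will be justifying that the tangent planes of $\Sigma$ and $\Sigma^*$ truly agree along the spheres — without this, a corner could form and destroy smoothness — but this is ruled out because $V^*$ is itself an integral varifold with globally bounded first variation equal to $-(\eps_k h + h_0)\nu\,\|V^*\|$, forcing continuity of the tangent plane at every regular point along the interface.
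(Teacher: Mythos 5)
Your Step 2 contains the real gap. The assertion that $\Sigma^*$ attaches to $\Sigma\cap\bd \an(p,s,r)$ with ``matching Dirichlet data,'' and your justification that the tangent planes agree because $\delta V^*$ equals $-(\eps_k h+h_0)\nu\,\|V^*\|$, assumes exactly what has to be proved. Replacement property (iv) only gives the bound $|\delta V^*(X)|\le (2a+b)\int |X|\,d\|V^*\|$; it does not identify the generalized mean curvature near the interface, does not exclude mass of $V^*$ concentrating on the spheres $\bd B_s(p)$ and $\bd B_r(p)$ (property (i) determines $V^*$ only on $M\setminus \an(p,s,r)$ and property (v) only concerns the interior of $K$), and does not show that $\cl{\Sigma^*}$ reaches the spheres at all, let alone exactly along $\Sigma$ and without extra sheets. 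Moreover the curvature estimate for the replacement degenerates like $\dist(x,\bd K)^{-2}$, so there is no up-to-the-interface graphical description to which boundary elliptic regularity could be applied. Ruling out peeling off, corners, and interface mass is the actual content of the proposition, and ``bounded first variation forces continuity of the tangent plane'' is not a valid implication without first establishing this structure.

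The paper's proof is much shorter and rests on an observation your proposal never uses: by replacement property (iii), $(V^*,\Omega^*)$ is itself $(F_k,\mathcal F)$-almost-minimizing in $U$, hence in every small annulus centered at any interface point $q\in\bd\an(p,s,r)$, and it has bounded first variation; therefore the local regularity argument of Proposition \ref{F-regularity} (replacements of the replacement in annuli around $q$, Proposition \ref{prop:mean curvature match1}, and the gluing arguments of \cite{zhou2019min,zhou2020existence}) applies verbatim at $q$ and shows that $\supp\|V^*\|$ is a smooth almost-embedded hypersurface in a neighborhood of $\bd\an(p,s,r)$. Smoothness across the interface then yields simultaneously the smooth gluing and the equality of the mean curvature constants, so your Step 1 is not needed as a separate step. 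Step 1 as written also has two smaller defects: the point $q$ must be chosen in $\mathcal R(\Sigma)\cap\big(U\setminus \an(p,s,r)\big)$ --- a point merely in $M\setminus\an(p,s,r)$ may lie outside $U$, where the deformation violates condition (ii) of Definition \ref{F-am}, and the hypotheses alone do not guarantee $\Sigma$ has regular points there --- and the contradiction must be produced for all Caccioppoli sets $\Theta$ that are $\mathscr F$-close to $(V^*,\Omega^*)$, since the almost-minimizing property of the pair is defined through such approximants; this requires the uniform first-variation estimates over an $\mathscr F$-ball as in Proposition \ref{prop:mean curvature match1}, not a Taylor expansion at $\Omega^*$ alone.
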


\begin{proof}
    Consider any point $q\in \bd \an(p,s,r)$. Note that $(V^*,\Omega^*)$ is $(F_k,\mathcal F)$-almost-minimizing in small annuli centered at $q$. Therefore, by the same local regularity argument as in Proposition \ref{F-regularity}, it follows that $V^*$ is induced by a smooth, almost-embedded hypersurface in a neighborhood of $q$. Therefore, $\supp \|V^*\|$ is smooth in a neighborhood of $\bd \an(p,s,r)$. This implies the conclusion of the proposition.
\end{proof}

\begin{rem}
An almost-embedded hypersurface in $M$ can be viewed as an immersion $\Sigma \to M$. In the following, by a component of an almost-embedded hypersurface, we mean the image of a connnected component of $\Sigma$ under this immersion. Thus two spheres touching tangentially at a point define two components.
\end{rem}

\begin{corollary}
\label{F-am implies stability}
    Assume that $\Sigma = \bd \Omega$ is a smooth, almost-embedded hypersurface with mean curvature $\eps_k h + h_0$ for some constant $h_0$. Assume that $(\vert \Sigma\vert, \Omega)$ is $(F_k,\mathcal F)$ almost-minimizing in an open set $U$ and that $\an(p,s,r) \subset U$ is a closed annulus. Assume that a local embedded sheet $\Gamma$ of $\Sigma$ crosses the boundary of $\an(p,s,r)$. Then the component of $\Sigma$ containing this sheet is volume preserving stable for $A^{\eps_k h + h_0}$ in the interior of $\an(p,s,r)$. 
\end{corollary}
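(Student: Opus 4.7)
The plan is to transfer volume preserving stability from a suitable replacement back to the relevant component of $\Sigma$ by unique continuation. Let $(V^*,\Omega^*)$ be a replacement of $(\vert\Sigma\vert,\Omega)$ in the closed annulus $\an(p,s,r)$, and write $\Sigma^*$ for the smooth almost-embedded hypersurface inducing $V^*$ inside the annulus. Since $(\vert\Sigma\vert,\Omega)$ is $(F_k,\mathcal F)$-almost-minimizing in $U\supset \an(p,s,r)$, Proposition \ref{F-gluing} applies and tells us that $\Sigma$ and $\Sigma^*$ carry the same mean curvature $\eps_k h + h_0$ and glue smoothly along $\bd\an(p,s,r)$.

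Next I would identify the component of $\Sigma$ containing $\Gamma$ with the corresponding component of $\Sigma^*$ via unique continuation. The local embedded sheet $\Gamma$ crossing $\bd\an(p,s,r)$ determines, through the smooth gluing, an embedded piece of $\Sigma^*$ that coincides with the extension of $\Gamma$ in an open collar neighborhood of $\bd\an(p,s,r)$ inside the annulus. Viewing the respective connected components of $\Sigma\cap\an(p,s,r)$ and $\Sigma^*$ that contain this common piece as solutions of the quasi-linear elliptic prescribed mean curvature equation $H = \eps_k h + h_0$, and using that they agree on a nonempty open set, unique continuation forces them to coincide throughout the annulus. Denote this common component by $C$; any touching points between different sheets of $\Sigma$ or $\Sigma^*$ are handled correctly because the unique continuation is applied along the intrinsic connected component of each immersion, not on the image.

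Finally, I would invoke the stability of the replacement. By construction, $\Sigma^*$ is obtained as a smooth limit, via the curvature estimates of Proposition \ref{curvature-estimates}, of solutions to the constrained minimization problem in $\an(p,s,r)$, and these solutions are volume preserving stable for $A^{\eps_k h + h_0}$ in the interior of the annulus. Volume preserving stability is preserved under smooth limits, so $\Sigma^*$ itself is volume preserving stable in $\ins(\an(p,s,r))$. Moreover, stability descends to each connected component: given a smooth function $\varphi$ supported on a single component with $\int \varphi = 0$ and compact support in $\ins(\an(p,s,r))$, extending by zero produces an admissible volume preserving variation of the full $\Sigma^*$, so the second variation on that single component is non-negative. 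Combined with the identification $C = C^*$ obtained in the previous step, this gives the desired volume preserving stability of the component of $\Sigma$ containing $\Gamma$.

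The main obstacle is the careful treatment of the almost-embedded structure: one must ensure that the unique continuation step propagates along the intrinsic component of the immersion (so that distinct sheets meeting at a touching point are not spuriously glued), and one must verify that the volume preserving stability really does restrict to individual components rather than only to the whole of $\Sigma^*$. Both of these are addressed by working sheet-by-sheet starting from the embedded piece $\Gamma$ and by testing with variations compactly supported on a single component that have zero mean.
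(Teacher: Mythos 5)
Your argument is correct and is essentially the paper's own proof: take a replacement in the annulus, use Proposition \ref{F-gluing} to glue it smoothly to $\Sigma$ across $\bd \an(p,s,r)$, propagate by unique continuation along the component containing $\Gamma$, and conclude from the volume preserving stability of replacements. The extra details you supply (stability passing to smooth limits and restricting to a single component via zero-mean test functions extended by zero) are consistent with what the paper takes for granted.
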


\begin{proof}
Consider a replacement $(V^*,\Omega^*)$ for $(\vert \Sigma\vert, \Omega)$ in $\an(p,s,r)$. Then $V^*$ is induced by a smooth, almost-embedded hypersurface $\Sigma^*$ in the interior of $\an(p,s,r)$. 
According to Proposition \ref{F-gluing}, $\Sigma \cap (M\setminus \an(p,s,r))$ glues smoothly to $\Sigma^*$ along the boundary of $\an(p,s,r)$. 
Now let $\Sigma_1$ be the component of $\Sigma$ containing $\Gamma$. Then by unique continuation, $\Sigma_1 \cap \ins(\an(p,s,r))$ is contained in $\Sigma^*$ and the conclusion of the proposition follows since replacements are volume preserving stable.
\end{proof}

\begin{corollary}
    \label{F-stable in small balls}
    Assume that $\Sigma = \bd \Omega$ is a smooth, almost-embedded hypersurface with mean curvature $\eps_k h + h_0$ for some constant $h_0$. Assume that $(\vert \Sigma\vert, \Omega)$ is $(F_k,\mathcal F)$ almost-minimizing in an open set $U$. There is a radius $\rho > 0$ depending only on $M$ and an upper bound $c$ for $\vert \eps_k h + h_0\vert$ such that if $s < r < \rho$ and $\an(p,s,r) \subset U$,  then $\Sigma$ is volume preserving stable for $A^{\eps_k h + h_0}$ in the interior of $\an(p,s,r)$. 
\end{corollary}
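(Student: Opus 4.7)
The plan is to reduce this corollary to Corollary \ref{F-am implies stability} by showing that, provided $r$ is small enough (depending only on $M$ and $c$), every connected component of $\Sigma$ meeting $\an(p,s,r)$ must have a local embedded sheet crossing the boundary of a slightly enlarged annulus. Volume preserving stability of each such component in $\ins(\an(p,s,r))$ then follows componentwise from Corollary \ref{F-am implies stability}.

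The main step is a non-existence result for closed CMC components concentrated in small balls. Concretely, I claim there exists $\rho = \rho(M,c) > 0$ such that no closed, connected, almost-embedded hypersurface with mean curvature bounded in absolute value by $c$ can have image contained in a geodesic ball $B_\rho(q) \subset M$. To prove this, lift $\Sigma$ to an immersion $\iota \colon \tilde\Sigma \to M$ and suppose for contradiction that a closed connected component $\tilde\Sigma_1$ of $\tilde\Sigma$ satisfies $\iota(\tilde\Sigma_1) \subset B_\rho(q)$. Let $X$ be the vector field equal to $\exp_q^{-1}$ on $B_\rho(q)$, extended smoothly with compact support to $M$. In normal coordinates around $q$, one has $|X| \le \rho$ and pointwise on $\tilde\Sigma_1$ the bound $\div_{\tilde\Sigma_1}(X\circ\iota) \ge n - C\rho^2$, where $C$ depends only on the sectional curvature of $M$. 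The first variation formula then gives
\[
(n - C\rho^2)\,\mathcal H^n(\tilde\Sigma_1) \;\le\; \int_{\tilde\Sigma_1} |H\circ\iota|\,|X\circ\iota|\, d\mathcal H^n \;\le\; c\rho\, \mathcal H^n(\tilde\Sigma_1),
\]
which is a contradiction once $\rho$ is small in terms of $c$ and the geometry of $M$.

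With this claim in hand, fix $\rho$ as above, shrinking if necessary so that every geodesic ball of radius $\rho$ lies in a normal coordinate chart. Given $s < r < \rho$ with $\an(p,s,r) \subset U$, choose $0 < s'' < s$ and $r < r'' < \rho$ such that $\an(p,s'',r'') \subset U$ and such that the spheres $\bd B_{s''}(p)$ and $\bd B_{r''}(p)$ are transverse to $\Sigma$ and intersect $\Sigma$ only at embedded points; this is possible for almost every such pair $(s'',r'')$ by Sard's theorem combined with the fact that the touching set $\mathcal S(\Sigma)$ is contained in a countable union of $(n-1)$-dimensional submanifolds. Any connected component of $\Sigma$ meeting $\an(p,s'',r'')$ cannot be entirely contained in $\an(p,s'',r'') \subset B_{r''}(p)$ by the non-existence result, so it must meet $\bd \an(p,s'',r'')$, and by the choice of $s''$ and $r''$ it does so along a local embedded sheet. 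Corollary \ref{F-am implies stability} then implies that each such component is volume preserving stable for $A^{\eps_k h + h_0}$ in $\ins(\an(p,s'',r''))$, and hence the conclusion holds in the smaller annulus $\ins(\an(p,s,r)) \subset \ins(\an(p,s'',r''))$.

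The hard part is the first variation estimate of the second paragraph. The Euclidean version is classical, but one must track the Riemannian error terms in $\div X$ and handle the fact that $\Sigma$ is only almost-embedded rather than embedded; the latter is addressed by passing to the abstract immersion $\tilde\Sigma \to M$, for which the first variation identity holds intrinsically.
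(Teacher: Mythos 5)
There is a genuine gap at the final step. Your argument applies Corollary \ref{F-am implies stability} separately to each component of $\Sigma$ meeting the annulus, and concludes that each such component is volume preserving stable; but the corollary you are proving asserts that $\Sigma$ itself is volume preserving stable in $\ins(\an(p,s,r))$, and componentwise volume preserving stability is strictly weaker than volume preserving stability of the union. The zero-mean constraint couples the components: a test function may have nonzero mean on each of two components while the total mean vanishes. Concretely, two disjoint sheets that are each volume preserving stable but unstable for unconstrained variations (e.g.\ pieces of large CMC caps) can be volume preserving unstable as a pair, since one can push outward on one sheet and inward on the other with cancelling volumes. The later applications in Section \ref{S:compactness} (property (R') and the convergence/removable-singularity arguments) use stability of $\supp\|V\|$ as a whole, so the distinction matters.

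The fix is essentially the paper's route, and your small-ball non-existence lemma (which is fine, and is a quantitative version of the maximum-principle step the paper invokes) slots right into it: instead of quoting Corollary \ref{F-am implies stability} as a black box per component, fix a single replacement $(V^*,\Omega^*)$ for $(\vert\Sigma\vert,\Omega)$ in $\an(p,s,r)$. By Proposition \ref{F-gluing}, $\Sigma^*$ glues smoothly to $\Sigma\cap(M\setminus\an(p,s,r))$; your lemma (applied to both $\Sigma$ and $\Sigma^*$, whose mean curvatures are bounded by the same constant) rules out closed components inside the annulus, so unique continuation gives $\Sigma=\Sigma^*$ there. Then the whole of $\Sigma\cap\ins(\an(p,s,r))$ sits inside the replacement, which is volume preserving stable as an immersion in the interior of the annulus, and any compactly supported zero-mean test function on $\Sigma\cap\ins(\an(p,s,r))$ extends by zero to a zero-mean test function on $\Sigma^*$; this yields the stated stability of $\Sigma$, not merely of its individual components.
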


\begin{proof}
Let $(V^*,\Omega^*)$ be a replacement for $(\vert \Sigma\vert,\Omega)$ in $\an(p,s,r)$. Then $V^*$ is induced by a smooth, almost-embedded hypersurface $\Sigma^*$ in the interior of $\an(p,s,r)$. Again Proposition \ref{F-gluing} implies that $\Sigma^*$ glues smoothly to $\Sigma \cap (M\setminus \an(p,s,r))$ along the boundary of $\an(p,s,r)$ 
By the maximum principle, if $\rho$ is sufficiently small depending on $M$ and an upper bound for the mean curvature of $\Sigma$, then $\Sigma$ and $\Sigma^*$ can have no closed components contained entirely in $\an(p,s,r)$. Therefore, by unique continuation, $\Sigma = \Sigma^*$ in $\an(p,s,r)$ and the result follows. 
\end{proof}

\begin{prop}
\label{F-compactness}
    Consider a sequence $\{(V_k,\Omega_k)\}\in \vc(M)$. Assume that $(V_k,\Omega_k)$ is stationary for $F_k$ and that $(V_k,\Omega_k)$ satisfies property $\operatorname{(R)}$ \eqref{item:property R for F} for $F_k$ with an integer $m$ that does not depend on $k$. Finally, suppose that $\|V_k\|(M)$ is bounded uniformly above, and that the varifolds $V_k$ have $c$-bounded first variation for a uniform constant $c>0$. Then, up to a subsequence, $(V_k,\bd\Omega_k) \to (V,T) \in \vz(M,\Z_2)$. Moreover, $(V,T)$ is stationary for $E$ and we have the following alternative. Either:
        \begin{itemize}
        \item[(i)] The varifold $V$ is induced by a smooth, almost-embedded, multiplicity one constant mean curvature surface $\Lambda$ with non-zero mean curvature; or
        \item[(ii)] The varifold $V$ is induced by a collection of minimal hypersurfaces with multiplicities. 
    \end{itemize}
    The following property also holds:
    \begin{itemize}
        \item[(R')]\label{item:property R'} There is a number $N = N(m)$ depending only on $m$ and a number $\rho > 0$ depending only on $c$, such that for any collection of $N$ concentric annuli $\an(x,s_1,r_1)$, $\hdots$, $\an(x,s_N,r_N)$ with $2r_j < s_{j+1}$ and $r_N < \rho$, $\supp \|V\|$ is volume preserving stable for the area as an immersion in at least one of the annuli. 
    \end{itemize}

\end{prop}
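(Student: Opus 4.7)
The plan proceeds in four steps: subsequential convergence, passing stationarity to the limit, a pigeonhole argument producing (R'), and the structural dichotomy. Since $\|V_k\|(M)$ is uniformly bounded, compactness of $Y_{z,L}\subset \vz(M,\Z_2)$ yields a subsequence with $(V_k,\bd\Omega_k)\to (V,T)$, $\Omega_k\to\Omega$, and $T=\bd\Omega$. The stationarity of $(V_k,\Omega_k)$ for $F_k$ reads
\[
\delta V_k(X) - \eps_k\int_{\Omega_k}\div(hX) + f'(\vol(\Omega_k))\int_{\Omega_k}\div(X) = 0
\]
for every $X\in \mathfrak X(M)$. Letting $k\to\infty$ using $\eps_k\to 0$, weak varifold convergence $V_k\to V$, and $L^1$ convergence $\Omega_k\to\Omega$, the corresponding identity without the $h$-term follows, which is exactly the first variation \eqref{eq:1st variation for E} for $E$; hence $(V,T)$ is stationary for $E$.

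For property (R'), observe that the mean curvatures $H_k=\eps_k h + h_k^0$ of $\Sigma_k=\bd\Omega_k$ are uniformly bounded, since $|h_k^0|=|f'(\vol(\Omega_k))|\le a$. Let $\rho>0$ be the radius supplied by Corollary \ref{F-stable in small balls} for this uniform bound. Fix $x\in M$ and any $N$ concentric annuli $\an(x,s_j,r_j)$ with $2r_j<s_{j+1}$ and $r_N<\rho$. By property (R) for each $(V_k,\Omega_k)$ there is an index $j_k\in\{1,\ldots,N\}$ for which $(V_k,\Omega_k)$ is $(F_k,\mathcal F)$-almost-minimizing in $\an(x,s_{j_k},r_{j_k})$; pigeonhole extracts a further subsequence (still converging to the same $(V,T)$) on which $j_k=j$ is constant. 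Corollary \ref{F-stable in small balls} then makes each $\Sigma_k$ volume-preserving stable for $A^{\eps_k h+h_k^0}$ in $\an(x,s_j,r_j)$, and the curvature estimate of Proposition \ref{curvature-estimates} supplies a uniform $C^2$ bound on compact subsets (the $C^3$-norm of $\eps_k h$ is bounded by $\eps_k\|h\|_{C^3}$, hence uniformly bounded). A smooth subsequential limit then carries $V$ in the interior of $\an(x,s_j,r_j)$ and inherits volume-preserving stability; this is (R').

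For the dichotomy, extract a further subsequence so that $h_k^0\to h_0\in[-a,a]$; stationarity of $(V,T)$ for $E$ forces $h_0=-f'(\vol(\Omega))$. Property (R') provides smoothness of $\supp\|V\|$ in many small annuli around every point, and a standard tangent-cone plus Allard-type regularity argument (together with the uniform bound on $\delta V$ inherited from the $c$-bounded first variation of the $V_k$) upgrades this to regularity at every point of $\supp\|V\|$; the smooth limit has constant mean curvature $h_0$ on each component. If $h_0\neq 0$, the maximum principle for CMCs with prescribed nonzero mean-curvature vector rules out accumulation of sheets in the limit: two sheets of $\Sigma_k$ with opposing orientations cannot touch tangentially while carrying the same nonzero mean-curvature vector, and two sheets with matching orientation must coincide. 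Hence $V=|\Lambda|$ for a single multiplicity-one almost-embedded CMC $\Lambda$, and the constancy theorem applied to $\|\,|T|\,\|\le\|V\|$ forces $\Lambda=\bd\Omega$. If $h_0=0$, the smooth limit is minimal and $V$ is carried by a collection of minimal hypersurfaces with integer multiplicities. The main obstacle is the multiplicity-one conclusion in case $h_0\neq 0$: this is precisely where the $F_k$-regularization pays off, since each $\Sigma_k$ is \emph{a priori} a smooth multiplicity-one CMC to which the maximum principle applies directly; without the regularization, one would face mixed critical points with both nonzero-CMC and minimal components.
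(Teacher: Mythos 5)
Your proposal is correct and follows essentially the same route as the paper's proof: compactness in the $\vz$ space together with the limiting first-variation identity gives stationarity for $E$, a pigeonhole on property (R) combined with Corollary \ref{F-stable in small balls} and the Bellettini--Chodosh--Wickramasekera curvature estimates gives (R'), and the dichotomy is obtained by splitting on whether $h_k\to h_\infty$ is nonzero or zero, with multiplicity one in the nonzero case coming from the alternating-orientation/maximum-principle argument. The only cosmetic difference is at the final step, where you describe the regularity across the annulus centers as a ``tangent-cone plus Allard-type'' argument, whereas the paper invokes the removable-singularity steps of the CMC min-max of Zhou--Zhu and of the Almgren--Pitts argument; this is the same standard machinery.
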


\begin{proof}
    Choose a sequence $\{(V_k,\Omega_k)\}$ as in the statement of the theorem and let $T_k = \bd \Omega_k$. Then $(V_k,T_k)\in \vz(M,\Z_2)$ and so, up to a subsequence, $(V_k,T_k)\to (V,T)$ in $\vz(M,\Z_2)$ since $\|V_k\|(M)$ is bounded uniformly from above. It follows immediately from the first variation formulas \eqref{eq:1st variation for E}\eqref{eq:1st variation for F} that $(V,T)$ is stationary for $E$. 

    Note that Proposition \ref{F-regularity} implies that $V_k$ is induced by a smooth, almost-embedded hypersurface $\Sigma_k = \bd \Omega_k$ with mean curvature $H_k = \eps_k h + h_k$, where $h_k$ is a constant. Moreover, the constants $h_k$ are uniformly bounded. Passing to a further subsequence, we can suppose that $h_k \to h_\infty$. 

    To prove property (R'), let $\rho$ be the constant from Corollary \ref{F-stable in small balls}, which depends only on $c$. Consider a collection of $N$ concentric annuli $\an(x,s_1,r_1)$, $\hdots$, $\an(x,s_N,r_N)$ with $2r_j < s_{j+1}$ and $r_N < \rho$.  Since $(V_k,\Omega_k)$ satisfies property (R), there is a choice of $j\in \{1,\hdots,N\}$ and a subsequence $(V_{k_i},\Omega_{k_i})$ such that $(V_{k_i},\Omega_{k_i})$ is $(F_{k_i},\mathcal F)$-almost minimizing in $\an = \an(x, s_j, r_j)$ for every $i$.  According to Proposition \ref{F-stable in small balls}, $\Sigma_{k_i}$ is volume preserving stable for $A^{\eps_k h + h_{k_i}}$ as an immersion in $\an$. 

    This stability implies a curvature estimate for $\Sigma_{k_i}$ in $\an$. If $h_\infty \neq 0$, then passing to a further subsequence, $\Sigma_{k_i}$ converges smoothly with multiplicity one to an almost-embedded limit $\Sigma$ with mean curvature $h_\infty$ in $\an$. If $h_\infty = 0$, then passing to a further subsequence, $\Sigma_{k_i}$ converges smoothly, possibly with multiplicity, to a collection of minimal hypersurfaces in $\an$. In both cases, this implies that $\an \cap \supp \|V\|$ is smooth and volume preserving stable for the area as an immersion. This proves that property (R') holds. 

    Finally, we need to show that either (i) or (ii) holds. First suppose that $h_\infty \neq 0$. Then the above argument shows that for any collection of $N$ concentric annuli $\an(x,s_1,r_1)$, $\hdots$, $\an(x,s_N,r_N)$ with $2r_j < s_{j+1}$ and $r_N < \rho$, V is induced by a smooth, almost-embedded, volume preserving stable CMC with mean curvature $h_\infty$ in one of the annuli. This implies that there is a radius $\sigma(x) > 0$ such that for any annulus $\an(x,s,r)$ with $s< r<\sigma(x)$, the varifold $V$ is induced by a smooth, almost-embedded, volume preserving stable CMC with mean curvature $h_\infty$ in $\an(x,s,r)$. This property implies that alternative (i) holds by the removable singularity part in \cite[Section 6, Step 4]{zhou2019min}. 
    
    If instead $h_\infty = 0$, then the above argument shows that for any collection of $N$ concentric annuli $\an(x,s_1,r_1)$, $\hdots$, $\an(x,s_N,r_N)$ with $2r_j < s_{j+1}$ and $r_N < \rho$, V is induced by a collection of smooth, volume preserving stable minimal surfaces with multiplicity in one of the annuli. This implies that there is a radius $\sigma(x) > 0$ such that for any annulus $\an(x,s,r)$ with $s< r<\sigma(x)$, the varifold $V$ is induced by a collection of smooth, volume preserving stable minimal surfaces with multiplicity. This property implies that alternative (ii) holds by the removable singularity part of the Almgren-Pitts regularity argument \cite{pitts2014existence}.  
\end{proof}

\subsection{Compactness for E}

Next we record another compactness property for the $E$ functional. Consider a sequence of smooth functions $f_k\colon[0,\vol(M)]\to \R$ satisfying (\ref{f is even}) and set 
\[
E_k(T) = \M(T) + f_k(\vol(T)).
\]
Let $\{(V_k,T_k)\}$ be a sequence such that $(V_k, T_k)$ is a critical point for $E_k$ for each $k\in\mathbb N$.  We show that if the mean curvature of the CMC portion of $(V_k,T_k)$ stays bounded then there is a convergence subsequence.  We will omit the proofs in this subsection, since they are very similar to the previous subsection. 

\begin{prop}
\label{E-gluing}
    Assume that $(V,T)$ has the regularity described in Proposition \ref{E-regularity}. Assume that $(V,T)$ is $(E_k,\mathcal F)$ almost-minimizing in an open set $U$ and that $\an(p,s,r)\subset U$ is a closed annulus. Let $(V^*,T^*)$ be a replacement for $(V,T)$ in $\an(p,s,r)$. Then $\supp \|V^*\| \cap \an(p,s,r)$ and $\supp \|V\| \cap (M\setminus \an(p,s,r))$ glue smoothly along the boundary of $\an(p,s,r)$. 
\end{prop}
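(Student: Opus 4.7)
The plan is to mirror the proof of the analogous $F$-gluing statement (Proposition \ref{F-gluing}): I will establish smooth local regularity of the replacement $(V^*, T^*)$ in a full neighborhood of each point of $\bd\an(p,s,r)$, which will then automatically glue $\supp\|V^*\|\cap \an(p,s,r)$ to $\supp\|V\|\cap(M\setminus\an(p,s,r))$ smoothly.

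First, fix an arbitrary $q \in \bd \an(p,s,r)$. By the standard properties of replacements established earlier in this section (see the analog for $F$), $(V^*, T^*)$ is stationary for $E_k$ and is $(E_k, \mathcal F)$-almost-minimizing in the ambient open set $U$. In particular, $(V^*, T^*)$ is $(E_k,\mathcal F)$-almost-minimizing in every sufficiently small concentric family of annuli centered at $q$ and contained in $U$, and it has $2a$-bounded first variation there by Proposition \ref{E-ambv}. Hence $(V^*, T^*)$ satisfies the hypotheses of Proposition \ref{E-regularity} locally at $q$.

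Next, applying the local regularity portion of the proof of Proposition \ref{E-regularity} at the point $q$ yields a small ball $W$ about $q$ such that $\supp\|V^*\|\cap W$ is a union of finitely many smooth components, each of which is either a multiplicity-one almost-embedded CMC with some non-zero mean curvature or a smooth embedded minimal hypersurface with integer multiplicity. The key point is that this regularity holds in a full neighborhood of $q$ that crosses $\bd\an(p,s,r)$. Since $V^* = V$ on $M\setminus \an(p,s,r)$, we have $V\rest(W\setminus \an) = V^*\rest(W\setminus \an)$, so $\supp\|V\|\cap (W\setminus\an)$ is already part of the smooth structure on $W$ produced by the regularity argument. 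Consequently, $\supp\|V^*\|\cap \an$ and $\supp\|V\|\cap(M\setminus\an)$ are two restrictions of this single smooth structure on $W$ to the two sides of $\bd\an\cap W$, which yields smooth gluing at $q$. Varying $q$ over $\bd\an(p,s,r)$ completes the proof.

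The main technical subtlety is that the local regularity argument for $E$ uses the weaker mean-curvature-matching Proposition \ref{prop:mean curvature match2}, which generally requires iterating replacements up to three times (as in the proof of Proposition \ref{E-regularity}). One must verify that all of these iterated replacements can be performed at $q$, which in turn reduces to verifying that $(V^*, T^*)$ is $(E_k, \mathcal F)$-almost-minimizing in families of concentric annuli around $q$ and has $2a$-bounded first variation. Both conditions have been checked above, so no modification of the regularity argument is needed, and the proof reduces to exactly the same boundary-regularity invocation as in Proposition \ref{F-gluing}.
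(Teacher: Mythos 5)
Your proposal is correct and follows essentially the same route as the paper's (omitted) proof, which mirrors Proposition \ref{F-gluing}: fix $q\in \bd \an(p,s,r)$, use that the replacement $(V^*,T^*)$ is $(E_k,\mathcal F)$-almost-minimizing in small annuli centered at $q$ together with its $2a$-bounded first variation, run the local regularity argument of Proposition \ref{E-regularity} (with its iterated replacements and Proposition \ref{prop:mean curvature match2}) in a full neighborhood of $q$, and then use $V^*=V$ outside the annulus to conclude smooth gluing. One caveat: your assertion that $(V^*,T^*)$ is stationary for $E_k$ is not among the replacement properties and need not hold (the CMC produced inside $K$ has some mean curvature $h_0$ unrelated to $-f_k'(\vol)$); fortunately the local regularity step does not use stationarity, only the almost-minimizing property in annuli centered at $q$ and the bounded first variation, so the argument goes through once that claim is dropped.
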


\begin{corollary}
\label{E-am implies stability}
    Assume that $(V,T)$ has the regularity described in Proposition \ref{E-regularity}. Assume that $(V,T)$ is $(E_k,\mathcal F)$ almost-minimizing in an open set $U$ and that $\an(p,s,r) \subset U$ is a closed annulus. Assume that a local embedded sheet of $\supp \|V\|$ crosses the boundary of $\an(p,s,r)$. Then the component of $\supp \|V\|$ containing this sheet is volume preserving stable for the area in the interior of $\an(p,s,r)$. 
\end{corollary}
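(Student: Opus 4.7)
The plan is to mirror the argument given for Corollary \ref{F-am implies stability}, replacing Proposition \ref{F-gluing} by its $E$-analog Proposition \ref{E-gluing} and using the volume preserving stability built into the $E$-replacement. Concretely, I would first pass to a replacement $(V^*, T^*)$ for $(V,T)$ in the closed annulus $\an = \an(p,s,r)$. By the construction of replacements for $E$, the pair $(V^*,T^*)$ is $(E_k,\mathcal F)$ almost-minimizing in $U$, coincides with $(V,T)$ outside $\an$, and, in the interior of $\an$, is a limit of solutions to the constrained minimization problem. Each such solution is smooth and volume preserving stable for the area with uniformly bounded mean curvature, so by the curvature estimates (Proposition \ref{curvature-estimates}) the smooth limit $\supp\|V^*\| \cap \ins(\an)$ is an almost-embedded hypersurface that is volume preserving stable for the area as an immersion.

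Next, I would invoke Proposition \ref{E-gluing} to conclude that $\supp\|V^*\|\cap \an$ and $\supp\|V\|\cap (M\setminus\an)$ glue smoothly across $\bd \an$. In particular, if $\Gamma$ is the local embedded sheet of $\supp\|V\|$ that crosses $\bd \an$, then $\Gamma$ extends smoothly across the boundary into some embedded sheet of $\supp\|V^*\|$ in the interior of the annulus, and the extension has the same (constant) mean curvature as $\Gamma$. By unique continuation for constant mean curvature hypersurfaces, the connected component $\Sigma_1$ of $\supp\|V\|$ containing $\Gamma$ must coincide inside $\an$ with the corresponding connected component of $\supp\|V^*\|$.

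Since $\supp\|V^*\|$ is volume preserving stable for the area in the interior of $\an$ as an immersion, so is $\Sigma_1 \cap \ins(\an)$, which is exactly the claim.

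The one step that requires care is ensuring the unique continuation identification: the $E$-replacement may a priori be either an almost-embedded CMC (with $|H|\leq 2a$) or a collection of embedded minimal hypersurfaces with multiplicities, so in contrast to the $F$ case one cannot cite a single prescribed mean curvature. However, the matching of mean curvatures at the glued boundary is automatic from the smoothness of the gluing in Proposition \ref{E-gluing}, and unique continuation then applies in either alternative. This is the only nontrivial technical point, and it is handled entirely by the gluing proposition already established.
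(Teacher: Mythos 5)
Your proposal is correct and follows essentially the same route the paper intends: the paper omits the proof of this corollary precisely because it mirrors the proof of Corollary \ref{F-am implies stability}, i.e.\ pass to a replacement, invoke the gluing statement (Proposition \ref{E-gluing} in place of Proposition \ref{F-gluing}), identify the component inside the annulus with the replacement by unique continuation, and conclude from the volume preserving stability of replacements. Your extra remark about the two regularity alternatives for the $E$-replacement and the automatic matching of mean curvatures through the smooth gluing is exactly the right way to handle the only point where the $E$ case differs from the $F$ case.
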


\begin{corollary}
    \label{E-stable in small balls} 
    Assume that $(V,T)$ has the regularity described in Proposition \ref{E-regularity}. Assume that $(V,T)$ is  $(E_k,\mathcal F)$ almost-minimizing in an open set $U$. There is a radius $\rho > 0$ depending only on $M$ and an upper bound $c$ for the first variation of $V$ such that if $s < r < \rho$ and $\an(p,s,r) \subset U$,  then every component of $\supp \|V\|$ intersecting $\an(p,s,r)$ has the same mean curvature. Moreover, $\supp \|V\|$ is volume preserving stable for the area in $\an(p,s,r)$. 
\end{corollary}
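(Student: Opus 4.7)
The plan is to mirror the argument for Corollary \ref{F-stable in small balls}, replacing the role of Proposition \ref{F-gluing} with its $E$-analog Proposition \ref{E-gluing}. Given $\an(p,s,r)\subset U$, first construct a replacement $(V^*,T^*)$ for $(V,T)$ in $\an(p,s,r)$, which exists because $(V,T)$ is $(E_k,\mathcal F)$-almost-minimizing in $U$. By Proposition \ref{E-gluing}, $\supp\|V^*\|\cap \an(p,s,r)$ glues smoothly to $\supp\|V\|\cap (M\setminus \an(p,s,r))$ along $\bd\an(p,s,r)$. Recall also that the regularity proposition for replacements of $E$ gives that $V^*$ has mean curvature bounded by $2a$ and is a varifold limit of solutions of the constrained minimization problem, each of which is smooth, almost-embedded and volume preserving stable for the area functional.

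Next, I would choose $\rho>0$ small depending only on $M$ and $c$ (taking $c$ to also dominate the replacement bound $2a$, which is a fixed quantity for given $f$) so that the maximum principle rules out any closed component of either $V$ or $V^*$ lying entirely inside $B_\rho(p)$. With this choice, every component of the (smooth, almost-embedded) immersion inducing $V^*\rest\an(p,s,r)$ must meet $\bd B_s(p)\cup \bd B_r(p)$, and likewise for every component of $V\rest\an(p,s,r)$. Combining this with the smooth gluing from Proposition \ref{E-gluing} and unique continuation for CMC immersions, matching components on the two sides of $\bd \an(p,s,r)$ must coincide as immersions inside the annulus. Viewing the almost-embedded hypersurfaces as immersions from smooth manifolds, and tracking components of the domain rather than components of the image, handles the possible presence of touching sets and of multiplicities inherited from minimal pieces of $V$. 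This yields the varifold identity $V\rest \an(p,s,r) = V^*\rest \an(p,s,r)$.

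The two conclusions now follow. For the first, by the regularity of $E$-replacements, $V^*\rest\an(p,s,r)$ is either (i) induced by a multiplicity one almost-embedded CMC hypersurface whose components share a single mean curvature $|H|\le 2a$, or (ii) induced by a collection of embedded minimal hypersurfaces with integer multiplicities, all of mean curvature zero; since $V=V^*$ in the annulus, every component of $\supp\|V\|$ meeting $\an(p,s,r)$ has the same mean curvature. For the second, volume preserving stability passes to smooth limits, so $\supp\|V^*\|$ is volume preserving stable in the interior of $\an(p,s,r)$, and hence so is $\supp\|V\|$.

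The main subtlety I expect is the component-matching step: $V$ might in principle simultaneously contain a multiplicity one CMC piece $\Lambda$ and a disjoint minimal piece $\Sigma_i$ which both enter $\an(p,s,r)$, whereas the replacement $V^*$ is forced to have a single mean curvature on each component (and is either entirely of type (i) or entirely of type (ii)). The maximum principle ensures both pieces extend outside the annulus, and unique continuation then forces them to agree inside with components of $V^*$; the only way this is consistent is for all components of $V$ meeting the annulus to have already shared a common mean curvature, which is precisely the desired conclusion.
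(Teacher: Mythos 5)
Your argument is correct and follows essentially the same route the paper intends (mirroring Corollary \ref{F-stable in small balls}): replacement, smooth gluing via Proposition \ref{E-gluing}, the maximum principle at a small scale $\rho$ to exclude closed components inside the annulus, and unique continuation to identify $V$ with its replacement there, after which the conclusions follow from the regularity and volume preserving stability of $E$-replacements. The only minor remark is that the full varifold identity $V\rest\an = V^*\rest\an$ (with multiplicities) is more than you need; equality of supports, which your maximum-principle plus unique-continuation step already gives, suffices for both the common mean curvature and the volume preserving stability of $\supp\|V\|$.
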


\begin{prop}
\label{E-compactness}
    Consider a sequence $(V_k,T_k)\in \vz(M,\Z_2)$. Assume that $(V_k,T_k)$ is stationary for $E_k$ and that $(V_k,T_k)$ satisfies property $\operatorname{(R)}$ \eqref{item:property R for E} for $E_k$ with an integer $m$ that does not depend on $k$. Suppose that $\|V_k\|(M)$ is uniformly bounded, and that the all the varifolds $V_k$ have $c$-bounded first variation for a uniform constant $c$. Then, up to a subsequence, $(V_k,T_k)\to (V,T)\in \vz(M,\Z_2)$. Moreover, $(V,T)$ has the regularity described in Proposition \ref{E-regularity} and the following property holds:
        \begin{itemize}
        \item[(R')] There is a number $N = N(m)$ depending only on $m$ and a number $\rho > 0$ depending only on $c$, such that for any collection of $N$ concentric annuli $\an(x,s_1,r_1)$, $\hdots$, $\an(x,s_N,r_N)$ with $2r_j < s_{j+1}$ and $r_N < \rho$, there is an annulus $\an$ in the collection such that every component of $\supp \|V\|$ intersecting $\an$ has the same mean curvature and is volume preserving stable for the area in $\an$. 
    \end{itemize}
    
\end{prop}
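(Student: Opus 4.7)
The plan is to mirror the proof of Proposition~\ref{F-compactness}, substituting the $E$-analogs (Proposition~\ref{E-gluing} and Corollaries~\ref{E-am implies stability} and \ref{E-stable in small balls}) for their $F$-counterparts. First, since $\|V_k\|(M)$ is uniformly bounded, compactness of the space $Y_{z,L}$ yields a subsequence (not relabeled) with $(V_k,T_k)\to(V,T)$ in $\vz(M,\Z_2)$. By Proposition~\ref{E-regularity}, each $V_k$ decomposes as a (possibly absent) almost-embedded CMC component of constant mean curvature $c_k:=-f_k'(\vol(\Omega_k))$ together with minimal components carrying integer multiplicities. The $c$-bounded first variation of $V_k$ forces $|c_k|\le c$ (by testing with a vector field concentrated near a regular point of the CMC sheet), so passing to a further subsequence we may assume $c_k\to c_\infty$ with $|c_\infty|\le c$. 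Taking $k\to\infty$ in the identity $\delta V_k(X)=c_k\int_{\Omega_k}\div(X)$ obtained from $E_k$-stationarity, and using Proposition~\ref{prop:continuity of first variation of E}, gives $\delta V(X)=c_\infty\int_\Omega\div(X)$; in particular $V$ has $c$-bounded first variation.

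Next I would establish property~(R$'$). Let $N=N(m)$ be the integer from property~(R), and let $\rho>0$ be the radius from Corollary~\ref{E-stable in small balls} (which depends only on $M$ and $c$). Fix any collection of $N$ concentric annuli $\an(x,s_1,r_1),\ldots,\an(x,s_N,r_N)$ with $2r_j<s_{j+1}$ and $r_N<\rho$. Property~(R) for each $k$, combined with pigeonholing over the $N$ possible choices of annulus, produces an index $j$ and a further subsequence along which $(V_k,T_k)$ is $(E_k,\mathcal F)$-almost-minimizing in $\an:=\an(x,s_j,r_j)$. Corollary~\ref{E-stable in small balls} then guarantees that within $\an$, every component of $\supp\|V_k\|$ shares a common mean curvature (either $c_k$ or $0$) and is volume preserving stable for area. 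The uniform curvature bound from Proposition~\ref{curvature-estimates} (with $h\equiv 0$), together with the uniform mass bound, yields smooth subsequential convergence of $V_k\rest\an$ (allowing multiplicity on the minimal sheets, and forced to be multiplicity one on CMC sheets with mean curvature bounded away from $0$ by the maximum principle) to a limit whose components share mean curvature $c_\infty$ or $0$ and are volume preserving stable for area. Since $V_k\to V$ weakly, this limit coincides with $V\rest\an$, establishing property~(R$'$).

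Finally, to obtain the regularity asserted in Proposition~\ref{E-regularity}, I would run the local analysis of Section~\ref{section:E-regularity} around each $p\in\supp\|V\|$. Property~(R$'$) allows one to take replacements in overlapping annuli, and the mean-curvature matching step of Proposition~\ref{prop:mean curvature match2} carries over verbatim to ensure that successive replacements glue smoothly across common spherical boundaries. One concludes that some neighborhood of $p$ realizes $V$ as either a smooth, almost-embedded, multiplicity-one CMC with non-zero mean curvature $c_\infty$, or as a collection of smooth embedded minimal hypersurfaces with integer multiplicities. The constancy theorem applied to $\|\,|T|\,\|\le\|V\|$, together with the stationarity identity $\delta V(X)=c_\infty\int_\Omega\div(X)$, then yields the case~(i)/case~(ii) dichotomy of Proposition~\ref{E-regularity}: the boundary-type components must carry mean curvature $c_\infty$, and any additional components must be minimal.

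The main technical obstacle is precisely the mean-curvature matching on overlapping annuli when an initial replacement is a minimal varifold with higher multiplicity rather than a non-zero CMC sheet, because one cannot a~priori exclude this alternative from the volume preserving stable replacement process. This is the same subtlety addressed by Proposition~\ref{prop:mean curvature match2}, and it is resolved by the third-replacement trick used in the proof of Proposition~\ref{E-regularity}, exploiting the upper semi-continuity of density to rule out higher multiplicity on sheets that touch a regular point of a previously constructed multiplicity-one CMC replacement.
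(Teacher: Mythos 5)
Your first two steps are exactly the argument the paper intends: the paper omits the proof of this proposition with the remark that it is the same as Proposition \ref{F-compactness}, and your compactness step, the bound $\vert c_k\vert \le c$ with $c_k\to c_\infty$, the limiting identity $\delta V(X)=c_\infty\int_\Omega \div(X)$, and the derivation of property (R$'$) by pigeonholing property (R) over the $N$ annuli and invoking Corollary \ref{E-stable in small balls} together with the curvature estimates for volume preserving stable CMCs reproduce that proof faithfully.

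The final step, however, contains a genuine gap. You propose to obtain the regularity of the limit by ``taking replacements in overlapping annuli'' for $(V,T)$ and then invoking the mean-curvature matching of Proposition \ref{prop:mean curvature match2}. But the replacement construction (the constrained minimization problem and its consequences), and hence Proposition \ref{prop:mean curvature match2}, requires the pair in question to be $(E,\mathcal F)$-almost-minimizing in the relevant annuli. The limit $(V,T)$ is not known to have any almost-minimizing property: it is only a limit of pairs that are almost-minimizing for the \emph{varying} functionals $E_k$, and property (R$'$) records stability and constancy of mean curvature of $\supp\|V\|$ in small annuli, not almost-minimizing. So no replacements exist for $(V,T)$, and the ``third-replacement trick'' you cite in your last paragraph is likewise unavailable here. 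The correct conclusion, as in the proof of Proposition \ref{F-compactness}, uses (R$'$) directly: for each $x$ there is $\sigma(x)>0$ such that in every annulus $\an(x,s,r)$ with $s<r<\sigma(x)$ the smooth subsequential convergence you established in your second paragraph shows that $\supp\|V\|$ is a smooth, volume preserving stable hypersurface whose components share one constant mean curvature (with multiplicity one when that value is nonzero); the center point $x$ is then handled by the removable-singularity arguments (\cite[Section 6, Step 4]{zhou2019min} in the CMC case, the Almgren--Pitts argument \cite{pitts2014existence} in the minimal case). The identification of the boundary components as the ones carrying mean curvature $c_\infty$ and of the remaining components as minimal then follows from the limiting stationarity identity you already derived in your first paragraph, not from any replacement procedure.
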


\section{Constructing Half-Volume CMCs}\label{S:constructiong half-volume CMCs}

The goal of this section is to prove Theorem \ref{main} and Corollary \ref{main2}. To begin, we recall the notion of the half-volume spectrum.

\subsection{The Half-Volume Spectrum} 
In \cite{mazurowski2023half}, the authors introduced the half-volume spectrum of a manifold. This is similar to the usual volume spectrum introduced by Gromov, except that all hypersurfaces in the construction are now additionally required to enclose half the volume of $M$. 

The Almgren isomorphism theorem \cite{almgren1962homotopy} implies that the space $\mathcal B(M,\Z_2)$ is weakly homotopy equivalent to $\RP^\infty$. Marques and Neves \cite{marques2021morse} later gave a substantially simpler proof of this fact. In \cite{mazurowski2023half}, it is shown that $\mathcal H(M,\Z_2)$ is homotopy equivalent to $\mathcal B(M,\Z_2)$. In particular, this means that the cohomology ring $H^*(\mathcal H(M,\Z_2),\Z_2)$ is isomorphic to $\Z_2[\lambda]$ where the generator $\lambda$ is in degree 1. 

\begin{defn}
Fix an integer $p\in \N$.  Let $X$ be a cubical complex. A flat continuous map $\Phi\colon X \to \mathcal H(M,\Z_2)$ is called a half-volume $p$-sweepout if $\Phi^* (\lambda^p) \neq 0$ in $H^*(X,\Z_2)$.  
\end{defn}

\begin{defn}
A flat continuous map $\Phi\colon X \to \mathcal H(M,\Z_2)$ is said to have {\it no concentration of mass} provided 
\[
\lim_{r\to 0} \left[\sup_{x\in X} \sup_{y\in M} \M(\Phi(x)\rest B(y,r))\right] = 0. 
\]
\end{defn}

\begin{defn}
Let $\mathcal Q_p$ be the set of all half-volume $p$-sweepouts of $M$ with no concentration of mass. Note that different half-volume $p$-sweepouts are allowed to have different domains. 
\end{defn}

\begin{defn}
For an integer $p\in \N$, the half-volume $p$-width of $M$ is 
\[
\tilde \omega_p = \inf_{\Phi\in \mathcal Q_p}\left[\sup_{x\in \text{dom}(\Phi)} \M(\Phi(x))\right].
\]
The sequence $\{\tilde \omega_p\}_{p\in \N}$ is called the half-volume spectrum of $M$. 
\end{defn}

The next proposition says that the definition of the half-volume $p$-width is unchanged if we restrict to only those half-volume $p$-sweepouts whose domain is a subset of $I(2p+1,k)$ for some $k\in\N$. This was proved for (ordinary) $p$-sweepouts by Y. Li \cite{li2023existence}. 

\begin{prop}
\label{prop:dimension}
For every $\eps > 0$, there is half-volume $p$-sweepout $\Psi\colon Y \to \mathcal H(M,\Z_2)$ whose domain $Y$ is a cubical subcomplex of $I(2p+1,k)$ for some $k\in \N$ and which satisfies
\[
\sup_{y\in Y} \M(\Psi(y)) \le \tilde \omega_p + \eps.
\]
Moreover, $\Psi$ has no concentration of mass. 
\end{prop}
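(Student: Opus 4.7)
The strategy is to mimic Y.~Li's dimension reduction for the ordinary volume spectrum \cite{li2023existence}, with an additional correction at the end to keep every cycle in the family inside $\h(M,\Z_2)$.

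I would start with a half-volume $p$-sweepout $\Phi\colon X\to(\h(M,\Z_2),\mathbf{F})$ in $\mathcal{Q}_p$ satisfying $\sup_{x\in X}\M(\Phi(x))<\tilde\omega_p+\eps/3$. Applying the Almgren-Pitts discretization theorem \cite[Theorem 1.11]{zhou2020multiplicity}, after sufficiently fine subdivision I obtain a discrete map $\phi_j$ from the vertex set of a cubical subdivision $X_j$ of $X$ into $\B(M,\Z_2)$, with fineness tending to $0$, no concentration of mass, and maximum mass converging to $\sup_x\M(\Phi(x))$. Because $H^*(\h(M,\Z_2),\Z_2)\cong\Z_2[\lambda]$ with $\lambda$ in degree $1$, and because the restriction $H^p(X_j,\Z_2)\to H^p(X_j^{(p)},\Z_2)$ is injective, the $p$-sweepout property is preserved when we restrict $\phi_j$ to the $p$-skeleton $X_j^{(p)}$. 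The combinatorial step carried out in \cite{li2023existence} then re-realizes this $p$-dimensional cubical complex as a cubical subcomplex $Y\subset I(2p+1,k)$ for some $k$, using the classical fact that $p$-dimensional polyhedra PL-embed in $\R^{2p+1}$.

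Next, I would apply the Almgren-Pitts interpolation theorem \cite[Theorem 1.12]{zhou2020multiplicity} to convert the discrete map on $Y$ back into an $\mathbf{F}$-continuous map $\tilde\Psi\colon Y\to(\B(M,\Z_2),\mathbf{F})$ with $\sup_y\M(\tilde\Psi(y))<\tilde\omega_p+2\eps/3$ and no concentration of mass. Since the vertex values of $\phi_j$ already enclose volume $\hv$, taking $j$ sufficiently large forces $|\vol(\Omega(y))-\hv|$ to be uniformly small along $\tilde\Psi(y)=\bd\Omega(y)$. To land inside $\h(M,\Z_2)$, at each $y$ I would add or remove a small geodesic ball about a base point varying continuously in $y$, with radius $O(|\vol(\Omega(y))-\hv|^{1/(n+1)})$ chosen so that the corrected Caccioppoli set has volume exactly $\hv$. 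The induced increase in area is then $O(|\vol(\Omega(y))-\hv|^{n/(n+1)})$, which can be absorbed into the remaining $\eps/3$ budget as long as $j$ is large enough.

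The main obstacle is the final correction step: one must arrange for the auxiliary ball to depend continuously in $\mathbf{F}$ on $\tilde\Psi(y)$, without reintroducing mass concentration or breaking the $p$-sweepout property. The continuity can be handled by fixing an a priori center point away from a region where $\tilde\Psi$ could accumulate mass, or by a partition-of-unity construction over $Y$ that allows the center to vary. Since the correction is a uniformly small $\mathbf{F}$-perturbation, the resulting map $\Psi\colon Y\to\h(M,\Z_2)$ is flat-homotopic to $\tilde\Psi$ and therefore still detects $\lambda^p$ in cohomology, producing the desired half-volume $p$-sweepout with $\sup_y\M(\Psi(y))\leq\tilde\omega_p+\eps$.
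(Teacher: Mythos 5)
Your route diverges substantially from the paper's, and the divergence is where the trouble lies. The paper's proof is essentially one observation: Li's construction in \cite{li2023existence}, applied verbatim to a half-volume $p$-sweepout $\Phi\colon X\to \h(M,\Z_2)$ with $\sup_X \M(\Phi)\le\tilde\omega_p+\eps$, produces $\Psi\colon Y\to\B(M,\Z_2)$ with $Y\subset I(2p+1,k)$ and, crucially, $\Psi(Y)\subset\Phi(X)$. Since the new map's values are literally among the old map's values, the half-volume constraint, the mass bound, and the no-concentration property are all inherited with no modification of any cycle. Your proposal instead runs the family through Almgren--Pitts discretization and interpolation, which destroys the constraint $\vol(\Omega(y))=\hv$ and forces you to repair it afterwards; this re-creates exactly the difficulty that the paper's proof is designed to avoid, and it is the repair step that is not sound as written.

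Concretely, the ball correction has a genuine gap. With a fixed center $q$, there is no reason the set $\Omega(y)$ (or its complement) contains a ball about $q$ of the required volume: even though $\vol(\Omega(y))$ is close to $\hv$, the set can entirely avoid, or entirely fill, any fixed neighborhood of $q$, so for some $y$ you cannot add (or cannot remove) the needed volume there, and in any case the amount of volume exchanged by intersecting with $B_r(q)$ is not a controlled function of $r$ unless the ball lies on one side of $\bd\Omega(y)$. Which side needs correcting also flips sign across the family, and letting the center vary with $y$ while keeping the assignment $y\mapsto\Psi(y)$ continuous (and still a $p$-sweepout) is precisely the nontrivial point; ``a partition-of-unity construction'' does not resolve it, since one cannot average Caccioppoli sets. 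The correct continuous repair is the deformation retraction $\theta\colon\B(M,\Z_2)\times[0,1]\to\h(M,\Z_2)$ of \cite{mazurowski2023half}, whose mass cost is controlled by Lemma \ref{lem:deformation retraction map}; composing your interpolated map with $\theta(\cdot,1)$ would fix the volumes, preserve $\lambda^p$ (because $\theta$ is a retraction onto a subspace whose inclusion is a homotopy equivalence), and keep the mass within budget once the volume defect is small. With that substitution your argument can be made to work, but it is considerably longer than necessary: for this proposition no discretization, interpolation, or volume correction is needed at all, because Li's construction never leaves the image of the original half-volume family.
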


\begin{proof}
This follows from the proof of \cite[Proposition 3.2]{li2023existence}. Indeed, given $\eps > 0$, we can find a half-volume $p$-sweepout $\Phi\colon X\to \mathcal H(M,\Z_2) \subset \mathcal B(M,\Z_2)$ with no concentration of mass and with 
\[
\sup_{x\in X} \M(\Phi(x)) \le \tilde \omega_p + \eps. 
\]
Applying the construction in \cite{li2023existence} verbatim to $\Phi$ produces a cubical subcomplex $Y$ of $I(2p+1,k)$ and a flat continuous $p$-sweepout $\Psi\colon Y\to \mathcal B(M,\Z_2)$ with the property that $\Psi(Y) \subset \Phi(X)$. In particular, the inclusion $\Psi(Y)\subset \Phi(X)$ implies that $\Psi$ takes values in $\mathcal H(M,\Z_2)$ and that 
\[
\sup_{y\in Y} \M(\Psi(y))\le \tilde \omega_p + \eps,
\]
and that $\Psi$ has no concentration of mass.
\end{proof}

\subsection{The Penalized Functionals}

Intuitively, we expect that each $\tilde \omega_p$ is associated with a critical point of the area functional restricted to the space of half-volume cycles. Therefore, $\tilde \omega_p$ should be achieved by a half-volume CMC. Rather than directly developing a min-max theory for the area on $\mathcal H(M,\Z_2)$, we first apply min-max on all of $\mathcal B(M,\Z_2)$ with a functional that consists of the area plus a term that penalizes the distance to the space of half-volume cycles. 

Fix a closed Riemannian manifold $(M^{n+1},g)$ with dimension $3\le n+1\le 7$.  Let $\hv = \frac{1}{2}\vol(M)$. For each $k\in \N$ define  $f_k\colon [0,\vol(M)]\to \R$ by 
\[
f_k(v) = k(v-\hv)^2. 
\]
Then define $E_k:\mathcal B(M,\Z_2)\to \R$ by 
\[
E_k(T) =  \M(T) + f_k(\vol(\Omega)) = \M(T) + k (\vol(\Omega) - \hv)^2
\]
where $\Omega\in \C(M)$ satisfies $\bd \Omega = T$. Since $\vert\vol(\Omega) - \hv\vert = \vert \vol(M\setminus \Omega) - \hv\vert$, it is easy to see that the definition does not depend on the choice of $\Omega$. 

Fix $p\in \N$.  For each $k\in \N$, select a half-volume $p$-sweepout $\Phi_k^* \colon X_k \to \mathcal H(M,\mathcal F,\Z_2)$ with no concentration of mass for which 
\[
\sup_{x\in X_k} \M(\Phi_k^*(x)) \le \tilde \omega_p + \frac{1}{k}. 
\] 
By Proposition \ref{prop:dimension}, we can further ensure that $X_k$ is a cubical subcomplex of $I(2p+1,\ell)$ for some $\ell\in \N$. 
By applying discretization \cite[Theorem 1.11]{zhou2020multiplicity} followed by interpolation \cite[Theorem 1.12]{zhou2020multiplicity}, we can replace $\Phi_k^*$ with a new $\mathbf F$-continuous map $\Phi_k \colon X_k\to \mathcal B(M,\mathbf F, \Z_2)$ such that 
\begin{gather*}
\sup_{x\in X_k} \M(\Phi_k(x)) \le \tilde \omega_p + \frac{2}{k},\quad
\sup_{x\in X_k} \vert \vol(\Phi_k(x)) - \hv\vert \le \frac{1}{k}. 
\end{gather*}
Recall here that $\vol(\Phi_k(x))$ stands for $\vol(\Omega)$ for any set $\Omega$ with $\bd \Omega=\Phi_k(x)$.

Let $\Pi_k$ be the $X_k$-homotopy class of the map $\Phi_k$.  Observe that  
\begin{align*}
L^{E_{k}}(\Pi_k) \le \sup_{x\in X_k} E_{k}(\Phi_k(x)) = \sup_{x \in X_k} \M(\Phi_k(x)) + k \sup_{x\in X_k} \vert \vol(\Phi_k(x)) - \hv\vert^2 \le \tilde \omega_p + \frac 3 k. 
\end{align*} 
Now suppose for contradiction that 
\[
\liminf_{k\to \infty} L^{E_{k}}(\Pi_k) < \tilde \omega_p.
\]
Then, after passing to a subsequence, we can find an $\eta > 0$ and maps $\Psi_k\colon X_k \to \mathcal B(M,\Z_2)$ homotopic to $\Phi_k$ such that 
\[
\sup_{x\in X_k} {E_{k}}(\Psi_k(x))\le \tilde \omega_p - \eta
\]
for all $k$. Note in particular this implies that for any $x\in X_k$ we have 
\[
\M(\Psi_k(x)) \le \tilde \omega_p - \eta, \quad \text{ and } \quad 
 \vert \vol(\Psi_k(x)) - \hv\vert \le \sqrt{\frac{\tilde \omega_p}{k}}
\]
Now recall the deformation retraction $\theta\colon \mathcal B(M,\Z_2)\times[0,1]\to \mathcal H(M,\Z_2)$ constructed in \cite{mazurowski2023half}.

\begin{lem}\label{lem:deformation retraction map}
There is a continuous function $w\colon [0,\infty) \to [0,\infty)$ with $w(0) = 0$ such that 
\[
\M(\theta(T,1)) \le \M(T) + w(\vert\vol(T) - \hv\vert)
\]
for any $T \in \mathcal B(M,\Z_2)$.
\end{lem}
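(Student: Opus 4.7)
\medskip

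\noindent\textbf{Proof plan.} The plan is to extract the needed estimate from the explicit construction of the deformation retraction $\theta$ given in \cite{mazurowski2023half}. Recall (or take as the governing feature) that for $T=\bd\Omega$ the final slice $\theta(T,1)$ is obtained by forming a symmetric difference $\bd(\Omega\,\sd\,A(\Omega))$, where $A(\Omega)$ is a canonically chosen Caccioppoli set of volume exactly $|\vol(\Omega)-\hv|$ that is produced from a fixed auxiliary family of ``volume-sweeping'' subsets of $M$ (for instance the sublevel sets of a background Morse function, suitably localized near $\bd\Omega$ so that the construction is flat-continuous in $T$). All one needs from this construction is a uniform quantitative input: the boundary mass of $A(\Omega)$ is controlled in terms of $\vol(A(\Omega))$ alone, by a continuous function of the volume that vanishes at $0$.

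First I would isolate, from the construction in \cite{mazurowski2023half}, a continuous modulus $\psi\colon[0,\hv]\to[0,\infty)$ with $\psi(0)=0$ such that
\[
\M(\bd A(\Omega)) \le \psi\bigl(\vol(A(\Omega))\bigr) = \psi\bigl(|\vol(\Omega)-\hv|\bigr)
\]
for every $T=\bd\Omega\in\mathcal B(M,\Z_2)$. If the construction uses sublevel sets of a fixed Morse function $h$ and the adjustment piece is (up to a small perturbation) of the form $\{h<s\}\,\cap\, U$ with $s=s(\Omega)$ tuned so that the prescribed volume is hit, then $\psi$ is essentially the isoperimetric-type quantity
\[
\psi(v) \;=\; \sup\bigl\{\area(\{h=s\}\cap U)\cdot\chi_{\{v>0\}}+\tfrac{C\,v}{v_0}: \vol(\{h<s\}\cap U)\le v\bigr\},
\]
which is continuous and vanishes at $0$ by the coarea formula (applied to $h$ away from its critical set) together with a uniform bound on the oscillation of $\area\{h=s\}$.

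Second, I would invoke the standard sub-additivity of perimeter under symmetric differences of Caccioppoli sets, namely
\[
\M\bigl(\bd(\Omega\,\sd\,A(\Omega))\bigr) \;\le\; \M(\bd\Omega)+\M(\bd A(\Omega)),
\]
which follows from the fact that, up to $\h^n$-null sets, the reduced boundary $\bd^*(\Omega\sd A)$ is contained in $\bd^*\Omega\cup\bd^*A$. Combining this with the previous step yields
\[
\M(\theta(T,1)) \;\le\; \M(T)+\psi\bigl(|\vol(T)-\hv|\bigr),
\]
so setting $w(v):=\psi(\min\{v,\hv\})$ gives a continuous function on $[0,\infty)$ with $w(0)=0$ that satisfies the required inequality.

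The main obstacle is verifying the uniform modulus $\psi$: this depends on how $A(\Omega)$ was chosen in \cite{mazurowski2023half} and in particular whether the construction produces uniformly thin adjustment pieces as $|\vol(\Omega)-\hv|\to 0$. If the cited construction already yields $A(\Omega)$ inside a fixed family of sublevel sets of a single Morse function, the uniform bound is immediate from coarea; if the construction depends on $T$ more intricately (e.g.\ via a localized foliation near $\spt T$), one must check that the area of the added slab is nonetheless bounded uniformly in $T$ by its volume times a $T$-independent constant, which is the key quantitative refinement to verify in \cite{mazurowski2023half}.
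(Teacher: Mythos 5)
Your proposal misdescribes the construction of $\theta$, and as a result the quantitative input you rely on does not exist. In this paper (following \cite{mazurowski2023half}) the final slice is $\theta(T,1) = \bd(\Omega\cup U_t)$, where $U_t=\{f<t\}$ is a sublevel set of a fixed Morse function and $t$ is tuned so that $\vol(\Omega\cup U_t)=\hv$ (after choosing $\Omega$ with $\vol(\Omega)\le\hv$). You can of course write this as $\bd(\Omega\,\sd\,A)$ with $A=U_t\setminus\Omega$, and $A$ does have volume $\hv-\vol(\Omega)$; but its perimeter is not controlled by its volume. Indeed $\bd A$ contains portions of $\bd\Omega$ inside $U_t$ and portions of $\{f=t\}$ outside $\Omega$, and both can have area of order one even when $\vol(A)$ is arbitrarily small: for instance $\Omega$ can fill almost all of $U_t$ while avoiding a thin collar of $\bd U_t$, so that $\bd U_t\setminus\Omega$ is all of $\bd U_t$. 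So the modulus $\psi$ with $\M(\bd A(\Omega))\le\psi(\vert\vol(\Omega)-\hv\vert)$ — which you correctly flag as the main obstacle — is false for the actual construction, and the coarea heuristic does not rescue it, since $t$ is chosen to reach half volume, not to be small, so $\area(\{f=t\})$ is not small. Consequently the subadditivity step $\M(\bd(\Omega\sd A))\le\M(\bd\Omega)+\M(\bd A)$ gives no useful bound.

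The paper's proof replaces crude subadditivity by the submodularity inequality $\M(\bd(\Omega\cup U_t))+\M(\bd(\Omega\cap U_t))\le\M(\bd\Omega)+\M(\bd U_t)$, so the error term becomes $\M(\bd U_t)-\M(\bd(\Omega\cap U_t))$ rather than $\M(\bd A)$. The real content is then an almost-rigidity statement, proved by a compactness/contradiction argument uniform in $t$: for every $\eps>0$ there is $\delta>0$ such that any $W\subset U_t$ with $\vol(W)\ge\vol(U_t)-\delta$ satisfies $\M(\bd W)\ge\M(\bd U_t)-\eps$ (using lower semicontinuity of mass and continuity of $t\mapsto\M(\bd U_t)$). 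Applying this with $W=\Omega\cap U_t$, whose volume deficit inside $U_t$ is exactly $\hv-\vol(\Omega)$, gives $\M(\theta(T,1))\le\M(T)+\eps$ whenever $\vert\vol(T)-\hv\vert<\delta$, which is precisely the modulus $w$. This stability step is the idea missing from your argument.
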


\begin{proof}
Let $f \colon M \to [0,1]$ be a Morse function and let $U_t = \{f < t\}$ for $t\in [0,1]$. We first claim that for every $\eps > 0$ there is a $\delta > 0$ such that for all $t\in [0,1]$ and all $W\subset U_t$ we have
\[
\vol(W)\ge \vol(U_t) - \delta \implies  \M(\bd W) \ge \M(\bd U_t) - \eps.
\]
Suppose to the contrary that for some $\eps > 0$ there is no $\delta > 0$ which makes the assertion true. Then for $\delta = \frac 1 n$ there exist $t_n \in [0,1]$ and $W_n \subset U_{t_n}$ with $\vol(W_n) > \vol(U_{t_n}) - \frac 1 n$ but $\M(\bd W_n) < \M(\bd U_{t_n}) - \eps$. After passing to a subsequence, we can suppose that $t_n \to t_\infty \in [0,1]$ and that $W_n \to W_\infty \in \mathcal C(M)$. Note that $W_\infty \subset U_{t_\infty}$ and $\vol(W_\infty) = \vol(U_{t_\infty})$ and so $W_\infty = U_{t_\infty}$. Since $t\mapsto \M(\bd U_t)$ is continuous, this implies that 
\begin{align*}
    \M(\bd U_{t_\infty}) &\le \liminf \M(W_n)\\
    & \le \liminf \M(\bd U_{t_n}) - \eps = \M(\bd U_{t_\infty}) - \eps. 
\end{align*}
This is a contradiction. 

Now, to prove the lemma, it suffices to show that for each $\eps > 0$ there is a $\delta > 0$ such that 
\[
\vert \vol(T) - \hv\vert < \delta \implies \M(\theta(T,1)) \le \M(T) + \eps. 
\]
So let $\eps > 0$ be given and choose $\delta$ according to the previous claim. Assume that $T$ satisfies $\vert \vol(T) - \hv\vert < \delta$. Choose a set $\Omega$ with $\vol(\Omega) \le \hv$ and $\bd \Omega = T$ and note that $\hv - \vol(\Omega) < \delta$. The set $\theta(T,1)$ is defined as $\bd(\Omega \cup U_t)$ where $t$ is chosen so that $\vol(\Omega \cup U_t) = \hv$. Note that  $\Omega \cap U_t \subset U_t$ and that $\vol(\Omega \cap U_t) = \vol(\Omega)+\vol(U_t)-\vol(\Omega\cup U_t) \ge \vol(U_t) - \delta$. According to the previous claim, this implies that 
\[
\M(\bd (\Omega \cap U_t)) \ge \M(\bd U_t) - \eps. 
\]
Finally note that 
\begin{align*}
\M(\bd (\Omega \cup U_t)) &\le \M(\bd \Omega) + \M(\bd U_t) - \M(\bd (\Omega \cap U_t))\\
&\le \M(\bd \Omega) + \eps,
\end{align*}
as needed. 
\end{proof}

By the lemma, for $k$ large enough, the map $\Xi_k \colon X_k \to \mathcal H(M,\Z_2)$ given by $\Xi_k(x) = \theta(\Psi_k(x),1)$ is a half-volume $p$-sweepout with no concentration of mass (guaranteed by properties of $\theta$ \cite{mazurowski2023half}) which satisfies 
\[
\sup_{x\in X_k} \M(\Xi_k(x)) \le \tilde \omega_p - \eta + w\left(\sqrt{\frac{\tilde \omega_p}{k}}\right) < \tilde \omega_p.
\]
This contradicts the definition of $\tilde \omega_p$. It follows that 
\[ L^{E_{k}}(\Pi_k) \to \tilde \omega_p \]
as $k\to \infty$.

\subsection{Bounding the Mean Curvature}  We assume that $3\le \dim(M)\le 5$ from this point onward. 
Again $p\in \N$ is fixed. Applying the $E_k$-min-max theorem in the homotopy class $\Pi_k$ gives the existence of critical points for $E_k$. We would like to show that these critical points converge to a regular limit as $k\to \infty$. The key point is to show that the mean curvature of the critical points stays uniformly bounded as $k\to \infty$. Then we can appeal to the compactness results of the previous section. 

The following diameter bound for stable CMCs is due to Elbert-Nelli-Rosenberg \cite{elbert2007stable}.

\begin{theorem}[\cite{elbert2007stable}]
\label{theorem:diameter}
Assume $M^{n+1}$ is a Riemannian manifold of dimension $3\le n+1\le 5$. Asssume the sectional curvatures of $M$ are bounded below by $-\kappa$ for some $\kappa\ge 0$. Let $\Sigma$ be a stable, immersed $H$-CMC hypersurface in $M$. There is a constant $c = c(n,H,\kappa)$ so that $\dist_\Sigma(q,\bd \Sigma) \le c$ for all $q\in \Sigma$ provided $\vert H\vert > 2\sqrt{\kappa}$.  
\end{theorem}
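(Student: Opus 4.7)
The plan is to derive the diameter bound from the stability inequality via a Schoen--Simon--Yau type Bochner argument on the second fundamental form, with the dimensional restriction $n+1\le 5$ controlling exactly the Kato-absorption step. Stability of $\Sigma$ gives, for every $\phi\in C^{\infty}_{c}(\operatorname{int}\Sigma)$,
\[ \int_{\Sigma}\bigl(|A|^{2}+\ric_{M}(\nu,\nu)\bigr)\phi^{2}\,d\mathcal{H}^{n}\ \le\ \int_{\Sigma}|\nabla\phi|^{2}\,d\mathcal{H}^{n}. \]
Combining $\ric_{M}(\nu,\nu)\ge -n\kappa$ with the decomposition $|A|^{2}=|\mathring{A}|^{2}+H^{2}/n$ yields a weighted $L^{2}$ estimate whose leading coefficient is $H^{2}/n-n\kappa$. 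Positivity of that coefficient already gives a diameter bound once $|H|>n\sqrt{\kappa}$, but falls short of the sharper threshold $|H|>2\sqrt{\kappa}$ claimed.

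To reach the sharper constant, I would apply Simons' identity together with the refined Kato inequality $|\nabla A|^{2}\ge(1+\tfrac{2}{n-1})|\nabla|A||^{2}$ that is available for CMC hypersurfaces. This produces a differential inequality of the form
\[ |\mathring{A}|\lap|\mathring{A}|\ \ge\ -c_{1}(n)|\mathring{A}|^{4}-c_{2}(n,H,\kappa)|\mathring{A}|^{2}+\tfrac{2}{n-1}|\nabla|\mathring{A}||^{2}. \]
Testing this against $|\mathring{A}|^{2q}\eta^{2(1+q)}$, integrating by parts, and simultaneously testing the stability inequality with $\phi=|\mathring{A}|^{1+q}\eta^{1+q}$, the quartic term can be absorbed on the left precisely when $q$ lies in a nonempty interval; this interval is nonempty only for $n\le 4$, which is where the dimensional assumption enters. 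Combined with the Hoffman--Spruck Sobolev inequality (whose constant depends only on $n$ and an upper bound for $|H|$), this yields a scale-invariant estimate
\[ \int_{B_{\Sigma}(q,R/2)}|\mathring{A}|^{n}\,d\mathcal{H}^{n}\ \le\ C(n,H,\kappa) \]
on intrinsic balls $B_{\Sigma}(q,R)\subset\Sigma$ with $R$ small relative to $\kappa^{-1/2}$.

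With such a bound in hand, I would deduce the diameter estimate by a cut-off test. Choosing $\phi(x)=\eta(d_{\Sigma}(x,q))$ with $\eta$ supported on $[0,R]$ and equal to $1$ on $[0,R/2]$, the left-hand side of the stability inequality controls $(H^{2}/n-n\kappa)\,\mathcal{H}^{n}(B_{\Sigma}(q,R/2))$ plus an $|\mathring{A}|^{2}$ contribution; the Sobolev-corrected integration of the $|\mathring{A}|^{2}$ term using the $L^{n}$ estimate above is precisely what closes the gap between the crude threshold $n\sqrt{\kappa}$ and the sharp $2\sqrt{\kappa}$. On the other hand, the intrinsic Michael--Simon volume lower bound forces $\mathcal{H}^{n}(B_{\Sigma}(q,R/2))\ge c_{0}R^{n}$ below a comparable scale. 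Comparing the two sides under the standing hypothesis $|H|>2\sqrt{\kappa}$ forces $R\le c(n,H,\kappa)$, which is the desired bound on $\dist_{\Sigma}(q,\bd\Sigma)$.

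The main obstacle is the Bochner absorption step in the second paragraph: the Kato constant $2/(n-1)$ is barely large enough for $n\le 4$, and for $n\ge 5$ the admissible range of $q$ collapses and the method breaks down, which is the source of the restriction $n+1\le 5$. Tracking the resulting constants carefully---rather than using the cruder pinching $|A|^{2}\ge H^{2}/n$---is what produces the sharp threshold $|H|>2\sqrt{\kappa}$; the remaining ingredients (Michael--Simon inequality, the cut-off argument, volume comparison) are relatively standard once sufficient coercivity of the stability operator has been established.
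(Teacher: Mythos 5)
You should first note that the paper does not prove this statement at all: it is quoted verbatim from Elbert--Nelli--Rosenberg \cite{elbert2007stable}, so your proposal has to be judged against that argument, which is of a completely different nature from what you sketch. The ENR proof is a Schoen--Yau/Myers-type argument: stability is tested along minimizing geodesics from $q$ to $\bd \Sigma$ with a carefully chosen (power-type) test function, and the Gauss equation is used to convert the ambient sectional curvature bound $K\ge -\kappa$ in \emph{tangential} directions into the intrinsic information needed for a length bound; the restriction $n\le 4$ (i.e.\ $n+1\le 5$) comes from an algebraic inequality among the coefficients of the resulting quadratic form, and the threshold $2\sqrt{\kappa}$ comes from completing a square there. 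It is not an integral curvature-estimate argument, and your proposed source of the dimension restriction does not match: the Schoen--Simon--Yau absorption window you invoke is nonempty up to $n\le 5$, so it cannot be what forces $n+1\le 5$.

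More importantly, two steps of your sketch do not close. First, the endgame: testing stability with $\phi=\eta(d_\Sigma(\cdot,q))$ gives an inequality of the shape $\big(H^2/n-n\kappa\big)\,\mathcal H^n\big(B_\Sigma(q,R/2)\big)\le C R^{-2}\,\mathcal H^n\big(B_\Sigma(q,R)\big)$, and a \emph{lower} volume bound $\mathcal H^n(B_\Sigma(q,R/2))\ge c_0R^n$ cannot force $R$ to be bounded; you would need an upper bound on the ratio $\mathcal H^n(B_\Sigma(q,R))/\mathcal H^n(B_\Sigma(q,R/2))$, which is not available for a merely stable immersed CMC hypersurface (indeed the whole content of the theorem is to rule out complete examples with large intrinsic balls, so such growth control cannot be assumed). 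Second, the claimed upgrade from the ``crude'' threshold to $|H|>2\sqrt{\kappa}$ has no mechanism behind it: in the stability inequality the term $|\mathring{A}|^2$ enters with a favorable sign, so an $L^n$ bound on $|\mathring{A}|$ obtained from Simons' identity, the refined Kato inequality and Michael--Simon buys you nothing --- the loss relative to $2\sqrt{\kappa}$ comes entirely from estimating $\ric_M(\nu,\nu)\ge -n\kappa$, and improving on that requires using the sectional curvature hypothesis along directions tangent to $\Sigma$ (as ENR do via the Gauss equation along geodesics), which your Bochner/Kato machinery never touches. As written, the differential inequality, the admissible range of $q$, and the decisive ``Sobolev-corrected integration'' step are asserted rather than derived, so the proof has a genuine gap precisely at the two points where the theorem's sharp constant and dimension restriction are supposed to emerge.
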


The next proposition will be used to show that half-volume CMCs with bounded area cannot consist entirely of tiny components. 

\begin{prop}
\label{small-components}
Let $M^{n+1}$ be a closed Riemannian manifold. Let $\Lambda = \bd \Omega$ be a smooth, almost-embedded  hypersurface with non-vanishing mean curvature. Assume that the mean curvature vector of $\Lambda$ points consistently into or consistently out of $\Omega$. Assume that $\area(\Lambda) \le A$ and that $\frac{1}{3}\vol(M) \le \vol(\Omega)\le \frac{2}{3}\vol(M)$. There is a positive constant $\delta > 0$, depending only on $M$ and $A$, such that $\Lambda$ has a  component with extrinsic diameter at least $\delta$.
\end{prop}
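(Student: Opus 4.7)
The plan is to argue by contradiction. Suppose every connected component of the immersed hypersurface $\tilde\Lambda\to M$ has extrinsic diameter strictly less than some small $\delta>0$; we will derive a lower bound $\delta\ge\delta_*(M,A)>0$. The strategy is to build, for each component $\tilde C_i$, a local Caccioppoli set $R_i$ bounded by $\tilde C_i$ and contained in a ball of radius $\delta$, estimate $\vol(R_i)\le C(M)\delta\,\area(\tilde C_i)$ via the isoperimetric inequality in small balls, and then use that the mod-$2$ sum $R=\triangle_iR_i$ must coincide with $\Omega$ or $M\setminus\Omega$.

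Fix $\delta<\delta_0(M)$ smaller than the injectivity radius and small enough that each geodesic ball $B_\delta(y)\subset M$ is $2$-bi-Lipschitz to a Euclidean ball. For each component $\tilde C_i$, choose $y_i$ in its image so $\mathrm{image}(\tilde C_i)\subset B_\delta(y_i)$, and lift via $\exp_{y_i}$ to a closed, two-sided (hence orientable, since the ambient ball is orientable) immersed hypersurface $\hat C_i\subset B_\delta(0)\subset T_{y_i}M\cong\R^{n+1}$. Mod-$2$ Alexander duality in the contractible ball produces a Caccioppoli set $\hat R_i\subset B_\delta(0)$ with $\bd\hat R_i=\hat C_i$; push it forward to $R_i=\exp_{y_i}(\hat R_i)\subset B_\delta(y_i)\subset M$. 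Combining the Euclidean isoperimetric inequality $\vol(\hat R_i)^{n/(n+1)}\le C_1\,\area(\hat C_i)$ with $\vol(\hat R_i)^{1/(n+1)}\le C_2\,\delta$ and transferring back to $M$ via the bi-Lipschitz $\exp_{y_i}$ yields $\vol(R_i)\le C_3(M)\,\delta\,\area(\tilde C_i)$.

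Let $R=\triangle_iR_i$ denote the iterated symmetric difference. As mod-$2$ currents in $M$,
\[
\bd R=\sum_i\bd R_i=\sum_i\tilde C_i=\Lambda=\bd\Omega,
\]
and because $M$ is connected ($H_{n+1}(M;\Z_2)\cong\Z_2$), two Caccioppoli sets with the same boundary are equal or complementary, so $R=\Omega$ or $R=M\setminus\Omega$. The hypothesis $\tfrac13\vol(M)\le\vol(\Omega)\le\tfrac23\vol(M)$ gives $\min(\vol(\Omega),\vol(M)-\vol(\Omega))\ge\tfrac13\vol(M)$, so by the triangle inequality for symmetric differences and the per-component bound,
\[
\tfrac13\vol(M)\le\vol(R)\le\sum_i\vol(R_i)\le C_3(M)\,\delta\,\area(\Lambda)\le C_3(M)\,\delta\,A.
\]
This forces $\delta\ge\vol(M)/(3\,C_3(M)\,A)=:\delta_*(M,A)$, contradicting $\delta<\min(\delta_0(M),\delta_*(M,A))$ and finishing the argument.

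The main technical point is the construction of $R_i$: one must exploit the two-sided, almost-embedded nature of $\tilde\Lambda$ to lift each small-diameter component to an orientable immersed closed hypersurface in a contractible Euclidean ball so that it bounds a unique Caccioppoli region mod $2$. The non-vanishing mean curvature together with the consistent mean-curvature-direction hypothesis, while natural for the CMC context in which the proposition is applied, do not enter this quantitative argument essentially—they ensure that the geometric setup is well-posed and can provide an alternative, more geometric identification of $R$ with $\Omega$ via local mean-convexity.
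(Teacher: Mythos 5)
Your proof is correct and follows essentially the same route as the paper: assume every component has extrinsic diameter less than $\delta$, bound each one by a small region contained in a ball of radius $\delta$, recover $\Omega$ (or $M\setminus\Omega$) from these regions, and derive a contradiction from the area bound $A$ and the constraint $\vol(\Omega)\ge\frac{1}{3}\vol(M)$ via an isoperimetric inequality at small scales. The differences are cosmetic: you use the Euclidean isoperimetric inequality in bi-Lipschitz exponential charts to get $\vol(R_i)\le C\delta\,\area(\tilde C_i)$ and contradict the volume lower bound, whereas the paper uses the small-volume asymptotics $I_M(v)\ge c\,v^{n/(n+1)}$ of the isoperimetric profile and contradicts the area bound, and your mod-2 symmetric-difference identification of $R$ with $\Omega$ or $M\setminus\Omega$ simply makes explicit a step the paper asserts directly.
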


\begin{proof}
Let $I_M$ denote the isoperimetric profile of $M$. By the asymptotics of the isoperimetric profile for small volumes, there are constants $c > 0$ and $V_0 > 0$ such that $I_M(v) \ge c v^{n/(n+1)}$ for all $v \in (0,V_0)$. Let $\alpha = \frac{1}{3}\vol(M)$.  
Shrinking $V_0$ if necessary, we can suppose that 
\[
v^{\frac n{n+1}} \ge \frac{2 A v}{c\alpha}
\]
for all $v\in (0,V_0)$. 
Now choose $\delta > 0$ so that any closed hypersurface in $M$ with extrinsic diameter less than $\delta$ encloses a region with volume less than $V_0$. 

Suppose for contradiction that every component of $\Lambda$ has extrinsic diameter smaller than $\delta$. Replacing $\Omega$ by $M\setminus \Omega$ if necessary, we can suppose that $\Omega$ is the union of the small volume regions enclosed by these components. Let $J$ be the number of connected components of $\Lambda$, and list the volumes of these components as $v_1,\hdots,v_J$. Note that 
\[
\sum_{j=1}^J v_j \ge \alpha
\]
and that $v_j \le V_0$ for all $j=1,\hdots,J$. Thus the area of $\Lambda$ satisfies  
\[
\area(\Lambda) \ge c \sum_{j=1}^J v_j^{n/(n+1)} \ge \frac{2A}{\alpha} \sum_{j=1}^J v_j \ge 2 A. 
\]
This contradicts the definition of $A$. 
\end{proof} 

{Now we can prove that the mean curvature does not blow up. Let $\mathscr S_k$ be the set of all $(V,T) \in \vz(M,\Z_2)$ such that 
\begin{itemize}
\item[(i)] $(V,T)$ is stationary for $E_k$,
\item[(ii)] $(V,T)$ satisfies property (R) \eqref{item:property R for E} for $E_k$ with $m=2p+1$,
\item[(iii)] $\|V\|(M) \le \tilde \omega_p + 3/k$. 
\item[(iv)] $\vert \vol(T) - \hv\vert \le \sqrt{2\tilde \omega_p/k}$.  
\end{itemize}
Note that every $(V, T)$ in $\mathscr S_k$ is regular in the sense of Proposition \ref{E-regularity}.
Also note that the critical points produced by the $E_k$ min-max theory in the homotopy class $\Pi_k$ belong to $\mathscr S_k$ when $k$ is sufficiently large. Define 
\[
H_k = \inf\{c \ge 0: \text{every $(V,T) \in \mathscr S_k$ has $c$-bounded first variation}\}.
\]
and note that $H_k \le k\vol(M)$. 

\begin{prop}
\label{h-bound}
Assume that $3\le n+1 \le 5$. Then $H = \sup\{H_k:k\in \N\}$ is finite. 
\end{prop}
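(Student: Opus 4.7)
The plan is to argue by contradiction, following the outline sketched in the introduction. Suppose $H=+\infty$. Then we can extract indices $k_i\to\infty$ and elements $(V_i,T_i)\in\mathscr S_{k_i}$ whose first-variation bounds $c_i$ tend to $\infty$. Since $(V_i,T_i)$ is stationary for $E_{k_i}$ and satisfies the conclusions of Proposition \ref{E-regularity}, and since minimal components contribute nothing to $\delta V_i$, the divergent first variation must come from the CMC part. Thus we are in case (i): $V_i=|\Lambda_i|+\sum_j m_{i,j}|\Sigma_{i,j}|$ where $\Lambda_i=\bd\Omega_i$ is an almost-embedded CMC with constant mean curvature $H_i=-2k_i(\vol(\Omega_i)-\hv)$ and $|H_i|=c_i\to\infty$. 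From the defining conditions of $\mathscr S_{k_i}$ we also have $\area(\Lambda_i)\le \tilde\omega_p+3/k_i$ uniformly bounded and $|\vol(\Omega_i)-\hv|\le\sqrt{2\tilde\omega_p/k_i}\to 0$.

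The next step is to rescale the metric by $|H_i|$: set $\tilde g_i=|H_i|^2 g$. In this metric $\Lambda_i$ has constant mean curvature $\pm 1$, and the sectional curvatures of $(M,\tilde g_i)$ are bounded below by $-\kappa_g/|H_i|^2\to 0$. By property $\operatorname{(R)}$ for $E_{k_i}$ with $m=2p+1$, combined with Corollary \ref{E-stable in small balls}, each point of $\Lambda_i$ admits an annulus on which $\Lambda_i$ is volume preserving stable; since the stability radius from Corollary \ref{E-stable in small balls} scales like $1/|H_i|$ (as the maximum principle obstruction to containing closed CMC components is of order $1/|H_i|$), these annuli have radius bounded below by a uniform $\rho_*>0$ at the rescaled scale. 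Combining the resulting curvature estimates (cf.\ Proposition \ref{curvature-estimates}) with a pointed blow-up argument, one extracts a stable $1$-CMC hypersurface in Euclidean space $\R^{n+1}$. Here the restriction $3\le n+1\le 5$ is essential: since $1=|H|>2\sqrt{\kappa_{\tilde g_i}}$ for $i$ large, Theorem \ref{theorem:diameter} of Elbert-Nelli-Rosenberg applies and yields a uniform constant $D$ such that every component of the rescaled $\Lambda_i$ has extrinsic diameter at most $D$. Unscaling, every component of the original $\Lambda_i$ has extrinsic diameter at most $D/|H_i|\to 0$.

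The final step is a contradiction via Proposition \ref{small-components}. For $i$ large, $\vol(\Omega_i)\in[\tfrac13\vol(M),\tfrac23\vol(M)]$, $\area(\Lambda_i)$ is uniformly bounded, and the mean curvature vector of $\Lambda_i$ points consistently into (or out of) $\Omega_i$ because $H_i$ is a nonzero constant. Proposition \ref{small-components} then produces some component of $\Lambda_i$ with extrinsic diameter at least a fixed $\delta>0$ depending only on $M$ and the area bound, directly contradicting the vanishing diameter just established. The main technical obstacle lies in the rescaling step: the almost-minimizing property supplies volume preserving stability only on annuli of bounded rescaled size, not on entire components of $\Lambda_i$, so Theorem \ref{theorem:diameter} cannot be invoked directly on the rescaled surfaces. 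Transferring this local annular stability to a globally stable object to which Elbert-Nelli-Rosenberg does apply---most naturally by a pointed Gromov-Hausdorff/blow-up limit whose regularity is controlled by the volume preserving stability curvature estimates---is the delicate heart of the argument, and is precisely where the dimension restriction $3\le n+1\le 5$ enters.
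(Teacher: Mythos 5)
Your overall skeleton matches the paper's: assume $H_k\to\infty$, rescale so the CMC part has mean curvature $1$, use property (R) plus the almost-minimizing theory to obtain volume preserving stability on annuli, invoke the Elbert--Nelli--Rosenberg diameter bound (this is where $3\le n+1\le 5$ enters), and contradict Proposition \ref{small-components}. However, there is a genuine gap, and you name it yourself without closing it: Theorem \ref{theorem:diameter} requires \emph{strong} stability, whereas the almost-minimizing property only yields \emph{volume preserving} stability on annuli of controlled size. Your proposed fix --- a pointed blow-up limit producing ``a stable $1$-CMC hypersurface in Euclidean space'' to which ENR applies, and from which you deduce a uniform extrinsic diameter bound $D$ for \emph{every} component of the rescaled $\Lambda_i$ --- does not work as stated: a blow-up limit of volume preserving stable pieces is at best volume preserving stable, not strongly stable, so ENR still does not apply to it; and even granting a nice limit, the claimed uniform diameter bound on all components of the approximating surfaces does not follow from properties of a single subsequential limit. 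So the step from annular VP-stability to the contradiction is missing, and it is precisely the step the proposition is about.

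The paper closes this gap differently, and without any limit object. Arguing by contradiction, it assumes some component $\Gamma_k$ has extrinsic diameter at least the $\delta$ of Proposition \ref{small-components}, picks annuli $\an(x_k,s_j/\sigma_k,r_j/\sigma_k)$ with $\sigma_k\to\infty$ but $\rho_k\sigma_k\to 0$ (so the annuli shrink at the original scale but become huge and nearly Euclidean after rescaling by $\rho_k\sim 1/|H_k|$), and uses property (R) plus Proposition \ref{E-am implies stability} (the component crosses the annulus boundary) to get volume preserving stability of $\Gamma_k$ there. It then takes a point $y_k$ of $\Gamma_k$ on a middle sphere and concentric balls $D_k\subset B_k$ inside the annulus. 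The key trick: volume preserving stability on $B_k'$ forces $\Gamma_k'$ to be \emph{strongly} stable as a $1$-CMC immersion in at least one of the two disjoint regions $D_k'$ or $B_k'\setminus D_k'$ (otherwise destabilizing variations in each could be combined into a volume preserving destabilizer). Since the connected component $\Gamma_k'$ contains points whose distance to the boundary of either region tends to infinity, Theorem \ref{theorem:diameter} is violated in whichever region is strongly stable. You would need either this two-region strong-stability dichotomy or some substitute for it (e.g.\ a diameter bound valid under volume preserving stability, which is not what ENR provides) to make your argument complete.
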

}

\begin{proof} 
Suppose to the contrary that $H_k\to \infty$. Then we can find $(V_k,T_k)\in \mathscr S_k$ such that the mean curvature of the CMC portion $\Lambda_k$ of $\supp \|V_k\|$ goes to infinity as $k\to \infty$. 
By assumption, each $(V_k,T_k)$ has the following property:
\begin{itemize}
\item[(R)] For any collection of $N = N(2p+1)$ concentric annuli $\an (x, s_1,r_1)$, $\hdots$, $\an(x,s_N,r_N)$ with $2r_j < s_{j+1}$ for all $j$, $(V_k,T_k)$ is $(E_k,\mathcal F)$-almost-minimizing in at least one of the annuli. 
\end{itemize}
Let $\eta_{x,\rho}$ denote the map which rescales by a factor of $1/\rho$ centered at $x$. Choose $\rho_k\to 0$ so that $\eta_{x,\rho_k}(\Lambda_k)$ 
has mean curvature 1. 

Choose $\delta > 0$ according to Proposition \ref{small-components}. {Note that $\Lambda_k$ has a uniform upper bound on area, and $\Lambda_k = \bd \Omega_k$ with $\vol(\Omega_k)\to \hv$.} Therefore, to get a contradiction, it suffices to show that, for some large $k$, every connected component of $\Lambda_k$ has extrinsic diameter less than $\delta$. If this is not the case, then (passing to a subsequence) we can find points $x_k \in \Lambda_k$ such that the connected component $\Gamma_k$ of $\Lambda_k$ containing $x_k$ has extrinsic diameter at least $\delta$. Fix some positive numbers $s_1 < r_1 < s_2 < r_2 < \hdots < s_N < r_N$ satisfying $2r_j < s_{j+1}$ for all $j$. Choose a sequence $\sigma_k \to \infty$ such that $\rho_k \sigma_k \to 0$. 

For each $k$, consider the collection of $N(2p+1)$-concentric annuli $\an(x_k,s_1/\sigma_k,r_1/\sigma_k)$, $\hdots$, $\an(x_k,s_N/\sigma_k,r_N/\sigma_k)$ and note that this collection is admissible for property (R). 
Therefore (passing to a subsequence) there is a $j\in \{1,\hdots,N\}$ such that $(V_k,T_k)$ is $(E_k,\mathcal F)$-almost-minimizing 
in
\[
\an_k = \an(x_k,s_j/\sigma_k,r_j/\sigma_k)
\]
for all $k$. 
For notational convenience, let $s = s_j$ and $r = r_j$.
Then for $k$ large enough, since $\Gamma_k$ has extrinsic diameter at least $\delta$, it follows that $\Gamma_k$ crosses the boundary of $\an_k$.  Hence, by Proposition \ref{E-am implies stability}, it follows that $\Gamma_k$ is volume preserving stable in the interior of $\an_k$.

Define 
\[
C_k = \bd B\left(x_k,\frac{s+r}{2\sigma_k}\right) \subset \an_k. 
\]
Choose a point $y_k \in \Gamma_k \cap C_k$, which must exist since $\Gamma_k$ has diameter at least $\delta$. 
Now consider the balls 
\[
B_k = B\left(y_k,\frac{r-s}{2\sigma_k}\right) \subset \an_k, \quad D_k = B\left(y_k,\frac{r-s}{4\sigma_k}\right) \subset B_k. 
\]
Note that $\Gamma_k$ must intersect $\bd B(y_k,t)$ for every $0 < t < (r-s)/(2\sigma_k)$ since $\Gamma_k$ is connected and contains both $x_k$ and $y_k$. 

Define $\Lambda_k' = \eta_{y_k,\rho_k}(\Lambda_k)$, $\Gamma_k' = \eta_{y_k,\rho_k}(\Gamma_k)$, $B_k' = \eta_{y_k,\rho_k}(B_k)$, and $D_k'=\eta_{y_k,\rho_k}(D_k)$. 
Note that $\Lambda_k'$ has constant mean curvature 1 and is volume preserving stable in 
$
B_k' 
$.
Because $\rho_k \sigma_k\to 0$, the balls $B_k'$ resemble large, nearly Euclidean balls when $k$ is large. Note that $\Gamma_k'$ must be strongly stable as a 1-CMC immersion in either $D_k'$ or $B_k' \setminus D_k'$, but this violates the diameter estimate of Theorem \ref{theorem:diameter}. Indeed, there are points $q_k\in \Gamma_k' \cap D_k'$ with $\dist(q_k,\bd D_k')\to \infty$ and points $z_k \in \Gamma_k' \cap [B_k' \setminus D_k']$ with $\dist(z_k, \bd B_k' \cup \bd D_k')\to \infty$.
\end{proof}

This implies the following compactness property. 

\begin{prop}
    Assume that $3\le n+1\le 5$.  Consider a sequence $(V_k,T_k)\in \mathscr S_k$. Then, up to a subsequence, $(V_k,T_k)\to (V,T)$ in $\vz(M,\Z_2)$. Moreover, $(V,T)$ has the regularity described in Proposition \ref{E-regularity} and the mean curvature of the CMC part of $(V,T)$ is at most $H$. We have $\vol(T) = \hv$. Finally, $(V,T)$ satisfies property (R') \eqref{item:property R'} with $m = 2p+1$ and $\rho$ depending only on $H$. 
\end{prop}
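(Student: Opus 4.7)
The plan is to reduce the proposition to a direct application of Proposition \ref{E-compactness}, using Proposition \ref{h-bound} to supply precisely the uniform first-variation bound that the compactness theorem requires.

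First I would observe that, by the definitions of $\mathscr{S}_k$ and $H_k$, each $V_k$ with $(V_k,T_k)\in \mathscr{S}_k$ has $H_k$-bounded first variation, and Proposition \ref{h-bound} upgrades this to a $k$-independent bound: $V_k$ has $H$-bounded first variation with $H=\sup_k H_k < \infty$. Combined with the uniform mass bound $\|V_k\|(M)\le \tilde\omega_p + 3/k$, the sequence lies in the compact space $Y_{z, \tilde\omega_p + 3}$, so after extracting a subsequence $(V_k, T_k) \to (V, T)$ in $\vz(M, \Z_2)$.

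Next I would apply Proposition \ref{E-compactness} with uniform first-variation constant $c = H$ and integer $m = 2p+1$. All of its hypotheses are now in place: $(V_k, T_k)$ is stationary for $E_k$ by assumption, satisfies property (R) for $E_k$ with the common integer $m=2p+1$, has uniformly bounded mass, and has uniformly $H$-bounded first variation. The conclusion of that proposition directly yields the regularity of $(V, T)$ described in Proposition \ref{E-regularity}, together with property (R') with $m = 2p+1$ and with $\rho$ depending only on $H$.

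The equality $\vol(T) = \hv$ will follow by choosing $\Omega_k \in \C(M)$ with $\bd\Omega_k = T_k$ and extracting a further subsequence so that $\Omega_k \to \Omega \in \C(M)$ with $\bd\Omega = T$; since $|\vol(\Omega_k) - \hv| \le \sqrt{2\tilde\omega_p/k}\to 0$ and volume is continuous in the flat topology, one obtains $\vol(\Omega) = \hv$. Finally, the bound on the CMC mean curvature will come from (\ref{eq:1st variation for E}): stationarity of $(V_k, T_k)$ for $E_k$ forces the mean curvature of the CMC part of $V_k$ to equal $2k|\vol(\Omega_k) - \hv|$, which by definition of $H_k$ is at most $H_k\le H$. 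Passing to a subsequence along which these constants converge to some $H_\infty$ with $|H_\infty|\le H$ and invoking Proposition \ref{prop:continuity of first variation of E}, I would identify $H_\infty$ as the mean curvature of the CMC part of the limit $V$. I do not expect a serious obstacle here: the content of the proposition is essentially that, once Proposition \ref{h-bound} rules out mean-curvature blow-up, the standard compactness machinery developed in Section \ref{S:compactness} applies directly.
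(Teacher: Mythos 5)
Your proposal is correct and follows essentially the same route as the paper: uniform bound on mean curvature/first variation from Proposition \ref{h-bound}, the half-volume constraint $\vert\vol(\Omega_k)-\hv\vert\le\sqrt{2\tilde\omega_p/k}\to 0$, and then a direct application of Proposition \ref{E-compactness} with $c=H$ and $m=2p+1$. The paper's own proof is just a terser version of the same argument.
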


\begin{proof}
    The mean curvature of $(V_k,T_k)$ is uniformly bounded by $H$. Choosing $\Omega_k$ so that $\bd \Omega_k = T_k$, one has $\vol(\Omega_k)\to \hv$. Therefore the result follows from Proposition \ref{E-compactness}. 
\end{proof}

\subsection{The Lifting Construction} 
At this point, we assume the metric $g$ on $M$ is generic so that, by the results of Appendix \ref{generic metrics}, we can assume that $M$ is bumpy, that $M$ has no half-volume minimal hypersurfaces, that half-volume CMCs are isolated in $M$, and that every closed almost-embedded half-volume CMC in $M$ is actually embedded.

Define $H = \sup\{H_k: k\in \N\}$ as in the previous subsection. Choose a scale $\rho > 0$ depending on $H$ according to Proposition \ref{h-bound}. Let $\mathscr S$ be the set of all pairs $(V,T)\in \vz(M,\Z_2)$ such that 
\begin{itemize}
\item[(i)] $\|V\|(M) \le \tilde \omega_p$,
\item[(i)] $(V,T)$ satisfies property (R') \eqref{item:property R'} with $m=2p+1$ and scale $\rho$, 
\item[(ii)] $\vol(T) = \hv$,
\item[(iii)] There is a smooth, embedded CMC hypersurface $\Lambda$ with non-zero mean curvature $\vert H_\Lambda\vert \le H$.
Moreover, there there exist a (possibly empty) collection of closed minimal hypersurfaces $\Sigma_1,\hdots,\Sigma_k$ and a collection of multiplicities $m_1,\hdots,m_k\in \N$ such that 
\[
V = \vert \Lambda\vert + \sum_{i=1}^k m_i \vert \Sigma_i\vert.
\]
Here the hypersurfaces $\Lambda,\Sigma_1,\hdots,\Sigma_k$ are all disjoint.
\end{itemize}

\begin{prop}
    Assume the metric $g$ on $M$ is generic and that $3\le n+1 \le 5$. Then the set $\mathscr S$ is finite. 
\end{prop}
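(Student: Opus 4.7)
The plan is to argue by contradiction. Suppose $\mathscr S$ is infinite, and take an infinite sequence of distinct elements $(V_k, T_k)\in\mathscr S$. Decompose $V_k = |\Lambda_k| + \sum_i m_{k,i}|\Sigma_{k,i}|$ with $T_k = \bd\Omega_k$, $\vol(\Omega_k) = \hv$, and $|H_{\Lambda_k}| \le H$. Since $\|V_k\|(M) \le \tilde\omega_p$, the bounded-mass compactness of $\vz(M,\Z_2)$ yields a subsequential limit $(V_\infty, T_\infty)$. The first variation of $V_\infty$ is at most $H$ by lower semicontinuity, and $\vol(T_\infty) = \hv$.

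The key structural step is to show that $V_\infty$ still has the regular form described in Proposition \ref{E-regularity}. This should follow by running the removable-singularity argument from Proposition \ref{E-compactness} using property (R') for each $V_k$ together with the uniform first-variation bound $H$. If $V_\infty$ were of case (ii) of Proposition \ref{E-regularity} --- entirely minimal --- then $\bd\Omega_\infty$ would be a half-volume minimal hypersurface, contradicting the genericity assumption on $g$. Hence $V_\infty$ is of case (i): $\Lambda_\infty = \bd\Omega_\infty$ is an almost-embedded half-volume CMC of non-zero mean curvature, and so embedded by the genericity hypothesis. Because the components of each $V_k$ are disjoint, a sufficiently small annulus through a point of $\Lambda_k$ only meets $\Lambda_k$; property (R') then forces $\Lambda_k$ itself to be volume-preserving stable on a definite-scale cover of its support, and the Bellettini--Chodosh--Wickramasekera curvature estimate delivers uniform $|A_{\Lambda_k}|^2$ bounds. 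Smooth subsequential convergence then gives $\Lambda_k \to \mu\Lambda_\infty$ for some integer $\mu \ge 1$; the strong maximum principle excludes $\mu \ge 2$, since two nearby sheets on either side of $\Lambda_\infty$ with matched non-zero mean curvature in consistent orientation cannot coexist as smooth CMCs. Thus $\Lambda_k \to \Lambda_\infty$ smoothly with multiplicity one, and the generic isolation of half-volume CMCs forces $\Lambda_k = \Lambda_\infty$ for all large $k$.

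The same local-curvature argument (again using disjointness to localize (R') to each component) gives smooth subsequential convergence $\Sigma_{k,i} \to \Sigma_{\infty,i}$ with integer multiplicities; bumpiness of $g$ makes every closed embedded minimal hypersurface nondegenerate and hence isolated in the smooth topology, so the supports and the integer multiplicities $m_{k,i}$ stabilize as well. Therefore $(V_k, T_k) = (V_\infty, T_\infty)$ for all sufficiently large $k$, contradicting distinctness. The main obstacle I foresee is the second step: turning the global property (R') on $V_k$ into uniform local curvature bounds for the individual components $\Lambda_k$ and $\Sigma_{k,i}$ (where the disjointness of components is crucial), while simultaneously ruling out the higher-multiplicity collapse of the CMC piece via the strong maximum principle in order to invoke the isolation of half-volume CMCs.
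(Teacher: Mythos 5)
Your proposal follows essentially the same route as the paper's proof: argue by contradiction, use the uniform mean curvature bound and property (R') to extract a subsequential limit with the regularity of Proposition \ref{E-compactness}, invoke genericity (no half-volume minimal hypersurfaces, embeddedness of almost-embedded half-volume CMCs, their isolation, and bumpiness) to force first the CMC portions and then the minimal portions to stabilize, contradicting distinctness. The deviations are inessential: the paper obtains smooth multiplicity-one convergence of the CMC part via Allard's theorem (using embeddedness of the limit) rather than your curvature-estimate-plus-orientation argument, and it phrases the stabilization of the minimal part as the impossibility of extracting a Jacobi field in a bumpy metric, which is the same mechanism as your nondegeneracy/isolation step.
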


\begin{proof}
    Suppose to the contrary that $\mathscr S$ is not finite. Then there is a sequence of distinct elements $(V_k,T_k)\in \mathscr S$. Let $\Lambda_k$ be the CMC portion of $(V_k,T_k)$. Since the mean curvature of $\Lambda_k$ stays uniformly bounded, we can argue as in Proposition \ref{E-compactness} to see that that, up to a subsequence, $(V_k,T_k)\to (V,T) \in \mathscr S$. Let $\Lambda$ denote the CMC portion of $(V,T)$. Note, in particular, $\Lambda$ is guaranteed to be embedded with non-zero mean curvature, since the metric $g$ admits no half-volume minimal hypersurfaces, and every almost-embedded half-volume CMC is embedded. 

    Since $\Lambda$ is genuinely embedded and the convergence of the CMC portion occurs with multiplicity one, Allard's regularity theorem implies that the convergence $\Lambda_k \to \Lambda$ is actually smooth. This contradicts that half-volume CMCs are isolated in $M$; see Corollary \ref{half-volume-isolated}.
    Thus $\Lambda_k = \Lambda$ for sufficiently large $k$. 

    Now let $V_k'$ be the minimal portion of $V_k$ and let $V'$ be the minimal portion of $V$. Then Property (R') implies that for each $x\in \supp \|V'\|$ there is a $\rho(x) > 0$ such that if $s < r <\rho(x)$ then the convergence of $V_k'$ to $V'$ is smooth in $\an(x,s,r)$. This implies that the convergence $V_k'\to V'$ is actually smooth away from finitely many points. Therefore, $V_k' = V'$ for sufficiently large $k$, as otherwise it would be possible to extract a Jacobi field on $V'$. This shows that in fact $(V_k,T_k) = (V,T)$ for sufficiently large $k$ and this is a contradiction. 
\end{proof}

The following lemma is due to Marques and Neves. 

\begin{lem}[\cite{marques2017existence} Corollary 3.6]
Let $\mathcal T$ be a finite subset of $\mathcal B(M,\Z_2)$. If $\eta > 0$ is sufficiently small, then every map $\Phi\colon S^1 \to B^{\mathcal F}_\eta(\mathcal T)$ is homotopically trivial. 
\end{lem}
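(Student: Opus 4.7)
The plan is to show that for sufficiently small $\eta$, the neighborhood $B^{\mathcal F}_\eta(\mathcal T)$ decomposes as a disjoint union of contractible open sets, one for each element of $\mathcal T$. Since $\mathcal T$ is finite, I would first choose $\eta < \frac{1}{2}\min\{\mathcal F(T, T') : T, T' \in \mathcal T,\ T \ne T'\}$, which ensures that the balls $B^{\mathcal F}_\eta(T)$ for $T \in \mathcal T$ are pairwise disjoint. Because $S^1$ is connected and $\Phi$ is continuous, its image $\Phi(S^1)$ must lie in a single ball $B^{\mathcal F}_\eta(T_0)$ for some $T_0 \in \mathcal T$. The problem is therefore reduced to showing that each ball $B^{\mathcal F}_\eta(T_0)$ is simply connected for $\eta$ small.

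The heart of the argument uses the flat isoperimetric inequality on $M$: provided $\eta$ is smaller than a constant depending only on $M$, every $T \in B^{\mathcal F}_\eta(T_0)$ can be written uniquely as $T = T_0 + \bd U(T)$ with $U(T) \in \C(M)$ of small volume, and the assignment $T \mapsto U(T)$ is continuous in the flat topology. Uniqueness is what small volume buys us, and continuity reflects that flat convergence of cycles near $T_0$ is controlled by volume convergence of the bounded regions.

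Next I would construct a null-homotopy of any loop $\gamma \f S^1 \to B^{\mathcal F}_\eta(T_0)$ via an Almgren-style cone construction. Set $U_s = U(\gamma(s))$, so that $\gamma(s) = T_0 + \bd U_s$ with $\vol(U_s)$ uniformly small in $s$. I would then contract the family $\{U_s\}$ to the empty set through a continuous family of Caccioppoli sets $U_{s,t}$, $t\in [0,1]$, with $U_{s,0} = U_s$ and $U_{s,1} = \emptyset$ — for instance, by intersecting $U_s$ with sublevel sets $\{f \le \tau\}$ of a fixed Morse function $f\f M \to \R$ and letting $\tau$ decrease from $\max f$ to $\min f$. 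Setting $H(s, t) = T_0 + \bd U_{s,t}$ would then yield a continuous homotopy from $\gamma$ to the constant loop at $T_0$ inside $B^{\mathcal F}_\eta(T_0)$.

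The main subtlety is ensuring that the contraction $t \mapsto U_{s,t}$ is continuous in the flat topology uniformly in $s$, and that the resulting homotopy $H$ stays inside $B^{\mathcal F}_\eta(T_0)$ throughout — equivalently, that $\mathcal F(\bd U_{s,t}, 0) < \eta$ for all $(s,t)$. Since $\vol(U_{s,t})$ is monotone decreasing in $t$ along the sublevel-set sweepout, the flat distance $\mathcal F(\bd U_{s,t}, 0) \le \vol(U_{s,t}) \le \vol(U_s)$ remains small; the only genuine issue is the continuity of the sweepout at the critical values of $f$, which is standard. This is essentially the classical cone construction underlying the Almgren isomorphism theorem, and is where the $\eta$ must be chosen small enough relative to both the geometry of $M$ and the Morse function $f$.
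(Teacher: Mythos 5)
The paper itself offers no proof of this lemma; it is quoted from Marques--Neves, whose argument reduces to a single ball exactly as you do and then invokes Almgren's isomorphism $\pi_1(\mathcal B(M,\Z_2))\cong H_{n+1}(M;\Z_2)\cong\Z_2$: the class of a loop is computed by telescoping isoperimetric fillings against the fixed center $T_0$, the total mass of these fillings is less than $\vol(M)$, so the class cannot be the fundamental class and injectivity of the Almgren map gives triviality. Your route is correct but replaces the appeal to injectivity by an explicit null-homotopy: the isoperimetric choice $T\mapsto U(T)$ with $\gamma(s)=T_0+\bd U_s$, followed by the sublevel-set contraction $U_{s,t}=U_s\cap\{f\le\tau(t)\}$. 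Since $\mathcal F(\bd A+\bd B)\le \vol(A\,\Delta\, B)$ and $\tau\mapsto\vol(\{f\le\tau\})$ is continuous (every level set of a Morse function is null, so there is in fact no issue at critical values), $H(s,t)=T_0+\bd U_{s,t}$ is flat-continuous, ends at the constant $T_0$, and remains in $\mathcal B(M,\Z_2)$ because $T_0+\bd U_{s,t}=\bd(\Omega_0\,\Delta\, U_{s,t})$ for $T_0=\bd\Omega_0$. What your approach buys is self-containedness -- it reproves the special case of Almgren's theorem that is actually needed -- at the cost of having to justify the existence, uniqueness and flat-continuity of $T\mapsto U(T)$, which you assert rather than prove; the standard argument (two fillings of the same cycle differ by an $(n+1)$-cycle mod $2$, hence coincide when both volumes are below $\tfrac12\vol(M)$, and the filling of $\gamma(s)+\gamma(s')$ has small mass) should at least be indicated or a reference given.

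Two minor corrections. First, your claim that the homotopy stays inside $B^{\mathcal F}_\eta(T_0)$ is off by a constant: the isoperimetric choice only gives $\vol(U(T))\le \mathcal F(T,T_0)+c_M\,\mathcal F(T,T_0)^{(n+1)/n}$, so the homotopy is guaranteed to stay only in a ball of comparable, slightly larger radius. This is harmless -- the application in the paper only needs the loop to be null-homotopic in $\mathcal B(M,\Z_2)$ so that the pullback of $\lambda$ vanishes -- or it can be repaired by shrinking $\eta$ at the outset. Second, at $\tau=\min f$ you should note that $\{f\le\min f\}$ has measure zero, so $U_{s,1}=\emptyset$ as a Caccioppoli set; with that remark the endpoint of the homotopy is exactly the constant map at $T_0$.
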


Enumerate the set $\mathscr S = \{(W_1, S_1),\hdots,(W_Q,S_Q)\}$ and let $\mathcal T=\{S_1,\hdots,S_Q\}$. Choose $\eta > 0$ according to the previous lemma.   Recall the set $\mathscr S_k$ defined in the previous subsection. By Proposition \ref{h-bound} and Proposition \ref{E-compactness}, it follows that for any $\eta > 0$ there is a $K\in \N$ such that $\mathscr F((V,T), \mathscr S) \le \eta/4$ for all $(V,T)\in \mathscr S_k$ provided $k\ge K$. Note in particular that if $\mathscr F((V,T),\mathscr S) \le \eta/4$ then $\mathcal F(T,\mathcal T) \le \eta/4$. 

Now fix some $k\ge K$. Consider a pulled-tight $E_k$-min-max sequence $\Psi_{k,j}\colon X_k \to \B(M,\mathcal F,\Z_2)$ for the homotopy class $\Pi_k$. Choose a sequence $\ell_j\to \infty$ so that 
\[
\sup \{\mathbf F(\Psi_{k,j}(x),\Psi_{k,j}(y)): \alpha \in X_k(\ell_j),\ x,y\in \alpha\} < \frac{\eta}{2}. 
\]
Let $\tilde Z_{k,j} \subset X_k$ be the union of all the cells $\alpha \in X_k(\ell_j)$ such that 
\[
\mathscr F(\Psi_{k,j}(x), \mathscr S) \ge \eta/2 
\]
for all vertices $x\in \alpha$. Here and in the following, we use $\Psi_{k,j}(x)$ to denote its image under the natural inclusion $\B(M, \Z_2)\to \vz(M, \Z_2)$. Then 
\[ 
\mathscr F(\Psi_{k,j}(x), \mathscr S) \ge \eta/2 
\]
for all $x\in \tilde Z_{k,j}$. Let $\tilde Y_{k,j} = \cl{X_k \setminus \tilde Z_{k,j}}$. Consider the restricted sequence $\Psi_{k,j}|_{\tilde Z_{k,j}}$. Then either 
\begin{itemize}
\item[(i)] $L^{E_k}(\{\Psi_{k,j}|_{\tilde Z_{k,j}}\}) < L^{E_k}(\Pi_k)$, or
\item[(ii)] no element in $\mathcal K(\{\Psi_{k,j}|_{\tilde Z_{k,j}}\})$ satisfies property (R) for $E_k$ with $m=2p+1$.  
\end{itemize}
Indeed, if both (i) and (ii) failed then there would be some element 
\[
(W,S)\in \mathcal K(\{\Psi_{k,j}|_{\tilde Z_{k,j}}\}) \cap \mathscr S_k.
\]
But then $\mathscr F((W,S),\mathscr S) \le \eta/4$ contradicting the choice of $\tilde Z_{k,j}$. 

We can now argue exactly as in \cite[Section 5, Step 2]{zhou2020multiplicity} to obtain a new min-max sequence $\Psi_{k,j}'\colon X_k \to \mathcal B(M,\Z_2)$ such that 
\begin{itemize}
    \item[(i)] $X_k$ can be decomposed into $Y_{k,j}$ and $Z_{k,j}$ where $Y_{k,j} = \cl{X_k\setminus Z_{k,j}}$. Moreover, we have 
\[
\qquad \qquad(\Psi_{k,j}'|_{Y_{k,j}})^* \lambda = 0 \text{ in } H^1(Y_i,\Z_2), \quad (\Psi_{k,j}'|_{Z_{k,j}})^*(\lambda^{p-1}) \neq 0 \text{ in } H^{p-1}(Z_{k,j},\Z_2)
\]
provided $j$ is large enough, 
\item[(ii)] $L^{E_k}(\{\Psi_{k,j}'\}) = L^{E_k}(\Pi_k)$,
\item[(iii)] $\limsup_{j\to \infty} \sup\{ E_k(\Psi_{k,j}'(z)): z\in Z_{k,j}\} < L^{E_k}(\Pi_k)$.
\end{itemize}
Define $\widetilde E_k \colon \C(M) \to \R$ by 
\[
\widetilde E_k(\Omega) = \M(\bd \Omega) + f_k(\vol(\Omega)).  
\]
Let $\widetilde X_k$ be the double cover of $X_k$ associated to the cohomology class $(\Psi_{k,j}')^*\lambda$.  Let $\widetilde \Pi_{k,j}$ be the $(\widetilde X_k, \widetilde Z_{k,j})$ relative homotopy class of $\Psi_{k,j}'$. 
By the same argument as \cite[Section 5, Step 3]{zhou2020multiplicity}, if $j$ is large enough then 
\[
L^{\widetilde E_k}(\widetilde\Pi_{k,j}) > \sup_{z\in \widetilde Z_{k,j}} \widetilde E_k(\Psi_{k,j}'(z)).
\]
Moreover, we have $L^{\widetilde E_k}(\widetilde\Pi_{k,j})\to L^{E_k}(\Pi_k)$ as $j\to \infty$.

\subsection{Conclusion of the Proof} 
Let $h\colon M\to\R$ be a smooth Morse function satisfying property (T) \eqref{property (T)}. Choose a sequence $\eps_j \to 0$. Define $F_{k,j}\colon \C(M)\to \R$ by 
\[
F_{k,j}(\Omega) = \M(\bd \Omega) -\eps_j \int_{\Omega} h + f_k(\vol(\Omega)).
\]
Then $F_{k, j}(\Omega) = \widetilde{E}_k(\Omega) - \eps_j \int_{\Omega} h$, and we have  
\[
L^{F_{k,j}}(\widetilde \Pi_{k,j}) > \sup_{z\in \widetilde Z_{k,j}} F_{k,j}(\Psi_{k,j}'(z))
\]
for large enough $j$, and $L^{F_{k,j}}(\widetilde \Pi_{k,j}) \to L^{E_k}(\Pi_k)$ as $j\to \infty$. Therefore, by applying min-max theory for $F_{k,j}$ (Theorem \ref{F-min-max}) in the relative homotopy class $\tilde \Pi_{k,j}$, there exist critical points $(V_{k,j},\Omega_{k,j})$ for $F_{k,j}$ with $F_{k,j}(V_{k,j},\Omega_{k,j})\to L^{E_k}(\Pi_k)$.  By Proposition \ref{F-compactness}, after passing to a subsequence, $(V_{k,j},\bd \Omega_{k,j})\to (V_k,T_k) \in \vz(M,\Z_2)$, $(V_k,T_k)$ is stationary for $E_k$, $E_k(V_k,T_k) = L^{E_k}(\Pi_k)$, and $V_k$ is induced by either 
\begin{itemize}
        \item[(i)] a smooth, almost-embedded, multiplicity one constant mean curvature surface with non-zero mean curvature; or
        \item[(ii)]  a collection of minimal hypersurfaces with multiplicities. 
    \end{itemize}
It remains to take a limit as $k\to \infty$.  Again the key step is to show that the mean curvature doesn't blow up. 

\begin{prop}
\label{h-bound2}
Consider the pairs $(V_k,T_k)\in \vz(M,\Z_2)$ defined above. Then the mean curvature of $\Lambda_k = \supp \|V_k\|$ stays uniformly bounded as $k\to \infty$.
\end{prop}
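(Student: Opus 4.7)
The plan is to mirror Proposition \ref{h-bound} at the level of the prescribed-mean-curvature approximants $(V_{k,j},\Omega_{k,j})$ and then pass to the limit in $j$. One may assume $(V_k,T_k)$ lies in case (i) of the regularity (Proposition \ref{E-regularity}), since case (ii) gives $H_k=0$ trivially. Write $H_{k,j}$ for the constant such that $\bd \Omega_{k,j}$ has mean curvature $\eps_j h + H_{k,j}$; the smooth convergence of the CMC portion, which follows from the uniform volume-preserving-stability curvature estimates available in case (i) of Proposition \ref{F-compactness}, gives $H_{k,j}\to H_k$ as $j\to\infty$ for each fixed $k$. Supposing for contradiction that $|H_k|\to\infty$ along a subsequence in $k$, choose $j(k)$ large enough (depending on $k$) that $|H_{k,j(k)}|\ge |H_k|/2$ and $\eps_{j(k)}\|h\|_{C^0}\le 1$, and set $\widetilde\Sigma_k:=\bd\Omega_{k,j(k)}$.

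The hypersurfaces $\widetilde\Sigma_k$ are smooth, almost-embedded, multiplicity one, and satisfy: (a) a uniform area bound $\area(\widetilde\Sigma_k)\le \tilde\omega_p+o(1)$; (b) enclosed volumes $\vol(\Omega_{k,j(k)})\to\hv$, since $k(\vol(\Omega_{k,j(k)})-\hv)^2$ stays bounded by the energy identity; (c) mean curvature $\eps_{j(k)}h+H_{k,j(k)}$ of one consistent sign, since $|H_{k,j(k)}|\to\infty$ dominates $\eps_{j(k)}\|h\|_{C^0}$; and (d) property (R) for $F_{k,j(k)}$ with $m=2p+1$, provided directly by Proposition \ref{F-ca}. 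By (a)--(c) and Proposition \ref{small-components}, for $k$ large $\widetilde\Sigma_k$ admits a connected component $\Gamma_k$ of extrinsic diameter at least some $\delta>0$ depending only on $M$ and the uniform area bound.

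Now I would mimic the rescaling step of Proposition \ref{h-bound}. Set $\rho_k=1/|H_{k,j(k)}|\to 0$, pick $\sigma_k\to\infty$ with $\sigma_k\rho_k\to 0$, fix positive constants $s_1<r_1<\cdots<s_N<r_N$ with $2r_i<s_{i+1}$ and $N=N(2p+1)$, and pick $x_k\in\Gamma_k$. Property (R) applied to the $N$ concentric annuli $\an(x_k,s_i/\sigma_k,r_i/\sigma_k)$ yields, after extracting a subsequence, a fixed index $i$ for which $(V_{k,j(k)},\Omega_{k,j(k)})$ is $(F_{k,j(k)},\mathcal F)$-almost minimizing in $\an_k:=\an(x_k,s_i/\sigma_k,r_i/\sigma_k)$. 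Since $\Gamma_k$ has extrinsic diameter $\ge\delta\gg r_i/\sigma_k$, it crosses the outer sphere of $\an_k$, so Corollary \ref{F-am implies stability} yields volume-preserving stability of $\Gamma_k$ for $A^{\eps_{j(k)}h+H_{k,j(k)}}$ in the interior of $\an_k$.

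Rescaling by $1/\rho_k$, the rescaled $\Gamma_k'$ has mean curvature tending uniformly to $\pm 1$ (the $\eps_{j(k)}h$ contribution becomes negligible after division by $|H_{k,j(k)}|$) and is volume-preserving stable on a nearly Euclidean ball of rescaled radius $(r_i-s_i)/(2\sigma_k\rho_k)\to\infty$. By the standard localization trick, volume-preserving stability on such a ball forces strong stability on either a concentric sub-ball or its complement inside the ball (one localizes a destabilizing direction of the Jacobi form using its zero-average property), producing a strongly stable unit-mean-curvature immersion on a Euclidean-like region of diameter $\to\infty$. This contradicts the Elbert-Nelli-Rosenberg diameter estimate (Theorem \ref{theorem:diameter}), since interior points exist at extrinsic—and hence intrinsic—distance arbitrarily large from the boundary. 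The hardest part is the careful scale-and-sign bookkeeping in the diagonal extraction over $(k,j)$: one must ensure simultaneously that $H_{k,j(k)}$ is close to $H_k$, that the $\eps_{j(k)}h$-perturbation is negligible after rescaling, and that $(V_{k,j(k)},\Omega_{k,j(k)})$ remains $\mathbf F$-close to $(V_k,T_k)$, but all of this works because $j(k)$ may be taken to grow as fast as desired relative to $k$.
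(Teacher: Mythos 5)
Your overall strategy is the paper's: rule out mean curvature blow-up by producing, via Proposition \ref{small-components}, a component of definite extrinsic diameter, using property (R) plus Corollary \ref{F-am implies stability} to get volume-preserving stability in a suitable annulus, rescaling to unit mean curvature, and contradicting the Elbert--Nelli--Rosenberg bound through the two-region strong-stability dichotomy. The one genuine divergence is the order of limits, and it creates a gap as written. You diagonalize in $(k,j)$ and then apply Theorem \ref{theorem:diameter} directly to the rescaled $\Gamma_k'$, calling it a ``unit-mean-curvature immersion''; but its mean curvature is $\pm 1+\rho_k\,\eps_{j(k)}h$, which is not constant, and the stability you extract via the localization trick is strong stability for the perturbed functional $A^{\eps_{j(k)}h+H_{k,j(k)}}$, not for an exact CMC. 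Theorem \ref{theorem:diameter}, as quoted and used in this paper, applies only to genuinely constant mean curvature immersions, so the final contradiction is not licensed at the level of the approximants.

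The paper avoids exactly this by fixing $k$, keeping the rescaling factor $\rho_k=1/|H_k|$ independent of $j$, and sending $j\to\infty$ first: the curvature estimates coming from volume-preserving stability (Proposition \ref{curvature-estimates}) give smooth subconvergence of the rescaled $\Gamma_{k,j}'$ to a limit $\Gamma_k'$ which is an \emph{exact} volume-preserving stable $1$-CMC in the blown-up annulus, connected and meeting both boundary spheres; only then is the argument of Proposition \ref{h-bound} (strong stability on $D_k'$ or $B_k'\setminus D_k'$, then ENR) invoked, for $k$ large. Your diagonal argument is repairable by inserting the analogous compactness step -- e.g.\ pass to a further limit of the $\Gamma_k'$ using the same curvature estimates to obtain an exact strongly stable $1$-CMC with points at unbounded distance from its boundary -- or by supplying a prescribed-mean-curvature version of the diameter estimate, but one of these additions is needed; the ``scale-and-sign bookkeeping'' you flag is not the issue, the non-constancy of the mean curvature at the moment you invoke Theorem \ref{theorem:diameter} is.
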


\begin{proof}
    Suppose to the contrary that the mean curvature $H_k$ of $\Lambda_k$ goes to $\infty$. Choose $\rho_k\to 0$ so that $\eta_{x,\rho_k}(\Lambda_k)$ has mean curvature 1. 
    
    Let $\Lambda_{k,j}$ denote $\supp \|V_{k,j}\|$ and let $H_{k,j}$ be the mean curvature of $\Lambda_{k,j}$. Then for fixed $k$, we have $H_{k,j}\to H_k$ as $j\to \infty$. Choose $\delta > 0$ according to Proposition \ref{small-components}. Then, if $j$ is large enough depending on $k$, $\Lambda_{k,j}$ has a connected component $\Gamma_{k,j}$ with extrinsic diameter at least $\delta$. Pick a point $x_{k,j}\in \Gamma_{k,j}$. 

    Let $N = N(2p+1)$ and fix $s_1 < r_1 < s_2<r_2< \hdots < s_N < r_N$ with $2 r_i < s_{i+1}$. Choose a sequence $\sigma_k\to \infty$ such that $\rho_k\sigma_k\to 0$. Now fix an integer $k\in \N$. For each $j$, consider the collection of concentric annuli $\an(x_{k,j}, s_1/\sigma_k, r_1/\sigma_k)$, $\hdots$, $\an(x_{k,j}, s_N/\sigma_k,r_N/\sigma_k)$.  Since $(V_{k,j},\Omega_{k,j})$ satisfies property (R) \eqref{item:property R for F}, there is an $i(k)\in \{1,\hdots,N\}$ and a (non-relabeled) subsequence of $j$'s such that $(V_{k,j},\Omega_{k,j})$ is $(F_{k,j},\mathcal F)$-almost minimizing in the annulus $\an(x_{k,j},s_{i(k)}/\sigma_k,r_{i(k)}/\sigma_k)$ for all $j$. Since $\Gamma_{k,j}$ has extrinsic diameter at least $\delta$, it follows that $\Gamma_{k,j}$ must cross the boundary of $\an(x_{k,j},s_{i(k)}/\sigma_k,r_{i(k)}/\sigma_k)$. By Corollary \ref{F-am implies stability}, it follows that $\Gamma_{k,j}$ is volume preserving stable for $A^{\eps_j h}$ in $\an(x_{k,j},s_{i(k)}/\sigma_k,r_{i(k)}/\sigma_k)$.

    Now consider the rescaled surfaces $\Gamma_{k,j}' = \eta_{x_{k,j},\rho_k}(\Gamma_{k,j})$. Passing to a further subsequence in $j$, we can suppose that $x_{k,j}\to x_k$ as $j\to \infty$. For a fixed $k$, the surfaces $\Gamma_{k,j}'$ have a uniform upper bound on mean curvature, a uniform upper bound on area, and are volume preserving stable for $A^{\eps_j \rho_k h}$ in $\an_{k,j}' = \eta_{x_{k,j},\rho_k}(\an(x_{k,j},s_{i(k)}/\sigma_k, r_{i(k)}/\sigma_k))$. Therefore, these surfaces have a uniform curvature estimate, 
    and converge smoothly to a volume preserving stable CMC hypersurface $\Gamma_k'$ in $\an_k' = \eta_{x_k,\rho_k}(\an(x_{k},s_{i(k)}/\sigma_k, r_{i(k)}/\sigma_k))$ with mean curvature 1. Note that the surface $\Gamma_k'$ is connected and intersects both the inner and outer boundary of $\an_k'$. Therefore, we can now argue exactly as in Proposition \ref{h-bound} to get a contradiction for large enough $k$. 
\end{proof}

Once we know that the mean curvature doesn't blow up, it follows that $(V_k,T_k)$ satisfies property (R') \eqref{item:property R'} at a uniform scale $\rho$ that does not depend on $k$. This implies that $(V_k,T_k)\to (V,T)$,  $\|V\|(M) \le \tilde \omega_p$, $\vol(T) = \hv$, and $V$ is induced by either 
\begin{itemize}
        \item[(i)] a smooth, almost-embedded, multiplicity one constant mean curvature surface with non-zero mean curvature; or
        \item[(ii)]  a collection of minimal hypersurfaces with multiplicities. 
    \end{itemize}
Choose $\Omega$ so that $\bd \Omega = T$. Note that case (ii) cannot occur because then some collection of minimal hypersurfaces in $M$ would bound $\Omega$. But we have assumed the metric $g$ is generic so that this cannot happen by Proposition \ref{half-volume-minimal}. Therefore case (i) occurs and $\bd \Omega$ is an almost-embedded constant mean curvature hypersurface. 

It remains to show that $\|V\|(M) = \tilde \omega_p$. Since $E_k(V_k,T_k)\to \tilde \omega_p$, it is equivalent to show that 
\[
k(\vol(\Omega_k)-\hv)^2 \to 0
\]
as $k\to \infty$, where $\bd \Omega_k = T_k$. But this follows from the fact that $(V_k,T_k)$ is stationary for $E_k$ and the fact that the mean curvature does not blow up. Indeed, let $H_k$ denote the mean curvature of $\supp \|V_k\|$. Then we know that $
\vert H_k\vert = 2k\vert \vol(\Omega_k)-\hv\vert$ since $(V_k,T_k)$ is stationary for $E_k$. 
It follows that 
\[
k(\vol(\Omega_k)-\hv)^2 = \frac{\vert H_k\vert^2}{4k},
\]
and this goes to 0 as $k\to \infty$ since $H_k$ is uniformly bounded. Therefore, $\|V\|(M) = \tilde \omega_p$, as needed. This proves Theorem \ref{main}, and Corollary \ref{main2} follows immediately.

\subsection{Positive Ricci Curvature}

In this subsection, we prove Theorem \ref{positive-Ricci} on the existence of half-volume CMCs in manifolds with positive Ricci curvature.  Fix a closed manifold $M^{n+1}$ of dimension $3\le n+1\le 5$ and let $g$ be a metric on $M$ with positive Ricci curvature. Choose a sequence of generic metrics $g_i$ on $M$ such that $g_i\to g$ smoothly. 

Fix an integer $p\in \N$. We claim that $\tilde \omega_p(M,g_i)\to \tilde \omega_p(M,g)$ as $i\to \infty$. To see this, first note that there are constants $\eta_i\to 1$ such that 
\begin{equation}
\label{eq:equivalent metrics}
\eta_i^{-2} g_i(v,v) \le g(v,v)\le \eta_i^2 g_i(v,v)
\end{equation}
for all $v \in TM$. Also notice that if $\vol(\Omega,g_i) = \frac{1}{2}\vol(M,g_i)$ then 
\begin{align*}
\left\vert \vol(\Omega,g) - \frac{1}{2}\vol(M,g)\right\vert &\le \left\vert \vol(\Omega,g)-\vol(\Omega,g_i)\right\vert  + \left\vert \frac{1}{2}\vol(M,g_i)-\frac{1}{2}\vol(M,g)\right\vert.
\end{align*}
Thus, by equation (\ref{eq:equivalent metrics}), there are constants $a_i \to 0$ as $i\to \infty$ such that if $\vol(\Omega,g_i) = \frac{1}{2}\vol(M,g_i)$ then $\vert \vol(\Omega,g) - \frac{1}{2}\vol(M,g)\vert \le a_i$. 
Now, given any $\eps > 0$, one can select a half-volume $p$-sweepout $\Phi_i$ of $(M,g_i)$ with 
\[
\sup_{x\in \text{dom}(\Phi_i)} \M(\Phi_i(x)) \le \tilde \omega_p(M,g_i) + \eps. 
\]
Note that $\Phi_i$ may not be a half-volume $p$-sweepout of $(M,g)$. However, if $\theta$ denotes the deformation retraction to half-volume cycles in $(M,g)$ in Lemma \ref{lem:deformation retraction map}, then $\theta(\Phi_i(\cdot),1)$ is a half-volume $p$-sweepout of $(M,g)$. Moreover, according to Lemma \ref{lem:deformation retraction map}, we have 
\[
\sup_{x\in \text{dom}(\Phi_i)} \M(\theta(\Phi_i(x),1)) \le \tilde \omega_p(M,g_i) + \eps + w(a_i). 
\]
Hence letting $i\to \infty$ and then letting $\eps\to 0$, one obtains $\tilde \omega_p(M,g) \le \liminf_{i\to \infty} \tilde \omega_p(M,g_i)$. The reverse inequality $\tilde \omega_p(M,g) \ge \limsup_{i\to\infty} \tilde \omega_p(M,g_i)$ can be proved similary. 

Applying Theorem \ref{main} to $(M,g_i)$ gives the existence of a Caccioppoli set $\Omega_i$ with $\vol(\Omega_i,g_i) = \frac{1}{2}\vol(M,g_i)$ such that $\Sigma_i = \bd \Omega_i$ is smooth and almost-embedded with constant mean curvature $H_i\neq 0$. Moreover, we have $\area(\Sigma_i,g_i) = \tilde \omega_p(M,g_i)$.  Arguing similarly to Proposition \ref{h-bound} and Proposition \ref{h-bound2}, one can show that the mean curvature $H_i$ stays uniformly bounded as $i\to \infty$. Thus, after passing to a subsequence, we can suppose that $H_i\to H\in \R$ as $i\to \infty$. 

If $H\neq 0$, then arguing as in the proof of Proposition \ref{E-compactness}, one can show that $(\vert \Sigma_i\vert,\Omega_i)$ converges to a limit $(V,\Omega)$ with $\vol(\Omega,g) = \frac{1}{2}\vol(M,g)$ and $\|V\|(M,g) = \tilde \omega_p(M,g)$. Moreover, $\Sigma = \bd \Omega$ is a smooth, almost-embedded CMC with mean curvature $H$ and $V$ is induced by $\Sigma$ with multiplicity one. This concludes the proof in the case where $H\neq 0$. 

Suppose instead that $H = 0$. Again arguing as in the proof of Proposition \ref{E-compactness}, one can show that $(\vert \Sigma_i\vert,\Omega_i)$ converges to a limit $(V,\Omega)$ with $\vol(\Omega,g) = \frac{1}{2}\vol(M,g)$ and $\|V\|(M,g) = \tilde \omega_p(M,g)$. Moreover, $V$ is induced by a collection of smooth, embedded, connected, pairwise disjoint minimal hypersurfaces with multiplicity, and some subcollection of these minimal hypersurfaces bounds $\Omega$. Since $g$ has positive Ricci curvature, the Frankel property implies that every pair of minimal hypersurfaces in $(M,g)$ must intersect. Thus $\Sigma = \bd \Omega$ is in fact a connected minimal hypersurface and $V = m\vert \Sigma\vert$ for some $m\in \N$. 

To complete the proof of Theorem \ref{positive-Ricci}, we need to show that $m = 1$. Note that the convergence $\Sigma_i\to \Sigma$ is locally smooth away from finitely many points. If $m\ge 3$, then for each $i$ it is possible to find two sheets of $\Sigma_i$ with mean curvature pointing in the same direction. This implies that $\Sigma$ carries a positive solution $\varphi$ to $J_\Sigma \varphi = 0$; c.f. \cite[Theorem 4.1, Part 4]{zhou2020multiplicity}. Since $(M,g)$ has positive Ricci curvature, this is impossible. Finally, the case $m=2$ is also impossible. Indeed, if $m=2$ then it is straightforward to see that either $\Omega = \emptyset$ or $\Omega = M$ which violates the half-volume constraint. Thus $m=1$, as needed. 

\appendix
\section{Condition (T)}
\label{h-generic}

Let $h\colon M\to \R$ be a smooth Morse function. Given a regular point $x\in M$ for $h$, let $\Gamma(x)$ be the level set of $h$ passing through $x$. Then define $v(h,x)$ to be the vanishing order at $x$ of the mean curvature $H_{\Gamma(x)}$, regarded as a function on $\Gamma(x)$.   
Recall that our min-max theory requires $h$ to satisfy the following property (Definition \ref{property (T)}):
\begin{itemize}
\item[(T)] For every regular point $x$ of $h$, we have $v(h,x) < \infty$. 
\end{itemize}
This property is used to show that the touching set of an almost-embedded $(h+h_0)$-PMC is contained in a countable union of $(n-1)$-dimensional manifolds. The goal of this appendix is to show that an arbitrary Morse function $h$ can be perturbed slightly in the smooth topology to a nearby Morse function that satisfies condition (T).

Note the following lemmas. 

\begin{lem}
\label{usc1}
Assume that $u\colon M\to\R$ is a smooth Morse function. Let $C$ be the set of all critical points of $u$. Let $G \subset M\setminus C$ be a closed set such that for each $x\in G$ we have $v(u,x) < \infty$. Then there is a finite $k\in \N$ such that $v(u,x)\le k$ for all $x\in G$. Moreover, if $w$ is a sufficiently small smooth perturbation of $u$ then $v(w,x) \le k$ for all $x\in G$ as well. 
\end{lem}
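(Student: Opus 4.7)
The plan is to establish that $v(u,\cdot)$ is upper semicontinuous on $M\setminus C$, and then to verify perturbation stability via continuous dependence of a distinguished tangential derivative of the mean curvature on $u$ in a suitable $C^m$ topology. Since $G$ is closed in the compact manifold $M$, $G$ is itself compact, so both assertions will reduce to a local analysis followed by a finite cover.

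For each $x_0\in G$ I would choose a coordinate chart $(y_1,\ldots,y_n,t)$ on a neighborhood $W$ of $x_0$ in which $u(y,t)=t$; such a chart exists because $\nabla u(x_0)\neq 0$. In these coordinates each level set of $u$ is a slice $\{t=c\}$, and the mean curvature $H_u(y,t)$ of these slices is a smooth function of $(y,t)$ built from the $2$-jet of $u$ and the metric. The intrinsic vanishing order $v(u,(y_0,t_0))$ is then the smallest non-negative integer $k$ for which $\partial_y^\alpha H_u(y_0,t_0)\neq 0$ for some multi-index $\alpha$ with $|\alpha|=k$. If $v(u,x_0)=k<\infty$ and $\alpha$ is a realizing multi-index, continuity of $\partial_y^\alpha H_u$ supplies a neighborhood of $x_0$ on which $v(u,\cdot)\leq k$. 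Thus $v(u,\cdot)$ is upper semicontinuous on $M\setminus C$, and the hypothesis that it is finite on $G$, combined with a finite cover $W_1,\ldots,W_m$ and corresponding multi-indices $\alpha_1,\ldots,\alpha_m$, yields the uniform bound $k=\max_i|\alpha_i|$ over all of $G$.

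For the perturbation statement, first observe that for $w$ sufficiently $C^1$-close to $u$, $w$ has no critical points on $G$, so $v(w,\cdot)$ is defined there. Working inside each chart $W_i$, I would change coordinates by setting $\tilde t:=w(y,t)$ and $\tilde y:=y$. Since $\partial_t u=1$, this is a smooth diffeomorphism for $w$ sufficiently $C^1$-close to $u$, and in these new coordinates $w(\tilde y,\tilde t)=\tilde t$, so the level sets of $w$ are the slices $\{\tilde t=c\}$. The mean curvature $H_w(\tilde y,\tilde t)$ of these slices is then a smooth function of the $2$-jet of $w$, and the derivative $\partial_{\tilde y}^{\alpha_i}H_w$ depends continuously on $w$ in the $C^{|\alpha_i|+2}$ topology, reducing to $\partial_y^{\alpha_i}H_u$ at $w=u$. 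Hence for $w$ sufficiently $C^{k+2}$-close to $u$ the derivative $\partial_{\tilde y}^{\alpha_i}H_w$ remains nonzero on $W_i$, yielding $v(w,x)\leq|\alpha_i|\leq k$ there. Taking the maximum required closeness over the finite cover finishes the argument.

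The main delicate point is that the tangential derivatives of the mean curvature that determine $v(w,x)$ are along the level sets of $w$, not those of $u$, so one must ensure that the quantities one compares for $w$ close to $u$ are really computing the correct intrinsic vanishing order. The straightening coordinate change $\tilde t=w(y,t)$ handles this cleanly: it reduces tangential derivatives along level sets of $w$ to ordinary $\tilde y$-derivatives and depends smoothly on $w$, so that continuous dependence of the relevant derivative on $w$ becomes transparent.
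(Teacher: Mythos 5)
Your proposal is correct and takes essentially the same route as the paper: the paper's proof simply asserts the joint upper semicontinuity $\limsup_{w\to u,\,y\to x} v(w,y)\le v(u,x)$ and concludes by compactness of $G$ (via a subsequence/contradiction argument), whereas you supply the proof of that semicontinuity explicitly through the straightening coordinates and continuity of the distinguished tangential derivative $\partial_y^{\alpha}H$. The only detail to tidy is to shrink each chart $W_i$ so that $\vert\partial_y^{\alpha_i}H_u\vert$ is bounded away from zero on $\overline{W_i}$ (still covering $G$ by compactness), so that the nonvanishing genuinely survives a small $C^{|\alpha_i|+2}$ perturbation on all of $W_i$.
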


\begin{proof}
The function $v(u,x)$ is upper semicontinuous, i.e., 
\[
\limsup_{w \to ,u\, y\to x} v(w,y) \le v(u,x). 
\]
If there were points $x_n\in G$ with $v(u,x_n)\to \infty$, then there would be a convergent subsequence $x_{n_k}\to x\in G$. But then the semi-continuity implies $v(u,x) = \infty$, contrary to assumption. Likewise, if there is a sequence $u_n\to u$ such that there are points $x_n\in G$ with $v(u_n,x_n)\ge k+1$, then we can find a convergent subsequence $x_{n_k}\to x \in G$. The semi-continuity implies that $v(u,x) \ge k+1$, contrary to assumption. 
\end{proof}

\begin{lem}
\label{usc2}
Assume that $u\f M\to \R$ is a smooth Morse function. Let $C$ be the set of all critical points of $u$. Let $G\subset M\setminus C$ be a closed set such that for each $x\in G$ we have $v(u,x)<\infty$. Then there is an $\eps > 0$ such that $v(u,x)< \infty$ for every $x\in M$ with $\dist(x,G)\le \eps$. 
\end{lem}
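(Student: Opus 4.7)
The plan is to leverage the upper semicontinuity of $v(u,\cdot)$ --- the same property underlying Lemma \ref{usc1} --- to show that the ``bad set'' of points with infinite vanishing order cannot cluster on $G$. Define
\[ A = \{x \in M\setminus C : v(u,x) = \infty\}. \]
First I would show $\overline{A}^{M} \cap G = \emptyset$, where the closure is taken in $M$. Indeed, if $x_n \in A$ converges to some $x \in G$, then since $G \subset M\setminus C$ and $C$ is closed, every sufficiently large $x_n$ lies in $M\setminus C$; upper semicontinuity of $v(u,\cdot)$ at $x$ then forces $v(u,x) = \infty$, contradicting the hypothesis that $v(u,y) < \infty$ for every $y \in G$.

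Next I would use compactness. Because $M$ is a closed manifold and $G$ is closed in $M$, the set $G$ is compact. Both $C$ and $\overline{A}^{M}$ are closed subsets of $M$ disjoint from $G$, so
\[ \eps_1 := \dist(G,C) > 0 \quad \text{and} \quad \eps_2 := \dist(G, \overline{A}^{M}) > 0. \]
Taking $\eps := \tfrac{1}{2}\min\{\eps_1,\eps_2\}$, any $x \in M$ with $\dist(x,G) \le \eps$ lies in $M\setminus C$, so $v(u,x)$ is defined, and also lies outside $A$, so $v(u,x) < \infty$. This is exactly the conclusion of the lemma.

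The only substantive input is the upper semicontinuity of $v$, which is the central fact used in the proof of Lemma \ref{usc1}, so nothing further is needed here. One mild subtlety worth flagging is that $A$ is, a priori, only closed inside $M\setminus C$: a sequence in $A$ could in principle limit to a critical point of $u$, so $\overline{A}^{M}$ might meet $C$. This poses no difficulty, since the role of $\eps_1$ is precisely to keep the $\eps$-neighborhood of $G$ away from $C$ (and thereby from any troublesome accumulation of $A$ at critical points). I do not anticipate any serious obstacle.
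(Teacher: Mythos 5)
Your proof is correct and rests on the same key fact as the paper's: upper semicontinuity of $v(u,\cdot)$ prevents points of infinite vanishing order from accumulating on the compact set $G$ (the paper phrases this as a direct two-line contradiction with a convergent sequence $x_n \to x \in G$, while you package it via the closed bad set $A$ and positive distances). Your extra step separating the $\eps$-neighborhood from the critical set $C$ is a harmless refinement of the same argument, so there is nothing to fix.
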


\begin{proof}
Again this follows from the upper semicontinuity of the vanishing order. Suppose to the contrary that there is no such $\eps$. Then there is a sequence of points $x_n \to x\in G$ with $v(u,x_n)\to \infty$. But this implies that $v(u,x) = \infty$. 
\end{proof}

Now we can prove the main result of this appendix. 

\begin{prop}
Let $h\colon M\to \R$ be a smooth Morse function. Then there exist Morse functions satisfying property $\operatorname{(T)}$ which are arbitrarily close to $h$ in $C^\infty(M)$. 
\end{prop}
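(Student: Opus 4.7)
The plan is to show that the set $\mathcal{T}\subset C^\infty(M)$ of smooth Morse functions satisfying property $\operatorname{(T)}$ is a dense $G_\delta$ in the Fr\'echet space $C^\infty(M)$, hence dense, via a Baire category argument. For each $n,k\in \N$, I would define
\[
\mathcal{A}_{n,k} = \left\{h\in C^\infty(M) : h \text{ is Morse and } v(h,x)\le k \text{ for all } x \text{ with } \dist(x, C(h))\ge \tfrac{1}{n}\right\}.
\]
The set of Morse functions is open, critical points vary continuously under small $C^\infty$-perturbations, and by Lemma~\ref{usc1} the vanishing order is upper semicontinuous in both $h$ and $x$. Together these imply each $\mathcal{A}_{n,k}$ is open in $C^\infty(M)$, so $\mathcal{A}_n := \bigcup_k \mathcal{A}_{n,k}$ is open, and the set of Morse functions with property $\operatorname{(T)}$ is $\bigcap_n \mathcal{A}_n$ intersected with the open set of Morse functions. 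It therefore suffices to show each $\mathcal{A}_n$ is dense.

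To reduce to a local problem, I would establish the following local density: given a Morse function $h$, a small open ball $B \subset M$ contained in the regular set of $h$, and $\eta > 0$, there exists $\phi \in C^\infty(M)$ with $\supp \phi \subset 2B$ and $\|\phi\|_{C^\infty} < \eta$ such that $v(h+\phi, y) < \infty$ for every $y \in \overline{B}$. Granting this, density of $\mathcal{A}_n$ follows by covering the compact set $\{x : \dist(x, C(h)) \ge \tfrac{1}{n}\}$ by finitely many small balls $B_1, \hdots, B_N$ and applying successive local perturbations $\phi_j$ supported near $B_j$. Lemmas~\ref{usc1} and \ref{usc2}, combined with the openness of $\mathcal{A}_{n,k}$, ensure each $\phi_j$ can be chosen small enough not to destroy the finite vanishing order already arranged on $\overline{B_1}, \hdots, \overline{B_{j-1}}$.

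The heart of the argument, and the main obstacle, is the local perturbation step. I would work in coordinates on $2B$ that rectify $h$ to a linear coordinate, so that level sets of $h$ become affine hyperplanes and the mean curvature $H_h(y)$ of the level set through $y$ is a smooth function of $y$. A direct calculation gives that, to first order in $\phi$, the perturbed mean curvature satisfies
\[
H_{h+\phi}(y) - H_h(y) \;=\; \frac{\Delta^{\Sigma_y} \phi(y)}{|\nabla h|(y)} + O(\phi^2),
\]
where $\Sigma_y$ denotes the level set of $h$ through $y$ and $\Delta^{\Sigma_y}$ is its intrinsic Laplacian; in particular $\phi \mapsto H_{h+\phi} - H_h$ is (to leading order) an elliptic tangential operator in $\phi$. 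Taking $\phi = \eta \chi \cdot P$ where $\chi$ is a bump function supported in $2B$ and $P$ is a polynomial in the chart coordinates of sufficiently high degree, the new mean curvature equals $H_h + \eta \Delta^{\Sigma}(\chi P)/|\nabla h| + O(\eta^2)$. A Sard-Smale jet transversality argument applied to the finite-dimensional family of polynomials of bounded degree shows that for a generic such $P$, the function $\Delta^\Sigma(\chi P)/|\nabla h|$ has finite vanishing order at every point of the compact set $\overline{B}$ along the (perturbed) level sets; for $\eta$ sufficiently small the $O(\eta^2)$ terms cannot restore infinite vanishing order, yielding the local density.

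Combining the openness of each $\mathcal{A}_{n,k}$, the local density above, and Baire category in $C^\infty(M)$ shows that $\mathcal{T}$ is dense in $C^\infty(M)$, completing the proof.
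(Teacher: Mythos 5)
Your global framework (writing the Morse functions with property (T) as a countable intersection of sets $\mathcal A_{n,k}$ that are open by Lemma \ref{usc1}, and reducing density to a local perturbation statement on balls away from the critical set) is sound and is in fact a cleaner packaging than the paper's explicit infinite iteration; since each $\mathcal A_n$ only requires a uniform bound on a fixed compact set, a finite covering argument suffices there and no limiting function has to be constructed. The problem is that the heart of your proof, the local perturbation step, is not established, and the reasoning you give for it would fail. First, the reduction is logically flawed: you arrange (generically in $P$) that the \emph{linearized} change $L(\chi P)$ has finite vanishing order at every point of $\overline B$, and then claim that the $O(\eta^2)$ remainder cannot "restore infinite vanishing order" for small $\eta$. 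But the perturbed mean curvature is $H_{h+\phi}=H_h+\eta L(\chi P)+O(\eta^2)$, and the unperturbed term $H_h$ is not small: at a point where $H_h$ has finite but nonzero jets, the $k$-jet of $H_h$ can exactly cancel $\eta\, j^k_x L(\chi P)$ (and likewise all higher jets), so finiteness of the vanishing order of $L(\chi P)$ says nothing about the vanishing order of the sum. Smallness of $\eta$ does not help — the obstruction is exact cancellation at individual points, not size — and for a fixed $P$ the set of "bad" $\eta$, taken over the uncountably many points of $\overline B$, is not controlled. Moreover the vanishing order is computed along the \emph{perturbed} level sets $\Gamma_{h+\phi}(x)$, so the comparison of jets is not even between functions on the same submanifold; your linearization also omits the zeroth-order (Jacobi-type) terms $(|A|^2+\ric(\nu,\nu))$, the derivatives of $|\nabla h|^{-1}$, and the transport term coming from the normal displacement of the level set through $x$.

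What a correct argument along your lines would require is a genuine jet-transversality statement for the full nonlinear family $P\mapsto h+\eta\chi P$ at fixed small $\eta$: define the evaluation map sending $(P,x)$ to the tangential $k$-jet at $x$ of the mean curvature of the level set of $h+\eta\chi P$ through $x$, prove that its derivative in the $P$-directions is surjective onto the space of tangential $k$-jets (this is where a corrected linearization and the surjectivity of $q\mapsto j^k_x(\lap^\Sigma q+\text{l.o.t.})$ on polynomial jets must be verified, together with the terms from the moving level set), and then run a Sard/Thom codimension count with $k$ large enough that the zero section has codimension exceeding $\dim M$. None of this is carried out in your proposal, and it is precisely this analytic content that the paper avoids by a completely different device: both of its constructions run mean curvature flow for a short time on the level sets and invoke \cite[Proposition 3.9]{zhou2020existence} to guarantee finite vanishing order of the evolved mean curvature, then patch the two constructions (near critical level sets and away from them) through a Cauchy iteration controlled by Lemmas \ref{usc1} and \ref{usc2}. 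So either the transversality argument must be developed in full, or the local step should be replaced by the flow-based regularization; as written, the proposal has a genuine gap at its central step.
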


\begin{proof}
The topology on $C^\infty(M)$ is complete and metrizable. Fix a complete metric $\rho$ on $C^\infty(M)$ inducing the topology of $C^\infty(M)$.  Consider the following two constructions. 

{\bf Construction 1:} Let $u\f M\to \R$ be a Morse function. Let $B=\{b_1,\hdots,b_k\}$ be the set of all critical values of $u$ and let $C$ denote the set of all critical points of $u$. Fix a small radius $r > 0$. Let $\Sigma_i$ be a smooth, closed, embedded surface which coincides with $\{u=b_i\}$ outside $N_r(C) = \{x\in M: \dist(x,C)< r\}$. Choose a short time $s$, and let $\Sigma_{i,t}$, $t\in[0,s]$ be the surface obtained by running mean curvature flow for time $t$ starting from $\Sigma_i$. There exists a smooth, time-dependent vector field $X_t$ on $M$ which coincides with the mean curvature vector of $\Sigma_{i,t}$ on $\Sigma_{i,t}$ at time $t$. We can suppose that $X_t$ is supported in $\{x\in M: \dist(u(x),B) < r\}$ for all $t$. Let $\zeta$ be a smooth function on $M$ which is identically 1 outside $N_{2r}(C)$ and identically 0 inside $N_r(C)$.  Let $\phi_t$ be the flow of $\zeta X_t$ and define 
\[
u_t = u \circ \phi_t\inv.
\]
If $t$ is small enough, the following properties hold:
\begin{itemize}
\item[(i)] $u_t$ is Morse and has exactly the same critical points and critical values as $u$,
\item[(ii)] if $x\in(u_t)\inv(B) \setminus N_{2r}(C)$ then  $v(u_t,x) < \infty$ (c.f. \cite[Proposition 3.9]{zhou2020existence}),
\item[(iii)] if $\dist(b,B)\ge r$ then $\{u_t = b\} = \{u = b\}$. 
\end{itemize}
Moreover, $u_t\to u$ in $C^\infty(M)$ as $t\to 0$. We select a small time $\tau$ and then set $\tilde u = u_\tau$. $\blacksquare$

{\bf Construction 2:} Let $u\f M\to \R$ be a Morse function. Let $B = \{b_1,\hdots,b_k\}$ be the set of critical values of $u$.  Let $\mathcal N_\rho(B) = \{b\in \R: \dist(b,B) < \rho\}$.  Fix a small number $r > 0$. By the argument in \cite[Proposition 3.9]{zhou2020existence}, there exists a small positive number $s$ and smooth family of smooth functions $w_t\colon M\to \R$, $t\in [0,s]$ such that $\Sigma_{b,t} = \{w_t = b\}$ is the surface obtained from running mean curvature flow for time $t$ starting from $\{u=b\}$ for all $b\in \R \setminus \mathcal N_{r/2}(B)$ and all $t\in [0,s]$. Choose a smooth cut-off function $\zeta$ which is identically 1 outside $u\inv(\mathcal N_{r}(B))$ and identically 0 inside $u\inv(\mathcal N_{r/2}(B))$. Then set 
\[
u_t = \zeta w_t + (1-\zeta)u.
\]
If $t$ is small enough then the following properties hold:
\begin{itemize}
\item[(i)] $u_t$ is Morse and has exactly the same critical points and critical values as $u$,
\item[(ii)] if $x\in M\setminus (u_t)\inv(\mathcal N_{2r}(B))$ then $v(u_t,x) < \infty$ (c.f. \cite[Proposition 3.9]{zhou2020existence}),
\item[(iii)] $(u_t)\inv(B) = u\inv(B)$.
\end{itemize}
Moreover, $u_t\to u$ in $C^\infty(M)$ as $t\to 0$. We select a small time $\tau$ and then set $\tilde u = u_\tau$. $\blacksquare$

Now choose a smooth Morse function $h\f M\to \R$. Let $C$ be the set of critical points of $h$ and let $B= \{b_1,\hdots,b_k\}$ be the set of critical values. Fix some $\eps_0 > 0$. To begin, apply Construction 1 with $u= h$ and $r = r_0 = 1$ and $\tau$ small enough to ensure that 
\[
\rho(u,\tilde u) < \frac{\eps_0}{2}. 
\]
Then define $h_1 =\tilde u$ and let $G_1$ be a closed set containing a neighborhood of the set $(h_1)\inv(B) \setminus N_2(C)$ with the property that $v(h_1,x)<\infty$ for all $x\in G_1$. Note that such a $G_1$ exists by Lemma \ref{usc2}. We further select $\eps_1 > 0$ small enough that if $\rho(w,h_1) < \eps_1$ then 
\begin{itemize}
\item[(i)] $v(w,x) < \infty$ for all $x\in G_1$,
\item[(ii)] $w\inv(B) \setminus N_{2}(C) \subset G_1$.
\end{itemize}
This is possible by Lemma \ref{usc1}. 

Next we apply Construction 2 with $u = h_1$ and $r= r_1 = 2^{-1}$ and $\tau$ small enough to ensure that 
\[
\rho(u,\tilde u) < \min\left\{\frac{\eps_1}{2},\frac{\eps_0}{4}\right\}.
\]
Then we set $h_2 = \tilde u$. We let $G_2$ be a closed set containing $G_1$ and a neighborhood of $M\setminus (h_2)\inv(\mathcal N_{1}(B))$ with the property that $v(h_2,x)<\infty$ for all $x\in G_2$. Such a set $G_2$ exists by Lemma \ref{usc2}. Note that $G_2$ also contains a neighborhood of $(h_2)\inv(B)\setminus N_2(C) = (h_1)\inv(B)\setminus N_2(C)$. We select $\eps_2 > 0$ small enough that if $\rho(w,h_2)<\eps_2$ then 
\begin{itemize}
\item[(i)] $v(w,x) < \infty$ for all $x\in G_2$,
\item[(ii)] $w\inv(B) \setminus N_{2}(C) \subset G_2$.
\item[(iii)] $M \setminus w\inv(\mathcal N_{1}(B)) \subset G_2$.
\end{itemize}
This is possible by Lemma \ref{usc1}. 

Now, for $n\ge 2$, suppose inductively that $h_n \colon M\to \R$ is a smooth Morse function with the same critical points and critical values as $h$. Let $r_n = 2^{-n}$. Assume that $G_n$ is a closed subset of $M\setminus C$ such that $v(h_n,x) < \infty$ for all $x\in G_n$. Suppose that $G_n$ contains a neighborhood of $(h_n)\inv(B) \setminus N_{4 r_n}(C)$ when $n$ is odd, and that $G_n$ contains a neighborhood of $(h_n)\inv(B)\setminus N_{8r_n}(C)$ when $n$ is even.  Further suppose that $G_n$ contains a neighborhood of $M\setminus (h_n)\inv(\mathcal N_{4r_n}(B))$ when $n$ is even, and that $G_n$ contains a neighborhood of $M\setminus (h_n)\inv(\mathcal N_{8r_n}(B))$ when $n$ is odd. Choose $\eps_n$ small enough that if $\rho(w,h_n) < \eps_n$ then 
\begin{itemize}
\item[(i)] $v(w,x)<\infty$ for all $x\in G_n$,
\item[(ii)]  $w\inv(B) \setminus N_{8r_n}(C) \subset G_n$. 
\item[(iii)] $M\setminus w\inv(\mathcal N_{8r_n}(B)) \subset G_n$. 
\end{itemize} 
This is possible by Lemma \ref{usc1}. 

Now if $n$ is even, apply Construction $1$ with $u = h_n$ and $r = r_n$ and $\tau$ small enough to ensure that 
\[
\rho(u,\tilde u) < \min\left\{\frac{\eps_n}{2}, \frac{\eps_{n-1}}{2^2}, \hdots, \frac{\eps_0}{2^{n+1}}\right\}.
\]
Then set $h_{n+1} = \tilde u$. Let $G_{n+1}$ be a closed set containing $G_n$ together with a neighborhood of $(h_{n+1})\inv(B) \setminus N_{4r_{n+1}}(C)$ with the property that $v(h_{n+1},x)<\infty$ for all $x\in G_{n+1}$. Again such a choice is possible by Lemma \ref{usc2}. Further note that $G_{n+1}$ contains a neighborhood of $M\setminus (h_{n+1})\inv(\mathcal N_{8r_{n+1}}(B))$ since $M\setminus (h_{n+1})\inv(\mathcal N_{8r_{n+1}}(B)) = M\setminus (h_n)\inv (\mathcal N_{4r_n}(B))$ and $G_n$ contains a neighborhood of $M\setminus (h_n)\inv (\mathcal N_{4r_n}(B))$.

If $n$ is odd, apply Construction 2 with $u = h_n$ and $r = r_n$ and $\tau$ small enough to ensure that 
\[
\rho(u,\tilde u) < \min\left\{\frac{\eps_n}{2}, \frac{\eps_{n-1}}{2^2}, \hdots, \frac{\eps_0}{2^{n+1}}\right\}.
\]
Then set $h_{n+1} = \tilde u$. Let $G_{n+1}$ be a closed set containing $G_n$ and a neighborhood of $M\setminus h_{n+1}\inv(\mathcal N_{4 r_{n+1}})$ with the property that $v(h_{n+1},x)<\infty$ for all $x\in G_{n+1}$. Again such a choice is possible by Lemma \ref{usc2}.  Note that $G_{n+1}$ contains a neighborhood of $(h_{n+1})\inv(B) \setminus N_{8r_{n+1}}(C)$ since $(h_{n+1})\inv(B) \setminus N_{8r_{n+1}}(C) = (h_n)\inv(B) \setminus N_{4r_n}(C)$ and $G_n$ contains a neighborhood of $(h_n)\inv (B) \setminus N_{4r_n}(C)$. 

Now observe that $(h_n)$ is a Cauchy sequence with respect to $\rho$ and so it converges to some smooth function $h^*$ with $\rho(h,h^*) < \eps_0$. If $\eps_0$ is chosen sufficiently small, then $h^*$ is also Morse. Since each critical point $y$ of $h$ is also a critical point for $h^*$ with $h(y) = h^*(y)$, it follows that $h^*$ has the same critical points and critical values as $h$. Moreover, we know that $v(h^*,x) < \infty$ for all $x\in G = \cup_{n=1}^\infty G_n$. To complete the proof, it remains to show that $G = M\setminus C$. 

First, we claim that $G$ contains $(h^*)\inv(B) \setminus C$. It suffices to show that $G$ contains $(h^*)\inv(B) \setminus N_{8r_n}(C)$ for all $n$. To see this, observe that by construction one has 
\[
\rho(h^*,h_n) < \sum_{j=1}^\infty \frac{\eps_n}{2^j} = \eps_n. 
\]
By the choice of $\eps_n$, this implies that $(h^*)\inv (B) \setminus N_{8r_n}(C) \subset G_n \subset G$. This proves the claim. 

Second, we claim that $G$ contains $M\setminus (h^*)\inv(B)$.  It suffices to show that $G$ contains $M\setminus (h^*)\inv(\mathcal N_{8r_n}(B))$ for all $n$. To see this, again observe that 
\[
\rho(h^*,h_n) < \sum_{j=1}^\infty \frac{\eps_n}{2^j} = \eps_n. 
\]
By the choice of $\eps_n$, this implies that $M\setminus (h^*)\inv(\mathcal N_{8r_n}(B)) \subset G_n \subset G$. This completes the proof. 
\end{proof}

\section{Generic Metrics}
\label{generic metrics}

In this appendix, we prove three theorems about generic metrics. The first result says that, generically, half-volume minimal hypersurfaces do not exist.  The second result says that, generically, half-volume CMCs are isolated. Finally, the third result says that, generically, every almost-embedded half volume CMC is actually embedded.

\begin{prop}
\label{half-volume-minimal}
Let $M$ be a closed manifold. Then, for a generic smooth metric $g$ on $M$, there is no  smooth, closed, (not necessarily connected) minimal hypersurface which encloses half the volume of $M$. 
\end{prop}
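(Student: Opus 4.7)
The plan is to stratify by area bound and combine White's bumpy metric theorem with a direct volume-perturbation argument. Let $\mathcal G$ denote the space of smooth Riemannian metrics on $M$ with its $C^\infty$ topology, and for each $A > 0$ set
\[
\mathcal G_A = \{ g \in \mathcal G : \text{some smooth closed minimal hypersurface in } (M,g) \text{ of area} \le A \text{ encloses volume } \tfrac{1}{2}\vol(M,g)\}.
\]
It suffices to show each $\mathcal G_A$ is closed and nowhere dense, so $\bigcup_{n\in\N} \mathcal G_n$ is meagre and its complement is the desired generic set.

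\emph{Closedness:} Suppose $g_k \to g$ and $\Sigma_k = \bd \Omega_k$ is a half-volume minimal hypersurface in $g_k$ with $\area_{g_k}(\Sigma_k) \le A$. After extracting subsequences, $\Omega_k \to \Omega$ as Caccioppoli sets with $\vol_g(\Omega) = \tfrac{1}{2}\vol(M,g)$ and $|\Sigma_k| \to V$ as varifolds, where $V$ is a stationary integral varifold in $(M,g)$ of mass $\le A$ with smooth support in the admissible dimensions. The constancy theorem forces $\bd \Omega$ to be a union of multiplicity-one components of $\supp V$, yielding a smooth closed minimal half-volume hypersurface in $(M,g)$ of area at most $A$.

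\emph{Nowhere density:} Fix an open $U \subset \mathcal G$. By White's bumpy metric theorem, choose a bumpy $g_0 \in U$. Then only finitely many smooth closed minimal hypersurfaces $\Sigma_1,\ldots,\Sigma_N$ in $(M,g_0)$ have area $\le A+1$, and each is non-degenerate. For each $i$, enumerate the finitely many Caccioppoli sets $\Omega_{i,1},\ldots,\Omega_{i,J_i}$ bounded by $\Sigma_i$. Pick a non-empty open set $W \subset M$ disjoint from $\Sigma_1 \cup \cdots \cup \Sigma_N$ and, for a smooth $\phi \ge 0$ compactly supported in $W$, consider $g_t = (1+t\phi) g_0$ for small $t \ge 0$. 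Since $\phi \equiv 0$ near each $\Sigma_i$, each $\Sigma_i$ remains minimal in $g_t$. A direct calculation yields
\[
\frac{d}{dt}\bigg|_{t=0}\!\left[\vol_{g_t}(\Omega_{i,j}) - \tfrac{1}{2}\vol(M, g_t)\right] = \tfrac{n+1}{2}\!\left[\int_{\Omega_{i,j}} \phi\, d\vol_{g_0} - \tfrac{1}{2}\int_M \phi\, d\vol_{g_0}\right].
\]
For each $(i,j)$ with $\vol_{g_0}(\Omega_{i,j}) = \tfrac{1}{2}\vol(M, g_0)$, vanishing of this derivative is a codimension-one linear condition on $\phi$; avoiding the finitely many such conditions is a generic property of $\phi$. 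For such $\phi$ and all sufficiently small $t > 0$, $\vol_{g_t}(\Omega_{i,j}) \ne \tfrac{1}{2}\vol(M, g_t)$ for every $(i,j)$.

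It remains to rule out new half-volume minimal hypersurfaces of area $\le A$ arising in $g_t$. If some sequence $t_k \to 0^+$ admitted $\Gamma_k = \bd \Theta_k$ with half volume and area $\le A$ in $g_{t_k}$, then by the closedness step a subsequence would converge to a half-volume minimal hypersurface of area $\le A+1$ in $g_0$, which by bumpiness equals some $\Sigma_{i_0}$. Allard regularity decomposes $\Gamma_k$ into graphical sheets over $\Sigma_{i_0}$, and non-degeneracy together with the implicit function theorem forces each sheet to coincide with $\Sigma_{i_0}$ itself, so $\Gamma_k = \Sigma_{i_0}$ for large $k$, contradicting the choice of $\phi$. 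Therefore $g_t \notin \mathcal G_A$ for arbitrarily small $t > 0$, establishing nowhere density. The main obstacle lies in this last step, where the rigidity needed to rule out \emph{de novo} half-volume minimal hypersurfaces comes from the interplay of bumpiness, Sharp-type compactness, and Allard regularity.
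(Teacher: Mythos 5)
Your stratification by area alone creates genuine gaps at three points, all traceable to the missing index bound. First, the ``closedness'' of $\mathcal G_A$: a varifold limit of smooth embedded minimal hypersurfaces with only a mass bound is merely a stationary integral varifold, and there is no regularity theory giving ``smooth support'' of such a limit without curvature, stability, or index control (this is essentially the open regularity problem for stationary varifolds), so the step producing a smooth half-volume minimal hypersurface in the limit metric is unjustified and $\mathcal G_A$ need not be closed. Second, in the nowhere-density step, bumpiness of $g_0$ does \emph{not} yield finiteness of the minimal hypersurfaces of area $\le A+1$: the finiteness statement (via Sharp's compactness plus non-degeneracy) requires a bound on area \emph{and} Morse index simultaneously; with unbounded index the set can a priori be infinite and non-compact. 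Third, the final rigidity step invokes Allard regularity to write the new hypersurfaces $\Gamma_k$ as graphs over $\Sigma_{i_0}$, but without an index (hence curvature) bound the varifold convergence can occur with higher multiplicity and unbounded second fundamental form, in which case no graphical decomposition and no implicit-function-theorem uniqueness is available.

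The paper's proof repairs exactly this by stratifying by \emph{both} area and index: it defines $\mathcal G_{C,I}$ as the metrics for which the set $\mathcal M_{C,I}(g)$ of embedded minimal hypersurfaces with area $\le C$ and index $\le I$ is finite, consists of non-degenerate surfaces, and contains no half-volume one; openness and density of each $\mathcal G_{C,I}$ are then proved using Sharp's compactness theorem (which needs both bounds) together with non-degenerate persistence, and density is obtained from a bumpy perturbation followed by conformal factors $e^{2\phi}$ with $\phi \ge 0$ vanishing on $\overline{\Omega_i}$, which leaves each $\Sigma_i$ minimal and its enclosed volume unchanged while strictly increasing $\vol(M)$. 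The generic set is the countable intersection over $C,I \in \N$, which suffices because any individual smooth closed minimal hypersurface has finite area and finite index. Your volume-perturbation computation is a reasonable alternative to the paper's one-sided conformal trick, but to make the overall scheme work you would need to reintroduce the index stratification (or some substitute giving compactness with regularity), at which point your argument essentially becomes the paper's.
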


\begin{proof}
Given $C > 0$ and $I\in \N$ and a smooth metric $g$ on $M$, let $\mathcal M_{C,I}(g)$ be the set of all smooth, closed, embedded (not necessarily connected) minimal hypersurfaces in $(M,g)$ with area at most $C$ and index at most $I$.  Let $\mathcal G_{C,I}$ be the set of all smooth metrics $g$ on $M$ such that the set $\mathcal M_{C,I}(g) = \{\Sigma_1,\hdots,\Sigma_n\}$ is finite, each surface $\Sigma_i$ is non-degenerate, and none of the surfaces $\Sigma_i$ encloses half the volume of $M$.  

Let $\mathcal G$ be the set of all smooth metrics on $M$. It suffices to show that for each fixed $C>0$ and $I\in \N$, the set $\mathcal G_{C,I}$ is open and dense in $\mathcal G$. First, we claim that $\mathcal G_{C,I}$ is open. Indeed, choose some $g\in \mathcal G_{C,I}$ and write $\mathcal M_{C,I} = \{\Sigma_1,\hdots,\Sigma_n\}$. Then, since each $\Sigma_i$ is non-degenerate, there is a neighborhood $U_1$ of $g$ in $\mathcal G$ such that for every $\tilde g\in \mathcal G$ and every $i=1,\hdots,N$ there is a unique small perturbation of $\Sigma_i$ to a hypersurface $\Sigma_i(\tilde g)$ which is minimal and non-degenerate in $(M,\tilde g)$. Then, by Sharp's compactness theorem \cite{sharp2017compactness}, it follows that for $\tilde g$ in a possibly smaller neighborhood $U_2$ of $g$ we have 
\[
\mathcal M_{C,I}(\tilde g) \subset \{\Sigma_1(\tilde g),\hdots,\Sigma_n(\tilde g)\}. 
\]
Finally, since none of the hypersurfaces $\Sigma_i$ enclose half the volume of $(M,g)$, it follows that for $\tilde g$ in a potentially smaller neighborhood $U_3$ of $g$, none of the hypersurfaces $\Sigma_i(\tilde g)$ enclose half the volume of $(M,\tilde g)$. This proves that $\mathcal G_{C,I}$ is open. 

It remains to show that $\mathcal G_{C,I}$ is dense in $\mathcal G$. Choose an arbitrary metric $g\in \mathcal G$. Then there exists a small perturbation $g_1$ of $g$ which is bumpy. By Sharp's compactness theorem, it follows that the set $\mathcal M_{C,I}(g_1) = \{\Sigma_1,\hdots,\Sigma_n\}$ is finite and consists entirely of non-degenerate minimal hypersurfaces.  Again there is a small neighborhood $U_1$ of $g_1$ in $\mathcal G$ such that for every $\tilde g\in U$ there is a small perturbation of $\Sigma_i$ to a hypersurface $\Sigma_i(\tilde g)$ which is minimal and non-degenerate in $(M,\tilde g)$. As above, by Sharp's compactness theorem, for $\tilde g$ in a possibly smaller neighborhood $U_2$ of $g_1$ we have 
\[
\mathcal M_{C,I}(\tilde g)\subset \{\Sigma_1(\tilde g),\hdots,\Sigma_n(\tilde g)\}. 
\]
We now inductively perturb $g_1$ to ensure that no surfaces in this collection encloses half the volume. 

First consider $\Sigma_1(g_1)$. If $\Sigma_1(g_1)$ does not enclose half the volume of $(M,g_1)$ then we set $g^1 = g_1$. Suppose instead that $\Sigma_1 = \bd \Omega_1$ does enclose half the volume of $M$.  Then we set $g^1 = e^{2\phi} g_1$ where $\phi$ is non-negative, and vanishes on the closure of $\Omega_1$, and is positive at some point in $M\setminus \Omega_1$. By selecting $\phi$ close enough to 0, we can ensure that $g^1\in U_2$.  
 Then $\Sigma_1(g^1) = \Sigma_1$ as sets but $\Sigma_1(g^1)$ no longer encloses half the volume of $(M,g^1)$.  
 
 Next consider $\Sigma_2(g^1)$. If $\Sigma_2(g^1)$ does not enclose half the volume of $(M,g^1)$ then set $g^2 = g^1$. Suppose instead that $\Sigma_2(g^1) = \bd \Omega_2$ does enclose half the volume of $(M,g^1)$. Then we set $g^2 = e^{2\phi}g^1$ where $\phi$ is non-negative, and vanishes on the closure of $\Omega_2$ together with the support of $\Sigma_1(g^1)$, and is positive at some point in $M\setminus \Omega_2$. By selecting $\phi$ close enough to $0$, we can ensure that $g^2\in U_2$ and that $\Sigma_1(g^2)$ does not enclose half the volume of $(M,g^2)$. Note that $\Sigma_2(g^2) = \Sigma_2(g^1)$ as sets but $\Sigma_2(g^2)$ no longer encloses half the volume of $(M,g^2)$. 
 
Continuing this process inductively, one arrives at a metric $g^n\in U_2$ with the property that none of the hypersurfaces $\Sigma_1(g^n),\hdots,\Sigma_n(g^n)$ enclose half the volume of $(M,g^n)$. Thus $g^n \in \mathcal G_{C,I}$ and so $g^n$ is the required perturbation of $g$. 
 \end{proof}
 
Let $g$ be a smooth Riemannian metric on $M$.  Assume that $u\colon \Sigma \to M$ is a two-sided, null-homologous embedding with a preferred choice of normal vector $\nu$ with respect to $g$. Then there is a unique region $\Omega$ whose boundary is $u(\Sigma)$ and such that $\nu$ points into $\Omega$. We define the volume enclosed by $u$ to be the volume of $\Omega$. Note that the enclosed volume depends only on the image $u(\Sigma)$ and the preferred choice of normal vector, and not the particular immersion $u$.

Now suppose that $u\colon \Sigma \to M$ is a two-sided, null-homologous CMC almost-embedding with non-zero mean curvature and a preferred choice of normal vector $\nu$ with respect to  $g$.  Consider another immersion $w\colon \Sigma \to M$ such that 
\[
w(x) = \exp^g_{u(x)}(f(x)\nu)
\]
for some $f\in C^{j,\alpha}(\Sigma)$ with small norm. Regard $w(\Sigma)$ as an integer multiplicity current (oriented by the preferred choice of normal vector). 
Then exactly one of the following is true (depending on whether the normal vector and the mean curvature vector point in the same direction or opposite directions): 
\begin{itemize}
\item[(i)] There exists a unique $\Theta = \theta(x) \rest \mathcal H^{n+1}$ with $\theta(x) \in \{0,1,2\}$ such that $\bd \Theta = w(\Sigma)$.
\item[(ii)] There exists a unique $\Theta = \theta(x) \rest H^{n+1}$ with $\theta(x) \in \{-1,0,1\}$ such that $\bd \Theta = w(\Sigma)$.
\end{itemize}
We define the volume enclosed by $w$ to be 
\[
\int_M \theta(x)\, d\mathcal H^{n+1}(x). 
\]
Observe that the enclosed volume of $w$ actually depends only on $w(\Sigma)$ and the preferred choice of normal vector, and not on the particular immersion $w$. 

Let $M^{n+1}$ and $\Sigma^n$ be smooth, closed manifolds. Fix $q,j\in \N$ and $\alpha\in(0,1)$ with $q \ge j+3$. Let $\mathcal P$ be the set of all triples $(g,[u],H)$ where $g$ is a $C^q$ Riemannian metric on $M$, and $[u]$ is the equivalence class of a simple, two-sided, null-homologous, $C^{j,\alpha}$ immersion $u\colon \Sigma \to M$ with constant mean curvature $H$ and a preferred choice of normal vector. Let $\mathcal P' \subset \mathcal P$ be a neighborhood of the triples $(g,[u],H)$ where $u$ is an almost-embedding. We can suppose that $\mathcal P'$ is small enough that the enclosed volume function is defined on $\mathcal P'$. Then let $\mathcal M \subset \mathcal P'$ be the set of all triples $(g,[u],H)$ where the volume enclosed by $u$ is equal to $\frac{1}{2}\vol(M,g)$. Define $\Pi\colon \mathcal M\to \Gamma$ by $\Pi(g,[u],H) = g$. Let $\mathcal M_\infty$ be the subset of $\mathcal M$ consisting of triples $(g,[u],H)$ where $g$ and $u$ are smooth.

\begin{prop}
There is an open subset $\mathcal U\subset \mathcal M$ containing $\mathcal M_\infty$ such that $\mathcal U$ has the structure of a separable $C^{q-j}$ Banach manifold. Moreover, the differential of $\Pi$ restricted to $\mathcal U$ is Fredholm with index 0.
\end{prop}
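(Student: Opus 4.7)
The approach is a White-style application of the Banach-space implicit function theorem. Fix $(g_0,[u_0],H_0)\in\mathcal M_\infty$ and a smooth unit normal $\nu$ along $u_0$. Parametrize nearby almost-embeddings by normal graphs $u_f(x)=\exp^{g}_{u_0(x)}(f(x)\nu_g(x))$ for $f\in C^{j+2,\alpha}(\Sigma)$ small, where $\nu_g$ is the unit normal with respect to $g$ obtained by continuous extension of $\nu_{g_0}=\nu$. On a small $C^q$-neighborhood $\mathcal U_1\subset\Gamma$ of $g_0$, define
\[
\Phi(g,f,H)=\bigl(\,\mathcal H_{g}(u_f)-H,\ \vol_g(u_f)-\tfrac12\vol(M,g)\,\bigr)\in C^{j,\alpha}(\Sigma)\times\R,
\]
where $\mathcal H_g(u_f)$ denotes the mean curvature of $u_f$ in $g$ with respect to the extended normal. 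Since $q\ge j+3$, $\Phi$ is a $C^{q-j}$ map of Banach manifolds, and locally $\mathcal M=\Phi^{-1}(0)$ modulo tangential reparametrizations (which vanish to first order). I will apply the implicit function theorem to realise $\mathcal M$ as a Banach submanifold near each point of $\mathcal M_\infty$.

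\textbf{Linearisation and Fredholm index.} At a zero $(g_0,0,H_0)$, the partial derivative of $\Phi$ in the $(f,H)$ direction is, up to signs fixed by the orientation of $\nu$,
\[
T(\dot f,\dot H)=\bigl(\,L_\Sigma\dot f-\dot H,\ \textstyle-\int_\Sigma\dot f\,d\mu_\Sigma\,\bigr),
\]
where $L_\Sigma=\lap_\Sigma+|A|^2+\ric_{g_0}(\nu,\nu)$ is the Jacobi operator (which coincides for a CMC and its underlying minimal surface when restricted to normal variations). As a map $C^{j+2,\alpha}(\Sigma)\times\R\to C^{j,\alpha}(\Sigma)\times\R$, $T$ is a rank-one augmentation of the elliptic self-adjoint Fredholm operator $L_\Sigma$; a direct adjoint computation shows that both $\ker T$ and $\operatorname{coker}T$ are isomorphic to $\{(\varphi,c):L_\Sigma\varphi=c,\ \int_\Sigma\varphi\,d\mu_\Sigma=0\}$, so $T$ is Fredholm of index~$0$.

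\textbf{Surjectivity via metric variations (the main obstacle).} The main step is to prove surjectivity of the full $D\Phi$ at each point of $\mathcal M_\infty$. By the cokernel description it suffices to realise every pair $(\varphi,c)$ with $L_\Sigma\varphi=c$ and $\int_\Sigma\varphi=0$ as the image of some metric variation $\dot g$. Elliptic regularity makes such $\varphi$ smooth, so the problem localises. Following White's classical construction for minimal submanifolds, a variation $\dot g$ compactly supported in a small tubular neighborhood of a regular (embedded) point $p$ of $u_0$ infinitesimally changes $\mathcal H_{g_0}(u_0)$ by a prescribed $C^{j,\alpha}$ function of compact support in the embedded part of $\Sigma$; since the almost-embedded hypothesis forces the touching set to have codimension at least one, such functions are dense in $C^{j,\alpha}(\Sigma)$ and hit any given $\varphi$. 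To realise the second coordinate independently, one takes a conformal variation $\dot g=\psi g_0$ with $\psi$ supported in $M\setminus\spt(u_0)$: this has no first-order effect on the induced metric or mean curvature on $\Sigma$, but shifts $\vol_{g_0}(u_0)-\tfrac12\vol(M,g_0)$ by an amount freely prescribed by the choice of $\psi$ and of which side of $u_0$ it is supported on. Combining the two classes of variations shows $D\Phi$ is surjective with complemented kernel.

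\textbf{Conclusion.} The Banach-space implicit function theorem then identifies $\Phi^{-1}(0)$ near $(g_0,0,H_0)$ with a $C^{q-j}$ Banach submanifold of $\mathcal U_1\times C^{j+2,\alpha}(\Sigma)\times\R$. Gluing these local pieces over a countable cover of $\mathcal M_\infty$ yields an open $\mathcal U\subset\mathcal M$ with the claimed manifold structure; separability is inherited from $\Gamma$ and $C^{j+2,\alpha}(\Sigma)\times\R$. Finally, for the projection $\Pi$ we have $\ker D\Pi|_{(g_0,[u_0],H_0)}=\ker T$, and a standard diagram chase using the surjectivity of $D\Phi$ identifies $\operatorname{coker}D\Pi$ with $\operatorname{coker}T$; these are finite-dimensional of equal dimension since $T$ has Fredholm index~$0$, so $D\Pi|_{\mathcal U}$ is Fredholm of index~$0$ as claimed.
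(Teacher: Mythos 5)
Your overall strategy is the same as the paper's: this is White's manifold structure argument applied to the augmented map $(g,f,H)\mapsto(\mathcal H_g(u_f)-H,\ \vol_g(u_f)-\tfrac12\vol(M,g))$, with the key observations that the $(f,H)$-linearization $(w,b)\mapsto(Jw-b,\ -\int_\Sigma w)$ is formally self-adjoint and Fredholm of index $0$, that the full differential is surjective with complemented kernel thanks to metric variations, and that the projection is then Fredholm of index $0$ by the standard diagram chase. The paper proves surjectivity by pairing cokernel elements against the explicit conformal-variation formulas $D_1\mathcal H = -2\varphi H - n\la\grad\varphi,\nu\ra$ and $D_1\mathcal V=(n+1)\int_\Omega\psi$, while you use White-style perturbations localized near embedded points plus conformal factors supported off the hypersurface; these are equivalent in spirit.

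However, one step is wrong as literally written: you claim that mean-curvature variations compactly supported in the embedded part of $\Sigma$ are ``dense in $C^{j,\alpha}(\Sigma)$ and hit any given $\varphi$.'' Functions vanishing near a nonempty touching set are not dense in the H\"older topology (they cannot approximate, even in $C^0$, a function that is nonzero at a touching point), and density would not let you realize a cokernel element exactly in any case. What the argument actually requires --- and what the paper verifies --- is only that no nonzero element of the \emph{finite-dimensional} cokernel $\{(\varphi,c): J\varphi=c,\ \int_\Sigma\varphi=0\}$ is $L^2$-orthogonal to the image of the metric-variation derivative; since cokernel elements are smooth and the touching set has empty interior, a nonzero $\varphi$ cannot vanish identically on the embedded part, so a localized perturbation prescribing the mean-curvature variation there, together with your off-surface conformal factor for the volume coordinate, gives a nonzero pairing. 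Phrased this way your argument closes the gap; as stated it does not. A second, more minor point: you parametrize by $f\in C^{j+2,\alpha}(\Sigma)$ with target $C^{j,\alpha}(\Sigma)\times\R$, but $\mathcal M$ consists of $C^{j,\alpha}$ immersions, so to obtain an \emph{open} subset of $\mathcal M$ (and the stated $C^{q-j}$ regularity of the charts) you should take graphs in $C^{j,\alpha}$ with target $C^{j-2,\alpha}(\Sigma)\times\R$, as in the paper; with your spaces you chart a different (smaller) space and the regularity count changes.
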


\begin{proof} The proof is almost identical to White's manifold structure theorem \cite{white1991space}. We will provide a sketch of the argument.  Let $\Gamma$ be the space of all $C^q$ Riemannian metrics on $M$. Fix an arbitrary smooth metric $g\in \Gamma$.  Fix a smooth, simple, two-sided, null-homologous, almost-embedding $u\colon \Sigma \to M$ with a preferred choice of normal vector $\nu$, constant mean curvature $H$ with respect to $g$, and enclosed volume $\frac{1}{2}\vol(M,g)$ with respect to $g$. Here we compute the mean curvature of $H$ with respect to $\nu$ and the enclosed volume is defined as above.

Define a map 
$
\mathcal H: \Gamma \times C^{j,\alpha}(\Sigma) \to C^{j-2,\alpha}(\Sigma)
$
by letting $\mathcal H(h,f)$ be the mean curvature of the immersion 
\[
x \mapsto \exp^g_{u(x)}(f(x)\nu),
\]
computed with respect to $h$. 
 The differential of the function $\mathcal H$ at the point $(g,0)$ is well-known.  If $\varphi$ is a smooth function on $M$ and $h(t) = e^{2t \varphi} g$ then 
\[
D_1\mathcal H(h'(0)) = -2\varphi H - n\la \grad \varphi,\nu\ra. 
\] 
If $w\in C^{j,\alpha}(\Sigma)$ then 
\[
D_2 \mathcal H(w) = Jw = \lap w + (\vert A\vert^2 + \ric(\nu,\nu))w,
\]
where the Laplacian and second fundamental form are those of $u(\Sigma)$ with respect to $g$. 

Now consider the volume functional. Define 
$
\mathcal V\f \Gamma \times C^{j,\alpha}(\Sigma)\to \R
$
by letting $\mathcal V(h,f)$ be the volume enclosed by the immersion 
\[
x \mapsto \exp^g_{u(x)}(f(x)\nu),
\]
computed with respect to $h$. As explained above, this is well-defined if $\|f\|_{j,\alpha}$ is small-enough, and it depends only on the image of the immersion and the preferred choice of normal vector. The differential of $\mathcal V$ is also well-known. Let $\Omega$ be the region enclosed by $u(\Sigma)$. If $\psi$ is a smooth function on $M$ supported in $\Omega$ and $h(t) = e^{2t\psi}g$ then 
\[
D_1\mathcal V(h'(0)) = (n+1) \int_\Omega \psi \, dV_g. 
\]
Also one has 
\[
D_2\mathcal V(w) = -\int_\Sigma w\, dA_{u^*g}. 
\]
for $w\in C^{j,\alpha}(\Sigma)$. 

Now define 
\begin{gather*}
\mathcal F\colon \Gamma \times C^{j,\alpha}(\Sigma) \times \R \to C^{j-2,\alpha}(\Sigma)\times \R,\\
\mathcal F(h,f,a) = \left(\mathcal H(h,f)-a,\mathcal V(h,f) - \frac{\vol(M,h)}{2}\right). 
\end{gather*}
Also define  
\begin{gather*}
L\colon  C^{j,\alpha}(\Sigma)\times \R \to C^{j-2,\alpha}(\Sigma)\times \R,\\
L(w,b) = D \mathcal F(0,w,b) = \left(Jw - b, -\int_\Sigma w\, dA_g\right).
\end{gather*}
The key point is that $L$ is formally self-adjoint, with finite dimensional kernel $K$, and image $(K^\perp) \cap (C^{j-2,\alpha}(\Sigma)\times \R)$. Moreover, it is easy to see from the explicit formulas for $D_1 \mathcal H$ and $D_1\mathcal V$ along conformal paths of metrics that no non-zero element of $K$ is orthogonal to the image of $D_1\mathcal F$. As in \cite{white1991space}, these facts imply that $D\mathcal F$ is onto, and that the kernel of $D\mathcal F$ is complemented.

It now follows by the implicit function theorem that $\mathcal N = \mathcal F\inv(0,0)$ has the structure of a $C^{q-j}$ Banach manifold in a neighborhood of $(g,0,H)$. 
Now consider the map 
\begin{gather*}
\Pi\colon \Gamma\times C^{j,\alpha}(\Sigma)\times \R \to \Gamma,\\
\Pi(h,f,a) = h.
\end{gather*}
Again, the same argument as in \cite{white1991space} shows that the differential $D(\Pi \vert_{\mathcal N})$, evaluated at $(g,0,H)$, is Fredholm with index 0. 

The above construction gives a chart for $\mathcal M$ in a neighborhood of every $(g,[u],H)$ in which $g$ is smooth and in which $u$ is a smooth almost-embedding.  Let $\mathcal U$ be the open subset of $\mathcal M$ covered by these charts. As in \cite{white1991space}, one can show that the transition maps between charts are $C^{q-j}$ and therefore that this gives $U$ the claimed manifold structure. 
\end{proof}

\begin{corollary}
Let $M^{n+1}$ be a smooth, closed manifold. Then for a (Baire) generic set of smooth Riemannian metrics on $M$, there are no almost-embedded half-volume CMC hypersurfaces $\Sigma$ in $(M,g)$ which carry a non-zero function $\varphi$ such that $J_\Sigma \varphi$ is constant and $\int_\Sigma \varphi = 0$.
\end{corollary}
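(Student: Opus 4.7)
The plan is to combine the Banach manifold structure established in the previous proposition with the Sard--Smale theorem, followed by a standard stratification argument to pass from $C^q$ genericity to $C^\infty$ genericity. The key algebraic observation is that, using the formulas for $D_2\mathcal H$ and $D_2\mathcal V$ recorded in the previous proof, the kernel of $D\Pi$ at a point $(g,[u],H)\in \mathcal U$ coincides with $\ker L$, where $L(w,b) = (J_\Sigma w - b,\ -\int_\Sigma w\,dA_g)$. A non-zero pair $(\varphi,c)\in \ker L$ is exactly a non-zero function $\varphi$ on $\Sigma$ satisfying $J_\Sigma\varphi = c$ (a constant) and $\int_\Sigma \varphi = 0$, so the metrics $g$ violating the corollary's conclusion are precisely the critical values of $\Pi|_{\mathcal U}$.

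Since $\Pi|_{\mathcal U}$ is a $C^{q-j}$ Fredholm map of index zero between separable Banach manifolds, the Sard--Smale theorem produces a residual set $\mathcal R^q \subset \Gamma^q$ of regular values. To upgrade this $C^q$ statement to the $C^\infty$ topology, stratify by extrinsic bounds: for each $A, H_0, I \in \N$, let $\mathcal B_{A,H_0,I} \subset \Gamma^\infty$ denote the set of smooth metrics admitting an almost-embedded half-volume CMC of area at most $A$, mean curvature at most $H_0$ in absolute value, and Morse index at most $I$, which carries a bad function $\varphi$. Standard compactness for almost-embedded CMCs with area and index bounds, combined with the upper semicontinuity of $\dim \ker L$ under smooth convergence, shows that $\mathcal B_{A,H_0,I}$ is closed in $\Gamma^\infty$. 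Sard--Smale applied with $q$ sufficiently large, together with the density of $C^\infty$ metrics in $\Gamma^q$, shows that $\mathcal B_{A, H_0, I}$ has empty interior in $\Gamma^\infty$. The countable intersection $\bigcap_{A,H_0,I} (\Gamma^\infty \setminus \mathcal B_{A,H_0,I})$ is then residual in $\Gamma^\infty$ and furnishes the corollary.

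The principal obstacle is proving closedness of $\mathcal B_{A,H_0,I}$: given $g_k \to g$ smoothly with almost-embedded half-volume CMCs $u_k$ and bad functions $\varphi_k$, one must produce an almost-embedded half-volume CMC in $(M,g)$ carrying a bad limiting function. Area and index bounds yield subsequential smooth convergence of $u_k$ (possibly with integer multiplicities bounded by the index) to an almost-embedded limit; the half-volume constraint and CMC equation pass to the limit; and, after normalizing $\|\varphi_k\|_{C^{j,\alpha}} = 1$, upper semicontinuity of the kernel of the Jacobi operator on the smoothly convergent hypersurfaces yields a non-zero limit $\varphi_\infty$ satisfying $J_{\Sigma_\infty}\varphi_\infty = c_\infty$ constant and $\int_{\Sigma_\infty}\varphi_\infty = 0$, as required.
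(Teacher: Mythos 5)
Your first step is the same as the paper's: on the Banach manifold $\mathcal U$ from the preceding proposition, a non-zero $\varphi$ with $J_\Sigma\varphi$ constant and $\int_\Sigma\varphi=0$ is exactly a non-trivial kernel element of $D\Pi$, and Sard--Smale \cite{smale1965infinite} then gives a residual set of regular values in the space $\Gamma_q$ of $C^q$ metrics. The divergence, and the gap, is in the passage from $C^q$ genericity to $C^\infty$ genericity. The paper deliberately avoids any compactness argument here and instead invokes the abstract transfer result \cite[Theorem 2.10]{white2017bumpy}, which upgrades the second-category statement from $\Gamma_q$ to $\Gamma_\infty$ directly. Your replacement argument needs two facts you have not established: that each stratum $\mathcal B_{A,H_0,I}$ is closed, and that it has empty interior. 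For the latter, ``Sard--Smale with $q$ large plus density of smooth metrics in $\Gamma^q$'' is not enough: the regular values produced by Sard--Smale are merely $C^q$ metrics, and since the set of regular values of a non-proper Fredholm map need not be open, you cannot perturb such a regular value to a nearby smooth metric and keep regularity. To run your scheme you would need the good set to be open in the $C^q$ topology (i.e.\ the bad set closed in $\Gamma_q$, not just in $\Gamma_\infty$), and you would also need every bad CMC for a non-smooth $C^q$ metric to lie in the chart neighborhood $\mathcal U$, which was only constructed around smooth data. This is precisely the circle of difficulties that White's theorem is designed to bypass.

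The closedness claim itself is also not ``standard'' in this setting. With only the bounds $\area\le A$, $|H|\le H_0$, $\ind\le I$, a sequence of almost-embedded half-volume CMCs $\Sigma_k$ in metrics $g_k\to g$ can have $H_k\to 0$ and then converge with multiplicity $\ge 2$ to a minimal limit (the multiplicity is not bounded by the index, contrary to your parenthetical); in that regime the limit need not be an almost-embedded half-volume CMC of the stratum, the enclosed regions only converge as Caccioppoli sets so the limiting current bounding $\Omega$ is a proper subcollection of the limit, and the normalized functions $\varphi_k$ live on surfaces converging as multi-sheeted covers, so ``upper semicontinuity of $\ker L$'' does not directly produce a bad function on the limit hypersurface. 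Handling this degeneration is exactly the kind of delicate compactness the body of the paper spends effort on (cf.\ Section \ref{S:compactness} and Proposition \ref{small-components}), so it cannot be treated as a black box. Finally, a smaller omission: $\mathcal U$ is built for a fixed diffeomorphism type $\Sigma$, so any argument must in the end take a countable intersection over diffeomorphism classes (and your strata would otherwise have to absorb changes of topology in the limit), a point the paper's proof records explicitly.
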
 

\begin{proof} 
Fix $q$, $j$, and $\alpha$ as before. Let $\Gamma_q$ be the space of $C^q$ metrics on $M$. Let $\Gamma_\infty$ be the space of smooth Riemannian metrics on $M$. Fix a smooth, closed $n$-dimensional manifold $\Sigma$ and consider the manifold $\mathcal U\subset \mathcal M$ containing $\mathcal M_\infty$ from the previous proposition.  Since $\Pi\colon \mathcal U\to \Gamma_q$ is Fredholm with index 0 and $\mathcal U$ is separable, it follows from Smale's infinite dimensional Sard theorem \cite{smale1965infinite} that the set $\mathcal R$ of regular values of $\Pi$ is of second category in $\Gamma_q$. 
According to \cite[Theorem 2.10]{white2017bumpy}, the set $\mathcal R \cap \Gamma_\infty$ is therefore of second category in $\Gamma_\infty$. Now observe that if $g\in \mathcal R\cap \Gamma_\infty$ and $u\colon \Sigma\to M$ is a smooth half-volume almost-embedded CMC then there is no non-zero function $\varphi$ on $\Sigma$ such that $J_\Sigma \varphi$ is constant and $\int_\Sigma \varphi = 0$. Since there are only countably many possible diffeomorphism classes for $\Sigma$, the result follows. 
\end{proof}

\begin{corollary}
\label{half-volume-isolated}
Let $M^{n+1}$ be a smooth, closed manifold. Then for a (Baire) generic set of smooth Riemannian metrics on $M$, every almost-embedded half-volume CMC hypersurface $\Sigma$ in $(M,g)$ is isolated. 
\end{corollary}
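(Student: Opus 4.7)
The plan is to deduce isolation from the previous corollary using the Banach manifold structure established in the preceding proposition. Fix parameters $q \geq j+3$ and $\alpha \in (0,1)$ and, for each diffeomorphism type of a closed $n$-manifold $\Sigma'$, let $\mathcal U \subset \mathcal M$ be the corresponding separable $C^{q-j}$ Banach manifold and $\Pi: \mathcal U \to \Gamma$ the Fredholm-index-$0$ projection from that proposition.

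The first step is to identify the kernel of $D\Pi|_{\mathcal U}$ at a point $(g, [u], H) \in \mathcal U$. From the formula for the linearization $L(w, b) = (J_\Sigma w - b, -\int_\Sigma w\, dA)$ of the defining map $\mathcal F$ used in the chart, this kernel is isomorphic to the set of pairs $(w, b) \in C^{j,\alpha}(\Sigma) \times \R$ with $J_\Sigma w = b$ (constant) and $\int_\Sigma w \, dA = 0$. For $g$ in the Baire-generic set provided by the previous corollary, no smooth almost-embedded half-volume CMC in $(M, g)$ admits such a nonzero $w$; and if $w \equiv 0$ then $b = 0$. Hence $g$ is a regular value of $\Pi|_{\mathcal U}$, so by the implicit function theorem and the fact that $\Pi$ is Fredholm of index $0$, the fiber $\Pi^{-1}(g) \cap \mathcal U$ is a $0$-dimensional $C^{q-j}$ submanifold of $\mathcal U$, meaning each of its points is isolated in the Banach-manifold topology.

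It remains to translate this Banach-manifold isolation into geometric isolation of $\Sigma = u(\Sigma')$. Suppose for contradiction that $\Sigma_k = u_k(\Sigma')$ is a sequence of distinct smooth almost-embedded half-volume CMCs converging to $\Sigma$ in some reasonable sense. After further intersecting with the generic set from Proposition \ref{half-volume-minimal}, we may assume $\Sigma$ has nonzero mean curvature $H$, so in particular the $H_k$ are uniformly bounded. Standard curvature estimates and Allard-type regularity for almost-embedded CMCs with bounded mean curvature converging to a smooth multiplicity-one limit then upgrade the convergence to smooth graphical convergence: for large $k$ one can write $u_k(x) = \exp^g_{u(x)}(f_k(x)\, \nu(x))$ with $f_k \to 0$ in $C^{j,\alpha}$. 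Consequently the triples $(g, [u_k], H_k) \in \mathcal U$ converge to $(g, [u], H)$, contradicting the isolation shown above. Since there are only countably many diffeomorphism types of closed $n$-manifolds, taking the countable intersection of the generic sets indexed by diffeomorphism type preserves Baire genericity.

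The main technical point is ensuring that any weak notion of convergence $\Sigma_k \to \Sigma$ upgrades to $C^{j,\alpha}$ graphical convergence, so that the Banach-manifold framework applies; this relies on a priori estimates for almost-embedded CMCs with bounded mean curvature and the smoothness of the limit. Everything else is a formal consequence of the index-$0$ Fredholm property and the kernel computation already present in the preceding proposition.
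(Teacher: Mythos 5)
Your core argument is the same as the paper's. The kernel of $D(\Pi|_{\mathcal U})$ at $(g,[u],H)$ is exactly the set of pairs $(w,b)$ with $J_\Sigma w=b$ constant and $\int_\Sigma w=0$; the previous corollary makes this trivial for generic $g$, the index-zero Fredholm property upgrades injectivity to an isomorphism, and the inverse/implicit function theorem gives isolation. The paper runs this computation with $g$ frozen, applying the inverse function theorem to $\mathcal F(f,a)=\bigl(\mathcal H(g,f)-a,\ \mathcal V(g,f)-\tfrac{1}{2}\vol(M,g)\bigr)$ rather than through the fiber $\Pi^{-1}(g)$, but that is only a rephrasing of the same step, and your countable-intersection remark over diffeomorphism types matches the paper's handling as well.

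Where you diverge is the third paragraph, in which you try to upgrade an unspecified weak convergence $\Sigma_k\to\Sigma$ to $C^{j,\alpha}$ graphical convergence. This goes beyond what the paper proves or needs: the corollary is only ever invoked after smooth, multiplicity-one convergence has already been established by other means (property (R'), volume-preserving stability of replacements, and Allard regularity in the finiteness argument for $\mathscr S$), so ``isolated'' is used in the graphical sense your second paragraph already delivers. Moreover, as written the upgrade step contains two unjustified claims. First, uniform boundedness of the $H_k$ does not follow from $\Sigma$ having nonzero mean curvature; nothing ties $H_k$ to $H$ until strong convergence is already known, so this is circular. Second, ``standard curvature estimates'' are not available for general almost-embedded CMC hypersurfaces with bounded mean curvature: in this paper the curvature estimates (Proposition \ref{curvature-estimates}, after \cite{bellettini2019curvature}) require volume-preserving stability, which an arbitrary competitor $\Sigma_k$ in your compactness argument need not satisfy, and which in the body of the paper is extracted from the almost-minimizing/replacement structure, not from CMC-ness alone. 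So either drop that paragraph and state the corollary in the graphical topology, or make explicit the extra hypotheses (bounded mean curvature, multiplicity-one varifold convergence, a stability or almost-minimizing assumption) under which the upgrade is legitimate.
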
 

\begin{proof}
Let $\Sigma$ be an almost-embedded half-volume CMC hypersurface in $(M,g)$. Consider the operator $\mathcal F\colon C^{j,\alpha}(\Sigma)\times \R \to C^{j-2,\alpha}(\Sigma)\times \R$ given by 
\[
F(f,a) = \left(\mathcal H(g,f)-a, \mathcal V(g,f)-\frac{\vol(M,g)}{2}\right). 
\]
Then the linearization of $\mathcal F$ at $(0,0)$ is 
\[
L(w,b) = \left(J_\Sigma w - b,-\int_\Sigma w\right). 
\]
Therefore the previous corollary implies that, for generic $g$, the map $L$ is injective and hence an isomorphism. The inverse function theorem now implies that $\Sigma$ is isolated. 
\end{proof}

\begin{prop}
Assume that $M^{n+1}$ is a smooth, closed manifold. Then for a (Baire) generic set of smooth Riemannian metrics on $M$, every almost-embedded half-volume CMC hypersurface in $(M,g)$ is actually embedded. 
\end{prop}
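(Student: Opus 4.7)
The plan is to adapt the White-style Sard--Smale framework used in the preceding proposition by enlarging the moduli space so that it records not only a half-volume CMC immersion but also a pair of distinct domain points whose images coincide and whose tangent planes agree. The Fredholm index of the projection to the space of metrics drops to $-1$, so Sard--Smale yields a residual set of metrics with empty preimage; for such metrics no almost-embedded half-volume CMC has any self-touching point, whence every such CMC is embedded.

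Fix $q,j \in \N$ and $\alpha \in (0,1)$ with $q \geq j+3$, and fix closed $n$-manifolds $\Sigma_1,\Sigma_2$ (possibly equal). Consider tuples $(g,[u_1],[u_2],H,q_1,q_2)$, where $g \in \Gamma_q$, each $u_i \colon \Sigma_i \to M$ is a simple, two-sided, null-homologous $C^{j,\alpha}$ immersion with preferred normal, of constant mean curvature of absolute value $H$ (with signs chosen so that the mean curvature vectors at the touching point are opposite), with total enclosed volume $\frac{1}{2}\vol(M,g)$, and with $q_1 \in \Sigma_1$, $q_2 \in \Sigma_2$ satisfying $u_1(q_1)=u_2(q_2)$ and $(du_1)_{q_1}(T_{q_1}\Sigma_1)=(du_2)_{q_2}(T_{q_2}\Sigma_2)$. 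These are the zeros of a map
\[
\mathcal{G} \colon \Gamma_q \times C^{j,\alpha}(\Sigma_1)\times C^{j,\alpha}(\Sigma_2)\times \R \times \Sigma_1 \times \Sigma_2 \to C^{j-2,\alpha}(\Sigma_1)\times C^{j-2,\alpha}(\Sigma_2)\times \R \times M \times \operatorname{Gr}_n(TM),
\]
packaging the two CMC equations, the half-volume equation, the position-matching equation, and the Grassmannian tangent-plane matching equation. The CMC-plus-volume block of the linearization is surjective by the argument of the previous proposition. For the evaluation block, variations $\delta f_1 \in C^{j,\alpha}(\Sigma_1)$ supported in a small neighborhood of $q_1$ (disjoint from $q_2$ even in the case $\Sigma_1=\Sigma_2$, since as domain points $q_1 \neq q_2$) together with tangential perturbations $\delta q_1 \in T_{q_1}\Sigma_1$ realize any infinitesimal displacement of $u_1(q_1)$ in $T_{u_1(q_1)}M$, while prescribing $\nabla(\delta f_1)(q_1)$ realizes any infinitesimal tilt of $(du_1)_{q_1}(T_{q_1}\Sigma_1)$ in the Grassmannian of $n$-planes at $u_1(q_1)$. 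Any resulting change in the CMC equations is absorbed by compensating conformal perturbations of $g$. Relative to the single-CMC setup, this enriched construction adds $2n$ finite-dimensional input parameters ($q_1,q_2$) and $2n+1$ finite-dimensional output equations ($n+1$ for position matching and $n$ for Grassmannian matching), so the Fredholm index of $\Pi|_{\mathcal{G}^{-1}(0)} \to \Gamma_q$ is $0 + 2n - (2n+1) = -1$.

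Smale's infinite-dimensional Sard theorem now produces a residual set of $C^q$ regular values of $\Pi$; by the negative index, every regular value has empty preimage. The argument of \cite[Theorem 2.10]{white2017bumpy} promotes this to a residual set in $\Gamma_\infty$, and intersecting over the countably many possible diffeomorphism types of $(\Sigma_1,\Sigma_2)$ yields a residual set of smooth metrics on $M$ for which no almost-embedded half-volume CMC carries a self-touching point, i.e.\ every such CMC is embedded. The main technical obstacle is verifying the submersion property of the evaluation block, specifically the independence of the displacement of the touching point, the infinitesimal tilt of the common tangent plane, and the CMC equations; this decoupling is supplied precisely by the freedom to localize the variations $\delta f_i$ near the distinct domain points $q_1$ and $q_2$, and then to restore the CMC constraints via conformal metric perturbations of the kind used in the preceding proposition.
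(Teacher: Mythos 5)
Your overall strategy (build a universal ``incidence'' moduli space that additionally records a tangential self-touching, check that the projection to the space of metrics is Fredholm of index $-1$, and invoke Sard--Smale to conclude that generically the incidence space projects to nothing) is a legitimate repackaging of a transversality argument, and your index bookkeeping ($+2n$ for the marked points, $-(n+1)$ for position matching, $-n$ for matching of tangent $n$-planes) is consistent. The paper instead follows White's self-transversality scheme: it builds a finite-dimensional family of conformal metric perturbations together with the induced family of half-volume CMC immersions and shows the two-point evaluation map $\widetilde{\mathcal U}(\tau,x_1,x_2)=(\mathcal U(\tau,x_1),\mathcal U(\tau,x_2))$ is a submersion along the preimage of the diagonal, then concludes generic self-transversality, which for hypersurfaces rules out tangential touchings. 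Either packaging could work; the crux in both is the same linear statement, and that is where your proposal has a genuine gap.

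The gap is the claimed decoupling: ``variations $\delta f_1$ supported in a small neighborhood of $q_1$ \dots{} any resulting change in the CMC equations is absorbed by compensating conformal perturbations of $g$.'' If $\delta f_1$ is supported near $q_1$, then the induced error $J_1(\delta f_1)$ in the first CMC equation is concentrated near $u_1(q_1)$, so any conformal factor $\varphi$ used to cancel it (via $-2\varphi H-n\la \grad\varphi,\nu\ra$) must be supported near the touching point. But at a tangential touching the two sheets share a tangent plane and, up to sign, a normal, so this same $\varphi$ enters the linearized CMC equation of the second sheet near $q_2$ with essentially the same magnitude; correcting that forces a nonzero $\delta f_2$ near $q_2$, which feeds back into the position- and plane-matching block you were trying to prescribe. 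In other words, ambient perturbations localized at the touching point cannot separate the two sheets to first order, so the surjectivity of the evaluation block modulo the CMC block is exactly what must be proved, not assumed. This is precisely the difficulty White's argument is designed to circumvent, and the paper's proof uses his mechanism: one works with normal fields $f$ on all of $\Sigma$ whose Jacobi defect $Jf$ is supported in an open set $W$ where $u(\Sigma)$ is \emph{embedded} (away from every touching point), and White's Theorem 29 guarantees a finite-dimensional space of such $f$ that still takes arbitrary independent values at the two preimages of each touching point; the compensating conformal perturbation is then supported in $W$, where only one sheet is present, so the feedback problem disappears. Without this ingredient (or an equivalent cokernel/unique-continuation argument), your index-$(-1)$ conclusion does not follow. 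A secondary, fixable issue: the half-volume and null-homology constraints apply to the full boundary $\bd\Omega$, i.e.\ to a single (possibly disconnected) immersion, so the incidence space should carry one immersion of a possibly disconnected $\Sigma$ with two distinct marked points $q_1\neq q_2$, rather than two separately null-homologous immersions each with its own enclosed volume; as written, your space neither models self-touchings of a connected component correctly nor accounts for the volume contributed by components not involved in the touching.
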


\begin{proof}
Again we can follow White's self-transversality argument \cite{white2019generic} almost identically.  Let $\Pi\colon \mathcal M_\infty\to \Gamma_\infty$ be the projection to the metric, and let $\mathcal M_{\text{reg}}$ be the set of points $(g,[u],H)$ in $\mathcal M_\infty$ at which the differential $D\Pi$ has trivial kernel.  Pick a point $(g,[u],H)\in \mathcal M_{\text{reg}}$ and assume that $u\colon \Sigma\to M$ is an almost-embedding.  Pick a small open set $W\subset M$ such that $u(\Sigma)$ is embedded in $W$.  Let 
\[
V_0 = \{f \in C^\infty(\Sigma): Jf \text{ is supported in } W\}. 
\]
Let $\Omega^2 \Sigma = \{(x_1,x_2)\in \Sigma\times\Sigma: x_1\neq x_2\}$ and then let 
\[
C = \{(x_1,x_2)\in \Omega^2\Sigma: u(x_1) = u(x_2)\}
\]
which is a compact subset of $\Omega^2(\Sigma)$. According to \cite[Theorem 29]{white2019generic}, there is a finite dimensional subspace $V\subset V_0$ such that given any $(x_1,x_2)\in C$ and $a_1,a_2\in \R$, there is an $f\in V$ with $f(x_i) = a_i$ for $i=1,2$. 

Let $f_1,\hdots,f_d$ be a basis for $V$. Next, we can argue as in \cite[Proposition 8]{white2019generic} to find maps 
\begin{gather*}
\gamma\colon B^d(0,\eps)\to C^\infty(M),\\
\mathcal W\colon B^d(0,\eps) \times \Sigma \to M
\end{gather*}
such that 
\begin{itemize}
\item[(i)] $\gamma(0) = 0$,
\item[(ii)] $\mathcal W(0,\cdot) = u(\cdot)$,
\item[(iii)] for each $\tau\in B^k(0,\eps)$, the map $\mathcal W(\tau,\cdot)$ is a half-volume CMC immersion with respect to the metric $e^{2\gamma(\tau)}g$,
\item[(iv)] For each $i=1,\hdots,d$ we have 
\[
\frac{d}{dt}\eval_{t=0} \mathcal W(te_i,x) = f_i(x). 
\]
\end{itemize}
Indeed, since $Jf_i$ is supported in $W$, it is clear from the formula for $D_1\mathcal F$ that there exists $\varphi_i\in C^\infty(M)$ such that $D_1\mathcal F(v_i) = (-J f_i,\int_\Sigma f_i)$, where $h_i(t) = e^{2t\varphi_i}g$ and $v_i = h_i'(0)$. Then define 
\begin{gather*}
\gamma\colon \R^d \times M\to \R,\\
\gamma(\tau,x) = \sum_{i=1}^d 2 \tau_i \varphi_i(x). 
\end{gather*}
Since $(g,[u],H)\in \mathcal M_{\text{reg}}$, it follows that for a sufficiently small $\eps > 0$, for each $\tau \in B^d(0,\eps)$ there is a unique function $w(\tau)$ with small norm on $\Sigma$ such that 
\[
\mathcal W(\tau,x) = \exp^g_{u(x)}(w(\tau) \nu)
\]
is a half-volume CMC immersion with respect to $e^{2\gamma(\tau)}g$ with mean curvature $H(\tau)$. 
Moreover, the map 
\[
\mathcal W: B^d(0,\eps)\times \Sigma \to M
\]
is smooth. Finally, observe that $\mathcal F(e^{2\gamma(\tau)}g, w(\tau), H(\tau)) \equiv 0$, and hence 
\begin{align*}
(0,0) &= \frac{d}{d\tau_i}  \mathcal F(e^{2\gamma(\tau)}g, w(\tau), H(\tau)) \\
&= D_1\mathcal F(v_i) + L\left(\frac{d}{d\tau_i} w, \frac{d}{d\tau_i} H\right),
\end{align*}
where $L$ is defined as above. This implies that 
\[
L\left(\frac{d}{d\tau_i} w, \frac{d}{d\tau_i} H\right) = L\left(f_i,0\right)
\]
Since $(g,[u],H)\in \mathcal M_{\text{reg}}$ the operator $L$ is invertible, and therefore this implies that 
\[
\frac{d}{dt}\eval_{t=0} \mathcal W(te_i,x) = \frac{d}{d\tau_i} w = f_i,
\]
as needed. 

Let $\Delta_2 M = \{(x,x)\in M^2: x\in M\}$. The same argument as \cite[Theorem 14]{white2019generic} now shows that the map 
\begin{gather*}
\widetilde {\mathcal U}\colon B^d(0,\eps) \times \Omega^2 \Sigma \to M\times M,\\
\widetilde {\mathcal U}(\tau,x_1,x_2) = \big(\mathcal U(\tau,x_1),\mathcal U(\tau,x_2)\big)
\end{gather*}
is a submersion at every point in $\widetilde {\mathcal U}\inv(\Delta^2 M)$. From this construction, it now follows as in \cite[Theorem 15]{white2019generic}  that, for a generic smooth metric $g$ on $M$, every half-volume CMC immersion is self-transverse. In particular, for such metrics, every almost-embedded half-volume CMC is embedded. 
\end{proof}

\bibliographystyle{plain}
\bibliography{bibliography}

\end{document}